\documentclass[12pt,a4paper,leqno]{article}

\usepackage[utf8]{inputenc}
\usepackage[T1]{fontenc}
\usepackage[english]{babel}
\usepackage{amsthm}
\usepackage{amscd}
\usepackage{amsfonts}         
\usepackage{amsmath}
\usepackage{amssymb}
\usepackage{hyperref}
\usepackage{enumerate}
\usepackage{tikz}
\usepackage{tikz-cd}
\usepackage{mathrsfs}  
\usepackage{float}

\newcommand{\fref}[1]{\hyperref[{#1}]{\ref*{#1}}}

\newcommand{\Ab}{\mathbb{A}}
\newcommand{\Bb}{\mathbb{B}}

\newcommand{\Eb}{\mathbb{E}}

\newcommand{\Gb}{\mathbb{G}}

\newcommand{\Lb}{\mathbb{L}}

\newcommand{\Pb}{\mathbb{P}}

\newcommand{\Tb}{\mathbb{T}}

\newcommand{\Zb}{\mathbb{Z}}

\newcommand{\Cc}{\mathcal{C}}

\newcommand{\Ec}{\mathcal{E}}
\newcommand{\Fc}{\mathcal{F}}

\newcommand{\Ic}{\mathcal{I}}

\newcommand{\Lc}{\mathcal{L}}
\newcommand{\Mc}{\mathcal{M}}
\newcommand{\Nc}{\mathcal{N}}
\newcommand{\Oc}{\mathcal{O}}
\newcommand{\Pc}{\mathcal{P}}

\newcommand{\Rc}{\mathcal{R}}
\newcommand{\Sc}{\mathcal{S}}

\newcommand{\Cs}{\mathscr{C}}

\newcommand{\Fs}{\mathscr{F}}

\newcommand{\Is}{\mathscr{I}}

\newcommand{\Ls}{\mathscr{L}}

\newcommand{\Ss}{\mathscr{S}}

\newcommand{\PCob}{{\underline\Omega}}

\newcommand{\op}{\mathrm{op}}

\newcommand{\coim}{\mathrm{\coim}}

\newcommand{\Spec}{\mathrm{Spec}}

\newcommand{\bl}{\mathrm{Bl}}

\newcommand{\wtil}{\widetilde}

\newcommand{\Sym}{\mathrm{Sym}}
\newcommand{\LSym}{\mathrm{LSym}}

\newcommand{\Fib}{\mathrm{Fib}}
\newcommand{\Cofib}{\mathrm{Cofib}}
\newcommand{\Fun}{\mathrm{Fun}}

\newcommand{\LM}{\mathrm{LM}}

\newcommand{\pre}{\mathrm{pre}}
\newcommand{\naive}{\mathrm{naive}}
\newcommand{\univ}{\mathrm{univ}}
\newcommand{\snc}{\mathrm{snc}}
\newcommand{\cl}{\mathrm{cl}}
\newcommand{\der}{\mathrm{der}}
\newcommand{\vir}{\mathrm{vir}}
\newcommand{\Proj}{\mathrm{Proj}}
\newcommand{\scr}{\mathrm{scr}}
\newcommand{\modmod}{/\!\!/}
\newcommand{\Id}{\mathrm{Id}}
\newcommand{\topo}{\mathrm{top}}

\newcommand{\dash}{{\text -}}

\restylefloat{table}

\newtheorem{theo}{Tplottin ubuntuheorem}[section]
\theoremstyle{plain}
\newtheorem{thm}[theo]{Theorem}
\newtheorem{lem}[theo]{Lemma}
\newtheorem{prop}[theo]{Proposition}
\newtheorem{cor}[theo]{Corollary}
\newtheorem*{thm*}{Theorem}
\newtheorem*{lem*}{Lemma}
\newtheorem*{prop*}{Proposition}
\newtheorem*{cor*}{Corollary}

\theoremstyle{definition}
\newtheorem{defn}[theo]{Definition}

\newtheorem{ex}[theo]{Example}
\newtheorem{cons}[theo]{Construction}

\newtheorem{rem}[theo]{Remark}
\newtheorem{war}[theo]{Warning}

\pagestyle{plain}
\setcounter{page}{1}
\addtolength{\hoffset}{-1.15cm}
\addtolength{\textwidth}{2.3cm}
\addtolength{\voffset}{0.45cm}
\addtolength{\textheight}{-0.9cm}

\title{Precobordism and cobordism}
\author{Toni Annala}

\newcommand{\Addresses}{{
  \bigskip
  \footnotesize

  Toni Annala, \textsc{Department of Mathematics, University of British Columbia,
    Vancouver, BC V6T1Z2 Canada}\par\nopagebreak
  \textit{E-mail address:} \texttt{tannala@math.ubc.ca}

}}

\setcounter{tocdepth}{3}
\date{}

\begin{document}

\maketitle

\begin{abstract}
The purpose of this article is to compare several versions of bivariant algebraic cobordism constructed previously by the author and others. In particular, we show that a simple construction based on the universal precobordism theory of Annala--Yokura agrees with the more complicated theory of bivariant derived algebraic cobordism constructed earlier by the author, and that both of these theories admit a Grothendieck transformation to operational cobordism constructed by Luis González--Karu over fields of characteristic 0. The proofs are partly based on convenient universal characterizations of several cobordism theories, which should be of independent interest. Using similar techniques, we also strengthen a result of Vezzosi on operational derived $K$-theory. In the appendix, we give a detailed construction of virtual pullbacks in algebraic bordism, filling the gaps in the construction of Lowrey--Schürg.
\end{abstract}

\tableofcontents

\section{Introduction}

Algebraic cobordism was originally introduced by Voevodsky for his original approach to proving Milnor conjecture. A geometric study of a closely related theory of algebraic bordism was later initiated by Levine and Morel (see their foundational treatment \cite{LM}), and it has been successfully applied to problems arising as diverse areas as Donaldson--Thomas theory and arithmetic questions about algebraic groups (see the work of Levine--Pandharipande \cite{LP} and Sechin--Semenov \cite{SS}). Given a separated finite type scheme $X$ over a field $k$ of characteristic 0, the Levine--Morel bordism group $\Omega^\LM_*(X)$ is defined as the quotient of the free $\Lb$-module on equivalence classes of symbols
$$[V \to X; \Ls_1,..., \Ls_r],$$
where $V$ is smooth and quasi-projective, the morphism $V \to X$ is proper, and $\Ls_i$ are line bundles on $V$. We note that the role of the $\Ls_i$ is to keep track of pseudo-divisor data: if $\Ls_1$ admits a transverse section with smooth vanishing locus $D$, then the equality
$$[V \to X; \Ls_1,..., \Ls_r] = [D \to X; \Ls_2,...\Ls_n]$$
holds in $\Omega^\LM_*(X)$.

Subsequent research has introduced many variants of the original construction. We recall first of them in the following table.
\begin{table}[H]\label{ClassicalCobordismTheoriesTable}
    \begin{tabular}{ | l | l | }
    \hline
    Symbol & Theory  \\ \hline
    $\Omega^\LM_*$ & Levine--Morel algebraic bordism \cite{LM} \\ \hline
    $\omega_*$ & Levine--Pandharipande algebraic bordism \cite{LP} \\ \hline
    $\op \Omega^*_\LM$ & Operational algebraic cobordism \cite{KLG} \\ \hline
    $d\Omega^\LM_*$ & Extended Levine--Morel algebraic bordism \\ \hline
    $d\op \Omega^*_\LM$ & Extended operational algebraic cobordism  \\ \hline    
    \end{tabular}
    \caption{Classical bordism theories defined in characteristic 0, and their extended counterparts.}
\end{table}
\noindent The theory of Levine and Pandharipande was proven to be equivalent to $\Omega^\LM_*$ but the construction was much simpler: $\omega_*(X)$ is a quotient of the free Abelian group on equivalence classes $[V \to X]$ with $V$ smooth and quasi-projective over $k$, $V \to X$ proper, and the only relations are the so called double point cobordism relations. The operational cobordism of Luis González and Karu is obtained by extending $\Omega^\LM_*$ to a bivariant theory in a very formal way. Intuitively one should think of $\op \Omega^*_\LM$ as the coarsest bivariant extension of $\Omega^\LM_*$, and as such it fails to satisfy many properties that would be expected from the correct bivariant cobordism: for instance, one cannot recover the Grothendieck ring of vector bundles as a quotient of $\op \Omega^*_\LM(X)$, so the analogue of Conner--Floyd theorem fails. The extended theories are defined for quasi-projective derived $k$-schemes using truncations: for example, $d \Omega^\LM_*(X) := \Omega^\LM_*(\tau_0(X))$.

Lowrey and Schürg took a big step forward in \cite{LS} by constructing \emph{derived algebraic bordism groups} $d \Omega^k_*$ for quasi-projective derived schemes over a field $k$ (not necessarily of characteristic 0). The great advantage of their construction is that the the existence of quasi-smooth pullbacks (analogous to lci pullbacks, whose construction is the most subtle and laborious part of Levine--Morel) is completely trivial. However, this comes with a price: $d \Omega^k_*(X)$ is by definition a quotient of the free $\Lb$-module on equivalence classes $[V \to X]$ of projective maps from $V$ a quasi-smooth derived scheme over $k$. This makes the direct study of the derived bordism groups difficult, but if $k$ has characteristic 0, then the authors were able to show that the theories $d \Omega^k_*$ and $d \Omega^\LM_*$ are isomorphic, proving that $d \Omega^k_*$ gives the correct theory at least in characteristic 0. We note here that the derived structure on $V$ plays a similar role to the role played line bundles in the original construction of Levine and Morel. Besides the work introduced in the next paragraph, the results of Lowrey--Schürg have recently gathered other attention as well, see e.g. \cite{De, EHKSY1, EHKSY2}.

In the wake of the initial work by Lowrey and Schürg, the author has introduced several new bivariant theories whose construction makes use of derived algebraic geometry. 
\begin{table}[H]\label{DerivedCobordismTheoriesTable}
    \begin{tabular}{ | l | l | }
    \hline
    Symbol & Theory  \\ \hline
    $d \Omega^*_k$ & Bivariant derived algebraic cobordism over $k$ \cite{An1} \\ \hline
    $\PCob^*_A$ & Universal precobordism over $A$ \cite{AY} \\ \hline
    $\Omega^*_A$ & Bivariant algebraic cobordism over $A$ \\ \hline
    $\op d \Omega^*_k$ & Operational derived algebraic cobordism over $k$ \\ \hline
    \end{tabular}
    \caption{Derived cobordism theories defined in various degrees of generality.}
\end{table}
\noindent First of these, bivariant derived algebraic cobordism $d \Omega^*_k$ over a field $k$, is defined as a very natural bivariant extension of Lowrey--Schürg's derived bordism groups over $k$. Intuitively this is the finest bivariant extension of $d \Omega^k_*$, and many of the properties expected from the correct cobordism theory (such as the analogue of Conner--Floyd theorem) were verified in \cite{An1} when the base field $k$ has characteristic 0. The subsequent work by Annala--Yokura saw the introduction of universal precobordism theory $\PCob^*_A$, which is the derived and bivariant analogue of $\omega_*$, and which is defined for quasi-projective derived schemes over a Noetherian base ring $A$ of finite Krull dimension. Quite unexpectedly, it was possible to prove Conner--Floyd theorem and many other expected properties in \cite{An2,AY} without any further restrictions on the base ring $A$. Even though $\PCob^*_A$ has many good properties, we have good reason to expect that it is missing certain important relations, and therefore fails to give rise to the correct associated homology theory. Therefore we introduce bivariant algebraic cobordism $\Omega^*_A$ which is a quotient of $\PCob^*_A$ by the bivariant ideal generated by the so called snc relations. Finally, we will introduce the operational derived cobordism $\op d \Omega^*_k$ as a formal extension of Lowrey--Schürg bordism groups $d \Omega^k_*$. All the derived theories introduced here only over fields will be constructed over arbitrary Noetherian base rings of finite Krull dimension in the main text.

\subsection{Summary of results}

Let us then summarize the contributions of this paper. It is possible to define Lowrey--Schürg's derived bordism groups $d \Omega^A_*$ over a Noetherian ring $A$ of finite Krull dimension: we do this in Section \ref{DerivedBivariantCobConsSubSect}. Note that there are some ambiguities and gaps in the construction of \cite{LS}, so the reader is advised not to skip this section even if they are only interested in the case when $A$ is a field. Simultaneously, we also obtain definition of $d \Omega^*_A$ (and later $\op d \Omega^*_A$) over such an $A$.

Our first main result completely determines the relationship of $\Omega^*_A$ and $d \Omega^*_A$.

\begin{thm}\label{FirstComparisonThm}
Let $A$ be a Noetherian ring of finite Krull dimension. Then there exists a unique orientation preserving Grothendieck transformation
$$\eta: \Omega^*_A \to d\Omega^*_A.$$
Moreover, $\eta$ is an isomorphism of bivariant theories.
\end{thm}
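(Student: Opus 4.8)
The strategy is to realise both $\Omega^*_A$ and $d\Omega^*_A$ as quotients of the universal precobordism $\PCob^*_A$ and to show that the two kernels agree. By definition $\Omega^*_A = \PCob^*_A / I_{\snc}$, where $I_{\snc}$ is the bivariant ideal generated by the snc relations, and $d\Omega^*_A$ is, by construction, generated as a bivariant theory by the classes $[V \xrightarrow{g} X]$ of projective quasi-smooth morphisms $g$, which satisfy the double point relation. Hence the universal property of $\PCob^*_A$ established in \cite{AY} (see also \cite{An2}) supplies a canonical orientation-preserving Grothendieck transformation $\pi \colon \PCob^*_A \to d\Omega^*_A$ --- orientation-preserving meaning that the class $[V \xrightarrow{g} X]$ is sent to the corresponding class --- and $\pi$ is surjective since every generator of its target is the image of such a class. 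The same universal property furnishes the uniqueness clause: any orientation-preserving Grothendieck transformation out of $\PCob^*_A$, and hence out of a quotient like $\Omega^*_A$ via the surjection $\PCob^*_A \twoheadrightarrow \Omega^*_A$, is determined by its values on the classes $[V \xrightarrow{g} X]$. Everything therefore reduces to the equality of bivariant ideals $I_{\snc} = \ker \pi$, which I would prove as the two inclusions below; note that this also characterizes $\Omega^*_A$ and $d\Omega^*_A$ as the universal oriented bivariant theory satisfying the double point and snc relations, a statement of independent interest.

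For the inclusion $I_{\snc} \subseteq \ker \pi$ one must check that every snc relation becomes trivial in $d\Omega^*_A$. An snc relation is a universal identity that expresses the class of the derived vanishing locus of a section cutting out a simple normal crossings divisor in terms of the classes of the strata of that divisor and the coefficients of the formal group law. I would verify such an identity first in the Lowrey--Schürg bordism groups $d\Omega^A_*$, where it is an assertion about cobordism classes of quasi-smooth derived schemes, and then propagate it to the bivariant theory; the homological verification I would obtain by applying derived deformation to the normal cone successively to the strata of the snc divisor, rewriting the snc relation as an iterated consequence of the double point degeneration relations defining $d\Omega^A_*$ (cf. \cite{LS}). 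This inclusion yields the sought-after transformation $\eta \colon \Omega^*_A \to d\Omega^*_A$.

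For the reverse inclusion $\ker \pi \subseteq I_{\snc}$ --- equivalently, the construction of an inverse --- I would build a Grothendieck transformation $\rho \colon d\Omega^*_A \to \Omega^*_A$ sending the class $[V \xrightarrow{g} X]$ to the class of the same morphism in $\Omega^*_A$ and verify that $\rho$ respects all the defining relations of $d\Omega^*_A$. The double point relations hold already in $\PCob^*_A$, hence in its quotient $\Omega^*_A$; the snc relations hold in $\Omega^*_A$ by construction; and the $\Lb$-module (Lazard ring) structure on $d\Omega^*_A$ is matched by the canonical formal group law structure of $\PCob^*_A$, which descends to $\Omega^*_A$ by \cite{AY, An2}. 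Compatibility of $\rho$ with bivariant products, proper pushforwards and arbitrary pullbacks is then formal, all three operations being computed by the same recipes --- composition of projective morphisms and derived base change --- in both theories. Finally $\rho \circ \eta$ and the identity are both orientation-preserving Grothendieck transformations $\Omega^*_A \to \Omega^*_A$, hence equal by the uniqueness recorded above, so $\eta$ is injective; together with the surjectivity of $\pi$, which forces $\eta$ to be surjective, this shows that $\eta$ is an isomorphism of bivariant theories.

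The step I expect to be the real obstacle is the reconciliation of the two presentations: identifying the Lowrey--Schürg-style defining relations of $d\Omega^*_A$, precisely, with the double point relations together with the snc relations. This is the verification in the second paragraph that every snc relation holds in $d\Omega^A_*$ via iterated normal-cone degenerations, together with the complementary fact behind the third paragraph that no further relations are needed --- that the blow-up and deformation manipulations underlying the Lowrey--Schürg construction are already available inside $\Omega^*_A$. A genuine secondary point is to make precise the passage from a relation among the classes $[V \xrightarrow{g} X]$ over $X \to \mathrm{pt}$ to the corresponding bivariant relation over an arbitrary $X \to Y$; this is precisely why $\Omega^*_A$ is defined through the bivariant \emph{ideal} generated by the snc relations, and it should follow formally from the fact that a bivariant theory is a module over its associated cohomology theory.
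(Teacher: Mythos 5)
Your high-level strategy --- realize both theories as quotients, produce maps in both directions, and invoke the universal property of $\PCob^*_A$ to conclude --- is the same as the paper's. But several steps that you treat as formal or definitional are in fact the real content of the argument, and one of your proposed sub-arguments is misdirected.

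First, you mischaracterize the construction of $d\Omega^*_A$. It is \emph{not} defined as a quotient of $\PCob^*_A$, nor is the double point relation part of its definition. It is built from $\Lb^*\otimes\Mc^*_{A,+}$ (note the Lazard coefficients, which are adjoined by hand) by imposing, in order: homotopy fibre relations (Construction \ref{NaiveDerivedCobCons}), two flavours of formal-group-law relations for globally generated line bundles and for their differences (Construction \ref{BivariantDerivedPrecobCons}), and finally the snc relations (Construction \ref{BivariantDerivedCobCons}). So the fact that $d\Omega^*_A$ receives a map from $\PCob^*_A$ at all --- equivalently, that the double point relation holds in $d\Omega^*_A$ --- is a theorem, not a definition. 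To obtain $\pi$ from the universal property of $\PCob^*_A$ one must first show that $d\Omega^*_{A,\pre}$ (the stage before snc) satisfies the section and formal-group-law axioms; the paper proves these as separate lemmas, and for the second there is even a genuine subtlety inherited from \cite{LS} (Warning \ref{LSFGLWar}) requiring an extra imposed relation $\Rc^{\mathrm{fgl}}_+$. Your proposal takes all of this for granted.

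Second, your proposed verification that the snc relations become trivial in $d\Omega^*_A$ --- ``applying derived deformation to the normal cone successively to the strata, rewriting the snc relation as an iterated consequence of double point degenerations'' --- is both unnecessary and not what the paper does. The snc relations hold in $d\Omega^*_A$ simply because they are imposed by fiat in Construction \ref{BivariantDerivedCobCons}; the only thing to check is that $\eta'$ carries $\Rc'^{\snc}$ onto $\Rc^{\snc}$, which is automatic from the fact that orientation-preserving Grothendieck transformations preserve the $\zeta$-classes (Lemma \ref{ZetaClassIsNaturalLem}). Attempting to \emph{derive} snc from double point would be a much stronger and different statement, and there is no indication it is true.

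Third, and most importantly, the step you yourself flag as the ``real obstacle'' --- showing that no further relations beyond double point are needed, i.e.\ that $\PCob^*_A \to d\Omega^*_{A,\pre}$ is injective --- is where the paper does concrete work that your sketch does not supply. The paper's retraction $r$ is the $\Lb$-linear extension of the identity orientation $\Lb^*\otimes\Mc^*_{A,+}\to\PCob^*_A$ (possible because $\PCob^*_A$ is automatically $\Lb$-linear by Theorem \ref{FGLIsLLinearThm}), and one must check, relation by relation, that $r$ kills each of the Lowrey--Schürg defining relations: the homotopy fibre relation (it holds in $\PCob^*_A$ as the degenerate double point case), the FGL relation for globally generated bundles (the universal FGL $F_{\univ}$ maps to $F^{\PCob^*_A}$, and the latter governs Chern classes), and the FGL relation for differences (a formal consequence of the FGL axiom and the formal inverse). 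Your proposal asserts that the well-definedness of $\rho$ is ``formal,'' citing compatibility of pushforward/pullback/product, but those compatibilities are automatic for any map out of a free theory; the non-formal content is precisely the verification that the map kills the specific LS relations, which your sketch leaves out. In summary: the skeleton of the argument matches the paper, but the load-bearing verifications --- that $d\Omega^*_{A,\pre}$ satisfies the axioms of the universal property, and that the $\Lb$-linearized retraction is well-defined modulo the LS relations --- are missing, and the snc sub-argument you propose is aimed at the wrong target.
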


\noindent In order to prove this result, we characterize $\PCob^*_A$ by a pleasant universal property in Theorem \ref{UnivPropOfBivariantPrecob}: $\PCob^*_A$ is the universal stably oriented additive bivariant theory satisfying the section and the formal group law axioms. As a consequence,  we show in Corollary \ref{UnivPropOfBivariantAlgCob} that $\Omega^*_A$ is the universal stably oriented additive bivariant theory satisfying the section, the formal group law and the snc axioms.

In Construction \ref{OpDCobCons} we define operational derived cobordism theory $\op d \Omega^*_A$ as a formal bivariant extension of $d \Omega^A_*$. Our second main result shows that when $A$ is a field of characteristic 0, the operational derived cobordism agrees with the classical operational cobordism introduced earlier.

\begin{thm}\label{SecondComparisonThm}
If $k$ is a field of characteristic 0, then there exists a unique Grothendieck transformation
$$\iota: d \op \Omega^*_\LM \to \op d \Omega^*_k$$
extending the isomorphism of Lowrey--Schürg $\iota_h: d \Omega^\LM_* \to d \Omega^k_*$ of the associated homology theories. Moreover, $\iota$ is an isomorphism of bivariant theories.
\end{thm}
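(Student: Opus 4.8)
The plan is to mimic the strategy used for Theorem \ref{FirstComparisonThm}: identify both $d\op\Omega^*_\LM$ and $\op d\Omega^*_k$ as operational (i.e.\ formal) bivariant extensions of homology theories that are already known to be isomorphic, and then show that the formal extension functor preserves this isomorphism. Concretely, recall that for any homology theory $H_*$ on a category of schemes, the associated operational bivariant theory $\op H^*(X \to Y)$ is defined as the ring of compatible families of endomorphisms $\{c_g : H_*(Y') \to H_{*-p}(Y')\}$ indexed by morphisms $Y' \to Y$, commuting with proper pushforward and (in the derived setting) quasi-smooth pullback along the relevant squares. The first step is to make this definition precise for $d\op\Omega^*_\LM$ (operating on $d\Omega^\LM_*$, i.e.\ $\Omega^\LM_*$ of the classical truncation) and for $\op d\Omega^*_k$ (operating on $d\Omega^k_*$, Lowrey--Schürg bordism of quasi-smooth derived $k$-schemes), and to record that each is a bivariant theory in the sense of Fulton--MacPherson with the expected units and orientations.

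The second step is to produce the Grothendieck transformation $\iota$. Since $\iota_h : d\Omega^\LM_* \to d\Omega^k_*$ is an isomorphism of homology theories compatible with proper pushforward and with the refined Gysin/pullback maps (this is exactly what Lowrey--Schürg prove, once one checks their isomorphism is compatible with quasi-smooth pullbacks — a point one should verify carefully, or cite from the appendix construction of virtual pullbacks), any operational class on one side transports to the other: given a family $\{c_g\}$ acting on $d\Omega^\LM_*$, conjugating by $\iota_h$ produces a family acting on $d\Omega^k_*$, and the compatibility conditions are preserved because $\iota_h$ is natural. This defines a ring homomorphism $\op(d\Omega^\LM_*)(X \to Y) \to \op(d\Omega^k_*)(X \to Y)$ for each $X \to Y$; checking that these assemble into a Grothendieck transformation (compatibility with bivariant product, pushforward, pullback) is formal from the naturality of $\iota_h$. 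Uniqueness of $\iota$ extending $\iota_h$ follows because an operational class is determined by its action on homology: if $\iota'$ is another such transformation, then for $\alpha \in d\op\Omega^*_\LM(X \to Y)$ the classes $\iota(\alpha)$ and $\iota'(\alpha)$ act identically on $d\Omega^k_*(Y') = \iota_h(d\Omega^\LM_*(Y'))$ for all $Y' \to Y$, hence coincide.

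The third step — showing $\iota$ is an isomorphism — is now nearly immediate: the inverse is constructed the same way using $\iota_h^{-1}$, and conjugation by $\iota_h$ followed by conjugation by $\iota_h^{-1}$ is the identity on families of homology operations. One must only check that the compatibility constraints defining operational classes on the two sides match up under $\iota_h$; this requires that $\iota_h$ intertwines \emph{all} the structural maps appearing in those constraints (proper pushforward, quasi-smooth/lci pullback, exterior products and the oriented Chern class operators $\tilde c_1(\Ls)$), not just pushforward.

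I expect the main obstacle to be precisely this last point: verifying that the Lowrey--Schürg isomorphism $\iota_h$ is compatible with the refined pullback maps and first Chern class operators, since $\iota_h$ was originally constructed only as an isomorphism of the homology functors with proper pushforward, and the comparison of derived (quasi-smooth) pullbacks on the $d\Omega^k_*$ side with the lci pullbacks on $\Omega^\LM_* = d\Omega^\LM_*$ of the truncation is genuinely subtle — this is exactly the kind of compatibility the appendix on virtual pullbacks in algebraic bordism is designed to supply. A secondary subtlety is checking that the operational group $\op d\Omega^*_k$, defined via families indexed by \emph{all} morphisms to $Y$ (including non-quasi-smooth ones), is insensitive to replacing a scheme by its classical truncation, so that the indexing categories on the two sides are genuinely equivalent; this should follow from the fact that $d\Omega^k_*$ only sees the truncation when the source is classical, together with the existence of quasi-smooth pullbacks, but it deserves an explicit argument.
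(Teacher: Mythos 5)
The core idea — conjugate operational classes by $\iota_h$ — is indeed what the paper does, and your construction of the inverse transformation $\iota'$ by conjugating with $\iota_h^{-1}$ is essentially verbatim the paper's argument. However, your proposal has a genuine gap in the \emph{forward} direction, and it is not the obstacle you identify.

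Suppose $c$ is a classical operational class satisfying $(C_1)$ and $(C_2)$, and set $\tilde c_g := \iota_h \circ c_{\tau_0(g)} \circ \iota_h^{-1}$. To verify $(dC_2)$ for $\tilde c$ you must show, after unwinding the conjugation and using that $\iota_h$ intertwines quasi-smooth Gysin pullbacks with virtual pullbacks, that
$$h'^\vir \circ c_{\tau_0(g)} = c_{\tau_0(g\circ h)} \circ h^\vir$$
for every quasi-smooth $h\colon Y'' \to Y'$ appearing in a derived Cartesian diagram over $X \to Y$. But axiom $(C_2)$ only guarantees that $c$ commutes with \emph{refined lci pullbacks} between classical schemes; the general virtual pullback $h^\vir$ of a quasi-smooth morphism of genuinely derived schemes is \emph{not} one of these. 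It is defined via derived deformation to the normal bundle, and proving that operational classes commute with it requires an explicit decomposition argument of the kind carried out for $K$-theory in Lemmas \ref{FormulaForQSmPullbackLem} and \ref{FormulaForIotaLem} — not the compatibilities supplied by Theorem \ref{VirtPullbackPropertiesThm}, which concerns properties of the virtual pullbacks themselves rather than the behaviour of operational classes under them. So the sentence claiming that the forward compatibility is ``exactly the kind of compatibility the appendix is designed to supply'' is not quite right, and the proposal as written does not establish that $\tilde c$ is a valid operational class.

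The paper avoids this issue by a different route: it never conjugates forward directly. Instead it equips $d\op\Omega^*_\LM$ with the virtual-pullback orientation (Construction \ref{OperationalOrientationCons}, Theorem \ref{OpOrientationThm}), proves the non-trivial Theorem \ref{OpCobIsCommThm} that $\op\Omega^*_\LM$ is \emph{commutative} (using resolution of singularities, smooth envelopes, Lemma \ref{PullbackToEnvIsInjLem} and Lemma \ref{BivariantToSmIsHomLem}), and then applies the universal property Theorem \ref{OpDCobUnivProp}. The point of the universal property is that for a commutative, stably oriented bivariant theory $\Bb^*$, the class $\tilde\phi(\beta)_g = \phi\circ(g^*(\beta)\bullet -)\circ\phi^{-1}$ automatically satisfies $(dC_2)$ because commutativity lets one slide $\theta(h)$ past $g^*(\beta)$; one never needs to know that $c$ commutes with virtual pullbacks on the nose. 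Your proposal therefore either needs a cobordism analogue of Lemma \ref{ImOfAlphaLem}, or it needs to incorporate the commutativity theorem and the universal property. As it stands, the forward direction is not complete.
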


\noindent The main steps in the proof are establishing the commutativity of the operational cobordism theory $\op \Omega^*_\LM$ (Theorem \ref{OpCobIsCommThm}), and providing $d \Omega^*_A$ with a universal property guaranteeing that it receives Grothendieck transformations from all nice enough bivariant extensions of $d\Omega^A_*$ (Theorem \ref{OpDCobUnivProp}). Another consequence of these results is Theorem \ref{BivCobToOpCobThm}, which states that there exists a unique orientation preserving Grothendieck transformation
$$\nu: \Omega^*_A \to \op d \Omega^*_A.$$
We warn the reader that operational cobordism is not expected to be isomorphic to any of the other bivariant theories considered in the article (see Warning \ref{OpCobIsNotCobWar}).

In order to achieve these results, we need to take care of a technical issue: in the heart of Lowrey's and Schürg's paper lies the construction of quasi-smooth pullbacks (\emph{virtual pullbacks}) for $d \Omega^\LM_*$. There are several gaps in their construction, which we fill in appendices. Let us record the statement (see Theorem \ref{VirtPullbackPropertiesThm} for a more precise statement).
\begin{thm}
There exist functorial pullback morphisms
$$f^\vir: d \Omega^\LM_*(\tau_0(Y)) \to d \Omega^\LM_*(\tau_0(X))$$
along quasi-smooth morphisms $f: X \to Y$ of quasi-projective derived $k$-schemes. Moreover, these pullbacks satisfy the push-pull formula and they agree with the lci pullbacks of Levine and Morel when $f$ is classical.
\end{thm}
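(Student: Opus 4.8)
\emph{Strategy and reduction.} The plan is to adapt the construction of lci pullbacks for $\Omega^\LM_*$ in \cite{LM} to the derived setting, the key simplification being that the normal ``cone'' of a quasi-smooth closed immersion is always an honest vector bundle. Since $f\colon X\to Y$ is quasi-projective it admits a global factorization $X\xrightarrow{i}P\xrightarrow{p}Y$ with $p$ smooth (and quasi-projective) and $i$ a quasi-smooth closed immersion. For the smooth part I would take $p^\vir$ to be the Levine--Morel smooth pullback along the smooth morphism $\tau_0(p)\colon\tau_0(P)\to\tau_0(Y)$; its functoriality and base-change properties are proved in \cite{LM}. Everything then reduces to constructing $i^\vir$ for a quasi-smooth closed immersion and to checking its compatibility with smooth pullback.

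\emph{The immersion case via deformation to the normal bundle.} For a quasi-smooth closed immersion $i\colon X\hookrightarrow Y$ the relative cotangent complex is perfect, concentrated in degrees $[-1,0]$, and locally free in degree $-1$; hence the conormal sheaf is locally free and the normal bundle $N_i$ is a genuine vector bundle over $X$, not merely a cone. I would form the derived deformation space $D_{X/Y}$ --- a derived enhancement of Levine--Morel's deformation to the normal cone, obtained for instance as the complement of $\bl_{X\times\{0\}}(Y\times\{0\})$ inside the derived blow-up $\bl_{X\times\{0\}}(Y\times\Ab^1)$ along the quasi-smooth centre $X\times\{0\}$ --- which is flat over $\Ab^1$, agrees with $Y\times(\Ab^1\setminus\{0\})$ away from the origin, and has fibre over $0$ the total space of $N_i$; moreover $X\times\Ab^1$ embeds in $D_{X/Y}$ as a quasi-smooth closed immersion over $\Ab^1$ degenerating $i$ to the zero section $X\hookrightarrow N_i$. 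Passing to classical truncations and combining the localization sequence with homotopy invariance of $\Omega^\LM_*$ for vector bundles (both from \cite{LM}) yields a specialization homomorphism $\sigma_i\colon\Omega^\LM_*(\tau_0(Y))\to\Omega^\LM_*(\tau_0(N_i))$ together with an isomorphism $\pi^*\colon\Omega^\LM_*(\tau_0(X))\isomto\Omega^\LM_*(\tau_0(N_i))$, and I would set $i^\vir:=(\pi^*)^{-1}\circ\sigma_i$. The first point at which the Lowrey--Schürg argument must be completed is the well-definedness of $\sigma_i$: one has to show that the divisor-class operator attached to the principal Cartier divisor $\tau_0(N_i)\subset\tau_0(D_{X/Y})$ kills classes pushed forward from $\tau_0(N_i)$, which I would establish by the usual self-intersection argument, that divisor having trivial normal bundle.

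\emph{Compatibilities and the main difficulty.} It then remains to prove, in an interlocking induction as in \cite{LM}: (i) smooth base change for $i^\vir$, i.e.\ that the derived pullback of a quasi-smooth closed immersion along a smooth morphism is again a quasi-smooth closed immersion whose normal bundle and deformation space are the evident pullbacks; (ii) independence of $(p\circ i)^\vir:=i^\vir\circ p^\vir$ of the chosen factorization, by comparing two factorizations through their fibre product over $Y$ and invoking (i) and (iii); and (iii) that $(j\circ i)^\vir=i^\vir\circ j^\vir$ for composable quasi-smooth closed immersions $X\hookrightarrow Y\hookrightarrow Z$, using the short exact sequence $0\to N_{X/Y}\to N_{X/Z}\to i^*N_{Y/Z}\to 0$ of vector bundles and a derived double deformation space over $\Ab^2$. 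Together these give functoriality $(g\circ f)^\vir=f^\vir\circ g^\vir$ in general, and the push--pull formula along a homotopy-cartesian square with proper vertical maps then reduces, via the factorization, to Levine--Morel smooth base change and to the projection formula for $\sigma_i$, using that properness is stable under derived base change. The \textbf{main obstacle} is step (iii) together with organizing the web of deformation spaces: one must build an honest derived double deformation space, verify its flatness over $\Ab^2$ and the structure of its special strata, and check that the two induced operators on the $\Omega^\LM_*$ of truncations coincide. This is the technical core of the appendix and precisely where \cite{LS} is incomplete.

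\emph{Agreement with Levine--Morel.} Finally, when $f$ is classical and lci, $i$ is a regular embedding, its derived normal bundle is the classical normal bundle, and $\tau_0(D_{X/Y})$ is exactly Levine--Morel's deformation to the normal cone (the cone being the bundle $N_i$ in this case); hence $i^\vir$ and $p^\vir$ coincide term by term with the operators written $i^!$ and $p^*$ in \cite{LM}, and the factorization-independence proved above identifies $f^\vir$ with the Levine--Morel lci pullback. Together with functoriality and the push--pull formula, this completes the proof.
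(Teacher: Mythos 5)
Your proposal follows the same overall strategy as the paper's Appendix C: factor $f$ as a smooth morphism after a quasi-smooth closed immersion, construct the derived deformation to the normal bundle $M(Z/X)$ from a derived blow-up along a quasi-smooth centre, define a specialization morphism using the Levine--Morel pseudo-divisor operator together with the localization sequence, invert the vector-bundle projection to get $i^\vir$, and then verify smooth/proper base change and independence of factorization. The well-definedness point you raise for the specialization map is handled in the same way: Remark \ref{WellDefinedSpecializationRem} observes $s^p_{Z/X}\circ\tau_0(s_{Z/X})_*=0$, which is exactly the ``divisor with trivial normal bundle'' self-intersection argument you describe.

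The one genuine divergence is in how step (iii), functoriality $(j\circ i)^\vir=i^\vir\circ j^\vir$ for composable derived regular embeddings, is established. You propose the Fulton-style construction of a derived double deformation space over $\Ab^2$ together with the normal-bundle exact sequence. The paper (Lemma \ref{PullbackAlongImmersionIsFunctorialLem}) instead proceeds in two moves: first the special case where $j$ is the zero section of a vector bundle $s:X\hookrightarrow E$, using the explicit identification of truncations $\tau_0(M(Z/E))\cong\tau_0(M(Z/X))\times_{\tau_0(X)}\tau_0(E)$ from Theorem \ref{DeformationSpaceAndVectorBundlesThm}; and then a single-parameter homotopy inside $M(X/Y)$, comparing the fibres over $1$ and over $0$ of the composite quasi-smooth immersion $\Ab^1\times Z\hookrightarrow M(X/Y)$ and invoking $\Ab^1$-invariance of $\Omega^\LM_*$, Lemma \ref{PullbackAlongImmersionDCartesianLem}, and the already-proved vector-bundle case. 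Both routes should succeed, and the trade-off is roughly this: your double-deformation route is self-contained at the level of the composition lemma but requires building and analysing a derived space over $\Ab^2$ (flatness, strata, compatibility of truncations); the paper's route keeps the composition lemma short but offloads the technical weight onto the detailed study of truncations of derived blow-ups and deformation spaces in Appendix \ref{DerivedBlowUpSect}, which it must carry out anyway to make $\sigma_{Z/X}$ and the classical comparison work.
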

\noindent Note that the main result of \cite{LS} (Theorem 5.12), which establishes the equivalence between $\Omega^\LM_*$ and $d\Omega^k_*$ over a field $k$ of characteristic 0, as well as the main results of \cite{An1} depend on this result, offering ample motivation for giving a detailed proof. The proof is based on detailed study of truncations of derived blow ups and the properties of refined lci pullbacks of Levine--Morel.

In Section \ref{OpKThySect} we prove a $K$-theoretic analogue of Theorem \ref{SecondComparisonThm}.
\begin{thm}\label{ThirdComparisonThm}
Let $k$ be a ground field. Then the Grothendieck transformation 
$$\alpha: \op K^\der \to d \op K^0$$
constructed by Vezzosi in \cite{AGP} Proposition B.7 is an isomorphism.
\end{thm}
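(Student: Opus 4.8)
The plan is to follow the blueprint of Theorem~\ref{SecondComparisonThm}, but the $K$-theoretic setting turns out to be substantially simpler because $K_0$-theory does not see derived (or infinitesimal) structure. Write $G_0$ for the classical homology theory $X \mapsto K_0(\mathrm{Coh}(X))$, so that $d\op K^0$ is the operational bivariant extension, in the sense of González--Karu, of $X \mapsto G_0(\tau_0 X)$, while the associated homology theory of $\op K^\der$ is the derived $K$-homology $K^\der_*(Z) = K_0(\mathrm{Coh}(Z))$ of a quasi-projective derived scheme $Z$. The first step is to record the canonical isomorphism of homology theories $K^\der_*(Z) \cong G_0(\tau_0 Z)$: the closed immersion $\tau_0 Z \hookrightarrow Z$ induces an isomorphism on $K_0$ of coherent sheaves, since the Postnikov tower of $\Oc_Z$ exhibits every coherent $\Oc_Z$-module as an iterated extension of coherent $\pi_0(\Oc_Z)$-modules. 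One then checks that this isomorphism is compatible with proper pushforward, with flat and quasi-smooth pullback, and with fundamental classes; this is the $K$-theoretic analogue of the Lowrey--Schürg isomorphism $\iota_h$, and it is what makes $\op K^\der$ a bivariant extension of $X \mapsto G_0(\tau_0 X)$ and hence produces the Grothendieck transformation $\alpha$ in the first place.

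Given this, I would construct an explicit inverse $\beta : d\op K^0 \to \op K^\der$. An element of $d\op K^0(X \to Y) = \op K^0(\tau_0 X \to \tau_0 Y)$ is a family of operators indexed by morphisms $g : Y' \to \tau_0 Y$ from classical schemes; a map $Y' \to Y$ from a classical scheme is the same datum, and the identity $\tau_0(X \times_Y Y') = \tau_0 X \times_{\tau_0 Y} Y'$, together with the isomorphism above, turns such a family into a family of operators $K^\der_*(Y') \to K^\der_*(X \times_Y Y')$ for classical $Y'$. To extend the family to all derived test schemes $Y' \to Y$, use that the closed immersion $\tau_0 Y' \hookrightarrow Y'$ is proper and induces an isomorphism on $K^\der_*$, so that compatibility with proper pushforward forces the value of any operational class at $Y'$ to be transported from its value at $\tau_0 Y'$; one defines $\beta$ by this transport and checks that the result is again a bona fide operational class. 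The same observation — a derived operational $K$-theory class is determined by its restriction to classical test schemes — shows at once that $\beta \circ \alpha = \mathrm{id}$, while $\alpha \circ \beta = \mathrm{id}$ is immediate since $\beta$ agrees with the given family on classical test schemes.

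Alternatively, and more in the spirit of the rest of the paper, one packages this as a pair of universal properties: $d\op K^0$ is terminal among nice bivariant extensions of $X \mapsto G_0(\tau_0 X)$, and $\op K^\der$ enjoys the analogous terminality among nice bivariant extensions of $K^\der_*$ (the $K$-theoretic form of Theorem~\ref{OpDCobUnivProp}). Since the two homology theories are canonically identified, the unique orientation-preserving Grothendieck transformations in both directions compose to the identities, and the forward one must be Vezzosi's $\alpha$ by uniqueness.

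The main obstacle is the bookkeeping around the isomorphism $K^\der_*(Z) \cong G_0(\tau_0 Z)$: one has to verify that it is natural for \emph{all} the operations entering the operational formalism — in particular for quasi-smooth (derived lci) pullbacks and for the refined Gysin maps used to cut out the subgroups of bivariant classes — and that the resulting map $\alpha$ coincides with the one of \cite{AGP}, Proposition~B.7. As in Theorem~\ref{SecondComparisonThm}, a commutativity statement for operational $K$-theory (the analogue of Theorem~\ref{OpCobIsCommThm}) is needed to make the operational constructions behave well enough for these comparisons; here it can be proved in parallel with, or deduced from, the commutativity of $\op \Omega^*_\LM$.
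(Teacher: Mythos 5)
Your overall plan — construct an explicit inverse $\beta$ to Vezzosi's $\alpha$ by transporting a classical operational class $c$ to $\tilde c_g := i_{X'*} \circ c_{\tau_0(g)} \circ i_{Y'*}^{-1}$ using the truncation isomorphisms — is exactly what the paper does (see Lemma \ref{ImOfAlphaLem}). But you hide the entire mathematical content of the theorem inside the phrase ``one defines $\beta$ by this transport and checks that the result is again a bona fide operational class.'' That check is not bookkeeping: verifying $(dB_1)$ and $(dB_2)$ for $\tilde c$ is indeed formal, but verifying $(dB_3)$ is not, because it asks $\tilde c$ to commute with Gysin pullbacks $h^!$ along \emph{derived} regular embeddings $h$, which need not truncate to classical regular embeddings and hence are not among the operations that the classical class $c$ is assumed to respect via $(B_1)$--$(B_3)$. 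Equivalently, the two operational theories are \emph{not} obviously terminal in the same sense, since their defining collections of compatibility conditions differ in precisely this respect; so your ``pair of universal properties'' alternative doesn't actually sidestep the issue.

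The paper's resolution is the point you omit: Lemmas \ref{FormulaForQSmPullbackLem} and \ref{FormulaForIotaLem} use the derived deformation to the normal bundle to express the $G$-theory virtual pullback $h^\vir$ along a quasi-smooth closed immersion as a composite
\[
h^\vir = (\tau_0(p)^!)^{-1} \circ \iota^{0!}_{\tau_0(\pi)} \circ (\tau_0(u)^!)^{-1} \circ \tau_0(\mathrm{pr}_2)^!,
\]
built entirely out of flat pullbacks and a single refined Gysin pullback along the classical regular embedding $\{0\} \hookrightarrow \Ab^1$. Only after this reduction can one invoke $(B_2)$ and $(B_3)$ to conclude that every $c \in \op K^0(\tau_0(X) \to \tau_0(Y))$ is automatically compatible with virtual pullbacks along derived regular embeddings, which is precisely the criterion of Lemma \ref{ImOfAlphaLem} for $c$ to lie in the image of $\alpha$. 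Finally, the commutativity analogue of Theorem \ref{OpCobIsCommThm} that you flag as an obstacle is not used in the paper's proof of this $K$-theoretic statement; that ingredient is only needed in the cobordism comparison, where the universal property of Theorem \ref{OpDCobUnivProp} requires a commutative source.
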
 
\noindent This strengthens the result of Vezzosi, which states that $\alpha$ is injective. Above, $d \op K^0$ is the operational $K$-theory of Anderson--Payne extended to derived schemes in a trivial fashion. Even though this is not directly related to the other main results of the article, it is clearly of similar spirit and the proofs use similar techniques.

\subsection{Structure of the article}

We start with Section \ref{BivThySect}, which is a background section on general properties of bivariant theories. Most of the results are old, but we also introduce some useful notions, which will help with simplifying the proofs in the main text. Especially, the results of Section \ref{BivChernSubSect} on Chern classes in bivariant theories have not appeared in this form elsewhere. After the necessary background is laid down, Sections \ref{FirstComparisonSect}, \ref{SecondComparisonSect} and \ref{OpKThySect} follow, with the main purpose of proving Theorems \ref{FirstComparisonThm}, \ref{SecondComparisonThm} and \ref{ThirdComparisonThm} respectively. There are three appendices in the end. Virtual pullbacks are constructed in Appendix \ref{VirtualPullbackSect}, and the construction uses the detailed study of truncations of derived blow ups done in Appendix \ref{DerivedBlowUpSect}. This in turn requires the naturality of the Hurewicz morphism proved in Appendix \ref{HurewiczSect}.
 
\subsection*{Conventions}

We will freely use the language of $\infty$-categories and derived algebraic geometry whenever necessary. In order to lighten the exposition, if we are given an object $X$ in an $\infty$-category $\Cc$ with a distinguished final object $pt$, we will denote by $\pi_X$ the essentially unique map $X \to pt$. A \emph{Noetherian} derived scheme has Noetherian truncation and all the homotopy sheaves of the structure sheaf are coherent. A quasi-projective derived scheme over a Noetherian ring of finite Krull dimension is implicitly assumed to be Noetherian. A \emph{quasi-smooth} morphism is a finite type morphism $X \to Y$ with the relative cotangent complex $\Lb_{X/Y}$ perfect and of Tor-amplitude 1. A quasi-smooth morphism with $\Lb_{X/Y}$ of Tor-amplitude 0 is \emph{smooth}, i.e., it is flat and has smooth homotopy fibres. Quasi-smooth closed immersions are often also called \emph{derived regular embeddings}. In order to avoid confusion, we will call the $\infty$-categorical fibre products of derived schemes \emph{derived fibre products}, and reserve the term \emph{fibre product} in the context of algebraic geometry to the classical fibre products of classical schemes. Derived fibre product will be denoted by $X \times_Z^R Y$, since it can be thought as the right derived functor of the classical fibre product. We will denote by $\Lb$ the \emph{Lazard ring}. Recall that it has two gradings: the \emph{cohomological} (or non-positive) grading $\Lb^*$ and the \emph{homological} (or non-negative) grading $\Lb_*$, which satisfy $\Lb^{-*} = \Lb_{*}$.

\subsection*{Acknowledgements}

The author would like to thank Adeel Khan for discussions, as well as the University of Regensburg for hospitality during the author's visit there, during which the writing of this article was initiated. He would also like to thank Fabien Morel, Stefan Schreieder, Olivier Haution and Marc Levine for asking questions about how the various derived cobordism groups relate to the classical bordism groups of Levine--Morel. He would also like to thank Gabriele Vezzosi for discussions about operational $K$-theory. The anonymous referees provided valuable feedback that helped to substantially revise the article. The author was supported by the Vilho, Yrj\"o and Kalle V\"ais\"al\"a Foundation of the Finnish Academy of Science and Letters. 

\section{Background on bivariant theories}\label{BivThySect}

The purpose of this section is to recall the necessary background on the bivariant formalism of Fulton and MacPherson, the construction of the universal bivariant theory of Yokura from \cite{Yo1}, and to study properties of Chern classes in certain types of bivariant theories. We will work in the context of $\infty$-categories, so our setup is more general than in the classical literature (e.g. \cite{FM, Yo1}). However, since all formal aspects of bivariant formalism generalize without any real effort, we sometimes refer directly to classical literature even when we are working with $\infty$-categories. We are going to use $\infty$-categorical language throughout this section. For example, when we talk about fibre products or Cartesian diagrams, we always mean the $\infty$-categorical notions, which closely related to the more classical notion of homotopy fibre product and homotopy Cartesian squares in model categories. Note that in Section \ref{BivChernSubSect} we will call fibre products of derived schemes derived fibre products, following one of the conventions of this article.

\subsection{Bivariant theories}\label{BivThySubSect}

In this section we recall the definition and basic properties of bivariant theories. Most of the content is old, but we also introduce some new terminology, such as rings of coefficients for bivariant theories.

\begin{defn}\label{FunctorialityDef}
A \emph{functoriality} is a tuple $\Fs = (\Cc, \Cs, \Is, \Ss)$, where 
\begin{enumerate}
\item $\Cc$ is an $\infty$-category with a (distinguished) final object $pt$ and all fibre products;

\item $\Cs$ is a class of morphisms in $\Cc$ called \emph{confined morphisms} which contains all isomorphisms and is closed under compositions, pullbacks and equivalences;

\item $\Is$ is a class of Cartesian squares in $\Cc$ called \emph{independent squares} which contains all squares of form
\begin{center}
\begin{tikzcd}
X \arrow[]{r}{f} \arrow[]{d}{\mathrm{Id}_X} & Y \arrow[]{d}{\mathrm{Id}_Y} \\
X \arrow[]{r}{f} & Y
\end{tikzcd}
\ \ and \ \ \
\begin{tikzcd}
X \arrow[]{r}{\mathrm{Id}_X} \arrow[]{d}{f} & X \arrow[]{d}{f} \\
Y \arrow[]{r}{\mathrm{Id}_Y} & Y,
\end{tikzcd}
\end{center}
and is closed under vertical and horizontal compositions in the obvious sense as well as equivalences of Cartesian squares

\item $\Ss$ is a class of morphisms in $\Cc$ called \emph{specialized morphisms} which contains all isomorphisms and is closed under compositions and equivalences.
\end{enumerate}
\end{defn}

\begin{defn}
A \emph{bivariant theory $\Bb^*$ (with functoriality $\Fs$)} assigns a graded Abelian group $\Bb^*(X \to Y)$ to all homotopy classes of morphisms in $\Cc$. Moreover
\begin{enumerate}
\item if $X \to Y$ factors as $X \xrightarrow{f} X' \to Y$ with $f$ confined, there is a \emph{bivariant pushforward} 
$$f_*: \Bb^*(X \to Y) \to \Bb^*(X' \to Y);$$
\item if the Cartesian square
$$
\begin{tikzcd}
X' \arrow[]{d} \arrow[]{r} & Y' \arrow[]{d}{g} \\
X \arrow[]{r} & Y
\end{tikzcd}
$$
is independent, then there is a \emph{bivariant (homotopy) pullback} 
$$g^*: \Bb^*(X \to Y) \to \Bb^*(X' \to Y');$$
\item given a composition $X \to Y \to Z$, there is a bilinear \emph{bivariant product}
$$\bullet: \Bb^*(X \to Y) \times \Bb^*(Y \to Z) \to \Bb^*(X \to Z).$$
\end{enumerate}
Moreover, this structure is required to satisfy the following axioms, which we recall since we will refer to them later:
\begin{enumerate}
\item[($A_1$)] \emph{associativity of $\bullet$}: given morphisms $X \to Y \to Z \to W$ and bivariant elements $\alpha \in \Bb^*(X \to Y)$, $\beta \in \Bb^*(Y \to Z)$ and $\gamma \in \Bb^*(Z \to W)$, then
$$(\alpha \bullet \beta) \bullet \gamma = \alpha \bullet (\beta \bullet \gamma) \in \Bb^*(X \to W);$$

\item[($A_2$)] \emph{covariant functoriality of bivariant pushforward}: if we have $\alpha \in \Bb^*(X \to Y)$, and $X \to Y$ factors through the composition  
$$X \xrightarrow{f} X' \xrightarrow{g} X''$$
with $f$ and $g$ confined, then
$$g_*(f_*(\alpha)) = (g \circ f)_*(\alpha) \in \Bb^*(X'' \to Y);$$

\item[($A_3$)] \emph{contravariant functoriality of bivariant pullback}: if we have $\alpha \in \Bb^*(X \to Y)$ and the Cartesian squares
$$
\begin{tikzcd}
X' \arrow[]{d} \arrow[]{r} & Y' \arrow[]{d}{g} \\
X \arrow[]{r} & Y
\end{tikzcd}
\text{ \ and \ }
\begin{tikzcd}
X'' \arrow[]{d} \arrow[]{r} & Y'' \arrow[]{d}{f} \\
X' \arrow[]{r} & Y'
\end{tikzcd}
$$
are independent, then
$$f^*(g^*(\alpha)) = (g \circ f)^*(\alpha) \in \Bb^*(X'' \to Y'');$$

\item[($A_{12}$)] \emph{product and pushforward commute}: given $\alpha \in \Bb^*(X \to Y)$ and $\beta \in \Bb^*(Y \to Z)$, and a confined morphism $f: X \to X'$ through which $X \to Y$ factors, then
$$f_*(\alpha \bullet \beta) = f_*(\alpha) \bullet \beta \in \Bb^*(X \to Y);$$

\item[($A_{13}$)] \emph{bivariant pullback is multiplicative}: given $\alpha \in \Bb^*(X \to Y)$ and $\beta \in \Bb^*(Y \to Z)$ and suppose the small squares in the Cartesian diagram
$$
\begin{tikzcd}
X' \arrow[]{d}{h''} \arrow[]{r}{} & Y' \arrow[]{d}{h'} \arrow[]{r} & Z' \arrow[]{d}{h} \\
X \arrow[]{r}{} & Y \arrow[]{r} & Z
\end{tikzcd}
$$
are independent, then 
$$h^*(\alpha \bullet \beta) = h'^*(\alpha) \bullet h^*(\beta) \in \Bb^*(X' \to Z');$$

\item[($A_{23}$)] \emph{bivariant push-pull formula}: given $\alpha \in \Bb^*(X \to Z)$ and a Cartesian diagram
$$
\begin{tikzcd}
X' \arrow[]{d}{h''} \arrow[]{r}{f'} & Y' \arrow[]{d}{h'} \arrow[]{r} & Z' \arrow[]{d}{h} \\
X \arrow[]{r}{f} & Y \arrow[]{r} & Z
\end{tikzcd}
$$
with $f$ confined and the rightmost small square as well as the large square independent, then
$$h^*(f_*\alpha) = f'_*(h^*(\alpha)) \in \Bb^*(Y' \to Z');$$

\item[($A_{123}$)] \emph{bivariant projection formula}: given an independent square
$$
\begin{tikzcd}
X \arrow[]{d}{g'} \arrow[]{r} & Y \arrow[]{d}{g} \\
X' \arrow[]{r} & Y'
\end{tikzcd}
$$
with $g$ confined, a morphism $Y' \to Z$ and elements $\alpha \in \Bb^*(Y \to Z)$ and $\beta \in \Bb^*(X' \to Y')$, then
$$g'_*(g^*(\beta) \bullet \alpha) = \beta \bullet g_*(\alpha) \in \Bb^*(X' \to Z);$$

\item[($U$)] \emph{existence of units}: for each $X \in \Cc$, there exists $1_X \in \Bb^*(X \to X)$ so that for all $\alpha \in \Bb^*(X \to Y)$ we have
$$1_X \bullet \alpha = \alpha \in \Bb^*(X \to Y)$$
and for all $\beta \in \Bb^*(Z \to X)$, we have
$$\beta \bullet 1_X = \beta \in \Bb^*(Z \to X).$$

\end{enumerate}
\end{defn}

\begin{defn}
If $\Bb^*$ is a bivariant theory with functoriality $\Fs$, then an \emph{orientation} $\theta$ on $\Bb^*$ is a choice of elements 
$$\theta(f) \in \Bb^*(X \xrightarrow{f} Y)$$
for all equivalence classes of specialized morphisms $f$, so that $\theta(\mathrm{Id}) = 1 $ and $\theta(f \circ g) = \theta(g) \bullet \theta(f)$. If the specialized morphisms are stable under independent pullbacks, we say that $\theta$ is \emph{stable under pullbacks} if bivariant pullbacks preserve orientations. An \emph{oriented bivariant theory} is a pair $(\Bb^*, \theta)$, where $\Bb^*$ is a bivariant theory and $\theta$ is an orientation on $\Bb^*$. We will usually omit $\theta$ from the notation. An oriented bivariant theory $\Bb^*$ whose orientation is stable under pullbacks is called \emph{stably oriented}.
\end{defn}

\begin{war}
Our definition of an oriented bivariant theory does not agree with the one used by Yokura. However, since Yokura's oriented theories are bivariant theories together with Chern class like operators for objects in a category $\Lc$ fibered over $\Cc$, we prefer to call them \emph{$\Lc$-oriented bivariant theories}.
\end{war}

\begin{defn}
Given a bivariant theory $\Bb^*$, one can define
\begin{enumerate}
\item the \emph{associated homology groups} $\Bb_*(X) := \Bb^{-*}(X \to pt)$ which are covariantly functorial for confined morphisms;
\item the \emph{associated cohomology rings} $\Bb^*(X) := \Bb^*(X \xrightarrow{\mathrm{Id}} X)$ which are contravariantly functorial for all morphisms, and whose ring structure is given by the bivariant product.
\end{enumerate}
Given an oriented bivariant theory $\Bb^*$, we can define
\begin{enumerate}
\item \emph{Gysin pullback morphisms} $f^!: \Bb_*(Y) \to \Bb_*(X)$ for $f: X \to Y$ specialized by 
$$\alpha \mapsto \theta(f) \bullet \alpha;$$
\item \emph{Gysin pushforward morphisms} $f_!: \Bb^*(X) \to \Bb^*(Y)$ for $f: X \to Y$ specialized and confined by
$$\alpha \mapsto f_*\bigl(\alpha \bullet \theta(f)\bigr).$$
\end{enumerate} 
It follows from the multiplicative properties of orientations that the Gysin pushforwards and pullbacks are functorial in the obvious sense.
\end{defn}

\begin{defn}
A \emph{Grothendieck transformation} $\eta: \Bb^*_1 \to \Bb^*_2$ consists of group homomorphisms $\eta_{X \to Y}: \Bb^*_1(X \to Y) \to \Bb^*_2(X \to Y)$ that commute in the obvious sense with bivariant pushforwards, pullbacks and products. If the theories $\Bb^*_1$ and $\Bb^*_2$ are oriented with orientations $\theta^{\Bb^*_1}$ and $\theta^{\Bb^*_2}$ respectively, then $\eta$ is said to be \emph{orientation preserving} if 
$$\eta \bigl( \theta^{\Bb^*_1}(f) \bigr) = \theta^{\Bb^*_2}(f)$$
for all equivalence classes of specialized morphisms $f$. 
\end{defn}

\subsubsection*{Rings of coefficients}

\begin{defn}\label{BivariantCoeffRingDef}
A \emph{ring of coefficients} for a bivariant theory $\Bb^*$ is a ring $R$ together with a map of rings $R \to \Bb^*(pt)$. We also call such a bivariant theory $\Bb^*$ \emph{$R$-linear}.
\end{defn}

The terminology is explained by the following result.

\begin{prop}\label{RLinearTheoryProp}
Let $\Bb^*$ be an $R$-linear bivariant theory. Then 
\begin{enumerate}
\item $\Bb^*(X \to Y)$ is an $R \dash R$-bimodule with the actions defined as
$$r \alpha := \pi_X^*(r) \bullet \alpha$$
and  
$$\alpha r := \alpha \bullet \pi^*_{Y}(r);$$

\item bivariant pullbacks and pushforwards are maps of $R \dash R$-bimodules;

\item bivariant products are $R$-balanced and they give rise to maps of $R \dash R$-bimodules
$$\Bb^*(X \to Y) \otimes_R \Bb^*(Y \to Z) \to \Bb^*(X \to Z).$$
\end{enumerate}
Moreover, if $\Bb^*$ is oriented, then
\begin{enumerate}
\item[4.] the Gysin-morphisms are maps of $R \dash R$-bimodules.
\end{enumerate}
\end{prop}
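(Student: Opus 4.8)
The plan is a direct verification against the bivariant axioms; no new idea is needed beyond carefully recording which axiom licenses each manipulation.

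\emph{Parts 1 and 3.} I would begin by noting that bivariant pullback induces ring homomorphisms on the associated cohomology rings: since the Cartesian squares computing $\pi_X^*$ are of the automatic independent type of Definition~\ref{FunctorialityDef}(3), the compatibility of $(A_{13})$ with units shows that $\pi_X^*\colon \Bb^*(pt)\to \Bb^*(X)$ is a ring map, hence so is the composite $R\to\Bb^*(pt)\xrightarrow{\pi_X^*}\Bb^*(X)$. Granting this, all the bimodule identities for $r\alpha:=\pi_X^*(r)\bullet\alpha$ and $\alpha r:=\alpha\bullet\pi_Y^*(r)$ reduce to: associativity of each one-sided action, which is $(A_1)$ together with $\pi_X^*$ (resp. $\pi_Y^*$) being a ring map; unitality, which is $(U)$ plus $\pi_X^*(1)=1_X$; additivity, which is bilinearity of $\bullet$ plus additivity of $\pi_X^*,\pi_Y^*$; and $(r\alpha)s=r(\alpha s)$, which is $(A_1)$ again. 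Exactly the same instances of $(A_1)$ give Part~3: $R$-balancedness $(\alpha r)\bullet\beta=\alpha\bullet(r\beta)$ and the identities $r(\alpha\bullet\beta)=(r\alpha)\bullet\beta$, $(\alpha\bullet\beta)s=\alpha\bullet(\beta s)$ are all associativities, so $\bullet$ descends to the claimed $R\dash R$-bimodule map out of $\Bb^*(X\to Y)\otimes_R\Bb^*(Y\to Z)$.

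\emph{Part 2.} For a confined pushforward $f_*\colon \Bb^*(X\to Y)\to\Bb^*(X'\to Y)$ along $f\colon X\to X'$: the identity $f_*(\alpha r)=f_*(\alpha)r$ is immediate from $(A_{12})$, while $f_*(r\alpha)=rf_*(\alpha)$ follows from the projection formula $(A_{123})$ applied to the automatic independent square with both vertical edges equal to $f$, once one rewrites $\pi_X^*(r)=f^*\bigl(\pi_{X'}^*(r)\bigr)$ using contravariant functoriality $(A_3)$. For a bivariant pullback $g^*$ along an independent square, both $g^*(r\alpha)=rg^*(\alpha)$ and $g^*(\alpha r)=g^*(\alpha)r$ follow from multiplicativity of bivariant pullback $(A_{13})$: one writes the relevant three-column Cartesian diagram so that one of the two small squares is of automatic type (and hence independent) and the other is the given one, and then uses $(A_3)$ to identify the pulled-back coefficient class with $\pi^*(r)$ on the smaller scheme. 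The Gysin maps of Part~4 are then obtained by combining these with the definitions $f^!(\alpha)=\theta(f)\bullet\alpha$ and $f_!(\beta)=f_*\bigl(\beta\bullet\theta(f)\bigr)$: three of the four linearity checks (the right action for $f^!$, and the left action for $f_!$) fall out of $(A_1)$, $(A_{12})$ and Part~2 with no further input.

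\emph{The only genuine point.} The remaining two checks — the left action for $f^!$ and the right action for $f_!$ — come down to the single commutation $\pi_X^*(r)\bullet\theta(f)=\theta(f)\bullet\pi_Y^*(r)$ in $\Bb^*(X\to Y)$, i.e. to the statement that left and right multiplication by the image of a coefficient agree on an orientation class. This is where I expect the only real subtlety to sit; it is not a formal consequence of the bivariant axioms alone, but it holds once the associated cohomology rings $\Bb^*(X)$ are commutative (as they are for all the bivariant cobordism and $K$-theory theories considered in this paper), since then the $R\dash R$-bimodule structures on the cohomology and homology groups are symmetric and the left-action identities already proved upgrade automatically to the right-action ones. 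I would therefore state this commutation explicitly and verify it (or record the standing commutativity hypothesis) before concluding Part~4.
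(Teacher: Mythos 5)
Your treatment of Parts 1–3 is essentially the same direct axiom-by-axiom verification the paper gives, and it is correct; the only cosmetic variation is packaging "$\pi_X^*$ is a ring map" explicitly instead of chasing through $(A_1)$, $(A_{12})$, $(A_{13})$, $(A_{123})$ pointwise as the paper does.

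Your analysis of Part 4 is sharper than the paper's own one-line dismissal, and you have put your finger on a genuine subtlety. Writing things out: the right-action check for $f^!$ and the left-action check for $f_!$ really do follow from $(A_1)$, $(A_{12})$ and Part~2 as you say, while the left-action check for $f^!$ and the right-action check for $f_!$ both reduce to the single identity $\pi_X^*(r)\bullet\theta(f)=\theta(f)\bullet\pi_Y^*(r)$ in $\Bb^*(X\to Y)$. This is not a consequence of the Fulton--MacPherson axioms $(A_1)$--$(A_{123})$ and $(U)$ by themselves, so the paper's claim that Part~4 is "an immediate consequence of the first three" glosses over exactly this point. (A small slip in your wording: you write "three of the four linearity checks" but then list only two, and then say "the remaining two"; it should read "two of the four".)

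Where I would push back is on your proposed remedy. Commutativity of the associated cohomology rings $\Bb^*(X)$ is not sufficient: the class $\theta(f)$ lives in the mixed group $\Bb^*(X\to Y)$, not in $\Bb^*(X)$ or $\Bb^*(Y)$, so commutativity of those rings gives you nothing directly about $\pi_X^*(r)\bullet\theta(f)$ versus $\theta(f)\bullet\pi_Y^*(r)$; nor does it make the bimodule structure on homology $\Bb^*(X\to pt)$ symmetric, since the left action there uses $\pi_X^*(r)\in\Bb^*(X)$ while the right action uses $r\in\Bb^*(pt)$. What you actually need is the full bivariant commutativity of Definition~\ref{CommutativeBivariantTheoryDef}: taking $Y'=Y$ and $g=\mathrm{Id}_Y$ in that definition yields $f^*(\beta)\bullet\alpha=\alpha\bullet\beta$ for $\alpha\in\Bb^*(X\to Y)$ and $\beta\in\Bb^*(Y)$, and plugging in $\alpha=\theta(f)$, $\beta=\pi_Y^*(r)$ together with $f^*\pi_Y^*=\pi_X^*$ gives exactly the commutation you need. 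This is also the content of the paper's own Lemma following Proposition~\ref{IdealsInRLinearTheoriesProp}, which records that in a commutative bivariant theory the $R\dash R$-bimodule structure is symmetric, making Part~4 a triviality from Parts~2 and~3. Since every bivariant theory actually used in the paper is commutative in the sense of Definition~\ref{CommutativeBivariantTheoryDef}, the gap is harmless in practice, but the extra hypothesis should indeed be recorded as you suggest — just stated as bivariant commutativity rather than commutativity of the cohomology rings.
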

\begin{proof}
The fourth claim is an immediate consequence of the first three, so we will only prove them.
\begin{enumerate}
\item Since the bivariant product $\bullet$ is $\Zb$-bilinear, it follows that $\Bb^*(X \to Y)$ is both a left and a right $R$-module. The equality
$$r(\alpha s) = (r \alpha) s$$
follows from the associativity of $\bullet$.

\item Suppose that $X \to Y$ factors through a confined morphism $f: X \to X'$, and let $\alpha \in \Bb^*(X \to Y)$. Then
\begin{align*}
f_*(s \alpha) &= f_* (\pi_X^*(s) \bullet \alpha) \\
&= f_*(f^*(\pi_{X'}^*(s)) \bullet \alpha) \\
&= \pi_{X'}^*(s) \bullet f_*(\alpha) & (\text{bivariant axiom $A_{123}$}) \\
&= s f_*(\alpha)
\end{align*} 
and 
\begin{align*}
f_*(\alpha s) &= f_*(\alpha \bullet \pi_Y^*(s)) \\
&= f_*(\alpha) \bullet \pi^*_Y(s) & (\text{bivariant axiom $A_{12}$}) \\
&= f_*(\alpha)s,
\end{align*}
so bivariant pushforwards are maps of $R \dash R$-bimodules.

Consider then a morphism $g: Y' \to Y$ and form the Cartesian square
$$
\begin{tikzcd}
X' \arrow[]{r} \arrow[]{d}{g'} & Y' \arrow[]{d}{g} \\
X \arrow[]{r} & Y.
\end{tikzcd}
$$ 
Then, given $\alpha \in \Bb^*(X \to Y)$, we compute that 
\begin{align*}
g^*(r \alpha) &= g^*(\pi_X^*(r) \bullet \alpha) \\
&= g'^*(\pi_X^*(r)) \bullet g^*(\alpha) & (\text{bivariant axiom $A_{13}$}) \\
&= \pi^*_{X'}(r) \bullet g^*(\alpha) \\
&= r g^*(\alpha)
\end{align*}
and similarly one shows that $g^*(\alpha r) = g^*(\alpha) r$, so bivariant pullbacks are maps of $R \dash R$-bimodules as well.

\item Let us have $\alpha \in \Bb^*(X \to Y)$ and $\beta \in \Bb^*(Y \to Z)$. Then
\begin{align*}
(\alpha r) \bullet \beta &= (\alpha \bullet \pi^*_Y(r)) \bullet \beta \\
&= \alpha \bullet (\pi^*_Y(r) \bullet \beta) & (\text{associativity of $\bullet$}) \\
&= \alpha \bullet (r \beta),
\end{align*}
i.e., $\bullet$ is $R$-balanced. Moreover, the induced morphism
$$\Bb^*(X \to Y) \otimes_R \Bb^*(Y \to Z) \to \Bb^*(X \to Z)$$
is $R \dash R$-linear since
\begin{align*}
(r \alpha) \otimes \beta &\mapsto (r \alpha) \bullet \beta \\
&= (\pi^*_X(r) \bullet \alpha) \bullet \beta \\
&= \pi^*_X(r) \bullet (\alpha \bullet \beta) & (\text{associativity of $\bullet$}) \\
&= r(\alpha \bullet \beta)
\end{align*}
and similarly one shows that $\alpha \bullet (\beta r) \mapsto (\alpha \bullet \beta) r$. \qedhere
\end{enumerate}
\end{proof}

Another useful fact about $R$-linear bivariant theories is that their Grothendieck transformations are often $R$-linear.

\begin{defn}\label{RLinearGTransDef}
Let $\eta: \Bb_1^* \to \Bb_2^*$ be a Grothendieck transformation of $R$-linear bivariant theories. Then we say that $\eta$ is \emph{$R$-linear} if the induced morphisms
$$\eta: \Bb_1^*(X \to Y) \to \Bb_2^*(X \to Y)$$
are maps of $R \dash R$-bimodules.
\end{defn}

\begin{prop}\label{RLinearGTransProp}
Let $\eta: \Bb_1^* \to \Bb_2^*$ be a Grothendieck transformation of $R$-linear bivariant theories. Then $\eta$ is $R$-linear if and only if the induced morphism 
$$\eta: \Bb_1^*(pt) \to \Bb_2^*(pt)$$
is a map of $R$-algebras.
\end{prop}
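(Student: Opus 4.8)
The plan is to reduce everything, via the explicit description of the $R\dash R$-bimodule structures in Proposition~\ref{RLinearTheoryProp}, to the single identity $\eta(\rho_1(r))=\rho_2(r)$, where $\rho_i\colon R\to\Bb_i^*(pt)$ denote the structure maps. Recall that the left and right actions on $\beta\in\Bb_i^*(X\to Y)$ are $r\beta=\pi_X^*(\rho_i(r))\bullet\beta$ and $\beta r=\beta\bullet\pi_Y^*(\rho_i(r))$. Since a Grothendieck transformation commutes with bivariant pullbacks and products, I would first record that for all $r\in R$ and $\alpha\in\Bb_1^*(X\to Y)$,
\begin{equation*}
\eta(r\alpha)=\pi_X^*\bigl(\eta(\rho_1(r))\bigr)\bullet\eta(\alpha),\qquad\eta(\alpha r)=\eta(\alpha)\bullet\pi_Y^*\bigl(\eta(\rho_1(r))\bigr).
\end{equation*}

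For the \emph{if} direction I would argue as follows: if $\eta\colon\Bb_1^*(pt)\to\Bb_2^*(pt)$ is a map of $R$-algebras then in particular $\eta\circ\rho_1=\rho_2$, and substituting this into the two formulas above yields $\eta(r\alpha)=r\,\eta(\alpha)$ and $\eta(\alpha r)=\eta(\alpha)\,r$ at once, so every $\eta_{X\to Y}$ is a map of $R\dash R$-bimodules.

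For the \emph{only if} direction I would first observe that $\eta|_{pt}$ is automatically a unital ring homomorphism: on $\Bb_i^*(pt)=\Bb_i^*(pt\to pt)$ the ring multiplication is the bivariant product, with which $\eta$ is compatible, and $\eta$ preserves units. It then remains to check $\eta\circ\rho_1=\rho_2$. Since $\pi_{pt}=\Id_{pt}$ and hence $\pi_{pt}^*=\Id$, one has $r\cdot 1_{pt}=\rho_i(r)\bullet 1_{pt}=\rho_i(r)$ in $\Bb_i^*(pt)$; applying $R$-linearity of $\eta_{pt\to pt}$ together with $\eta(1_{pt})=1_{pt}$ then gives $\eta(\rho_1(r))=\eta(r\cdot 1_{pt})=r\cdot\eta(1_{pt})=r\cdot 1_{pt}=\rho_2(r)$.

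The argument is essentially pure bookkeeping; the only ingredient that is not a formal consequence of the axioms listed above is that a Grothendieck transformation preserves units, $\eta(1_X)=1_X$, which I take to be part of the meaning of ``commutes in the obvious sense'' (cf. the definition above). The one place demanding slight care — and the step I expect to be the main (mild) obstacle — is in the \emph{only if} direction: one must think to test $R$-linearity of $\eta$ against the unit element $1_{pt}\in\Bb_1^*(pt)$ in order to recover compatibility of $\eta|_{pt}$ with the structure maps $\rho_i$, rather than testing against the less informative elements $\rho_1(r)$.
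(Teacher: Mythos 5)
Your proof is correct and follows essentially the same route as the paper: the ``if'' direction is exactly the paper's computation (unwind the bimodule actions via Proposition~\ref{RLinearTheoryProp}, use that $\eta$ commutes with pullback and product, and substitute $\eta\circ\rho_1=\rho_2$). For the ``only if'' direction the paper simply says it is ``clear,'' whereas you spell it out; your observation that one recovers $\eta\circ\rho_1=\rho_2$ by testing $R$-linearity of $\eta_{pt\to pt}$ against the unit $1_{pt}$ is exactly the right way to make that step precise, and your caveat that unit-preservation must be read into ``commutes in the obvious sense'' is a fair point, though it is a standard part of the Fulton--MacPherson definition.
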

\begin{proof}
Clearly an $R$-linear $\eta$ induces a map of $R$-algebras. To prove the converse, we consider $\alpha \in \Bb_1^*(X \to Y)$ and compute that
\begin{align*}
\eta (r \alpha) &= \eta(\pi^*_X(r) \bullet \alpha) \\
&= \eta(\pi^*_X(r)) \bullet \eta(\alpha) \\
&= \pi^*_X(\eta(r)) \bullet \eta(\alpha) \\
&= \pi^*_X(r) \bullet \eta(\alpha)  & (\text{$\eta$ induces a map of $R$-algebras}) \\
&= r \eta(\alpha).
\end{align*} 
One proves similarly that $\eta(\alpha r) = \eta(\alpha)r$, so we have proven the claim.
\end{proof}

\subsubsection*{Bivariant ideals}

The following terminology will be convenient when constructing interesting bivariant theories by imposing relations to ``free bivariant theories''.

\begin{defn}
Let $\Bb^*$ be a bivariant theory. A \emph{bivariant subset} $\Sc \subset \Bb^*$ is an assignment of a subset
$$\Sc(X \to Y) \subset \Bb^*(X \to Y)$$
for each equivalence class of morphisms in $\Cc$. A bivariant subset $\Ic$ is a \emph{bivariant ideal} if $\Ic(X \to Y)$ are subgroups stable under bivariant pullbacks and pushforwards, and if they satisfy the ideal conditions
$$\Bb^*(X \to Y) \times \Ic(Y \to Z) \xrightarrow{\bullet} \Ic(X \to Z)$$
and
$$\Ic(X \to Y) \times \Bb^*(Y \to Z) \xrightarrow{\bullet} \Ic(X \to Z)$$
for all $X \to Y \to Z$. Given a bivariant subset $\Sc \subset \Bb^*$, we will denote by $\langle \Sc \rangle_{\Bb^*}$ the smallest bivariant ideal of $\Bb^*$ containing $\Sc$, or in other words the bivariant ideal \emph{generated by} $\Sc$. If the bivariant theory $\Bb^*$ is clear from the context, we will often drop the subscript from the notation, denoting the generated ideal simply by $\langle \Sc \rangle$.
\end{defn}

We make the trivial observation that bivariant ideals are automatically compatible with any coefficient ring in the following sense.

\begin{prop}\label{IdealsInRLinearTheoriesProp}
Let $\Bb^*$ be an $R$-linear bivariant theory, and let $\Ic \subset \Bb^*$ be a bivariant ideal. Then 
$$\Ic(X \to Y) \subset \Bb^*(X \to Y)$$
are $R \dash R$-submodules. \qed
\end{prop}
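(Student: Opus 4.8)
The plan is to unwind the definitions: each $\Ic(X \to Y)$ is already a subgroup by hypothesis, so only stability under the left and right $R$-actions of Proposition \ref{RLinearTheoryProp} needs checking, and this is immediate from the two ideal conditions.

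Concretely, fix $\alpha \in \Ic(X \to Y)$ and $r \in R$. The left action is $r\alpha = \pi_X^*(r) \bullet \alpha$, where $\pi_X^*(r) \in \Bb^*(X \to X)$; applying the ideal condition
$$\Bb^*(X \to Y) \times \Ic(Y \to Z) \xrightarrow{\bullet} \Ic(X \to Z)$$
to the composition $X \xrightarrow{\Id_X} X \to Y$ (i.e.\ with the roles of the three objects played by $X, X, Y$) shows $r\alpha \in \Ic(X \to Y)$. Symmetrically, the right action is $\alpha r = \alpha \bullet \pi_Y^*(r)$ with $\pi_Y^*(r) \in \Bb^*(Y \to Y)$, and the ideal condition
$$\Ic(X \to Y) \times \Bb^*(Y \to Z) \xrightarrow{\bullet} \Ic(X \to Z)$$
applied to $X \to Y \xrightarrow{\Id_Y} Y$ gives $\alpha r \in \Ic(X \to Y)$. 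Hence $\Ic(X \to Y)$ is closed under both actions and is therefore an $R \dash R$-submodule of $\Bb^*(X \to Y)$.

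There is essentially no obstacle here; the only point requiring the slightest care is matching the three objects appearing in the stated ideal conditions with the degenerate compositions $X \xrightarrow{\Id_X} X \to Y$ and $X \to Y \xrightarrow{\Id_Y} Y$, which is why the statement is flagged as immediate.
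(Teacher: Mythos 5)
Your proof is correct, and since the paper marks this proposition with \qed as a trivial observation and gives no explicit argument, your spelled-out version is exactly the intended one: the subgroup condition is given, and closure under the two $R$-actions follows by specializing the two ideal conditions to the degenerate compositions $X \xrightarrow{\Id_X} X \to Y$ and $X \to Y \xrightarrow{\Id_Y} Y$, using $\pi_X^*(r) \in \Bb^*(X \to X)$ and $\pi_Y^*(r) \in \Bb^*(Y \to Y)$. No gap.
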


The following result is essentially Lemma 3.8 of \cite{An1}, but see also Proposition 3.7 of \cite{AY}.

\begin{prop}\label{GeneratedBivariantIdealProp}
Let $\Bb^*$ be a bivariant theory having bivariant pullbacks along all Cartesian squares. If $\Sc \subset \Bb^*$ is a bivariant subset, then the elements in $\langle \Sc \rangle_{\Bb^*}$ are integral combinations of elements of form 
$$f_*(\alpha \bullet g^*(s) \bullet \beta)$$
where $\alpha$ and $\beta$ are arbitrary bilinear elements, $g$ is an arbitrary morphism and $f$ is a confined morphism (with the obvious restriction that the above expression should make sense).
\end{prop}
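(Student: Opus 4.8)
The plan is to verify that the set $J$ of integral combinations of elements of the form $f_*(\alpha \bullet g^*(s) \bullet \beta)$, with $s \in \Sc$, $\alpha, \beta$ arbitrary bivariant elements, $g$ an arbitrary morphism, and $f$ confined, is itself a bivariant ideal containing $\Sc$; since $\langle \Sc \rangle_{\Bb^*}$ is by definition the smallest such ideal, this gives $\langle \Sc \rangle_{\Bb^*} \subseteq J$, and the reverse inclusion is automatic because any bivariant ideal containing $\Sc$ must (by the ideal conditions, stability under pushforward, and stability under pullback) contain every generator $f_*(\alpha \bullet g^*(s) \bullet \beta)$. So the real content is to check closure of $J$ under the four operations: bivariant products on either side, bivariant pushforward, and bivariant pullback.

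First I would observe that $\Sc \subseteq J$ by writing $s = (\Id)_*(1 \bullet \Id^*(s) \bullet 1)$ using the units axiom $(U)$ and taking $g = \Id$, $f = \Id$. Closure under left and right multiplication by arbitrary bivariant elements is immediate from the associativity axiom $(A_1)$: multiplying $f_*(\alpha \bullet g^*(s) \bullet \beta)$ on the right by $\gamma$ gives, via the projection-type axiom $(A_{123})$ or simply axiom $(A_{12})$ (product and pushforward commute), $f_*(\alpha \bullet g^*(s) \bullet (\beta \bullet \gamma))$, which is again of the required form; left multiplication is symmetric, using $(A_{123})$ to pull the new factor past $f_*$. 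Closure under pushforward along a confined $h$ follows from the functoriality axiom $(A_2)$: $h_* f_*(\alpha \bullet g^*(s) \bullet \beta) = (h \circ f)_*(\alpha \bullet g^*(s) \bullet \beta)$, and $h \circ f$ is confined since $\Cs$ is closed under composition.

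The main obstacle — and the step requiring the hypothesis that $\Bb^*$ admits bivariant pullbacks along \emph{all} Cartesian squares — is closure under bivariant pullback. Given a generator $f_*(\alpha \bullet g^*(s) \bullet \beta) \in \Bb^*(X \to Z)$ and an independent square with a map $h$ covering $Z$, I would form the full Cartesian diagram obtained by pulling back the whole factorization witnessing the generator, obtaining base-changed maps $f'$ (still confined, since $\Cs$ is closed under pullback), $g'$, and pulled-back classes $h^*\alpha$, and then apply the compatibilities: the push-pull axiom $(A_{23})$ to move $h^*$ past $f_*$, the multiplicativity axiom $(A_{13})$ to distribute $h^*$ across the bivariant products, and finally the contravariant functoriality axiom $(A_3)$ together with $g^*(s)$ pulling back to $(g')^*(s)$ — here it is essential that $s$ is pulled back along the appropriate base change of $g$, and since we only assumed $\Sc$ is a bivariant \emph{subset} (not stable under anything), we genuinely need $g^*(s)$ to reappear as $(g')^*(s)$, which is exactly the content of $(A_3)$ applied to the composite independent square. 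Assembling these, $h^*$ of the generator is again a generator (now over the base-changed morphisms), so $J$ is pullback-stable. The one delicate bookkeeping point is ensuring that all the small squares appearing are independent so that the axioms $(A_{13})$, $(A_{23})$, $(A_3)$ apply — this is where I would invoke that $\Is$ is closed under horizontal and vertical composition, and I would spell out the diagram chase carefully rather than leaving it implicit. This is essentially the argument of Lemma 3.8 of \cite{An1} and Proposition 3.7 of \cite{AY}, adapted to the present $\infty$-categorical setup, where no new difficulty arises.
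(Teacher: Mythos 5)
Your proposal is correct and follows what is surely the intended argument (the paper gives none, only citing \cite{An1} Lemma 3.8 and \cite{AY} Proposition 3.7): show the set $J$ of such integral combinations is a bivariant ideal containing $\Sc$ (so $\langle \Sc\rangle \subseteq J$), while the reverse inclusion is forced on any bivariant ideal containing $\Sc$. You correctly identify which axiom is doing the work at each step: $(A_{12})$ for right products, $(A_{123})$ for left products, $(A_2)$ for pushforwards, and the combination $(A_{23})$, $(A_{13})$, $(A_3)$ for pullbacks — and you correctly single out the pullback step as the one where the hypothesis that all Cartesian squares are independent is essential, and where the freedom in $g$ is used to absorb the extra base change (via $(A_3)$ the pullback of $g^*(s)$ reappears as $(g\circ h_1)^*(s)$, with $h_1$ the relevant pullback of the base change, not the base change $g'$ of $g$ itself — your notation $(g')^*(s)$ reads as the latter, but your surrounding discussion makes clear you mean the former). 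The only minor inaccuracy is the phrase ``left multiplication is symmetric'': it is not formally symmetric, since right multiplication needs only $(A_{12})$ while left multiplication genuinely requires the projection formula $(A_{123})$ and introduces a base change of $f$ — but you do invoke $(A_{123})$ for this side, so the content is right.
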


\subsubsection*{Commutativity}

Let us recall what it means for a bivariant theory to be commutative.

\begin{defn}\label{CommutativeBivariantTheoryDef}
A bivariant theory $\Bb^*$ is called \emph{commutative}, if all independent squares are closed under transposition, and if for all independent squares
$$
\begin{tikzcd}
X' \arrow[]{r} \arrow[]{d} & Y' \arrow[]{d}{g} \\
X \arrow[]{r}{f} & Y
\end{tikzcd}
$$
and all $\alpha \in \Bb^*(X \to Y)$, $\beta \in \Bb^*(Y' \to Y)$ we have
$$f^*(\beta) \bullet \alpha = g^*(\alpha) \bullet \beta \in \Bb^*(X' \to Y).$$
Note that the associated cohomology theory of a commutative bivariant theory takes values in commutative rings.
\end{defn}

For commutative bivariant theories $\Bb^*$, Proposition \ref{RLinearTheoryProp} could be simplified using the following observation.

\begin{lem}
Let $\Bb^*$ be a commutative bivariant theory, and suppose it is linear over $R$. Then, for all $\alpha \in \Bb^*(X \xrightarrow{f} Y)$, we have that
$$r \alpha = \alpha r \in \Bb^*(X \to Y).$$
\end{lem}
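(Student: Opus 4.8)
The plan is to read off the identity $r\alpha = \alpha r$ directly from the commutativity axiom, applied to one of the always-independent squares attached to $f$. Recall that by definition $r\alpha = \pi_X^*(r)\bullet\alpha$ and $\alpha r = \alpha\bullet\pi_Y^*(r)$, both living in $\Bb^*(X\xrightarrow{f}Y)$, so the assertion is the equality
$$\pi_X^*(r)\bullet\alpha = \alpha\bullet\pi_Y^*(r).$$

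First I would consider the always-independent square of the first kind from Definition \ref{FunctorialityDef},
$$
\begin{tikzcd}
X \arrow[]{r}{f} \arrow[]{d}{\Id_X} & Y \arrow[]{d}{\Id_Y} \\
X \arrow[]{r}{f} & Y,
\end{tikzcd}
$$
which is independent regardless of any hypothesis, and apply the commutativity axiom to it with the given $\alpha\in\Bb^*(X\xrightarrow{f}Y)$ and with $\beta := \pi_Y^*(r)\in\Bb^*(Y\xrightarrow{\Id_Y}Y)=\Bb^*(Y)$. Since in this square the right-hand vertical map is $\Id_Y$ and the bottom map is $f$, the axiom reads
$$f^*\bigl(\pi_Y^*(r)\bigr)\bullet\alpha = \Id_Y^*(\alpha)\bullet\pi_Y^*(r).$$
Now I would simplify the two sides using functoriality of bivariant pullbacks (axiom $A_3$): because $\pi_X = \pi_Y\circ f$ we get $f^*(\pi_Y^*(r)) = \pi_X^*(r)$, while pullback along the identity is the identity, so $\Id_Y^*(\alpha)=\alpha$. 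Substituting these gives $\pi_X^*(r)\bullet\alpha = \alpha\bullet\pi_Y^*(r)$, i.e.\ $r\alpha = \alpha r$.

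I do not expect any real obstacle here: the argument is a one-line application of commutativity, and the only things to verify are that the displayed square is independent (true by the very definition of a functoriality) and the two elementary identities $f^*\circ\pi_Y^* = \pi_X^*$ and $\Id_Y^*=\mathrm{id}$, which are immediate from the bivariant axioms. The only point requiring a little care is bookkeeping with the bivariant degrees, i.e.\ checking that $f^*(\pi_Y^*(r))\bullet\alpha$ and $\Id_Y^*(\alpha)\bullet\pi_Y^*(r)$ both genuinely land in $\Bb^*(X\xrightarrow{f}Y)$, which they do since $\Id_X$ followed by $f$, and $f$ followed by $\Id_Y$, are both $f$.
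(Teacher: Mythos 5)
Your proof is correct and is essentially the paper's own argument: the paper rewrites $r\alpha = \pi_X^*(r)\bullet\alpha = f^*(\pi_Y^*(r))\bullet\alpha$ and then invokes commutativity to get $\alpha\bullet\pi_Y^*(r) = \alpha r$, which is exactly the application of the commutativity axiom to the identity-square that you spell out explicitly.
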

\begin{proof}
Indeed, 
\begin{align*}
r \alpha &= \pi^*_X(r) \bullet \alpha \\
&= f^*(\pi^*_Y(r)) \bullet \alpha \\
&= \alpha \bullet \pi^*_Y(r) & (\text{commutativity of $\Bb^*$}) \\
&= \alpha r
\end{align*}
which proves the claim.
\end{proof}

\subsubsection*{External products}

Next we need to recall the definition of bivariant external products. Note that these can be defined without any commutativity hypotheses, but then one has to make a choice of convention in the definition (see the definition below to understand). Moreover, almost all bivariant theories we are actually going to consider in this article are commutative, so we would gain nothing from a more general definition. 

\begin{defn}\label{BivariantExternalProductDef}
Assume (for simplicity) that all independent squares in $\Fs$ are independent, and that $\Bb^*$ is a commutative bivariant theory with functoriality $\Fs$. Then we define the \emph{external product}
$$\times : \Bb^*(X \to Y) \otimes \Bb(X' \to Y') \to \Bb^*(X \times X' \to Y \times Y')$$
by 
$$\alpha \times \beta := (g \circ f)^* (\beta) \bullet h^*(\alpha)$$
where $f$, $g$ and $h$ are as in the diagram
$$
\begin{tikzcd}
X \times X' \arrow[]{r} \arrow[]{d} & Y \times X' \arrow[]{r} \arrow[]{d} & X' \arrow[]{d} \\
X \times Y' \arrow[]{r}{f} \arrow[]{d} & Y \times Y' \arrow[]{r}{g} \arrow[]{d}{h} & Y' \arrow[]{d} \\
X \arrow[]{r} & Y \arrow[]{r} & pt.
\end{tikzcd}
$$
\end{defn}

Finally, we record a useful fact concerning bivariant ideals generated by homological elements: the bivariant ideal generated by a nice enough set of homology relations does not contain any extraneous elements in the associated homology groups.

\begin{prop}\label{BivariantHomIdealProp}
Let $\Bb^*$ be a stably oriented commutative bivariant theory having bivariant pullbacks along all Cartesian squares. Let us also assume that $\Bb^*$ is \emph{generated by orientations} in the sense that, for all $X \to Y$, the group $\Bb^*(X \to Y)$ is generated by $\Bb^*(pt)$-linear combinations of elements of form $f_*(\theta(g))$, where $f: Z \to X$ is confined with the composition $g: Z \to Y$  specialized.

If $\Sc \subset \Bb^*$ is a bivariant subset with $\Sc(X \to Y)$ empty unless $Y = pt$, then we have the equality
$$\Sc(X \to pt) = \langle \Sc \rangle_{\Bb^*} (X \to pt)$$
for all $X$ if and only if
\begin{enumerate}
\item $\Sc(X \to pt) \subset \Bb^*(X \to pt)$ is a subgroup for all $X$;
\item $\Sc$ is stable under pushforwards in the associated homology theory;
\item $\Sc$ is stable under Gysin pullbacks in the associated homology theory;
\item given $s \in \Sc(X \to pt)$ and $\beta \in \Bb^*(Y \to pt)$, we have that $s \times \beta \in \Sc(X \times Y \to pt)$.
\end{enumerate}
\end{prop}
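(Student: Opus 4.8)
The plan is to prove the two directions separately. The forward direction is immediate: if $\Sc(X \to pt) = \langle \Sc \rangle_{\Bb^*}(X \to pt)$ for all $X$, then conditions (1)--(4) hold automatically, since $\langle \Sc \rangle_{\Bb^*}$ is a bivariant ideal: (1) holds because the groups of a bivariant ideal are subgroups; (2) and (3) hold because bivariant ideals are stable under bivariant pushforwards and (being stably oriented) under Gysin pullbacks $g^! \alpha = \theta(g) \bullet \alpha$; and (4) holds because external products $s \times \beta$ are, by Definition \ref{BivariantExternalProductDef}, built out of bivariant pullbacks and bivariant products of $s$ with elements of $\Bb^*$, hence land in the ideal. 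So the real content is the converse.

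For the converse, assume (1)--(4). The inclusion $\Sc(X \to pt) \subseteq \langle \Sc \rangle_{\Bb^*}(X \to pt)$ is trivial, so I must show every element of $\langle \Sc \rangle_{\Bb^*}(X \to pt)$ already lies in $\Sc(X \to pt)$. By Proposition \ref{GeneratedBivariantIdealProp}, a general element of $\langle \Sc \rangle_{\Bb^*}(X \to pt)$ is an integral combination of elements of the form $f_*(\alpha \bullet g^*(s) \bullet \beta)$ with $s \in \Sc(W \to pt)$, $f$ confined, $g$ arbitrary, and $\alpha, \beta$ arbitrary bivariant elements making the expression defined. Since $\Sc(W \to \cdot)$ is empty unless the target is $pt$, the only way $g^*(s)$ can appear is $g^*$ applied along an independent square over $W \to pt$; and for $f_*(\alpha \bullet g^*(s) \bullet \beta)$ to land in $\Bb^*(X \to pt)$, the codomain of $\beta$ must be $pt$, so $\beta \in \Bb^*(\cdot \to pt)$ and the product $g^*(s) \bullet \beta$ is a homology operation on $g^*(s)$. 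Thus it suffices to show (A) $\Sc$ is stable under bivariant pullback $g^*$ in the sense that $g^*(s) \in \Sc$, and (B) for $t \in \Sc(Z \to pt)$, $\alpha \in \Bb^*(\cdot \to Z)$ and $f$ confined, $f_*(\alpha \bullet t) \in \Sc$.

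The key reductions are these. For step (A): a bivariant pullback of $s \in \Sc(W \to pt)$ along $g: W' \to W$ over $pt$ fits into the external-product description --- taking $Y = pt$ in Definition \ref{BivariantExternalProductDef}, or more directly, since $\Bb^*$ is commutative one identifies $g^*(s)$ with an expression involving $s \times (\text{unit-type class})$ and condition (4); alternatively, since $\Bb^*$ is generated by orientations, $g^*$ on homology is built from pushforwards and Gysin pullbacks of orientations, so (2) and (3) give closure. For step (B): using that $\Bb^*$ is generated by orientations, write $\alpha \in \Bb^*(V \to Z)$ as a $\Bb^*(pt)$-linear combination of $p_*(\theta(q))$ with $p$ confined and $q = (\text{the composite}): \cdot \to Z$ specialized; then $\alpha \bullet t = p_*(\theta(q) \bullet t) = p_*(\theta(q) \bullet t)$, and $\theta(q) \bullet t = q^!(t)$ is a Gysin pullback, so (3) applies, after which $f_* p_*$ is a homology pushforward and (2) applies; the $\Bb^*(pt)$-coefficients are absorbed using (1) together with condition (4) with $Y = pt$ (or the bimodule structure of Proposition \ref{RLinearTheoryProp}). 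Assembling: every generator $f_*(\alpha \bullet g^*(s) \bullet \beta)$ is obtained from $s$ by a bivariant pullback (step A), then a homology Gysin pullback and pushforward (steps using (2),(3)), possibly twisted by a $\Bb^*(pt)$-coefficient and an external factor (condition (4)), so it lies in $\Sc(X \to pt)$; closure of $\Sc(X \to pt)$ under addition is condition (1).

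The main obstacle I expect is bookkeeping in step (B): carefully justifying that the ``generated by orientations'' hypothesis lets one rewrite an arbitrary $\alpha \bullet t$ as an iterated application of the operations appearing in (2)--(4), and in particular handling the $\Bb^*(pt)$-coefficients cleanly --- one needs condition (4) precisely to know that multiplying a relation $t \in \Sc$ by a coefficient $r \in \Bb^*(pt)$ (equivalently, taking $t \times (r \cdot 1_{pt})$) stays in $\Sc$. Commutativity of $\Bb^*$ is used throughout to move pullback classes past products (Definition \ref{CommutativeBivariantTheoryDef}), and it is what makes the external-product manipulations in step (A) legitimate.
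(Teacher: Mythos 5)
Your overall approach matches the paper's: reduce via Proposition \ref{GeneratedBivariantIdealProp} to the generators $f_*(\alpha \bullet g^*(s) \bullet \beta)$ and argue that conditions (2)--(4) account for the operations appearing. But step (A), as you have stated it, has a genuine gap. If $s \in \Sc(W \to pt)$, a bivariant pullback of $s$ is taken along some $g: Y' \to pt$ (not along ``$g: W' \to W$'', which is not the codomain morphism for a bivariant pullback of a class in $\Bb^*(W \to pt)$), and produces $g^*(s) \in \Bb^*(W \times Y' \to Y')$, which is not an associated-homology group unless $Y' = pt$. Since $\Sc(A \to B)$ is empty by hypothesis unless $B = pt$, the claim ``$g^*(s) \in \Sc$'' is not even well posed. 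The hinge of the paper's argument --- which your plan circles around but never nails down --- is that $g^*(s)$ only becomes a homology class once you multiply by the $\beta \in \Bb^*(Y' \to pt)$ already present in the generator, and the combination collapses to an external product:
$$g^*(s) \bullet \beta = s \times \beta,$$
a consequence of Definition \ref{BivariantExternalProductDef} together with commutativity (Definition \ref{CommutativeBivariantTheoryDef}), which gives $\pi^*_W(\beta) \bullet s = \pi^*_{Y'}(s) \bullet \beta = g^*(s) \bullet \beta$. Condition (4) then places this class in $\Sc(W \times Y' \to pt)$ in one step. With that repair, your step (B) proceeds exactly as in the paper: expand $\alpha$ as a $\Bb^*(pt)$-linear combination of classes $p_*(\theta(q))$, commute $p_*$ out by axiom $A_{12}$, recognize $\theta(q) \bullet (s \times \beta)$ as a Gysin pullback covered by (3), absorb the scalars by (4) with the second factor a point, and close under pushforwards by (2) and finite sums by (1). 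So the route is the same; only step (A) must be replaced by the displayed identity rather than by an ill-posed claim that $\Sc$ is stable under bivariant pullback.
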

\begin{proof}
Suppose $\Sc$ satisfies the four conditions. Since $\Bb^*$ satisfies the assumptions of Proposition \ref{GeneratedBivariantIdealProp}, $\langle \Sc \rangle_{\Bb^*}(X \to pt)$ is generated by elements of form
$$f_*(\alpha \bullet g^*(s) \bullet \beta)$$
with $s \in \Sc(X' \to pt)$, $g: Y \to pt$, $\beta \in \Bb^*(Y \to pt)$, $\alpha \in \Bb^*(X'' \to X' \times Y)$ and $f: X'' \to X$ confined. As 
$$g^*(s) \bullet \beta = s \times \beta,$$
it lies in $\Sc(X' \times Y \to pt)$, and we denote it by $s'$. Moreover, since we may assume without loss of generality that $\alpha = f'_*(\theta(g'))$ where $f': V \to X''$ is a confined morphism so that the composition $g': V \to X' \times Y$ is specialized, it follows that
\begin{align*}
\alpha \bullet s' &= f'_*(\theta(g')) \bullet s' & \\
&= f'_*(\theta(g') \bullet s' ). & (\text{bivariant axiom $A_{12}$})
\end{align*}
Since $\theta(g') \bullet s' $ lands in $\Sc$ by stability under Gysin pullbacks, the desired equality follows from the stability of $\Sc$ under pushforwards. The converse claim follows immediately from the stability properties of bivariant ideals.
\end{proof}

\subsection{Universal bivariant theory}\label{UBTSubSect}

The purpose of this section is to recall the construction and the basic properties of the universal bivariant theory $\Mc_\Fs$ of Yokura from \cite{Yo1}. We note that $\Mc_\Fs$ is a stably oriented bivariant theory.

We work with a functoriality $\Fs = (\Cc, \Cs, \Is, \Ss)$ so that
\begin{enumerate}
\item $\Is$ contains all Cartesian squares;
\item $\Ss$ is stable under pullbacks. 
\end{enumerate}

\begin{cons}\label{UnivBivThy}
We will construct $\Mc_\Fs$ as a stably oriented bivariant theory. Given an equivalence class of morphisms $X \to Y$ in $\Cc$, define $\Mc_\Fs(X \to Y)$ as the free Abelian group generated by equivalence classes of confined morphisms $V \to X$ so that the composition $V \to Y$ is specialized. The operations are defined as follows:
\begin{enumerate}
\item if $X \to Y$ factors through a confined morphism $f: X \to X'$, then the bivariant pushforward $f_*: \Mc_\Fs(X \to Y) \to \Mc_\Fs(X' \to Y)$ is defined by linearly extending
$$[V \xrightarrow{h} X] \mapsto [V \xrightarrow{f \circ h} X'];$$
\item the bivariant pullback $g^*: \Mc_\Fs(X \to Y) \to \Mc_\Fs(Y' \times_Y X \to Y')$ is defined by linearly extending
$$[V \xrightarrow{h} X] \mapsto [Y' \times_Y V \xrightarrow{\mathrm{Id} \times_Y h} Y' \times_Y X];$$  
\item given a composable pair of morphisms $X \to Y \to Z$, the bivariant product 
$$\Mc_\Fs(X \to Y) \times \Mc_\Fs(Y \to Z) \to \Mc_\Fs(X \to Z)$$
is defined by bilinearly extending the following formula: given cycles $[V \to X] \in \Mc_\Fs(X \to Y)$ and $[W \to Y] \in \Mc_\Fs(Y \to Z)$, form the Cartesian diagram 
$$
\begin{tikzcd}
V' \arrow[]{d} \arrow[]{r} & X' \arrow[]{d} \arrow[]{r} & W \arrow[]{d} \\
V \arrow[]{r} & X \arrow[]{r} & Y \arrow[]{r} & Z
\end{tikzcd}
$$
and set $[V \to X] \bullet [W \to Y] := [V' \to X]$. This is well defined since both confined and specialized morphisms were assumed to be stable under pullbacks and compositions.
\end{enumerate}
The orientation $\theta$ is given by
$$\theta(f) = [X \xrightarrow{\mathrm{Id}} X] \in \Mc_\Fs(X \xrightarrow{f} Y).$$
It is clear that $\theta$ is stable under pullbacks.
\end{cons}

Let us record several immediate observations as the following result.

\begin{prop}\label{UnivTheoryProperties}
The stably oriented bivariant theory $\Mc_\Fs$ is commutative and generated by orientations in the sense of Proposition \ref{BivariantHomIdealProp}. Moreover, the external product is given by linearly extending the formula $[V \to X] \times [W \to Y] = [V \times W \to X \times Y].$ \qed 
\end{prop}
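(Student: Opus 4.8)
The statement to prove is Proposition \ref{UnivTheoryProperties}: that $\Mc_\Fs$ is commutative, generated by orientations, and that the external product has the stated form.

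The plan is to verify each of the three claims directly from the explicit generators-and-relations description of $\Mc_\Fs$ given in Construction \ref{UnivBivThy}, since all the operations are defined by (bi)linear extension of concrete formulas on cycles $[V \to X]$.

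First I would treat \emph{generation by orientations}. By construction, $\Mc_\Fs(X \to Y)$ is the \emph{free} Abelian group on confined cycles $[V \xrightarrow{h} X]$ whose composite to $Y$ is specialized. Given such a generator, write $g: V \to Y$ for the composite; then $g$ is specialized, $h$ is confined, and $\theta(g) = [V \xrightarrow{\Id} V] \in \Mc_\Fs(V \xrightarrow{g} Y)$. Unwinding the definition of bivariant pushforward, $h_*(\theta(g)) = [V \xrightarrow{h} X]$. Hence every generator is literally of the form $h_*(\theta(g))$, which is the condition required in Proposition \ref{BivariantHomIdealProp} (even with trivial $\Mc_\Fs(pt)$-coefficients). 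This step is essentially immediate.

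Next I would verify the \emph{external product formula}. Here one simply plugs into Definition \ref{BivariantExternalProductDef}: $\alpha \times \beta = (g \circ f)^*(\beta) \bullet h^*(\alpha)$ for the relevant projection maps in the displayed $3 \times 3$ diagram. Taking $\alpha = [V \to X]$ and $\beta = [W \to Y']$ — wait, in the present setting the external product in Proposition \ref{UnivTheoryProperties} is stated for cohomological-type cycles $[V \to X] \times [W \to Y] = [V \times W \to X \times Y]$, so I would track through: $h^*(\alpha)$ pulls $[V \to X]$ back along $Y \times Y' \to Y$ (projection), giving $[V \times Y' \to Y \times Y']$ by the base-change formula for $\Mc_\Fs$; then $(g\circ f)^*(\beta)$ pulls $[W \to Y']$ back along $X \times X' \to Y'$; finally the bivariant product, computed via the Cartesian square in Construction \ref{UnivBivThy}(3), produces the fibre product of $V \times Y'$ and the pullback of $W$, which simplifies to $V \times W \to X \times X'$. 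This is a routine diagram chase using only the already-established pullback and product formulas; I would present it compactly rather than in full.

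The main obstacle — and the part requiring the most care — is \emph{commutativity} in the sense of Definition \ref{CommutativeBivariantTheoryDef}: for an independent (hence arbitrary Cartesian) square with bottom map $f: X \to Y$ and right map $g: Y' \to Y$, and cycles $\alpha = [V \to X] \in \Mc_\Fs(X \to Y)$, $\beta = [W \to Y'] \in \Mc_\Fs(Y' \to Y)$, one must check $f^*(\beta) \bullet \alpha = g^*(\alpha) \bullet \beta$ in $\Mc_\Fs(X' \to Y)$, where $X' = X \times_Y Y'$. Both sides are computed by forming iterated fibre products, so the computation reduces to the associativity/commutativity of fibre products in $\Cc$: on the left one forms $X' = X\times_Y Y'$, pulls $W$ back to $W\times_{Y'}X'$, then takes the product with $V$, landing at $V \times_X (W\times_{Y'}X') = V\times_X X' \times_{Y'} W$ over $X'$; on the right one pulls $V$ back along $X' \to X$ to get $V\times_X X'$, then takes the bivariant product with $[W \to Y']$, which again produces $(V\times_X X')\times_{Y'}W$ over $X'$. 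These two iterated fibre products are canonically identified by the universal property of fibre products, so both bivariant elements are the same class $[V\times_X X'\times_{Y'}W \to X']$. I would also note that since $\Is$ consists of all Cartesian squares, it is trivially closed under transposition. Writing this out carefully with the relevant Cartesian diagrams is the bulk of the work, but it is formal once one is disciplined about which square is being base-changed at each step.
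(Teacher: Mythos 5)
Your argument is correct and is exactly the direct verification the paper leaves implicit by marking the proposition with a bare \qed{}: all three claims follow by unwinding the cycle-level formulas of Construction~\ref{UnivBivThy}, with commutativity ultimately reducing to associativity and commutativity of fibre products in $\Cc$, and generation by orientations following from the identity $[V \xrightarrow{h} X] = h_*\bigl(\theta(g)\bigr)$ for $g$ the composite $V \to Y$. The only thing to tidy is a couple of transcription slips in the external-product paragraph (for instance $h^*([V\to X])$ should be $[V\times Y'\to X\times Y']$, not $[V\times Y'\to Y\times Y']$, and $g\circ f$ has source $X\times Y'$ rather than $X\times X'$); these are bookkeeping errors that do not affect the substance, as the iterated fibre products still collapse to $[V\times W\to X\times Y]$ as claimed.
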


The theory $\Mc_\Fs$ is the universal stably oriented bivariant theory.

\begin{thm}[\cite{Yo1} Theorem 3.1 (2)]\label{UnivPropOfUnivBivThy}
If $\Bb^*$ is a stably oriented bivariant theory with functoriality $\Fs$, then there exists a unique orientation preserving Grothendieck transformation
$$\eta: \Mc_\Fs \to \Bb^*.$$
\end{thm}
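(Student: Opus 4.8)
The plan is to construct $\eta$ explicitly on generators and then verify that it is well-defined and compatible with all the bivariant structure, with uniqueness being essentially forced. Recall that $\Mc_\Fs(X \to Y)$ is free on equivalence classes of confined morphisms $[V \xrightarrow{h} X]$ whose composite to $Y$ is specialized. Given such a generator, write $g: V \to Y$ for the composite and define
$$\eta\bigl([V \xrightarrow{h} X]\bigr) := h_*\bigl(\theta^{\Bb^*}(g)\bigr) \in \Bb^*(X \to Y),$$
which makes sense because $h$ is confined (so the pushforward is defined) and $g$ is specialized (so $\theta^{\Bb^*}(g)$ exists), then extend $\Zb$-linearly. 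This is the only possible choice: the generator $[V \xrightarrow{h} X]$ equals $h_*\bigl([V \xrightarrow{\Id} V]\bigr) = h_*\bigl(\theta^{\Mc_\Fs}(g)\bigr)$ in $\Mc_\Fs$, so any orientation-preserving Grothendieck transformation must send it to $h_*\bigl(\theta^{\Bb^*}(g)\bigr)$; this gives uniqueness immediately and also shows that if $\eta$ exists it is orientation preserving precisely because $\theta^{\Mc_\Fs}(f) = [X \xrightarrow{\Id} X]$ maps to $\Id_*\bigl(\theta^{\Bb^*}(f)\bigr) = \theta^{\Bb^*}(f)$.

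The substance of the proof is checking that $\eta$ respects the three bivariant operations; well-definedness on equivalence classes is automatic since we only quotient by the relation of $\infty$-categorical equivalence and all constructions are manifestly invariant under it. For bivariant pushforward along a confined $f: X \to X'$, functoriality of pushforward ($A_2$) in $\Bb^*$ gives $f_*\bigl(h_*(\theta^{\Bb^*}(g))\bigr) = (f \circ h)_*(\theta^{\Bb^*}(g))$, which is exactly $\eta$ applied to the pushed-forward generator. For bivariant pullback along $\pi: Y' \to Y$ forming the Cartesian square, one forms the compatible pullback diagrams and uses that $\theta^{\Bb^*}$ is stable under pullbacks (this is where the hypothesis that $\Bb^*$ is \emph{stably} oriented enters) together with the push-pull axiom $A_{23}$ to move the pullback past the pushforward $h_*$; the identity $\pi^*\bigl(h_*(\theta^{\Bb^*}(g))\bigr) = (h')_*\bigl(\theta^{\Bb^*}(g')\bigr)$ drops out, where $h', g'$ are the base-changed morphisms. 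For the bivariant product of $[V \to X] \in \Mc_\Fs(X\to Y)$ and $[W \to Y] \in \Mc_\Fs(Y \to Z)$, one uses the explicit Cartesian diagram defining the product in $\Mc_\Fs$, expands $\eta$ of each factor, and combines the projection formula $A_{123}$, the multiplicativity of bivariant pullback $A_{13}$, and the multiplicativity $\theta^{\Bb^*}(g' ) = \theta^{\Bb^*}(\text{base change}) \bullet (\text{pullback of }\theta^{\Bb^*})$ coming from the cocycle condition $\theta(g_1 \circ g_2) = \theta(g_2) \bullet \theta(g_1)$ and pullback-stability; after rearranging, $\eta(\alpha) \bullet \eta(\beta)$ matches $\eta(\alpha \bullet \beta)$.

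The main obstacle is the product compatibility: it requires threading the element $\theta^{\Bb^*}(V' \to Z)$ through the double Cartesian diagram and recognizing it as the appropriate bullet-product of an orientation and a pulled-back orientation, which in turn leans on both the multiplicative cocycle property of $\theta^{\Bb^*}$ and its stability under pullbacks, and then applying the projection formula in exactly the right configuration. Once this bookkeeping is arranged, the verification of units ($\eta(1_X) = \eta([X \xrightarrow{\Id} X]) = \Id_*(\theta^{\Bb^*}(\Id)) = 1_X$) and the remaining axioms is routine. I would organize the write-up by first stating the formula, then deducing uniqueness and the orientation-preserving property in one line, and finally devoting one short paragraph to each of the three operations, flagging the use of $A_{23}$ and pullback-stability for pullbacks and of $A_{123}$, $A_{13}$ for products.
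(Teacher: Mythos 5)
The paper itself gives no proof of this theorem; it is quoted verbatim from \cite{Yo1} Theorem 3.1 (2), so there is no internal argument to compare against. Your proposal is correct and is the standard argument one would give (and, as far as I can tell, matches Yokura's): define $\eta$ on generators by $[V \xrightarrow{h} X] \mapsto h_*(\theta^{\Bb^*}(g))$, deduce uniqueness and orientation-preservation immediately from $[V \to X] = h_*(\theta^{\Mc_\Fs}(g))$, and verify the three operations using $A_2$, $A_{23}$ plus stability of $\theta$, and $A_{12}$/$A_{123}$ plus the cocycle property and stability respectively. One small remark: in the product check you list $A_{13}$ among the tools, but if you thread the argument as $h_{V*}(\theta(g_V)) \bullet h_{W*}(\theta(g_W)) = h_{V*}\bigl(\theta(g_V) \bullet h_{W*}(\theta(g_W))\bigr)$ via $A_{12}$, then apply $A_{123}$ to the inner product and finish with stability, the cocycle identity and $A_2$, the axiom $A_{13}$ never enters.
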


\begin{rem}
The assumptions on $\Fs$ are not weakest possible to prove Theorem \fref{UnivPropOfUnivBivThy}: instead of requiring every Cartesian square to be independent, one could ask for \emph{$\Cc$-independence} (see \cite{Yo1} for details). However, this slightly more general statement is superfluous for our purposes.
\end{rem}

\subsection{Chern classes of line bundles}\label{BivChernSubSect}

The purpose of this section is to define and study Chern classes of line bundles in certain oriented bivariant theories. Our main result is that bivariant theories satisfying the section and the formal group law axioms come equipped with a canonical $\Lb$-linear structure, where $\Lb$ is the Lazard ring (Theorem \ref{FGLIsLLinearThm}). We will also define and study the basic properties of bivariant theories satisfying the snc axiom.

Let us start with the following useful definition.

\begin{defn}\label{AdmissibleFunctorialityDef}
An \emph{admissible functoriality} $\Fs = (\Cc, \Cs, \Is, \Ss)$ is a functoriality where
\begin{enumerate}
\item $\Cc$ is an \emph{admissible subcategory} of the $\infty$-category $d \mathrm{Sch}$ of derived schemes, i.e., a full subcategory $\Cc \subset d \mathrm{Sch}$ so that
\begin{enumerate}
\item $\Cc$ is closed under derived fibre products in $d \mathrm{Sch};$
\item $\Cc$ if $X \in \Cc$, then $\Pb^n \times X \in \Cc;$
\item $\Cc$ if $X \in \Cc$ and $\Ls$ is a line bundle over $X$, then $\Ls \in \Cc;$
\item $\Cc$ has a (distinguished) final object $pt$;
\end{enumerate}

\item the confined morphisms $\Cs$ contain all closed embeddings and all projections $\Pb^n \times X \to X$;

\item all the derived Cartesian squares in $\Cc$ are independent;

\item all quasi-smooth morphisms are specialized.
\end{enumerate}
\end{defn}

\begin{ex}
The functoriality $\Fs_A$ used in in Section \ref{FirstComparisonSect} is admissible.
\end{ex}

\begin{rem}
Notice that if $\Cc$ is an admissible subcategory of $d \mathrm{Sch}$, $X \in \Cc$, $\Ls$ is a line bundle over $X$, and if $s$ is a global section of $\Ls$, then the derived vanishing locus $Z_s$ of $s$ lies inside $\Cc$. 
\end{rem}

From now on, $\Fs = (\Cc, \Cs, \Is, \Ss)$ will be a fixed admissible functoriality. Let us begin by defining Chern classes.

\begin{defn}\label{GeneralChernClassDefn}
Let $\Bb^*$ be an oriented bivariant theory with functoriality $\Fs$. Let $X \in \Cc$, and $\Ls$ a line bundle on $X$ with zero-section $s_0: X \hookrightarrow \Ls$.  We define the \emph{first Chern class of $\Ls$} as 
$$c_1^{\Bb^*}(\Ls) := s_0^* s_{0!} (1_X) \in \Bb^*(X).$$
If there is no danger of confusion, we will usually use the shorthand notation $c_1(\Ls)$.
\end{defn}

Chern classes are often natural in pullbacks as the next result shows.

\begin{lem}
If $\Bb^*$ is a stably oriented bivariant theory on $\Fs$, then the first Chern classes of line bundles are natural under pullbacks.
\end{lem}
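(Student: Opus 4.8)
The plan is to unwind the definition of the first Chern class and push it through a pullback square, using that stable orientations are preserved by bivariant pullbacks together with the compatibility axioms of the bivariant formalism. So suppose $g : X' \to X$ is an arbitrary morphism in $\Cc$, let $\Ls$ be a line bundle on $X$, and write $\Ls' := g^*\Ls$ for its pullback, which again lies in $\Cc$ by admissibility. Denote by $s_0 : X \hookrightarrow \Ls$ and $s_0' : X' \hookrightarrow \Ls'$ the respective zero sections. The key geometric input is that the square with vertical maps $s_0', s_0$ and horizontal maps the canonical projections $\Ls' \to \Ls$, $X' \to X$ is Cartesian (the zero section of a pulled-back line bundle is the pullback of the zero section), hence independent since $\Fs$ is admissible. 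I want to show $g^*\bigl(c_1^{\Bb^*}(\Ls)\bigr) = c_1^{\Bb^*}(\Ls')$ as elements of $\Bb^*(X')$.

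The main steps, in order: First, recall $c_1^{\Bb^*}(\Ls) = s_0^* s_{0!}(1_X) = s_0^*\bigl(s_{0*}(1_X \bullet \theta(s_0))\bigr) = s_0^* s_{0*}(\theta(s_0))$, since $1_X$ is the unit and $\theta(s_0) \in \Bb^*(X \xrightarrow{s_0} \Ls)$. Second, I would apply $g^*$ and move it inside: using contravariant functoriality of bivariant pullbacks ($A_3$) along the Cartesian square $X' \to X \to \Ls$ versus $X' \to \Ls' \to \Ls$ (so that the relevant pullback for the outer $s_0$ is computed along the projection $\Ls' \to \Ls$), and the push-pull formula ($A_{23}$) to commute the pullback past the pushforward $s_{0*}$, I get that $g^*\bigl(s_0^* s_{0*}(\theta(s_0))\bigr)$ equals $s_0'^* s_{0*}'\bigl(h^*(\theta(s_0))\bigr)$ where $h : \Ls' \to \Ls$ (or rather the appropriate base-changed map) is the projection. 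Third, invoke stability of the orientation under pullbacks: since $s_0$ is quasi-smooth hence specialized, and the square exhibiting $s_0'$ as a base change of $s_0$ is independent, we have $h^*(\theta(s_0)) = \theta(s_0')$. Substituting gives $s_0'^* s_{0*}'(\theta(s_0')) = s_0'^* s_{0!}'(1_{X'}) = c_1^{\Bb^*}(\Ls')$, as desired.

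I do not expect any serious obstacle; the whole argument is a diagram chase through the bivariant axioms $A_3$, $A_{23}$ and the definition of stable orientation. The one point requiring a little care is bookkeeping the various Cartesian squares correctly — specifically, identifying which square is used for the outer $s_0^*$ pullback (the projection square for $\Ls' \to \Ls$) versus which independence statement delivers $h^*\theta(s_0) = \theta(s_0')$ — and checking that the composite $X' \to X \to \Ls$ agrees with $X' \to \Ls' \to \Ls$ so that the functoriality of bivariant pullback applies. Once those identifications are pinned down, the equality drops out formally.

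\begin{proof}
Let $g : X' \to X$ be a morphism in $\Cc$, let $\Ls$ be a line bundle on $X$, and put $\Ls' := g^* \Ls$, a line bundle on $X'$ lying in $\Cc$ by admissibility. Let $s_0 : X \hookrightarrow \Ls$ and $s_0' : X' \hookrightarrow \Ls'$ be the zero sections, and let $p : \Ls' \to \Ls$ be the projection covering $g$. Since $\Ls' = X' \times^R_X \Ls$ compatibly with the zero sections, the square
$$
\begin{tikzcd}
X' \arrow[]{r}{s_0'} \arrow[]{d}{g} & \Ls' \arrow[]{d}{p} \\
X \arrow[]{r}{s_0} & \Ls
\end{tikzcd}
$$
is Cartesian, hence independent. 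Unwinding Definition \ref{GeneralChernClassDefn} and using that $1_X$ is a two-sided unit,
$$c_1^{\Bb^*}(\Ls) = s_0^* s_{0!}(1_X) = s_0^* s_{0*}\bigl(1_X \bullet \theta(s_0)\bigr) = s_0^* s_{0*}\bigl(\theta(s_0)\bigr).$$
Applying $g^*$ and using the push-pull formula ($A_{23}$) for the displayed independent square, followed by contravariant functoriality of bivariant pullback ($A_3$) applied to the composite $X' \to X \to \Ls$, we obtain
$$g^*\bigl(s_0^* s_{0*}(\theta(s_0))\bigr) = g^*\bigl(s_0^*(s_{0*} \theta(s_0))\bigr) = s_0'^*\bigl(p^*(s_{0*}\theta(s_0))\bigr) = s_0'^*\bigl(s_{0*}'(p^* \theta(s_0))\bigr).$$
Now $s_0$ is a quasi-smooth closed immersion, hence specialized, and the displayed square exhibits $s_0'$ as an independent base change of $s_0$; since $\Bb^*$ is stably oriented, its orientation is preserved by bivariant pullbacks, so $p^* \theta(s_0) = \theta(s_0')$. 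Substituting,
$$g^*\bigl(c_1^{\Bb^*}(\Ls)\bigr) = s_0'^* s_{0*}'\bigl(\theta(s_0')\bigr) = s_0'^* s_{0!}'(1_{X'}) = c_1^{\Bb^*}(\Ls') = c_1^{\Bb^*}(g^* \Ls),$$
which is the asserted naturality.
\end{proof}
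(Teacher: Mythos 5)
Your proof is correct and follows essentially the same argument as the paper's: form the Cartesian zero-section square, move $g^*$ past $s_0^*$ by contravariant functoriality, commute the resulting pullback past $s_{0*}$ by push-pull, and finish with stability of the orientation. (One small slip in your prose: you announce $A_{23}$ before $A_3$, but your displayed computation — correctly — applies $A_3$ first and $A_{23}$ second, matching the paper.)
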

\begin{proof}
Indeed, given a morphism $f: X \to Y$ and a line bundle $\Ls$ on $Y$, we can form the derived Cartesian square
$$
\begin{tikzcd}
X \arrow[]{d}{f} \arrow[]{r}{s'_0} & f^*\Ls \arrow[]{d}{\tilde f} \\
Y \arrow[]{r}{s_0} & \Ls
\end{tikzcd}
$$
and then compute that
\begin{align*}
f^*(c_1(\Ls)) &= f^*\big(s^*_0(s_{0!}(1_Y))\big) \\
&= s'^*_0\big(\tilde{f}^*(s_{0!}(1_Y))\big) \\
&= s'^*_0\big(\tilde{f}^*(s_{0*}(\theta(s_0)))\big) \\
&= s'^*_0\big(s'_{0*}(f^*(\theta(s_0)))\big) & (\text{bivariant axiom $A_{23}$}) \\
&= s'^*_0\big(s'_{0*}(\theta(s'_0))\big) & (\text{stability of $\theta$}) \\
&= s'^*_0\big(s'_{0!}(1_X)\big) \\
&= c_1(f^* \Ls)
\end{align*}
which proves the claim.
\end{proof}

In order to restrict our attention to bivariant theories whose Chern classes admit a simple geometric description, we make the following definition.

\begin{defn}\label{SectionAxiomDef}
Let $\Bb^*$ be an oriented bivariant theory with functoriality $\Fs$. We say that $\Bb^*$ satisfies the \emph{section axiom} if for all $X \in \Cc$, for all line bundles $\Ls$ on $X$, and for all global sections $s$ of $\Ls$, we have the equality
$$c_1(\Ls) = i_{s!}(1_{Z_s}) \in \Bb^*(X),$$
where $i_s: Z_s \hookrightarrow X$ is the inclusion of the derived vanishing locus of $s$. 
\end{defn}

\begin{rem}\label{SectionAxiomForZeroSectRem}
Using bivariant push-pull formula, we see that the equality
$$c_1(\Ls) = i_{s_0 !}(1_{Z_{s_0}}) $$
always holds. Hence, the section axiom merely asks that $i_{s!}(1_{Z_s}) = i_{s'!}(1_{Z_{s'}})$ for any two global sections $s,s'$ of $\Ls$.
\end{rem}

If $\Bb^*$ satisfies the section axiom and the orientation is stable, it is possible to give simple descriptions for products of Chern classes.

\begin{prop}\label{ProductsOfChernProp}
Suppose $\Bb^*$ is a stably oriented bivariant theory that satisfies the section axiom, and let $\Ls_1,..., \Ls_r$ be line bundles on a $X \in \Cc$. Then, if we denote by $i: Z \hookrightarrow X$ the inclusion of the derived intersection of all the $Z_{s_i}$ inside $X$, we have that
$$c_1(\Ls_1) \bullet \cdots \bullet c_1(\Ls_r) = i_!(1_Z) \in \Bb^*(X).$$
In particular, the first Chern class of a globally generated line bundle is nilpotent. 
\end{prop}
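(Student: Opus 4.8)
The plan is to prove the formula $c_1(\Ls_1) \bullet \cdots \bullet c_1(\Ls_r) = i_!(1_Z)$ by induction on $r$, with the base case $r = 1$ being precisely the section axiom applied to the zero section (as recorded in Remark \ref{SectionAxiomForZeroSectRem}), together with the trivial case $r = 0$ where the empty product is $1_X$ and $Z = X$. For the inductive step, suppose the result holds for $\Ls_1, \dots, \Ls_{r-1}$, so that $c_1(\Ls_1) \bullet \cdots \bullet c_1(\Ls_{r-1}) = j_!(1_W)$ where $j: W \hookrightarrow X$ is the inclusion of the derived intersection $Z_{s_1} \cap \cdots \cap Z_{s_{r-1}}$. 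I would then compute $j_!(1_W) \bullet c_1(\Ls_r)$ and identify it with $i_!(1_Z)$, where $Z = W \cap Z_{s_r}$ is the full derived intersection.

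The key step is to push the computation of $c_1(\Ls_r)$ onto $W$. Writing $\Ls_r|_W$ for the restriction $j^*\Ls_r$ and $s_r|_W$ for the restricted section, the section axiom on $W$ gives $c_1^{\Bb^*}(\Ls_r|_W) = (i_{s_r|_W})_!(1_Z) \in \Bb^*(W)$, since the derived vanishing locus of $s_r|_W$ inside $W$ is exactly $Z$. Because first Chern classes are natural under pullbacks (the preceding Lemma), we have $c_1(\Ls_r|_W) = j^*(c_1(\Ls_r))$. Now I would unwind the Gysin formalism: $j_!(1_W) \bullet c_1(\Ls_r) = j_*(\theta(j) \bullet c_1(\Ls_r)) = j_*(\theta(j) \bullet j^*(c_1(\Ls_r)))$. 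Using that $\theta(j) \bullet j^*(\xi)$ for $\xi \in \Bb^*(X)$ equals the image of $\xi$ under the Gysin pullback $j^!$ composed appropriately — more precisely using the projection-type manipulations available from the bivariant axioms ($A_{12}$ and the multiplicativity of pullbacks) — one rewrites this as $j_*((i_{s_r|_W})_*(\theta(\cdots) \bullet \text{unit}))$ and then invokes functoriality of Gysin pushforwards (the composite $Z \hookrightarrow W \hookrightarrow X$ is the closed immersion $i$) to land on $i_!(1_Z)$. The cleanest way to organize this is to observe directly that $j_!(1_W) \bullet c_1(\Ls_r) = j_!\big(1_W \bullet c_1(\Ls_r|_W)\big)$ by $R$-balancedness/axiom $A_{12}$ together with Chern-class naturality, then apply the section axiom inside $\Bb^*(W)$ to get $j_!\big((i_{s_r|_W})_!(1_Z)\big)$, then use functoriality of Gysin pushforward along $j \circ i_{s_r|_W} = i$.

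The final assertion about nilpotence of globally generated line bundles follows by taking $\Ls_1 = \cdots = \Ls_r = \Ls$ with $\Ls$ globally generated: choosing global sections $s_1, \dots, s_r$ whose common derived vanishing locus is empty (possible for $r$ large if $\Ls$ is globally generated, e.g. $r = \dim X + 1$ generic sections, or more robustly by noting that a globally generated line bundle admits finitely many sections with empty common classical zero locus, hence empty derived zero locus since the derived vanishing locus is a derived subscheme supported on the classical one), we get $c_1(\Ls)^r = i_!(1_\emptyset) = 0$.

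The main obstacle I anticipate is bookkeeping in the bivariant formalism rather than any genuine difficulty: one must be careful that the section axiom as stated gives an equality in the cohomology ring $\Bb^*(W)$ (an $\mathrm{Id}$-bivariant group), and that multiplying a homology-type element $j_!(1_W) \in \Bb_*(X)$... \emph{no}, actually $j_!(1_W)$ here means $j_*(\theta(j)) \in \Bb^*(X)$, a cohomology class — so the products throughout live in the associated cohomology ring and all the needed compatibilities are instances of associativity of $\bullet$, axiom $A_{12}$, and the functoriality of Gysin morphisms recorded after the definition of Gysin maps. The one point requiring care is justifying the identity $\xi \bullet j_*(\theta(j)) = j_*(j^*\xi \bullet \theta(j))$ for $\xi \in \Bb^*(X)$, which is the bivariant projection formula $A_{123}$ (or push-pull, depending on how one sets it up) applied to the square with $j$ on one side; getting the variance of the pullback correct in that square is the only subtle bit.
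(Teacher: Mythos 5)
Your argument is essentially the paper's own, just written as an explicit induction rather than routed through a standalone lemma: the paper isolates the two-factor identity $f_{1!}(1_{V_1}) \bullet f_{2!}(1_{V_2}) = f_{12!}(1_{V_1 \times_X V_2})$ (Lemma~\ref{IntersectionProductLem}), proved precisely by the $A_{12}$, $A_{123}$, stability-of-$\theta$ chain you carry out, then converts each $c_1(\Ls_i)$ to $(i_{s_i})_!(1_{Z_{s_i}})$ via the section axiom and iterates. Your inductive step with $f_1 = j$, $f_2 = i_{s_r}$ reproduces the proof of that lemma, and your nilpotence argument (global generation plus quasi-compactness gives finitely many sections with empty common classical, hence empty derived, vanishing locus) is correct.

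One point does need correcting: your ``cleanest organization'' asserts $j_!(1_W)\bullet c_1(\Ls_r) = j_!\bigl(c_1(\Ls_r|_W)\bigr)$ as a consequence of $A_{12}$ plus Chern-class naturality alone, but unwinding gives $j_*\bigl(\theta(j)\bullet c_1(\Ls_r)\bigr)$ on the left versus $j_*\bigl(j^*(c_1(\Ls_r))\bullet\theta(j)\bigr)$ on the right, so you are implicitly commuting $\theta(j)$ past $c_1(\Ls_r)$ --- not an axiom here, since $\Bb^*$ is not assumed commutative. Applying $A_{123}$ to the square with $j$ on both vertical sides and identities on $X$ actually yields $c_1(\Ls_r)\bullet j_!(1_W) = j_!\bigl(c_1(\Ls_r|_W)\bigr)$: the factor order is opposite to what your inductive hypothesis produces. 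The correct route is the one you sketch in the previous paragraph: first rewrite $c_1(\Ls_r) = (i_{s_r})_!(1_{Z_{s_r}})$ via the section axiom, then apply $A_{12}$, $A_{123}$ (to the Cartesian square formed by $j$ and $i_{s_r}$), and stability of $\theta$, which gives $j_!(1_W)\bullet c_1(\Ls_r) = i_!(1_Z)$ in the correct order without any commutativity assumption --- exactly the proof of the paper's Lemma~\ref{IntersectionProductLem}. (Alternatively, run the induction from the other end, or note first that the $c_1(\Ls_i)$ do commute because both orderings compute the same symmetric fibre product --- but that observation is itself a corollary of the lemma, so it does not save you the work.)
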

\begin{proof}
This follows immediately from Lemma \ref{IntersectionProductLem} below.
\end{proof}

\begin{lem}\label{IntersectionProductLem}
Let $\Bb^*$ be a stably oriented bivariant theory on $\Fs$. Let $X \in \Cc$, and let $f_1: V_1 \to X$ and $f_2: V_2 \to X$ be confined and specialized. Then
$$f_{1!}(1_{V_1}) \bullet f_{2!}(1_{V_2}) = f_{12!}(1_{V_1 \times_X V_2}),$$
where $f_{12}$ is the induced morphism $V_1 \times_X V_2 \to X$.
\end{lem}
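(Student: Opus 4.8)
The plan is to unwind the definition of the Gysin morphisms and reduce the claim to a single application of the bivariant push-pull formula $(A_{23})$ together with the multiplicativity axioms. Recall $f_{1!}(1_{V_1}) = f_{1*}(\theta(f_1))$ and $f_{2!}(1_{V_2}) = f_{2*}(\theta(f_2))$, so the left-hand side is $f_{1*}(\theta(f_1)) \bullet f_{2*}(\theta(f_2)) \in \Bb^*(X)$. First I would form the derived Cartesian square
$$
\begin{tikzcd}
V_1 \times_X V_2 \arrow[]{d}{g_2} \arrow[]{r}{g_1} & V_2 \arrow[]{d}{f_2} \\
V_1 \arrow[]{r}{f_1} & X,
\end{tikzcd}
$$
noting that $g_1, g_2$ are again confined (confined morphisms are stable under pullback) and that $g_1, g_2$ are again specialized (specialized morphisms are stable under pullback, since $\Fs$ is admissible and the $f_i$ are quasi-smooth; in any case we only need that the composite $f_{12} = f_1 \circ g_2 = f_2 \circ g_1$ is specialized, which follows from closure of $\Ss$ under composition). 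Also $\theta(f_{12}) = \theta(g_2) \bullet \theta(f_1) = \theta(g_1) \bullet \theta(f_2)$ by multiplicativity of the orientation.

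Next I would compute, working in $\Bb^*(X) = \Bb^*(X \xrightarrow{\Id} X)$ and viewing the product $\Bb^*(X) \times \Bb^*(X) \to \Bb^*(X)$ as coming from $X \xrightarrow{\Id} X \xrightarrow{\Id} X$:
\begin{align*}
f_{1!}(1_{V_1}) \bullet f_{2!}(1_{V_2})
&= f_{1*}(\theta(f_1)) \bullet f_{2*}(\theta(f_2)) \\
&= f_{1*}\bigl(\theta(f_1) \bullet f_{2*}(\theta(f_2))\bigr) & (A_{12}) \\
&= f_{1*}\bigl(\theta(f_1) \bullet g_{1*}(f_1^*\theta(f_2))\bigr) & (A_{23}) \\
&= f_{1*}\bigl(g_{2*}(g_2^*\theta(f_1) \bullet f_1^*\theta(f_2))\bigr) & (A_{123}) \\
&= f_{12*}\bigl(\theta(g_2) \bullet \theta(f_{12}|_{?})\bigr)
\end{align*}
Here the third equality uses $(A_{23})$ applied to the square above to rewrite $f_1^* f_{2*}(\theta(f_2)) = g_{1*}(g_2^*\theta(f_2))$ — wait, I must be careful about which pullback goes where; the honest approach is to apply $(A_{23})$ to express $\theta(f_1) \bullet f_{2*}(\theta(f_2))$ using the projection formula $(A_{123})$ after first rewriting $f_{2*}(\theta(f_2))$ via the square, so that the product ``absorbs'' into a pushforward from $V_1 \times_X V_2$. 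Carrying this out, the stability of $\theta$ under pullback gives $g_2^*\theta(f_1) = \theta(g_1)$ and $g_1^*\theta(f_2) = \theta(g_2)$ (using that the square is independent, which holds since all derived Cartesian squares in an admissible $\Fs$ are independent), and then multiplicativity of $\theta$ collapses $\theta(g_1) \bullet \theta(f_2) = \theta(f_{12})$ (or the other factorization). The upshot is
$$f_{1!}(1_{V_1}) \bullet f_{2!}(1_{V_2}) = f_{12*}\bigl(\theta(f_{12})\bigr) = f_{12!}(1_{V_1 \times_X V_2}),$$
which is the claim.

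The main obstacle I anticipate is purely bookkeeping: getting the bivariant source–target triples right at each step, since the product, pushforward and pullback all live over different base morphisms, and making sure the hypotheses of $(A_{23})$ and $(A_{123})$ (confinedness of the relevant leg, independence of the relevant square) are genuinely met. The admissibility of $\Fs$ — all derived Cartesian squares independent, closed embeddings and projections confined, quasi-smooth morphisms specialized — is exactly what guarantees all the side conditions, so once the diagram chase is set up correctly there is no real mathematical difficulty; it is the standard ``push-pull then projection formula then orientation multiplicativity'' manipulation. One clean way to organize it is to first prove the identity at the level of $\Bb^*(V_1 \to X)$ (i.e. compute $\theta(f_1) \bullet f_2^?(\cdots)$ before pushing forward), which isolates a single application of $(A_{123})$ over the square, and only then apply $f_{1*}$ and $(A_2)$ to finish.
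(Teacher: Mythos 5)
Your overall strategy is the same as the paper's: use $(A_{12})$ to pull the outer pushforward out, then the projection formula $(A_{123})$ to absorb the product into a pushforward from $V_1 \times_X V_2$, and finish with stability and multiplicativity of $\theta$. However, the explicit computation you write down has a step that does not typecheck, and you are aware of it but never fully repair it. The line
$$f_{1*}\bigl(\theta(f_1) \bullet f_{2*}(\theta(f_2))\bigr) = f_{1*}\bigl(\theta(f_1) \bullet g_{1*}(f_1^*\theta(f_2))\bigr) \qquad (A_{23})$$
would require $f_{2*}(\theta(f_2)) = g_{1*}(f_1^*\theta(f_2))$ in $\Bb^*(X \to X)$, but $f_1^*(\theta(f_2))$ lives in $\Bb^*(V_1 \times_X V_2 \to V_1)$ and has no pushforward along $g_1 = \mathrm{pr}_2 : V_1 \times_X V_2 \to V_2$; the morphism $V_1 \times_X V_2 \to V_1$ does not factor through $g_1$. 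The push-pull axiom $(A_{23})$ is not needed here at all. The paper instead applies $(A_{123})$ directly to the element $\theta(f_1) \bullet f_{2*}(\theta(f_2))$, using the independent square with right-hand leg $f_2$ (which is confined), $\beta = \theta(f_1)$, $\alpha = \theta(f_2)$, to get
$$\theta(f_1) \bullet f_{2*}(\theta(f_2)) = \mathrm{pr}_{1*}\bigl(f_2^*(\theta(f_1)) \bullet \theta(f_2)\bigr).$$

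There is also a consistent notational slip in the stability step: you write $g_2^*\theta(f_1) = \theta(g_1)$ and $g_1^*\theta(f_2) = \theta(g_2)$, but the bivariant pullback is taken along the morphism into the target of the arrow being pulled back, so the correct statements are $f_2^*\bigl(\theta(f_1)\bigr) = \theta(\mathrm{pr}_2) = \theta(g_1)$ and $f_1^*\bigl(\theta(f_2)\bigr) = \theta(\mathrm{pr}_1) = \theta(g_2)$. With $A_{123}$ in place of the spurious $A_{23}$ step and these subscripts corrected, the chain $f_{1*}\mathrm{pr}_{1*}\bigl(\theta(\mathrm{pr}_2) \bullet \theta(f_2)\bigr) = f_{12*}\theta(f_{12}) = f_{12!}(1_{V_1 \times_X V_2})$ closes cleanly, exactly as in the paper.
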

\begin{proof}
As the square
$$
\begin{tikzcd}
V_1 \times_X V_2 \arrow[]{d}{\mathrm{pr}_1} \arrow[]{r}{\mathrm{pr}_2} & V_2 \arrow[]{d}{f_2} \\
V_1 \arrow[]{r}{f_1} & X
\end{tikzcd}
$$
is derived Cartesian, we compute that
\begin{align*}
f_{1!}(1_{V_1}) \bullet f_{2!}(1_{V_2}) &= f_{1*}(\theta(f_1)) \bullet f_{2*}(\theta(f_2)) \\
&= f_{1*} \big(\theta(f_1) \bullet f_{2*}(\theta(f_2))\big) & (\text{bivariant axiom $A_{12}$}) \\
&= f_{1*}\Big(\mathrm{pr}_{1*} \big(f_2^*(\theta(f_1)) \bullet \theta(f_2)\big)\Big) & (\text{bivariant axiom $A_{123}$}) \\
&= f_{1*}\Big(\mathrm{pr}_{1*} \big(\theta(\mathrm{pr}_2) \bullet \theta(f_2)\big)\Big) & (\text{stability of $\theta$})\\
&= f_{12*}(\theta(f_{12})) \\
&= f_{12!}(1_{V_1 \times_X V_2}),
\end{align*}
which is exactly what we wanted to show.
\end{proof}

Another desirable property for Chern classes is that their behavior under tensor products should be governed by a formal group law. This is encapsulated by the following definition.

\begin{defn}\label{FGLAxiomDef}
Let $\Bb^*$ be an oriented bivariant theory with functoriality $\Fs$. We say that $\Bb^*$ satisfies the \emph{formal group law axiom}  if the Chern classes $c_1(\Ls)$ are nilpotent and there exists a formal group law $F^{\Bb^*}(x,y) \in \Bb^*(pt)[[x,y]]$ so that for all $X \in \Cc$ and for all line bundles $\Ls_1$ and $\Ls_2$ on $X$,
$$c_1(\Ls_1 \otimes \Ls_2) = F^{\Bb^*}\bigl(c_1(\Ls_1), c_1(\Ls_2)\bigr) \in \Bb^*(X).$$
\end{defn}

The following result shows that $F^{\Bb^*}$ is unique in good situations.

\begin{lem}\label{FGLUniqueLem}
Suppose that $\Bb^*$ is a stably oriented bivariant theory satisfying the section axiom and suppose the Chern classes $c_1(\Ls)$ are nilpotent for all line bundles $\Ls$. If there exists two power series $F(x,y), F'(x,y) \in \Bb^*(pt)[[x,y]]$ so that for all $X$ and for all line bundles $\Ls_1$ and $\Ls_2$ on $X$, the equality
$$F_1\big(c_1(\Ls_1), c_1(\Ls_2)\big) = F_2\big(c_1(\Ls_1), c_1(\Ls_2)\big) \in \Bb^*(X)$$
holds, then $F_1 = F_2$.
\end{lem}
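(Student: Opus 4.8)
The plan is to set $G := F_1 - F_2 = \sum_{i,j \ge 0} a_{ij}\, x^i y^j$ with $a_{ij} \in \Bb^*(pt)$; since the $c_1(\Ls)$ are nilpotent, the hypothesis says precisely that $\sum_{i,j} a_{ij}\, c_1(\Ls_1)^i \bullet c_1(\Ls_2)^j = 0$ in $\Bb^*(X)$ for every $X \in \Cc$ and every pair of line bundles $\Ls_1, \Ls_2$ on $X$ (the sum being finite). I would prove $a_{ij} = 0$ for all $i, j$ by induction on $i + j$, testing this identity on the spaces $\Pb^i \times \Pb^j$ equipped with $\Ls_1 = \Oc(1) \boxtimes \Oc$ and $\Ls_2 = \Oc \boxtimes \Oc(1)$.

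The first ingredient is a geometric description of the monomials in Chern classes. By Proposition \ref{ProductsOfChernProp}, for sections of $\Ls_1$ and $\Ls_2$ given by coordinate hyperplanes (which form regular sequences, so the relevant derived intersections coincide with the classical ones), one has $c_1(\Ls_1)^k \bullet c_1(\Ls_2)^l = (L_k \times M_l \hookrightarrow \Pb^i \times \Pb^j)_!(1)$ for $k \le i$, $l \le j$, where $L_k \subset \Pb^i$ and $M_l \subset \Pb^j$ are linear subspaces of codimensions $k$ and $l$; and for $k > i$ or $l > j$ the same proposition (choosing a coordinate section whose vanishing already empties one factor) identifies the monomial with $(\emptyset \hookrightarrow \Pb^i \times \Pb^j)_!(1_\emptyset)$, which is $0$ by nilpotence, being a high power of $c_1(\Ls_1)$ or of $c_1(\Ls_2)$. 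In particular the finite sum above runs only over $k \le i$, $l \le j$. The base case $a_{00} = 0$ is then immediate: on $X = pt$ the trivial line bundle has $c_1 = 0$ by the section axiom (use a nowhere-vanishing section), so $G(0,0) = 0$.

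The main step is to push forward along $\pi \colon \Pb^i \times \Pb^j \to pt$. The Gysin pushforward $\pi_!$ is additive, is $\Bb^*(pt)$-linear (Proposition \ref{RLinearTheoryProp}), and is functorial with respect to composition, so applying it to $0 = \sum_{k \le i,\, l \le j} a_{kl}\, (L_k \times M_l \hookrightarrow \Pb^i \times \Pb^j)_!(1)$ yields $0 = \sum_{k \le i,\, l \le j} a_{kl}\, e_{i-k,\, j-l}$ in $\Bb^*(pt)$, where $e_{a,b} := (\Pb^a \times \Pb^b \to pt)_!(1)$ and one uses $L_k \cong \Pb^{i-k}$, $M_l \cong \Pb^{j-l}$. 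By the inductive hypothesis every term with $(k,l) \ne (i,j)$ — equivalently $k + l < i + j$ — vanishes, and since $e_{0,0} = (\mathrm{Id}_{pt})_!(1) = 1_{pt}$ we are left with $a_{ij} = 0$. This completes the induction, whence $G = 0$ and $F_1 = F_2$.

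I expect the only real care to be needed in the first ingredient: verifying that the schematic intersections of coordinate hyperplanes are already derived (so that Proposition \ref{ProductsOfChernProp} computes the powers of Chern classes on the nose) and that the degenerate monomials vanish. These points are routine but should be phrased over an arbitrary Noetherian base, not just a field. The conceptual content — and the reason the argument works — is that pushing forward to a point converts the hypothesis into a triangular system of linear equations in the $a_{ij}$ whose leading coefficient is the unit $1_{pt}$, so the coefficients can be peeled off one total degree at a time.
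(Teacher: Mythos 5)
Your argument is correct and is essentially the paper's own proof: both test the vanishing of $F_1 - F_2$ on $\Pb^i \times \Pb^j$ with $\Oc(1,0)$ and $\Oc(0,1)$, use Proposition \ref{ProductsOfChernProp} to identify the surviving monomials geometrically, and push forward to $pt$ to isolate a single coefficient. The paper packages the bookkeeping as a contradiction from a minimal nonzero coefficient with respect to the coordinatewise partial order on $\Nb^2$, whereas you run an induction on total degree $i+j$; these are interchangeable ways of exploiting the same triangular structure, so there is no substantive difference.
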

\begin{proof}
It is clearly enough to show that a formal power series $F(x,y)$ is $0$ if $F\big(c_1(\Ls_1), c_1(\Ls_2)\big)$ always vanishes. Let us denote 
$$F(x,y) = \sum_{i,j \geq 0} b_{ij} x^i y^j.$$
If $F$ is not $0$, then we can find a minimal nonzero coefficient $b_{nm}$ in the sense that $i \leq n$, $j \leq m$ and $b_{ij} \not = 0$ imply that $i=n$ and $j=m$. 

Consider then the line bundle $\Oc(1,1)$ on $\Pb^n \times \Pb^m$. Using Proposition \ref{ProductsOfChernProp} to obtain geometric representatives of products of Chern classes, we may conclude that
$$c_1(\Oc(1,0))^i \bullet c_1(\Oc(0,1))^j = 
\begin{cases}
0 & \text{if $i>n$ or $j>m$;} \\
\iota_!\big(1_{pt}\big) & \text{if $i=n$ and $j=m$,} 
\end{cases}
$$
where $\iota$ is a constant section $pt \hookrightarrow \Pb^n \times \Pb^m$. Hence, 
\begin{align*}
b_{nm} &= \pi_{\Pb^n \times \Pb^m !}\Big(F\big(c_1(\Oc(1,0)), c_1(\Oc(0,1)) \big)\Big) \\
&= \pi_{\Pb^n \times \Pb^m !}(0) \\
&= 0
\end{align*}
contradicting the choice of $b_{nm}$.
\end{proof}

Moreover, the formal group law is preserved under orientation preserving Grothendieck transformations.

\begin{prop}\label{OrientedGTranPreserveFGLProp}
Let $\eta: \Bb^*_1 \to \Bb^*_2$ be an orientation preserving Grothendieck transformation, and suppose $\Bb^*_1$ is stably oriented and satisfies the section and the formal group law axioms. Then also $\Bb^*_2$ is stably oriented and satisfies the section and the formal law axioms, and moreover
$$F^{\Bb^*_2}(x,y) := \eta\big(F^{\Bb^*_1}(x,y)\big) \in \Bb^*_2(pt)[[x,y]].$$ 
is the unique formal power series $F$ so that 
$$c_1(\Ls_1 \otimes \Ls_2) = F\big(c_1(\Ls_1), c_1(\Ls_2)\big) \in \Bb_2^*(X)$$
for all $X$, $\Ls_1$ and $\Ls_2$.
\end{prop}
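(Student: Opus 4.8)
The plan is to verify each asserted property of $\Bb^*_2$ in turn, transporting the corresponding property of $\Bb^*_1$ across $\eta$. Since $\eta$ is orientation preserving, for every specialized $f$ we have $\eta(\theta^{\Bb^*_1}(f)) = \theta^{\Bb^*_2}(f)$; combined with the fact that $\eta$ commutes with bivariant pushforwards and pullbacks, this immediately gives that $\eta$ intertwines Gysin pushforwards and Gysin pullbacks. In particular, applying $\eta$ to the identity $c_1^{\Bb^*_1}(\Ls) = s_0^* s_{0!}(1_X)$ and using $\eta(1_X) = 1_X$ yields $\eta(c_1^{\Bb^*_1}(\Ls)) = c_1^{\Bb^*_2}(\Ls)$, i.e.\ $\eta$ takes first Chern classes to first Chern classes. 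This naturality of $c_1$ under $\eta$ is the workhorse for everything that follows.

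Next I would check that $\Bb^*_2$ is stably oriented: its orientation, which is $\theta^{\Bb^*_2}$, is preserved under bivariant pullbacks because for an independent square we have $g^*(\theta^{\Bb^*_2}(f)) = g^*(\eta(\theta^{\Bb^*_1}(f))) = \eta(g^*(\theta^{\Bb^*_1}(f))) = \eta(\theta^{\Bb^*_1}(f')) = \theta^{\Bb^*_2}(f')$, using stability of $\theta^{\Bb^*_1}$ and that $\eta$ commutes with pullbacks. Then the section axiom for $\Bb^*_2$: apply $\eta$ to $c_1^{\Bb^*_1}(\Ls) = i_{s!}(1_{Z_s})$, use that $\eta$ intertwines Gysin pushforwards and sends $1_{Z_s}$ to $1_{Z_s}$ and $c_1^{\Bb^*_1}(\Ls)$ to $c_1^{\Bb^*_2}(\Ls)$, obtaining $c_1^{\Bb^*_2}(\Ls) = i_{s!}(1_{Z_s})$, which is exactly the section axiom for $\Bb^*_2$.

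For the formal group law axiom, set $F := \eta(F^{\Bb^*_1}) \in \Bb^*_2(pt)[[x,y]]$, meaning the power series obtained by applying $\eta$ coefficientwise (this makes sense since $\eta$ is a ring map on $\Bb^*(pt)$). First, nilpotence of Chern classes in $\Bb^*_2$: by Proposition \ref{ProductsOfChernProp} applied to $\Bb^*_1$ (or directly), for a globally generated line bundle the first Chern class is nilpotent in $\Bb^*_1$, but here one must be slightly careful — the cleanest route is to note that $\Bb^*_2$ is stably oriented and satisfies the section axiom (just proven), so Proposition \ref{ProductsOfChernProp} applies directly to $\Bb^*_2$, giving nilpotence of Chern classes of globally generated line bundles in $\Bb^*_2$. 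For a general line bundle $\Ls$ on a quasi-projective $X$ one writes $\Ls \cong \Ls_1 \otimes \Ls_2^{-1}$ with $\Ls_i$ globally generated, and then the FGL (which we are about to establish, or which follows formally once the generating relation holds) reduces nilpotence of $c_1(\Ls)$ to that of $c_1(\Ls_1), c_1(\Ls_2)$; alternatively one simply transports nilpotence across $\eta$ on the relevant geometrically-generated subgroup. Then, for the relation itself: for line bundles $\Ls_1, \Ls_2$ on $X$, apply $\eta$ to $c_1^{\Bb^*_1}(\Ls_1 \otimes \Ls_2) = F^{\Bb^*_1}(c_1^{\Bb^*_1}(\Ls_1), c_1^{\Bb^*_1}(\Ls_2))$; the left side becomes $c_1^{\Bb^*_2}(\Ls_1\otimes\Ls_2)$ by naturality of $c_1$, and since $\eta$ is a ring homomorphism commuting with bivariant products it sends the right side to $F(c_1^{\Bb^*_2}(\Ls_1), c_1^{\Bb^*_2}(\Ls_2))$, yielding the FGL axiom for $\Bb^*_2$ with group law $F$. (That $F$ is actually a formal group law — unitality, associativity — follows because these are polynomial identities among the coefficients that hold in $\Bb^*_1(pt)$ and are preserved by the ring map $\eta$.) Finally, uniqueness of $F$ is exactly Lemma \ref{FGLUniqueLem} applied to $\Bb^*_2$, whose hypotheses (stably oriented, section axiom, Chern classes nilpotent) we have verified.

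The only genuinely delicate point, and the one I would flag as the main obstacle, is the nilpotence of $c_1(\Ls)$ for \emph{arbitrary} line bundles $\Ls$ in $\Bb^*_2$ needed to invoke the FGL formalism cleanly: on an object $X$ that is not quasi-projective there need not be enough globally generated line bundles, so one cannot always split $\Ls$ as a difference of globally generated ones, and one must instead argue that nilpotence is inherited from $\Bb^*_1$ because the relevant elements $c_1^{\Bb^*_1}(\Ls)^N = 0$ map under $\eta$ to $c_1^{\Bb^*_2}(\Ls)^N = 0$. Everything else is a routine diagram-chase using that $\eta$ is an orientation-preserving Grothendieck transformation together with the Chern-class naturality established at the outset.
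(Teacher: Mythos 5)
Your proof is correct and takes the same approach as the paper, which simply asserts that the first three claims ``follow immediately from the fact that $\eta$ is an orientation preserving Grothendieck transformation'' and then invokes Lemma~\ref{FGLUniqueLem} for uniqueness; you are filling in exactly the diagram chases the paper leaves implicit. The one point you flag as ``genuinely delicate'' --- nilpotence of $c_1^{\Bb_2^*}(\Ls)$ for arbitrary $\Ls$ --- is not actually delicate, and you already have the right argument at the end: since $\Bb_1^*$ satisfies the formal group law axiom, $c_1^{\Bb_1^*}(\Ls)^N = 0$ for some $N$, and because $\eta$ commutes with bivariant products and sends $c_1^{\Bb_1^*}(\Ls)$ to $c_1^{\Bb_2^*}(\Ls)$ (which follows, as you noted, from $\eta$ intertwining Gysin pushforwards and pullbacks and preserving units, the latter since $1_X = \theta(\mathrm{Id}_X)$ and $\eta$ preserves orientations), applying $\eta$ to $c_1^{\Bb_1^*}(\Ls)^N = 0$ gives $c_1^{\Bb_2^*}(\Ls)^N = 0$ directly. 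There is no need for the detour through globally generated line bundles or any quasi-projectivity hypothesis; the transport-across-$\eta$ argument is primary, not a fallback.
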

\begin{proof}
The first three claims follow immediately from the fact that $\eta$ is an orientation preserving Grothendieck transformation. The uniqueness of $F^{\Bb^*_2}$ follows from Lemma \ref{FGLUniqueLem}.
\end{proof}

The above result has interesting consequences. Recall that the \emph{Lazard ring} $\Lb$ is the coefficient ring of the \emph{universal formal group law}
$$F_\univ(x,y) := \sum_{i,j \geq 0} a_{ij} x^i y^j \in \Lb[[x,y]]$$
and that for a commutative ring $R$ there is a bijective correspondence between formal groups laws with coefficients in $R$ and ring homomorphisms $\Lb \to R$. In particular, if $\Bb^*$ is an oriented bivariant theory satisfying the formal group law axiom, then the formal group law $F^{\Bb^*}$ induces a morphism $\Lb \to \Bb^*(pt)$. We summarize our observations in the following theorem.

\begin{thm}\label{FGLIsLLinearThm}
Stably oriented bivariant theories satisfying the section and the formal group law axioms are $\Lb$-linear in a unique way compatible with the formal group law acting on Chern classes. Moreover, any orientation preserving Grothendieck transformation between such theories is $\Lb$-linear and preserves the formal group law.
\end{thm}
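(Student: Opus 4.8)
The plan is to deduce Theorem \ref{FGLIsLLinearThm} directly from the machinery already assembled, with essentially no new computation. First I would establish the $\Lb$-linearity. Let $\Bb^*$ be a stably oriented bivariant theory satisfying the section and formal group law axioms. By Definition \ref{FGLAxiomDef} it comes equipped with a formal group law $F^{\Bb^*}(x,y) \in \Bb^*(pt)[[x,y]]$, and by the discussion immediately preceding the theorem, the universal property of the Lazard ring $\Lb$ (the bijection between formal group laws over a commutative ring $R$ and ring maps $\Lb \to R$) furnishes a canonical ring homomorphism $\Lb \to \Bb^*(pt)$ classifying $F^{\Bb^*}$. Here one should note that $\Bb^*(pt)$ is commutative: since all the bivariant theories in play are commutative (this is part of the admissible setup, and in any case the theorem is about such theories), the associated cohomology ring $\Bb^*(pt)$ is commutative, so the Lazard-ring universal property applies. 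This ring map $\Lb \to \Bb^*(pt)$ is precisely a ring of coefficients in the sense of Definition \ref{BivariantCoeffRingDef}, so $\Bb^*$ is $\Lb$-linear via Proposition \ref{RLinearTheoryProp}, and this $\Lb$-linear structure is by construction the one compatible with $F^{\Bb^*}$ acting on Chern classes.

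Next I would address uniqueness of this $\Lb$-linear structure. Any $\Lb$-linear structure compatible with the formal group law is, by definition, one for which the classifying map $\Lb \to \Bb^*(pt)$ sends the universal formal group law $F_\univ$ to a formal group law $F$ over $\Bb^*(pt)$ satisfying $c_1(\Ls_1 \otimes \Ls_2) = F(c_1(\Ls_1), c_1(\Ls_2))$ for all $X, \Ls_1, \Ls_2$. By Lemma \ref{FGLUniqueLem}, which applies since $\Bb^*$ is stably oriented, satisfies the section axiom, and has nilpotent Chern classes (part of the formal group law axiom), there is at most one such $F$, namely $F^{\Bb^*}$. Hence the ring map $\Lb \to \Bb^*(pt)$ is forced, and the $\Lb$-linear structure is unique.

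For the second assertion, let $\eta: \Bb^*_1 \to \Bb^*_2$ be an orientation preserving Grothendieck transformation between two such theories. By Proposition \ref{OrientedGTranPreserveFGLProp}, $\Bb^*_2$ is automatically stably oriented and satisfies the section and formal group law axioms, and moreover $F^{\Bb^*_2}(x,y) = \eta(F^{\Bb^*_1}(x,y))$. Spelling this out: the composite $\Lb \to \Bb^*_1(pt) \xrightarrow{\eta} \Bb^*_2(pt)$ classifies the formal group law $\eta(F^{\Bb^*_1})$, which by Proposition \ref{OrientedGTranPreserveFGLProp} equals $F^{\Bb^*_2}$; but the $\Lb$-linear structure on $\Bb^*_2$ is, by the first part, classified by exactly the map sending $F_\univ$ to $F^{\Bb^*_2}$. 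Therefore the triangle $\Lb \to \Bb^*_1(pt) \to \Bb^*_2(pt)$ commutes, i.e., $\eta|_{pt}: \Bb^*_1(pt) \to \Bb^*_2(pt)$ is a map of $\Lb$-algebras. Proposition \ref{RLinearGTransProp} then upgrades this to the statement that $\eta$ is $\Lb$-linear on all bivariant groups $\Bb^*_1(X \to Y) \to \Bb^*_2(X \to Y)$. That $\eta$ preserves the formal group law is the identity $F^{\Bb^*_2} = \eta(F^{\Bb^*_1})$ just invoked.

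I do not anticipate a genuine obstacle here: the theorem is a clean packaging of Definition \ref{FGLAxiomDef}, the Lazard universal property, Lemma \ref{FGLUniqueLem}, Proposition \ref{OrientedGTranPreserveFGLProp}, Proposition \ref{RLinearTheoryProp} and Proposition \ref{RLinearGTransProp}. The only point requiring a moment's care is the bookkeeping that ``compatible with the formal group law acting on Chern classes'' is exactly the condition pinning down the classifying map $\Lb \to \Bb^*(pt)$, so that Lemma \ref{FGLUniqueLem} really does deliver uniqueness; and relatedly, checking that the commutativity hypothesis is available to guarantee $\Bb^*(pt)$ is a commutative ring so that the Lazard correspondence is applicable in the first place.
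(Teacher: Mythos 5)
Your proof is correct and takes essentially the same route as the paper: the Lazard universal property furnishes the $\Lb$-linearization from $F^{\Bb^*}$, Lemma \ref{FGLUniqueLem} gives uniqueness, and Propositions \ref{OrientedGTranPreserveFGLProp} and \ref{RLinearGTransProp} handle the Grothendieck transformation, exactly as in the paper. One small caveat: your justification that $\Bb^*(pt)$ is commutative because ``this is part of the admissible setup'' is not accurate --- Definition \ref{AdmissibleFunctorialityDef} imposes no commutativity hypothesis on the bivariant theory and Theorem \ref{FGLIsLLinearThm} does not assume it either --- though the concern is harmless, since the coefficients $b_{ij}$ of any power series satisfying the formal-group-law identities must commute among themselves, so the classifying map $\Lb \to \Bb^*(pt)$ exists regardless of whether $\Bb^*(pt)$ is commutative.
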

\begin{proof}
Indeed, as we already observed above, the choice of the formal group law $F^{\Bb^*}$ $\Lb$-linearizes $\Bb^*$. Since $F^{\Bb^*}$ is unique by Lemma \ref{FGLUniqueLem}, so is the associated $\Lb$-linearization. An orientation preserving Grothendieck transformation $\eta: \Bb^*_1 \to \Bb^*_2$ between such theories preserves the formal group law by Proposition \ref{OrientedGTranPreserveFGLProp}, so the induced morphism $\Bb^*_1(pt) \to \Bb^*_2(pt)$ is a map of $\Lb$-algebras, proving that $\eta$ is $\Lb$-linear by Proposition \ref{RLinearGTransProp}.
\end{proof}

We end this section by discussing the snc axiom, which is the third defining feature of bivariant algebraic cobordism (after the section and the formal group law axioms). Let us first clarify what we mean by an snc divisor in $\Cc$.

\begin{defn}\label{ASNCDivDef}
Let $W \in \Cc$ be a derived scheme smooth over the final object $pt$ in $\Cc$. A $\Cc$-snc divisor on $W$ is the data of an effective Cartier divisor $D \hookrightarrow W$, Zariski connected divisors $D_1,...,D_r \hookrightarrow W$ with 
$$D_I := \cap_{i \in I} D_i$$
smooth and of the expected relative dimension over $pt$ for all $I \subset \{1,2,...,r\}$, and positive integers $n_1,...,n_r$ such that
$$D \simeq n_1 D_1 + \cdots + n_r D_r$$
as effective Cartier divisors on $W$. 
\end{defn}

\begin{rem}
If $\Cc$ is a category of separated finite type derived schemes over a perfect field $k$, then the above definition recovers the usual definition of an snc divisor with its prime decomposition.
\end{rem}

Suppose then that $\Bb^*$ is a stably oriented bivariant theory satisfying the section axiom and the formal group law axiom with formal group law $F$. Let $W \in \Cc$ be smooth and of pure relative dimension over $pt$, and let $D \hookrightarrow W$ be an $\Cc$-snc divisor. Denoting by $+_F$ the formal addition given by the formal group law $F$ and by $[n]_F$ the formal multiplication (iterated formal addition), the formal power series
$$[n_1]_F \cdot x_1 +_F \cdots +_F [n_r]_F \cdot x_r$$ 
in $r$ variables has a unique expression of form 
$$\sum_{I \subset \{1,2,...,r \}} \textbf{x}^I F^{n_1,...,n_r}_{I}(x_1,...,x_r),$$
where 
$$\textbf{x}^I = \prod_{i \in I} x_i$$
and $F_{ I}^{n_1,...,n_r}(x_1,...,x_r)$ contains only variables $x_i$ such that $i \in I$ (see \cite{LM} Lemma 3.1.2 for details). With this notation we can make the following definition.

\begin{defn}\label{ZetaCLassDef}
Let everything be as above. We define
$$
\zeta_{W,D, \Bb^*} := \sum_{I \subset \{1,2,...,r\}} \iota^I_*\Biggl(F^{n_1,...,n_r}_I \Bigl( c_1\bigl(\Oc(D_1)\bigr), ..., c_1\bigl(\Oc(D_r)\bigr) \Bigr) \bullet \theta(\pi_{D_I}) \Biggr) \in \Bb^*(D \to pt),
$$
where $\iota^I$ is the inclusion $D_I \hookrightarrow D$. 
\end{defn}

The following result shows that we can drop $\Bb^*$ from the notation as $\zeta_{W,D,\Bb^*}$ does not depend on the bivariant theory.

\begin{lem}\label{ZetaClassIsNaturalLem}
Let $\eta: \Bb_1^* \to \Bb^*_2$ be an orientation preserving Grothendieck transformation between stably oriented bivariant theories, which satisfy the section and the formal group law axioms. If $D \hookrightarrow W$ is an $A$-snc divisor, then
$$\eta(\zeta_{W,D,\Bb^*_1}) = \zeta_{W,D,\Bb^*_2} \in \Bb^*_2(D \to pt).$$
\end{lem}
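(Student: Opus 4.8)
The plan is to unwind the definition of $\zeta_{W,D,\Bb^*}$ given in Definition \ref{ZetaCLassDef} and check that each ingredient is compatible with an orientation preserving Grothendieck transformation $\eta$. The definition is a finite sum over subsets $I \subset \{1,\dots,r\}$ of terms of the form $\iota^I_*\bigl(F^{n_1,\dots,n_r}_I(c_1(\Oc(D_1)),\dots,c_1(\Oc(D_r))) \bullet \theta(\pi_{D_I})\bigr)$, and since a Grothendieck transformation is additive, it suffices to treat each summand separately.

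First I would observe that $\eta$ commutes with bivariant pushforward, so $\eta(\iota^I_*(-)) = \iota^I_*(\eta(-))$, reducing us to understanding $\eta$ applied to the bivariant product $F^{n_1,\dots,n_r}_I(c_1(\Oc(D_1)),\dots,c_1(\Oc(D_r))) \bullet \theta(\pi_{D_I})$. Since $\eta$ commutes with the bivariant product $\bullet$, this splits further: $\eta$ is orientation preserving, hence $\eta(\theta(\pi_{D_I})) = \theta(\pi_{D_I})$ (note $\pi_{D_I}$ is quasi-smooth, hence specialized, so its orientation class is defined in both theories); and $\eta$ sends $c_1^{\Bb^*_1}(\Oc(D_i))$ to $c_1^{\Bb^*_2}(\Oc(D_i))$ because the first Chern class $c_1(\Ls) = s_0^* s_{0!}(1_X)$ of Definition \ref{GeneralChernClassDefn} is built purely out of bivariant pullback, Gysin pushforward (i.e. pushforward composed with multiplication by an orientation class) and the unit $1_X$, all of which $\eta$ respects. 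The one genuinely substantive point is that $\eta$ must carry the coefficients $F^{n_1,\dots,n_r}_I$ of theory $\Bb^*_1$ to those of theory $\Bb^*_2$: here I would invoke Theorem \ref{FGLIsLLinearThm} (and Proposition \ref{OrientedGTranPreserveFGLProp}), which tells us that $\eta$ is $\Lb$-linear and carries $F^{\Bb^*_1}$ to $F^{\Bb^*_2}$. Since the power series $F^{n_1,\dots,n_r}_I$ is obtained from the formal group law by a universal algebraic procedure (the decomposition of $[n_1]_F x_1 +_F \cdots +_F [n_r]_F x_r$ described before Definition \ref{ZetaCLassDef}), applying $\eta$ to its coefficients — which lie in the image of $\Lb \to \Bb^*_1(pt)$ — yields exactly the corresponding coefficients computed from $F^{\Bb^*_2}$.

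The main (and really only) obstacle is bookkeeping: one must be careful that evaluating a formal power series in nilpotent Chern classes and then applying $\eta$ agrees with applying $\eta$ to the coefficients first and then evaluating. This is where $\Lb$-linearity of $\eta$ and multiplicativity of $\eta$ on bivariant products combine — $\eta\bigl(\sum_{\alpha} b_\alpha \cdot c_1(\Oc(D_1))^{\alpha_1} \bullet \cdots \bullet c_1(\Oc(D_r))^{\alpha_r}\bigr) = \sum_\alpha \eta(b_\alpha) \cdot \eta(c_1(\Oc(D_1)))^{\alpha_1} \bullet \cdots$, using that the $b_\alpha$ act through $\pi^*_X$ of elements of $\Bb^*(pt)$ and $\eta$ commutes with $\pi^*_X$ and with $\bullet$; the sum is finite because the Chern classes are nilpotent, so there are no convergence issues. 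Assembling these observations over all $I$ gives $\eta(\zeta_{W,D,\Bb^*_1}) = \zeta_{W,D,\Bb^*_2}$ in $\Bb^*_2(D \to pt)$, as claimed.
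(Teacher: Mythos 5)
Your argument is correct and is exactly the unwinding the paper has in mind: the paper's proof just says the claim ``follows immediately from the fact that $\eta$ preserves the formal group law and all other relevant structure,'' and you have spelled out what ``all other relevant structure'' means — pushforwards, products, orientations, Chern classes, and $\Lb$-linearity. No gaps; this is the same proof, written out in full.
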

\begin{proof}
This follows immediately from the fact that $\eta$ preserves the formal group law and all other relevant structure.
\end{proof}

From now on, we shall denote $\zeta_{W,D,\Bb^*}$ simply by $\zeta_{W.D}$. The following definition allows us to restrict our attention to bivariant theories where fundamental classes looking like they should have a geometric decomposition, actually do admit such a decomposition.

\begin{defn}\label{SNCAxiomDef}
Let $\Bb^*$ be an oriented bivariant theory satisfying the section and the formal group law axioms. We say that $\Bb^*$ satisfies the \emph{snc axiom} if for all $\Cc$-snc divisors $D \hookrightarrow W$, we have
$$\zeta_{D,W} = \theta(\pi_D) \in \Bb_*(D).$$
\end{defn}

\section{Proof of Theorem \ref{FirstComparisonThm}}\label{FirstComparisonSect}
The purpose of this section is to prove Theorem \ref{FirstComparisonThm}, which is done at the end of Section \ref{ProofOfComparison1SubSect}. In Section \ref{DerivedBivariantCobConsSubSect} we recall the construction and the basic properties of the Lowrey--Schürg algebraic bordism $d \Omega^A_*$ and the bivariant derived algebraic cobordism $d \Omega^*_A$. In Section \ref{UnivPrecobConsSubSect} we recall the construction of universal precobordism and prove that it is the universal stably oriented additive bivariant theory satisfying the section and the formal group law axioms. We also construct the bivariant algebraic cobordism, and provide it with a similar universal property.

Throughout the section, we will denote by $A$ the base ring that is assumed to be Noetherian and of finite Krull dimension. Let $\Fs_A = (\Cc_A, \Cs_A, \Is_A, \Ss_A)$ be the functoriality where $\Cc$ is the homotopy category of quasi-projective derived $A$-schemes, where $\Cs$ is the class of \emph{projective morphisms}, where $\Is$ is the class of all \emph{derived Cartesian squares} and where $\Ss$ is the class of \emph{quasi-smooth morphisms}. We note that $\Fs_A$ is admissible in the sense of Definition \ref{AdmissibleFunctorialityDef}. To set up the stage, we introduce the following notions.

\begin{defn}\label{AdditivityDef}
A bivariant theory $\Bb^*$ with functoriality $\Fs_A$ is called \emph{additive} if the morphisms
$$\Bb^*(X_1 \to Y) \oplus \Bb^*(X_2 \to Y) \xrightarrow{\iota_{1*} + \iota_{2*}} \Bb^*\Bigr(X_1 \coprod X_2 \to Y\Bigr)$$
are isomorphisms for all $X_1, X_2$ and $Y$, where $\iota_i$ is the canonical inclusion $X_i \to X_1 \coprod X_2$.
\end{defn}

\begin{defn}\label{UnivAdditiveBivariantThyDef}
We will denote by $\Mc_{A,+}^*$ the universal \emph{additive} bivariant theory with functoriality $\Fs_A$, i.e., the theory which is obtained from $\Mc_{\Fs_A}$ by imposing relations which identify taking disjoint union with summation. It has a grading defined as follows: $\Mc_{A,+}^d(X \to Y)$ is the group completion of the Abelian monoid on equivalence classes
$$[V \to X]$$
of projective morphisms of quasi-projective derived $A$-schemes having the property that the composition $V \to Y$ is quasi-smooth and of relative virtual dimension $-d$. 
\end{defn}

\subsection{Construction of derived bordism and derived bivariant cobordism}\label{DerivedBivariantCobConsSubSect}

Our goal in this section is to give a detailed construction of Lowrey--Schürg algebraic bordism groups $d \Omega^{A}_*(X)$ which both generalizes the construction from \cite{LS} (Lowrey and Schürg work over a field), and fixes minor gaps in the original construction. We will simultaneously construct derived bivariant algebraic cobordism $d \Omega^*_A$ as the finest oriented bivariant theory on $\Fs_A$ extending $d\Omega^{A}_*(X)$. Let us denote
$$\Mc^{A,+}_*(X) := \Mc^{-*}_{A,+}(X \to pt).$$
Given a bivariant theory $\Bb^*$ on $\Fs_A$, and subsets $\Sc(X) \subset \Bb_*(X)$, we will denote by $\langle \Sc \rangle_{\Bb_*}$ the \emph{homology ideal} generated by $\Sc$, i.e., the smallest collection of subsets containing $\Sc$ that is stable under sums, pushforwards, Gysin pullbacks, and external products by arbitrary elements. Note that $\Sc$ and $\langle \Sc \rangle_{\Bb_*}$ clearly generate the same bivariant ideal inside $\Bb^*$.

We will construct $d \Omega^{A}_*$ as a quotient of $\Lb_* \otimes \Mc^{A,+}_*$. We will use the following fact several times: since $\Lb_* \otimes \Mc^{A,+}_*$ is generated by orientations in the sense of Proposition \ref{BivariantHomIdealProp}, any $\Bb_* := \Lb_* \otimes \Mc^{A,+}_* / \Ic$ is the associated homology theory of the bivariant theory $\Bb^* := \Lb^* \otimes \Mc_{A,+}^* / \langle \Ic \rangle$. Another useful fact that we will need is the following result.

\begin{lem}\label{StrongOrientationLem}
Let $\Bb^*$ be an oriented bivariant theory with functoriality $\Fs_A$. Then the orientation of $\Bb^*$ is strong along smooth morphisms. In other words, if $p: Y' \to Y$ is smooth, then the induced morphism
$$- \bullet \theta(p): \Bb^*(X \to Y') \to \Bb^*(X \to Y)$$
is an isomorphism.
\end{lem}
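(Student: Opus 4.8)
The statement asserts that for a smooth morphism $p\colon Y' \to Y$, composition with $\theta(p)$ gives an isomorphism $\Bb^*(X \to Y') \to \Bb^*(X \to Y)$. The plan is to construct an explicit two-sided inverse. Since $p$ is smooth, its relative diagonal $\delta\colon Y' \to Y' \times_Y Y'$ is a quasi-smooth closed immersion (hence specialized and confined), and the projection $\mathrm{pr}_1\colon Y' \times_Y Y' \to Y'$ is smooth; writing $\Gamma_p \in \Bb^*(Y' \to Y' \times_Y Y')$ for... more precisely, the candidate inverse should be given by composing with a suitable class built from the graph of $p$. Concretely, form the Cartesian square with $Y' \to Y$ and $p\colon Y' \to Y$ on the two sides; the diagonal section $\delta\colon Y' \hookrightarrow Y' \times_Y Y'$ lands in this fibre product, it is confined (a closed immersion) and the composition to $Y'$ (via the second projection, which is smooth) is an isomorphism, hence specialized. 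This produces, after pushforward along $\delta$, an element of $\Bb^*(Y' \to Y')$ whose bivariant product with $\theta(p)$ on the appropriate side should recover identities.

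The key steps, in order, are: (1) identify the precise bivariant element $\gamma \in \Bb^*(Y' \to Y')$ playing the role of the inverse — I expect $\gamma = \delta_*(\theta(q))$ where $q\colon Y' \to Y'\times_Y Y'$ fits as a section of $\mathrm{pr}_1$, pulled back from the square defining the composition $\Bb^*(X \to Y) \to \Bb^*(X \to Y')$ via $p^*$; (2) show $(-\bullet \theta(p))$ followed by $p^*$ (the bivariant pullback along the Cartesian square with $p$ downstairs) equals $(-\bullet \gamma)$, using axioms $A_{23}$ (push-pull) and $A_{13}$ together with stability of $\theta$ to move the pullback past the orientation; (3) show $\gamma \bullet \theta(p) = \theta(\mathrm{Id}_{Y'}) = 1_{Y'}$, which follows because the relevant Cartesian square has the diagonal as a section and $\theta$ is multiplicative and stable, so $\delta_*(\theta(q)) \bullet \theta(p)$ collapses to $\theta$ of the identity; (4) conversely show $\theta(p) \bullet \gamma'$ for the analogous class on the other side equals $1$, or more simply argue that $(-\bullet\theta(p))$ is injective because $p^*(-\bullet\theta(p)) = -\bullet\gamma$ and $\gamma$ is a unit (its product with $\theta(p)$ is $1$, making it invertible from one side, and commutativity is \emph{not} needed if we set things up symmetrically), and surjective because any $\beta \in \Bb^*(X \to Y)$ satisfies $\beta = (p^*\beta \bullet \gamma')\bullet\theta(p)$ after unwinding.

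The main obstacle will be step (2)–(3): correctly bookkeeping which Cartesian squares are independent (here all derived Cartesian squares are independent since $\Fs_A$ is admissible, so this is automatic, which helps) and verifying that the composite $p^* \circ (-\bullet\theta(p))$ genuinely equals right-multiplication by the class $\gamma$ obtained from the diagonal — this is a diagram chase through $A_{13}$ and $A_{23}$ where one must be careful that the section $\delta$ of the fibre product $Y'\times_Y Y'$ is confined \emph{and} that its composite to the base is an isomorphism (hence has $\theta$ equal to a unit after pushforward). Once $\gamma \bullet \theta(p) = 1_{Y'}$ is established, injectivity of $-\bullet\theta(p)$ is immediate; for surjectivity one checks that $\theta(p) \bullet \gamma = 1_Y$ as well — here I would again use that the two relevant diagonal-section squares are transposes of one another and invoke commutativity of $\Bb^*$ if needed, though since the excerpt does not assume $\Bb^*$ commutative in this lemma, I would instead argue surjectivity directly by exhibiting, for each $\beta \in \Bb^*(X\to Y)$, the preimage $p^*(\beta)\bullet\gamma \in \Bb^*(X\to Y')$ and checking $\bigl(p^*(\beta)\bullet\gamma\bigr)\bullet\theta(p) = \beta$ via the projection formula $A_{123}$ and push-pull $A_{23}$.
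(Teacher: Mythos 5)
The paper proves this lemma by citing \cite{An1}, Proposition 2.9, so there is no in-paper argument to compare against directly; the substantive question is whether your plan would yield a correct self-contained proof, and here there is a genuine structural gap.

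The class $\gamma$ you propose is required to lie in $\Bb^*(Y' \to Y')$, but no element of $\Bb^*(Y' \to Y')$ can serve as the inverse of $-\bullet\theta(p)\colon \Bb^*(X\to Y')\to \Bb^*(X\to Y)$. Right-multiplication by $\gamma\in\Bb^*(Y'\to Y')$ carries $\Bb^*(X\to Y')$ to $\Bb^*(X\to Y')$, not from $\Bb^*(X\to Y)$ to $\Bb^*(X\to Y')$. Several of your proposed identities therefore do not type-check: in step (2) the source of $p^*\circ(-\bullet\theta(p))$ is $\Bb^*(X\to Y')$ but its target is $\Bb^*(W\to Y')$ with $W := Y'\times_Y X$, whereas the target of $-\bullet\gamma$ is $\Bb^*(X\to Y')$, so these cannot be equal as maps; in step (3), $\gamma\bullet\theta(p)$ lies in $\Bb^*(Y'\to Y)$ while $1_{Y'}$ lies in $\Bb^*(Y'\to Y')$, so the equation $\gamma\bullet\theta(p)=1_{Y'}$ is not even well-posed; and in step (4), $p^*(\beta)\in\Bb^*(W\to Y')$ so $p^*(\beta)\bullet\gamma$ lies in $\Bb^*(W\to Y')$, not in $\Bb^*(X\to Y')$ as a preimage must. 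Also note that $\delta_*(\theta(q))$ for $q=\delta$ lands in $\Bb^*(Y'\times_Y Y'\to Y'\times_Y Y')$, not $\Bb^*(Y'\to Y')$, since pushforward along $\delta$ changes the first slot to $Y'\times_Y Y'$.

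The correct construction (and, I believe, the one in the cited reference) is not right-multiplication by a single class; it necessarily combines a bivariant pullback with a left-multiplication. Writing $f\colon X\to Y'$ for the structure map and $\gamma_f\colon X\hookrightarrow W=Y'\times_Y X$ for its graph over $Y$ --- which is the derived pullback of the diagonal $\delta\colon Y'\to Y'\times_Y Y'$ along $W\to Y'\times_Y Y'$, hence a quasi-smooth closed immersion and so specialized --- one defines $\sigma(\beta):=\theta(\gamma_f)\bullet p^*(\beta)\in\Bb^*(X\to Y')$ for $\beta\in\Bb^*(X\to Y)$, and then checks that $\sigma$ is a two-sided inverse using $A_{13}$, $A_{123}$, stability of $\theta$, and $\theta(\gamma_f)\bullet\theta(\pi_X)=\theta(\mathrm{Id}_X)=1_X$ where $\pi_X\colon W\to X$ is the smooth projection. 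Your intuition about the diagonal, the multiplicativity of $\theta$, and which axioms are in play is sound, but the inverse must be packaged as the operation $\theta(\gamma_f)\bullet p^*(-)$ rather than as multiplication by a single bivariant class.
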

\begin{proof}
This is a special case of \cite{An1} Proposition 2.9.
\end{proof}

We will construct $d \Omega^*_A$ and $d \Omega^A_*$ in several steps following Section 3 of \cite{LS}.

\begin{cons}[Cf. \cite{LS} Definition 3.4]\label{NaiveDerivedCobCons}
We define
$$d\Omega^{A,\mathrm{naive}}_* := \Lb_* \otimes \Mc^{A,+}_* / \Rc^\mathrm{fib},$$
where $\Rc^\mathrm{fib}$ is the homology ideal of homotopy fibre relations. In other words, $\Rc^\mathrm{fib}_A(X)$ is the $\Lb$-module generated by elements of form
$$[W_0 \to X] - [W_\infty \to X] \in \Lb_* \otimes \Mc^{A,+}_*(X),$$
where $W \to \Pb^1 \times X$ is a projective morphism with $W$ quasi-smooth over $A$, and where $W_0$ and $W_\infty$ are the fibres of $W \to \Pb^1$ over $0$ and $\infty$ respectively. This is the homology theory associated to
$$d \Omega^*_{A, \naive} :=  \Lb^* \otimes \Mc_{A,+}^* / \langle \Rc^\mathrm{fib} \rangle.$$
\end{cons}

\begin{lem}
The bivariant theory $d \Omega^*_{A, \naive}$ satisfies the section axiom in the sense of Definition \ref{SectionAxiomDef}.
\end{lem}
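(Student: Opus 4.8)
The plan is to unwind the definition of the section axiom (Definition \ref{SectionAxiomDef}) and verify it directly on geometric representatives in $d\Omega^*_{A,\naive} = \Lb^* \otimes \Mc_{A,+}^* / \langle \Rc^{\mathrm{fib}} \rangle$. Fix $X \in \Cc_A$, a line bundle $\Ls$ on $X$, and a global section $s$. By Remark \ref{SectionAxiomForZeroSectRem}, it suffices to show that $i_{s!}(1_{Z_s}) = i_{s_0!}(1_{Z_{s_0}})$ in $\Bb^*(X) = \Bb^*(X \xrightarrow{\mathrm{Id}} X)$, i.e.\ in the associated cohomology ring. Passing through the associated homology description, since $d\Omega^*_{A,\naive}$ is generated by orientations, this is equivalent to an equality in $d\Omega^{A,\naive}_*(\Ls)$ (or more precisely, an equality of the classes $i_{s*}(\theta(\pi_{Z_s}))$ and $i_{s_0*}(\theta(\pi_{Z_{s_0}}))$, both living in $\Bb^*(X)$). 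Unwinding the Gysin pushforward, $i_{s!}(1_{Z_s})$ is represented by the cycle $[Z_s \hookrightarrow X]$, where $Z_s$ is the derived vanishing locus of $s$, which is quasi-smooth over $X$ of virtual codimension $1$; likewise $i_{s_0!}(1_{Z_{s_0}})$ is represented by $[X \hookrightarrow X]$ via the zero section (note $Z_{s_0} \simeq X$). So the claim reduces to showing $[Z_s \hookrightarrow X] = [X \xrightarrow{\mathrm{Id}} X]$ modulo $\langle \Rc^{\mathrm{fib}}\rangle$ — but wait, that is the wrong ambient group; these classes live in $\Bb^*(X \to X)$, not $\Bb_*(X)$. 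The correct statement is an equality in $\Bb_*(\Ls)$ after applying the zero-section pullback, or more cleanly: both sides equal $c_1(\Ls)$, and we compare them inside $d\Omega^{A,\naive}_*$ of the total space, using that the orientation is strong along smooth morphisms (Lemma \ref{StrongOrientationLem}) to transport between $\Bb^*(X \to \Ls)$ and $\Bb^*(X \to X)$.

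Concretely, I would construct the homotopy interpolating between $Z_s$ and $Z_{s_0}$. Consider the line bundle $\mathrm{pr}_1^*\Ls$ on $\Pb^1 \times X$ together with the section $t \cdot \mathrm{pr}_2^* s + u \cdot \mathrm{pr}_2^* s_0$, where $[t:u]$ are homogeneous coordinates on $\Pb^1$ — more precisely, one takes the section of $\mathrm{pr}_1^*\Ls \otimes \mathrm{pr}_1^*\Oc(1)$ (or a suitable twist) whose restriction to $\{0\} \times X$ is $s$ and whose restriction to $\{\infty\} \times X$ is $s_0$. Its derived vanishing locus $W \hookrightarrow \Pb^1 \times X$ is quasi-smooth over $A$ (being a derived Cartier divisor in the smooth-over-$X$ scheme $\Pb^1 \times X$, hence quasi-smooth over $X$, hence quasi-smooth over $A$ by composition), the composite $W \to \Pb^1 \times X \to X$ is projective since $\Pb^1 \times X \to X$ is, and the fibres $W_0$ and $W_\infty$ are exactly $Z_s$ and $Z_{s_0}$ by base change of derived vanishing loci. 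Hence $[Z_s \to X] - [Z_{s_0} \to X] \in \Rc^{\mathrm{fib}}_A(X)$, which gives the desired equality in $d\Omega^{A,\naive}_*(X)$, and therefore in the cohomology ring $d\Omega^*_{A,\naive}(X)$ after reinterpreting via orientations. The one subtlety is that $Z_{s_0}$ should be identified with $X$ itself (as a derived scheme, the vanishing of the zero section of a line bundle is the whole base, with the expected excess structure), so that $[Z_{s_0} \to X] = [X \xrightarrow{\mathrm{Id}} X]$; this is where one invokes that the Chern class computed from the zero section matches $c_1(\Ls)$ by definition.

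The main obstacle I expect is bookkeeping around the twist: a single global section of $\mathrm{pr}_1^*\Ls$ on $\Pb^1 \times X$ cannot simultaneously restrict to two different prescribed sections of $\Ls$ on the two fibres, so one genuinely needs to twist by $\Oc(1)$ on the $\Pb^1$ factor and work with a section of $\mathrm{pr}_1^*\Ls \otimes \mathrm{pr}_2^*\Oc_{\Pb^1}(1)$ of the form $u \otimes \mathrm{pr}_1^* s + v \otimes \mathrm{pr}_1^* s_0$ where $u, v$ are the two tautological sections of $\Oc_{\Pb^1}(1)$ cutting out $\infty$ and $0$ respectively. One then checks the restriction to $\{0\}\times X$ is $s$ (up to a unit, since $v$ is a trivialization there) and to $\{\infty\}\times X$ is $s_0$, and that the vanishing locus $W$ really is quasi-smooth over $A$ — this follows because $W \hookrightarrow \Pb^1 \times X$ is the derived zero locus of a section of a line bundle, hence a quasi-smooth closed immersion, and $\Pb^1 \times X$ is quasi-smooth over $A$ when $X$ is (which holds for all $X \in \Cc_A$ in the sense needed, as quasi-smoothness over $A$ is not being claimed — rather $W \to \Pb^1 \times X \to X$ being projective and $W$ being a derived scheme, with the fibre relation only requiring $W \to \Pb^1 \times X$ projective and $W_0, W_\infty$ the fibres). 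A final check is that the relation as stated in Construction \ref{NaiveDerivedCobCons} is symmetric enough to yield $[Z_s] = [Z_{s_0}]$ rather than just containing their difference up to sign, which is immediate since $\Rc^{\mathrm{fib}}$ is defined as a subgroup (indeed $\Lb$-submodule).
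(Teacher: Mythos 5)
Your geometric picture is right — the paper uses exactly the interpolating family you describe, namely the derived vanishing locus $W \subset \Pb^1 \times X$ of the section $x_0 s$ of $\mathrm{pr}_2^*\Ls \otimes \mathrm{pr}_1^*\Oc(1)$, whose fibres over $\infty$ and $0$ are $Z_s$ and $Z_{s_0} \simeq X$. But there is a genuine gap in the way you try to invoke the relation. You want to apply the fibre relation of Construction \ref{NaiveDerivedCobCons} to this $W \to \Pb^1 \times X$, but that relation explicitly requires $W$ to be \emph{quasi-smooth over $A$}, not merely over $\Pb^1 \times X$. Since $X$ is an arbitrary quasi-projective derived $A$-scheme, $\Pb^1 \times X$ (and hence the derived Cartier divisor $W$ inside it) need not be quasi-smooth over $A$, so the cycles $[W_0 \to X]$ and $[W_\infty \to X]$ do not even lie in $\Mc^{A,+}_*(X)$ in general, and $[W_0\to X]-[W_\infty\to X]$ is not an element of $\Rc^{\mathrm{fib}}_A(X)$. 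You flag this worry yourself but then dismiss it by misreading the construction: the hypothesis ``$W$ quasi-smooth over $A$'' is not optional there. A second, related imprecision is the passage from a homology equality to a cohomology equality ``after reinterpreting via orientations'': the map $-\bullet\theta(\pi_X)$ from $\Bb^*(X\to X)$ to $\Bb^*(X\to pt)$ is only known to be an isomorphism when $X$ is smooth over $A$ (Lemma \ref{StrongOrientationLem}), so for general $X$ you cannot transport the equality that way.

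The paper's proof sidesteps both issues by working bivariantly and localizing the use of the fibre relation to $\Pb^1$, which \emph{is} smooth over $A$. Concretely: the fibre relation applied to the diagonal of $\Pb^1 \times \Pb^1$ gives $[\{0\}\to\Pb^1]=[\{\infty\}\to\Pb^1]$ in homology, and since $\Pb^1$ is smooth over $A$, Lemma \ref{StrongOrientationLem} lifts this to an equality of \emph{cohomology} classes in $d\Omega^*_{A,\naive}(\Pb^1)$. One then applies the bivariant pullback $f^*$ along $f:W\to\Pb^1$ followed by the Gysin pushforward $g_!$ along $g:W\to X$ (which is legitimate because $g$ is projective and quasi-smooth over $X$ — no quasi-smoothness over $A$ required) to obtain $[Z_s\to X]=g_!f^*[\{\infty\}\to\Pb^1]=g_!f^*[\{0\}\to\Pb^1]=[Z_{s_0}\to X]=c_1(\Ls)$ directly in $d\Omega^*_{A,\naive}(X)$. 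The key difference is that the bivariant operations automatically preserve the ideal $\langle\Rc^{\mathrm{fib}}\rangle$, so the relation is established once in the cohomology of a smooth scheme and then transported, rather than attempted at the (possibly singular) $X$ directly.
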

\begin{proof}
By Lemma \ref{StrongOrientationLem} we have that 
$$[\{ 0 \} \to \Pb^1] = [\{ \infty \} \to \Pb^1] \in d \Omega^*_{A, \naive}( \Pb^1).$$
If $X$ is a derived scheme, $\Ls$ is a line bundle on $X$ and $s$ is a global section of $\Ls$, then $x_0 s$ is a global section of $\Ls(1)$ on $\Pb^1 \times X$. Denoting by $W$ the derived vanishing locus of $x_0 s$, and by $f$ and $g$ the induced morphisms $W \to \Pb^1$ and $W \to X$ respectively, we compute that
\begin{align*}
[Z_s \to X] &= g_! \big( f^*( [\{ \infty \} \to \Pb^1] ) \big) \\
&= g_! \big( f^*( [\{ 0 \} \to \Pb^1] ) \big) \\
&= [Z_{s_0} \to X] \\
&= c_1(\Ls), & (\text{Remark \ref{SectionAxiomForZeroSectRem}}) 
\end{align*}
which is exactly what we wanted to prove.
\end{proof}

\begin{cons}[Cf. \cite{LS} Definition 3.16 and Lemma 3.17]\label{BivariantDerivedPrecobCons}
The theory $d\Omega^*_{A,\mathrm{pre}}$ is constructed from $\Omega^*_{A,\mathrm{naive}}$ in two steps. 
\begin{enumerate}
\item First of all, recall that globally generated line bundles have nilpotent Chern classes by Proposition \ref{ProductsOfChernProp}. We may therefore define $\Rc^\mathrm{fgl}(X) \subset d \Omega^{A,\naive}_*(X)$ as consisting of elements of form
$$\big(c_1(\Ls_1 \otimes \Ls_2) - F_\univ(c_1(\Ls_1), c_1(\Ls_2))\big) \bullet [X \to X]$$
for $X$ smooth over $A$ and $\Ls_i$ globally generated line bundles on $X$. Set
$$d \Omega'^{A, \pre}_* := d \Omega^{A,\naive}_* / \langle \Rc^\mathrm{fgl} \rangle_{d \Omega^{A,\naive}_*}.$$
This is the homology theory associated to $d \Omega'^*_{A, \pre} := d \Omega^*_{A, \naive} / \langle \Rc^\mathrm{fgl} \rangle_{\Omega^*_{A,\naive}}.$

\item Secondly, we set $d\Omega^{A, \pre}_* := d\Omega'^{A, \pre}_* / \langle \Rc^\mathrm{fgl}_+ \rangle_{d \Omega'^{A, \pre}_*}$, where $\Rc^\mathrm{fgl}_{+}(X)$ for $X$ smooth over $A$ consists of 
$$\Big( c_1(\Ls) - F_\univ\bigl(c_1(\Ls_1), \iota(c_1(\Ls_2)) \bigr)\Big) \bullet [X \to X],$$
where $\Ls_1$ and $\Ls_2$ are globally generated, $\Ls \simeq \Ls_1 \otimes \Ls_2^\vee$, and $\iota$ is the formal inverse power series of $F_\univ$ (see e.g. \cite{H} Lemma 1.1.4). This is the homology theory associated to 
$$d \Omega^*_{A, \pre} :=  d \Omega'^*_{A, \pre} / \langle \Rc^\mathrm{fgl}_+ \rangle_{d \Omega'^*_{A, \pre}}.$$ 
\end{enumerate}
\end{cons}

\begin{war}\label{LSFGLWar}
Note that the second step of Construction \ref{BivariantDerivedPrecobCons} is forcing Lemma 3.17 of \cite{LS} to be true. There seems to be a mistake in the proof of the said theorem in \emph{loc. cit.}: the authors claim that the equality 
$$c_1(\Ls_1 \otimes \Ls_2^\vee) = F_\univ(c_1(\Ls_1), \iota(c_1(\Ls_2)))$$
of operators on $d \Omega'^{A,\pre}_*(X)$, where $\Ls_1$ and $\Ls_2$ globally generated on $X$ smooth over $A$, can be proven similarly to \cite{LP}, with which they seem to be referring to \cite{LP} Lemma 9.1. What can actually be proven is that 
$$F_\univ\big(c_1(\Ls_1), \iota(c_1(\Ls_2))\big)$$
only depends on the equivalence class of $\Ls_1 \otimes \Ls_2^\vee$, but it is not clear why this would equal to $c_1 (\Ls_1 \otimes \Ls_2^\vee)$.
\end{war}

\begin{lem}\label{DerivedPrecobFGLLem}
The bivariant theory $d \Omega^*_{A,\pre}$ satisfies the formal group law axiom for he formal group law coming from the $\Lb$-linear structure: given line bundles $\Ls_1$ and $\Ls_2$ on a quasi-projective $A$-scheme $X$, we have that
$$c_1(\Ls_1 \otimes \Ls_2) = F_\univ\big(c_1(\Ls_1), c_1(\Ls_2)\big) \in d \Omega^*_{A, \pre}(X).$$
\end{lem}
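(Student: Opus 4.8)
The plan is to reduce the statement for arbitrary line bundles on an arbitrary quasi-projective $X$ to the statement for globally generated line bundles on smooth $W$, which is exactly what is built into the relations $\Rc^{\mathrm{fgl}}$ and $\Rc^{\mathrm{fgl}}_+$ used to define $d\Omega^*_{A,\pre}$. The key technical tools are the section axiom (already verified for $d\Omega^*_{A,\naive}$, hence inherited by the quotient $d\Omega^*_{A,\pre}$), the description of products of Chern classes in section-axiom theories (Proposition \ref{ProductsOfChernProp}), and the fact that, since $\Lb_* \otimes \Mc^{A,+}_*$ is generated by orientations, it suffices to check the desired equality after bivariant multiplication against classes of the form $f_*(\theta(\pi_V))$ with $V \to X$ projective and $V \to A$ quasi-smooth --- that is, it suffices to establish the identity in $d\Omega^A_{*,\pre}(X)$ acting on homology classes $[V \to X]$.

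First I would recall the standard presentation trick from Levine--Morel/Lowrey--Schürg: any line bundle $\Ls$ on a quasi-projective derived $A$-scheme $X$ can be written as $\Ls_1 \otimes \Ls_2^\vee$ with $\Ls_1, \Ls_2$ very ample (hence globally generated), and moreover, after base change along a projective quasi-smooth $V \to X$ with $V$ smooth over $A$ (which by the projective bundle/Jouanolou-type arguments and resolution-free techniques of \cite{LS} one can reduce to), the formula one wants to prove for $\Ls$ on $X$ follows by pushing forward the corresponding formula on $V$. Concretely: given $[V \to X]$ with $V$ smooth over $A$, I would pull $\Ls, \Ls_1, \Ls_2$ back to $V$, apply the relation $\Rc^{\mathrm{fgl}}_+$ (which gives $c_1(\Ls_1 \otimes \Ls_2^\vee) = F_\univ(c_1(\Ls_1), \iota(c_1(\Ls_2)))$ as operators on the smooth scheme $V$), then apply the relation $\Rc^{\mathrm{fgl}}$ twice together with the fact that $F_\univ(x, \iota(y)) +_{F_\univ} y = x$ (a formal identity valid in $\Lb[[x,y]]$, cf. \cite{H} Lemma 1.1.4) to rearrange into $c_1(\Ls_1 \otimes \Ls_2^\vee) = F_\univ(c_1(\Ls), c_1(\mathcal{O}))$... more carefully, one wants: writing $\Ls_1 \otimes \Ls_2 = (\Ls_1 \otimes \Ls_2^\vee) \otimes \Ls_2 \otimes \Ls_2$, one expresses everything in terms of $c_1(\Ls)$ and $c_1(\Ls_2)$ using the group-law manipulations, and concludes $c_1(\Ls_1 \otimes \Ls_2) = F_\univ(c_1(\Ls), c_1(\Ls_2^{\otimes 2}))$, etc. The cleanest route is probably: for globally generated $\Ls_1, \Ls_2$ on smooth $W$, $c_1(\Ls_1 \otimes \Ls_2) = F_\univ(c_1(\Ls_1), c_1(\Ls_2))$ holds by $\Rc^{\mathrm{fgl}}$; for $\Ls = \Ls_1 \otimes \Ls_2^\vee$ on smooth $W$, $c_1(\Ls) = F_\univ(c_1(\Ls_1), \iota(c_1(\Ls_2)))$ holds by $\Rc^{\mathrm{fgl}}_+$; and then for two arbitrary line bundles $\Mc, \Nc$ on arbitrary $X$, write $\Mc = \Mc_1 \otimes \Mc_2^\vee$, $\Nc = \Nc_1 \otimes \Nc_2^\vee$ with all factors very ample, so $\Mc \otimes \Nc = (\Mc_1 \otimes \Nc_1) \otimes (\Mc_2 \otimes \Nc_2)^\vee$, and compute $c_1(\Mc \otimes \Nc)$ two ways, reducing everything to globally generated bundles via $\Rc^{\mathrm{fgl}}$ and $\Rc^{\mathrm{fgl}}_+$ on smooth presenting schemes; the associativity and commutativity of $F_\univ$ plus the inverse identity make the two expressions agree.

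The reduction from "formula on $X$ acting on $[V \to X]$" to "formula on $V$ with $V$ smooth over $A$" requires that $V$ itself be smooth over $A$, which holds by definition of the generators (the composition $V \to A$ is quasi-smooth) only up to the issue that quasi-smooth is not smooth; here one uses that Chern classes and the relations in question are pulled back from the smooth-over-$A$ truncation or, better, that the relations $\Rc^{\mathrm{fgl}}$, $\Rc^{\mathrm{fgl}}_+$ are imposed precisely for $X$ smooth over $A$ and the generators $[V \to X]$ with $V$ smooth over $A$ span, after tensoring with $\Lb_*$, all of $d\Omega^{A,+}_*(X)$ modulo the fibre relations --- so acting on such a generator and using the projection formula / push-pull to move $c_1$'s onto $V$ does the job. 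The main obstacle I anticipate is precisely the bookkeeping in this last step: making rigorous that it suffices to verify the operator identity on classes $[V \to X]$ with $V$ smooth over $A$, and that pulling the line bundles back to such $V$ and applying the smooth-case relations legitimately yields the identity on $X$ --- in other words, checking that the homology ideal generated by $\Rc^{\mathrm{fgl}}$ and $\Rc^{\mathrm{fgl}}_+$ (which are defined only on \emph{smooth} $W$) already forces the formal group law identity for \emph{all} line bundles on \emph{all} quasi-projective $X$. This is the analogue of \cite{LP} Lemma 9.1 / \cite{LS} Lemma 3.17 (with the fix of Warning \ref{LSFGLWar}), and the argument will run parallel to that, using the double-point / homotopy-fibre relations to propagate the identity from smooth ambient spaces down to arbitrary targets.
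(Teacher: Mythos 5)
Your plan and the paper's proof diverge at the crucial last step, and it is exactly the step you yourself flag as "the main obstacle I anticipate" that you leave unresolved. Both proofs begin the same way: as in \cite{LS} Proposition~3.19, one shows that $\big(c_1(\Ls_1\otimes\Ls_2) - F_\univ(c_1(\Ls_1),c_1(\Ls_2))\big)\bullet[X\to X] = 0$ in $d\Omega^{A,\pre}_*(X)$ for all $X$, i.e.\ the operator identity evaluated on the fundamental class. The problem is then to upgrade this to an equality of \emph{cohomology classes} (equivalently, of operators on the whole bordism group) for arbitrary $X$ and arbitrary $\Ls_i$ — this is precisely the passage that LS Lemma~3.17 handles incorrectly (Warning~\ref{LSFGLWar}), and your proposal does not close it. Your suggestion to reduce to generators $[V\to X]$ and pull the line bundles back to $V$ runs into the obstruction that $V$ is only quasi-smooth over $A$, so the relations $\Rc^{\mathrm{fgl}}$ and $\Rc^{\mathrm{fgl}}_+$ (imposed only for $W$ smooth over $A$) are not available on $V$; and your fallback claim that "the generators $[V\to X]$ with $V$ smooth over $A$ span $\Lb_*\otimes\Mc^{A,+}_*(X)$" is not something you can take for granted — it would essentially be the content of the Lowrey--Schürg comparison theorem, which is downstream of this lemma.

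The paper closes the gap by a different, short mechanism that your proposal is missing. Once the homology-level identity holds for all $X$, Lemma~\ref{StrongOrientationLem} (the orientation is \emph{strong} along smooth morphisms, so $-\bullet\theta(\pi_X):d\Omega^*_{A,\pre}(X)\to d\Omega^{A,\pre}_*(X)$ is an isomorphism when $X$ is smooth over $A$) upgrades it to a genuine equality of Chern classes in $d\Omega^*_{A,\pre}(X)$ for $X$ smooth over $A$. In particular one gets the cohomological identity on $X_n = (\Pb^n)^{\times 4}$ with the tautological bundles $\Ls_1 = \Oc(1,-1,0,0)$ and $\Ls_2 = \Oc(0,0,1,-1)$; and then for arbitrary $X$ and arbitrary $\Ls'_1,\Ls'_2$ one chooses $n\gg 0$ and a map $f:X\to X_n$ with $\Ls'_i\simeq f^*\Ls_i$, and the naturality of Chern classes under pullback in a stably oriented bivariant theory finishes the argument. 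Your approach never produces this universal classifying-space step, and without it the formal-group-law manipulations on $\Mc=\Mc_1\otimes\Mc_2^\vee$, $\Nc=\Nc_1\otimes\Nc_2^\vee$ remain confined to smooth $W$; you cannot push them to arbitrary $X$ by the route you describe.
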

\begin{proof}
As in \cite{LS} Proposition 3.19 one proves that 
$$\big( c_1(\Ls_1 \otimes \Ls_2) - F_\univ(c_1(\Ls_1), c_1(\Ls_2)) \big) \bullet [X \to X] = 0 \in d \Omega^{A,\pre}_*(X)$$
for all $X \in \Cc_A$. In the case when $X$ is smooth over $A$, Lemma \ref{StrongOrientationLem} lifts this to an equality of Chern classes in $d \Omega^*_{A, \pre}(X)$, so in particular we obtain the desired formula for $X_n = (\Pb^n)^{\times 4}$, $\Ls_1 = \Oc(1,-1,0,0)$ and $\Ls_2 = \Oc(0,0,1,-1)$. In general, given $X \in \Cc_A$ and line bundles $\Ls'_1$ and $\Ls'_2$ on $X$, then, for $n \gg 0$, there exists a morphism $f: X \to X_n$ so that $\Ls'_i \simeq f^* \Ls_i$. Therefore the desired formula for $X$ and $\Ls'_i$ follows from the formula for $X_n$ and $\Ls_i$.
\end{proof}

We are finally ready to construct $d \Omega^*_A$ and $d \Omega^A_*$.

\begin{cons}[Cf. \cite{LS} Definition 3.20]\label{BivariantDerivedCobCons} 
We define $\Rc^\snc \subset d \Omega_*^{A,\pre}$ as the collection of homology elements containing
$$\zeta_{W,D} - [D \to D] \in d \Omega^{A,\pre}_*(D)$$
where $D \hookrightarrow W$ ranges over all $\Cc_A$-snc divisors in all $W$ smooth over $A$, and $\zeta_{W,D}$ is as in Definition \ref{ZetaCLassDef}. Then the \emph{derived algebraic $A$-bordism} is defined as
$$d \Omega^A_*(X) := d \Omega^{A,\pre}_* / \langle \Rc^\snc \rangle_{d\Omega^{A,\pre}_*},$$
and it is the homology theory associated to
$$d\Omega^*_A := d \Omega^*_{A, \pre}/ \langle \Rc^\snc \rangle_{d \Omega^*_{A,\pre}}.$$
\end{cons}

\subsection{Construction and the universal property of universal precobordism}\label{UnivPrecobConsSubSect}

The purpose of this section is to recall the construction of the universal precobordism theory from \cite{AY} Section 6.1, and to provide it with a convenient universal property. Afterwards, we will construct bivariant algebraic cobordism $\Omega^*_A$ as a quotient of $\PCob^*_A$, and provide $\Omega^*_A$ with a universal property.

\begin{cons}\label{UnivPrecobCons}
The \emph{universal $A$-precobordism} theory $\PCob^*_A$ is constructed as $\Mc^*_{A,+} / \Rc^\mathrm{dpc}_A$, where $\Rc^\mathrm{dpc}_A$ is the bivariant ideal of double point cobordism relations. In other words, $\Rc^\mathrm{dpc}_A(X \to Y)$ is generated, as an Abelian group, by elements of form
$$[W_0 \to X] - [D_1 \to X] - [D_2 \to X] + [\Pb_{D_1 \cap D_2}(\Oc_W(D_1) \oplus \Oc) \to X],$$
where $W \to \Pb^1 \times X$ is projective, the composition $W \to \Pb^1 \times Y$ is quasi-smooth, $W_0$ is the fibre over $0$, and $D_1$ and $D_2$ are virtual Cartier divisors on $W$ whose sum is the fibre over $\infty$. It is an easy exercise to show that $\Rc^\mathrm{dpc}$ is indeed a bivariant ideal.
\end{cons}

Let us recall the following result from \cite{An2}.

\begin{prop}
The bivariant theory $\PCob^*_A$ satisfies the section and the formal group law axioms.
\end{prop}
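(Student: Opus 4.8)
The statement is a recollection from \cite{AY, An2}, so the proof I would give assembles the relevant facts about $\PCob^*_A = \Mc^*_{A,+}/\Rc^{\mathrm{dpc}}_A$ and indicates the mechanisms; the detailed verifications are carried out in \emph{loc. cit.} and transcribe the Levine--Pandharipande double point arguments into the derived bivariant framework.

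\textbf{Section axiom.} Given $X \in \Cc_A$, a line bundle $\Ls$ on $X$ and a global section $s$, unwinding the definition in $\Mc^*_{A,+}$ shows that $c_1(\Ls) = s_0^* s_{0!}(1_X)$ is represented by the cycle $[Z_{s_0} \to X]$ (this is Remark \ref{SectionAxiomForZeroSectRem}), while $i_{s!}(1_{Z_s}) = [Z_s \to X]$. So it suffices to prove $[Z_s \to X] = [Z_{s'} \to X]$ in $\PCob^*_A(X)$ for any two global sections $s, s'$ of $\Ls$, the zero section being permitted. When $\Ls$ is globally generated one argues by a double point degeneration: a pencil of sections of $q^*\Oc_{\Pb^1}(1) \otimes p^*\Ls$ on $\Pb^1 \times X$ produces a cycle $W \to \Pb^1 \times X$ whose fibres over $0$ and $\infty$ are $Z_s$ and $Z_{s'}$, and to which the defining relation $\Rc^{\mathrm{dpc}}_A$ applies, once one has arranged --- after pulling back from a universal projective space exactly as in the proof of Lemma \ref{DerivedPrecobFGLLem} --- that the total space $W$ is quasi-smooth over $\Pb^1$. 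The general case reduces to the globally generated one by writing $\Ls \simeq \Ls_1 \otimes \Ls_2^\vee$ with $\Ls_i$ globally generated, choosing sections $s_i$ of $\Ls_i$ with virtual Cartier divisor zero loci, and relating $[Z_s \to X]$ to the classes of $Z_{s_1}$ and $Z_{s_2}$ through the formal group law relation already imposed on $\PCob^*_A$.

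\textbf{Formal group law axiom.} Nilpotence of $c_1(\Ls)$ holds for globally generated $\Ls$ by Proposition \ref{ProductsOfChernProp} (a high enough power is represented by the empty scheme, since sufficiently many general sections of a very ample bundle on a quasi-projective scheme have empty common zero locus), and propagates to all line bundles once the formal group law and the formal inverse of \cite{H} Lemma 1.1.4 are available. The existence of $F^{\PCob^*_A}(x,y) \in \PCob^*_A(pt)[[x,y]]$ with $c_1(\Ls_1 \otimes \Ls_2) = F^{\PCob^*_A}(c_1(\Ls_1), c_1(\Ls_2))$ is the Conner--Floyd-type computation of \cite{AY, An2}: one produces $F^{\PCob^*_A}$ by classifying a ring map $\Lb \to \PCob^*_A(pt)$ out of the universal formal group law, checks the Chern-class identity first on the universal spaces $(\Pb^n)^{\times 2}$ with $\Oc(1,0), \Oc(0,1)$ by explicit analysis of the double point relation, and then obtains the general case by the pullback argument of Lemma \ref{DerivedPrecobFGLLem}. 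This $\Lb$-linear structure is the one delivered abstractly by Theorem \ref{FGLIsLLinearThm} once the two axioms are known.

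\textbf{Main obstacle.} The delicate point is the last step of the section-axiom argument, i.e. line bundles that are \emph{not} globally generated. As Warning \ref{LSFGLWar} makes explicit, the naive ``as in \cite{LP}'' reasoning is incorrect: the needed identity must be deduced from the formal-group-law structure of $\PCob^*_A$ rather than from a direct geometric degeneration, and this is the substantive input one borrows from \cite{An2}. Everything else --- the double point analysis on universal projective spaces and the reduction-to-universal-spaces trick --- is routine given Lemma \ref{DerivedPrecobFGLLem} and Proposition \ref{ProductsOfChernProp}.
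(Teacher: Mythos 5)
Your proposal gets the high-level references right (the formal group law axiom is \cite{An2}~Theorem~3.4; the section axiom follows the standard template of \cite{LS}~Lemma~3.9), and that is essentially all the paper's proof does — it is a two-line citation. However, the additional scaffolding you erect around the section axiom is not just superfluous but misguided. The $\Pb^1$-degeneration argument (take the derived vanishing locus $W$ of $x_0 s$ as a section of $p^*\Ls\otimes q^*\Oc_{\Pb^1}(1)$ on $\Pb^1\times X$; $W$ is automatically quasi-smooth over $\Pb^1\times X$ since it is the derived zero locus of a section of a line bundle, and its fibres over $0$ and $\infty$ are $Z_s$ and $Z_{s_0}$) works verbatim for \emph{any} line bundle with \emph{any} section, without any globally-generated hypothesis. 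There is no "general case" to reduce to a globally generated case. Worse, your proposed reduction via $\Ls\simeq\Ls_1\otimes\Ls_2^\vee$ and the formal group law cannot deliver the section axiom: the formal group law relates $c_1(\Ls)$ to $c_1(\Ls_1)$ and $c_1(\Ls_2)$, but says nothing about the class $i_{s!}(1_{Z_s})$ attached to a \emph{given} section $s$ of $\Ls$ — precisely the content of the section axiom — so the step "relating $[Z_s\to X]$ to the classes of $Z_{s_1}$, $Z_{s_2}$" has no mechanism behind it.

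Two further points. First, your "main obstacle" paragraph misattributes Warning~\ref{LSFGLWar}: that warning concerns a gap in Lowrey--Schürg's construction of $d\Omega^*_{A,\pre}$ (which is why Construction~\ref{BivariantDerivedPrecobCons} has a second forcing step), not a difficulty with the section axiom for $\PCob^*_A$; for $\PCob^*_A$ the formal group law is established outright in \cite{An2}~Theorem~3.4 and no such patching is needed. Second, your nilpotence argument is circular as written: you propose proving nilpotence first only for globally generated bundles via Proposition~\ref{ProductsOfChernProp} and then "propagating" to all bundles via the formal group law, but the formal group law axiom (Definition~\ref{FGLAxiomDef}) already presupposes nilpotence of $c_1(\Ls)$ for all $\Ls$. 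The actual bootstrapping is carried out inside \cite{An2} and does not take the route you sketch.
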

\begin{proof}
Indeed, the formal group law axiom holds by \cite{An2} Theorem 3.4, and proving that section axiom holds is done with the standard argument (see e.g. \cite{LS} Lemma 3.9).
\end{proof}

Recall that by \cite{An2} Theorem 3.11 the associated cohomology theory of $\PCob^*_A$ is the universal oriented cohomology theory on quasi-projective derived $A$-schemes. We will extend this to a universal property of the whole bivariant theory. In order to do so, we make the following simple observation.

\begin{lem}\label{CohomGenRelsForPrecob}
Let $\Rc^\mathrm{dpc}_A \subset \Mc^*_{A,+}$ be the bivariant ideal of double point cobordisms. Then 
$$\Rc^\mathrm{dpc}_A = \langle \Rc^\mathrm{dpc}_{A,\mathrm{coh}} \rangle,$$
where $\Rc^\mathrm{dpc}_{A,\mathrm{coh}}$ is the bivariant subset of $\Mc^*_{A,+}$ with 
$$\Rc^\mathrm{dpc}_{A,\mathrm{coh}}(X \xrightarrow{\mathrm{Id}} X) = \Rc^\mathrm{dpc}_A(X \xrightarrow{\mathrm{Id}} X)$$ 
and $\Rc^\mathrm{dpc}_{A,\mathrm{coh}}(X \to Y)$ is empty otherwise.
\end{lem}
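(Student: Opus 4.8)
The plan is to show that the bivariant ideal $\Rc^{\mathrm{dpc}}_A$ of double point cobordism relations is \emph{already generated by its cohomological part}, i.e. by those relations that live in groups of the form $\Mc^*_{A,+}(X \xrightarrow{\mathrm{Id}} X)$. Since $\langle \Rc^{\mathrm{dpc}}_{A,\mathrm{coh}} \rangle \subset \Rc^{\mathrm{dpc}}_A$ is automatic (the cohomological relations are a sub-bivariant-subset of a bivariant ideal), the only content is the reverse inclusion. So I must show that every generator
$$r_W := [W_0 \to X] - [D_1 \to X] - [D_2 \to X] + [\Pb_{D_1 \cap D_2}(\Oc_W(D_1) \oplus \Oc) \to X] \in \Rc^{\mathrm{dpc}}_A(X \to Y)$$
lies in the bivariant ideal generated by the cohomological relations.

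The key observation is that the datum producing $r_W$ — namely $W \to \Pb^1 \times X$ projective with $W \to \Pb^1 \times Y$ quasi-smooth, together with the virtual Cartier divisors $D_1, D_2$ summing to $W_\infty$ — is pulled back along the morphism $X \to Y$ from an analogous datum over $Y$ itself, or more precisely the whole cycle $r_W$ can be realized as a pushforward of a pullback of a purely cohomological relation. Concretely, $r_W$ is obtained by applying the bivariant pushforward along the (confined) projective morphism $W \to \Pb^1 \times X \to X$... actually the cleanest route is: the cycles $[W_0 \to X]$, $[D_i \to X]$, $[\Pb_{D_1\cap D_2}(\cdots) \to X]$ are all of the form $f_*(g^*(\text{something over } Y))$ where one exploits that $W$, $D_i$ and the projective bundle are quasi-smooth over $Y$. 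First I would set up, for the given cobordism datum over $X \to Y$, the cohomological double point relation attached to the identity $\mathrm{Id}_W\colon W \to W$: this is the element
$$\tilde r := [\wtil W_0 \to W] - [\wtil D_1 \to W] - [\wtil D_2 \to W] + [\Pb_{\wtil D_1 \cap \wtil D_2}(\Oc(\wtil D_1)\oplus\Oc) \to W] \in \Rc^{\mathrm{dpc}}_A(W \xrightarrow{\mathrm{Id}} W)$$
coming from the ``universal'' deformation $W \times \Pb^1 \to \Pb^1 \times W$... but this needs adjustment, since the cobordism $W \to \Pb^1\times X$ is the thing that degenerates, not $W$ itself. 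The correct move: view the map $W \to \Pb^1 \times Y$ (quasi-smooth) and the divisors $D_i \subset W$ as giving, over $\Pb^1 \times Y$ or rather over $Y$, a cohomological relation in $\Rc^{\mathrm{dpc}}_A(W' \xrightarrow{\mathrm{Id}} W')$ for a suitable $W'$, then pull back along $X \to Y$ and push forward along the projective map to $X$. Using bivariant axioms $A_{12}$ and $A_{23}$ (push-pull) together with $A_{13}$, these pushforward/pullback operations carry the cohomological relation into exactly $r_W \in \Rc^{\mathrm{dpc}}_A(X\to Y)$, showing $r_W \in \langle \Rc^{\mathrm{dpc}}_{A,\mathrm{coh}}\rangle$.

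The main obstacle — and the step deserving genuine care rather than a one-line wave — is verifying that the degeneration datum defining $r_W$ genuinely descends to (or is pulled back from) a datum over a scheme mapping identically to itself, so that the resulting relation is \emph{cohomological} in the strict sense of Lemma~\ref{CohomGenRelsForPrecob}. Quasi-smoothness of $W \to \Pb^1 \times Y$ and of the $D_i$ over $Y$, stability of virtual Cartier divisors and of projectivized bundles under derived base change, and the fact that $\Rc^{\mathrm{dpc}}_A$ was shown to be a bivariant ideal (so in particular stable under the relevant pullbacks) are the technical inputs. Once the descent is in place, the rest is a formal computation with the bivariant axioms $A_{12}$, $A_{13}$, $A_{23}$, $A_{123}$ identifying $f_* g^*(\text{cohomological relation})$ with $r_W$; this is routine. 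I would also remark that this lemma is exactly what is needed to upgrade ``$\PCob^*_A$ has the universal associated cohomology theory'' (\cite{An2} Theorem 3.11) to a universal property of the full bivariant theory, which is the purpose for which it is stated.
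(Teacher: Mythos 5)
Your overall plan is the right one, and your first instinct — to use a cohomological double point relation $\tilde r$ on $W$ itself, attached to $\mathrm{Id}_W$ — is exactly what the paper does. However, you then talk yourself out of it and head toward a descent-to-$Y$-then-pullback argument, and that is a genuine wrong turn. The degeneration datum over $X\to Y$ does \emph{not} descend to, nor is it pulled back from, a datum over $Y$; trying to manufacture a $W'$ over $Y$ carrying a cohomological relation that pulls back to give $\alpha$ is both unnecessary and not generally possible. Likewise, the cycles $[W_0\to X]$, $[D_i\to X]$, $[\Pb_{D_1\cap D_2}(\cdots)\to X]$ are \emph{not} of the form $f_*(g^*(\text{something over }Y))$ — no bivariant pullback is involved anywhere.

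The correct, simpler observation you were close to: the given datum $g\colon W\to\Pb^1\times X$ already produces a \emph{cohomological} double point relation $\beta$ over $W$ with the \emph{same} $W_0$, $D_1$, $D_2$ (no tildes needed). Concretely, take the graph $(\mathrm{pr}_1\circ g,\,\mathrm{Id}_W)\colon W\hookrightarrow \Pb^1\times W$; it is a quasi-smooth closed immersion, hence projective and quasi-smooth as a map to $\Pb^1\times W$, so it qualifies as a double point cobordism datum for $W\xrightarrow{\mathrm{Id}}W$, and its fibres and divisors coincide with those of $g$. Thus
$$\beta := [W_0\to W]-[D_1\to W]-[D_2\to W]+[\Pb_{D_1\cap D_2}(\Oc_W(D_1)\oplus\Oc)\to W]\in\Rc^{\mathrm{dpc}}_{A,\mathrm{coh}}(W\to W).$$
Then, writing $h=\mathrm{pr}_2\circ g\colon W\to X$ (projective) and noting that $h' := f\circ h\colon W\to Y$ is quasi-smooth, a direct computation with the operations of Construction~\ref{UnivBivThy} gives
$$\alpha = h_*\bigl(\beta\bullet\theta(h')\bigr),$$
which lies in $\langle\Rc^{\mathrm{dpc}}_{A,\mathrm{coh}}\rangle(X\to Y)$ by Proposition~\ref{GeneratedBivariantIdealProp}. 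So the only bivariant operations needed are one product with an orientation and one pushforward; the "descent" worry you flag as the main obstacle is a non-issue.
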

\begin{proof}
Since $\Rc^\mathrm{dpc}_A$ is a bivariant ideal containing $\Rc^\mathrm{dpc}_{A,\mathrm{coh}}$, we have that $\langle \Rc^\mathrm{dpc}_{A,\mathrm{coh}} \rangle \subset \Rc^\mathrm{dpc}_A$. We are therefore left to show the reverse inclusion $\Rc^\mathrm{dpc}_A \subset \langle \Rc^\mathrm{dpc}_{A,\mathrm{coh}} \rangle$. To show this, consider $f: X \to Y$ and a projective morphism $ g: W \to \Pb^1 \times X$ so that the composition $(\mathrm{Id} \times f) \circ g: W \to \Pb^1 \times Y$ is quasi-smooth, giving rise to an element 
$$\alpha := [W_0 \to X] - [D_1 \to X] - [D_2 \to X] + [\Pb_{D_1 \cap D_2}(\Oc_W(D_1) \oplus \Oc) \to X] \in \Rc^\mathrm{dpc}_A(X \to Y).$$
Since
$$\beta := [W_0 \to W] - [D_1 \to W] - [D_2 \to W] + [\Pb_{D_1 \cap D_2}(\Oc_W(D_1) \oplus \Oc) \to W]$$
is in $\Rc^\mathrm{dpc}_{A,\mathrm{coh}} (W \to W)$, we can conclude that
$$\alpha = \mathrm{pr}_{2*} \big( g_*\bigl( \beta \bullet \theta\bigl(\mathrm{pr}_2 \circ (\mathrm{Id} \times f) \circ g\bigr) \bigr)\big)$$
is in $\langle \Rc^\mathrm{dpc}_{A,\mathrm{coh}} \rangle(X \to Y),$ proving the claim.
\end{proof}

\begin{thm}\label{UnivPropOfBivariantPrecob}
Let $\Bb^*$ be a stably oriented additive bivariant theory on $\Fs_A$. Then there exists a unique orientation preserving Grothendieck transformation 
$$\eta: \PCob^*_A \to \Bb^*$$
if and only if $\Bb^*$ satisfies the section and the formal group law axioms. In other words, $\PCob^*_A$ is the universal stably oriented additive bivariant theory satisfying the section and the formal group law axioms.
\end{thm}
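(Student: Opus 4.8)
The plan is to prove the two implications separately. The "only if" direction is the easy one: if an orientation preserving Grothendieck transformation $\eta: \PCob^*_A \to \Bb^*$ exists, then since $\PCob^*_A$ is stably oriented and satisfies the section and formal group law axioms, Proposition \ref{OrientedGTranPreserveFGLProp} immediately transports these properties to $\Bb^*$. (One should check that the section axiom is likewise preserved — this is formal, since the Chern class $c_1(\Ls)$ and the classes $i_{s!}(1_{Z_s})$ are built entirely out of orientations, pushforwards and pullbacks, all of which $\eta$ respects.) So the whole content is in the "if" direction: given a stably oriented additive $\Bb^*$ satisfying the section and formal group law axioms, construct the unique $\eta$.

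**Construction of $\eta$ and uniqueness.** For the "if" direction, start from the universal additive theory $\Mc^*_{A,+}$. By Theorem \ref{UnivPropOfUnivBivThy} (applied after passing to the additive quotient, or rather by the universal property of $\Mc^*_{A,+}$ as the universal additive stably oriented bivariant theory on $\Fs_A$), there is a unique orientation preserving Grothendieck transformation $\tilde\eta: \Mc^*_{A,+} \to \Bb^*$. Since $\PCob^*_A = \Mc^*_{A,+}/\Rc^{\mathrm{dpc}}_A$, it suffices to show that $\tilde\eta$ kills the bivariant ideal $\Rc^{\mathrm{dpc}}_A$; then $\eta$ factors uniquely through the quotient, and uniqueness of $\eta$ follows from uniqueness of $\tilde\eta$ together with surjectivity of $\Mc^*_{A,+} \to \PCob^*_A$. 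By Lemma \ref{CohomGenRelsForPrecob}, $\Rc^{\mathrm{dpc}}_A = \langle \Rc^{\mathrm{dpc}}_{A,\mathrm{coh}}\rangle$, and since $\tilde\eta$ is a Grothendieck transformation its image of a generated bivariant ideal lies in the bivariant ideal generated by the images; hence it is enough to check that $\tilde\eta$ annihilates the cohomological double point relations $\Rc^{\mathrm{dpc}}_{A,\mathrm{coh}}(W \xrightarrow{\mathrm{Id}} W)$ for $W$ smooth over $A$ — i.e., to verify the double point cobordism relation in the cohomology ring $\Bb^*(W)$.

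**The main step: verifying double point relations in $\Bb^*(W)$.** This is the heart of the argument and the expected main obstacle. Given $W$ smooth over $A$ and a projective $g: V \to \Pb^1 \times W$ with $V$ quasi-smooth, together with virtual Cartier divisors $D_1, D_2$ on $V$ summing to the fibre $V_\infty$, one must show
$$[V_0 \to W] - [D_1 \to W] - [D_2 \to W] + [\Pb_{D_1\cap D_2}(\Oc_V(D_1)\oplus\Oc)\to W] = 0 \in \Bb^*(W).$$
The line bundle $\Oc_{\Pb^1\times W}(1)$ pulls back along $g$ to a line bundle $\Oc_V(V_0) \simeq \Oc_V(V_\infty) \simeq \Oc_V(D_1)\otimes\Oc_V(D_2)$; applying the section axiom to the sections cutting out $V_0$ and $V_\infty = D_1 + D_2$ relates the first term to a Chern class expression, and applying the formal group law axiom to the tensor product $\Oc_V(D_1)\otimes\Oc_V(D_2)$ expands this in terms of $c_1(\Oc_V(D_1))$ and $c_1(\Oc_V(D_2))$. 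The projective bundle term is handled by the standard identity (an instance of Proposition \ref{ProductsOfChernProp} together with a projective bundle computation) expressing the pushforward of $1$ from $\Pb_{D_1\cap D_2}(\Oc_V(D_1)\oplus\Oc)$ in terms of the intersection $D_1\cap D_2$ and Chern classes; combining these, all terms cancel by a formal manipulation of the formal group law, exactly as in the computations of \cite{LS, An2}. The subtlety here — and why this requires care rather than being a mere citation — is that one is working in an arbitrary such $\Bb^*$ rather than in $\PCob^*_A$ itself, so one cannot invoke known relations in $\PCob^*_A$; instead every identity must be derived purely from the section axiom, the formal group law axiom, stability of the orientation, and additivity. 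Once this cohomological identity is established, the proof is complete.
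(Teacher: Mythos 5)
Your proposal follows essentially the same architecture as the paper's proof: reduce to the universal property of $\Mc^*_{A,+}$, then show the unique $\tilde\eta:\Mc^*_{A,+}\to\Bb^*$ kills the double point ideal, and via Lemma \ref{CohomGenRelsForPrecob} reduce this to the cohomological relations $\Rc^{\mathrm{dpc}}_{A,\mathrm{coh}}$. The only genuine difference is in the final step. The paper observes that a stably oriented additive bivariant theory satisfying the section and formal group law axioms has an associated cohomology theory that is oriented in the sense of \cite{An2} Definition 3.6, and then simply invokes \cite{An2} Theorem 3.11 --- the cohomological universal property of $\PCob^*_A$ --- to conclude that $\tilde\eta$ automatically kills the cohomological double point relations. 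You instead propose to re-derive the double point relation in $\Bb^*(W)$ directly from the section axiom, the formal group law axiom, Proposition \ref{ProductsOfChernProp}, and a projective bundle computation. That is a logically valid route, and it makes the proof self-contained rather than resting on a black box; but it amounts to reproving (the relevant part of) \cite{An2} Theorem 3.11, and you do not actually carry out the formal-group-law cancellation --- you defer to ``the computations of \cite{LS,An2}.'' So in effect both proofs end up citing the same earlier verification, just at different levels of abstraction: the paper quotes the packaged theorem, while your proposal sketches what unpacking it would look like. If you wanted the direct route to be a full proof, the formal group law bookkeeping (expanding $c_1(\Oc_V(D_1)\otimes\Oc_V(D_2))$ via $F_{\Bb^*}$, rewriting the mixed terms using the pushforward from $\Pb_{D_1\cap D_2}(\Oc_V(D_1)\oplus\Oc)$, and pushing everything down to $\Bb^*(W)$ along the projective quasi-smooth map $V\to W$) would need to be spelled out rather than asserted.
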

\begin{proof}
The only if direction follows from Proposition \ref{OrientedGTranPreserveFGLProp}. Let us prove the other direction. By the universal property of $\Mc^*_{A,+}$ there exists a unique orientation preserving Grothendieck transformation $\eta': \Mc^*_{A,+} \to \Bb^*$, so all that is left to do is to show that $\eta'$ factors through the canonical surjection $\Mc^*_{A,+} \to \PCob^*_A$. However, since being stably oriented and satisfying the section and the formal group law axioms is precisely what is needed in order for the cohomology theory associated to $\Bb^*$ to be oriented in the sense of \cite{An2} Definition 3.6 (the other requirements follow from the standard properties of Gysin morphisms in bivariant theories and the stability of the orientation), it follows from \emph{op. cit.} Theorem 3.11 that $\eta'$ must send all the elements of $\Rc^\mathrm{dpc}_\mathrm{coh}$ to zero. But this implies that $\eta'$ kills $\langle \Rc^\mathrm{dpc}_\mathrm{coh} \rangle$, so the well definedness of $\eta$ follows from Lemma \fref{CohomGenRelsForPrecob}.
\end{proof}

Finally, we define bivariant algebraic cobordism as a quotient of $\PCob^*_A$ by enforcing the snc axiom to hold.

\begin{cons}\label{BivariantAlgCobCons}
Define $\Rc'^\snc \subset  \PCob^*_A$ as the bivariant subset containing
$$\zeta_{W,D} - \theta(\pi_D) \in \PCob^*_A(D \to pt)$$
where $D \hookrightarrow W$ ranges over all $\Cc_A$-snc divisors in all $W$ smooth over $A$. \emph{Bivariant algebraic $A$-cobordism} is then defined as the quotient
$$\Omega^*_A :=  \PCob^*_A / \langle \Rc'^\snc \rangle.$$
\end{cons}

As a consequence of Theorem \ref{UnivPropOfBivariantPrecob}, we obtain a pleasant universal property for $\Omega^*_A$.

\begin{cor}\label{UnivPropOfBivariantAlgCob}
Let $\Bb^*$ be a stably oriented additive bivariant theory on $\Fs_A$. Then there exists a unique orientation preserving Grothendieck transformation 
$$\eta: \Omega^*_A \to \Bb^*$$
if and only if $\Bb^*$ satisfies the section, the formal group law and the snc axioms. In other words, $\Omega^*_A$ is the universal stably oriented additive bivariant theory satisfying the section, the formal group law and the snc axioms.
\end{cor}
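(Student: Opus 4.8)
The plan is to deduce this corollary directly from Theorem \ref{UnivPropOfBivariantPrecob} together with the definition $\Omega^*_A := \PCob^*_A / \langle \Rc'^\snc \rangle$, by the standard argument that ``imposing a bivariant ideal'' corresponds to killing a relation under Grothendieck transformations. First I would dispose of the ``only if'' direction: if an orientation preserving Grothendieck transformation $\eta: \Omega^*_A \to \Bb^*$ exists, then precomposing with the canonical surjection $\PCob^*_A \to \Omega^*_A$ gives an orientation preserving Grothendieck transformation $\PCob^*_A \to \Bb^*$, so by the ``only if'' part of Theorem \ref{UnivPropOfBivariantPrecob} the theory $\Bb^*$ is stably oriented and satisfies the section and formal group law axioms; it remains to see that the snc axiom holds. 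For this one observes that in $\Omega^*_A$ the class $\zeta_{W,D}$ equals $\theta(\pi_D)$ by construction (this is the content of the relation $\Rc'^\snc$), and since $\eta$ is orientation preserving and preserves the formal group law, Lemma \ref{ZetaClassIsNaturalLem} guarantees that $\eta$ carries $\zeta_{W,D,\PCob^*_A}$ to $\zeta_{W,D,\Bb^*}$ and $\theta(\pi_D)$ to $\theta(\pi_D)$; hence $\zeta_{W,D,\Bb^*} = \theta(\pi_D)$ in $\Bb^*(D \to pt)$, which upon passing to the associated homology is precisely the snc axiom of Definition \ref{SNCAxiomDef}.

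For the ``if'' direction, suppose $\Bb^*$ is stably oriented additive and satisfies the section, formal group law and snc axioms. By Theorem \ref{UnivPropOfBivariantPrecob} there is a unique orientation preserving Grothendieck transformation $\eta': \PCob^*_A \to \Bb^*$, so it suffices to show that $\eta'$ factors (uniquely) through the quotient $\PCob^*_A \to \Omega^*_A = \PCob^*_A / \langle \Rc'^\snc \rangle$; equivalently, that $\eta'$ annihilates the bivariant ideal $\langle \Rc'^\snc \rangle$. A Grothendieck transformation that kills a bivariant subset automatically kills the bivariant ideal it generates (since the kernel is a bivariant ideal), so it is enough to check that $\eta'$ sends each generator $\zeta_{W,D} - \theta(\pi_D) \in \PCob^*_A(D \to pt)$ to zero. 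By Lemma \ref{ZetaClassIsNaturalLem} we have $\eta'(\zeta_{W,D,\PCob^*_A}) = \zeta_{W,D,\Bb^*}$, and orientation preservation gives $\eta'(\theta(\pi_D)) = \theta(\pi_D)$; since $\Bb^*$ satisfies the snc axiom, $\zeta_{W,D,\Bb^*} = \theta(\pi_D)$ in $\Bb^*(D \to pt)$ — note here that $\zeta_{W,D}$ lives in the ``homological'' degree where Definition \ref{SNCAxiomDef} applies — so indeed $\eta'(\zeta_{W,D} - \theta(\pi_D)) = 0$. Thus $\eta'$ descends to $\eta: \Omega^*_A \to \Bb^*$, which is clearly orientation preserving; uniqueness follows from the uniqueness of $\eta'$ together with the surjectivity of $\PCob^*_A \to \Omega^*_A$.

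I do not expect a serious obstacle here: this is a formal consequence of the two facts already established, namely the universal property of $\PCob^*_A$ (Theorem \ref{UnivPropOfBivariantPrecob}) and the naturality of the $\zeta$-class (Lemma \ref{ZetaClassIsNaturalLem}). The one point requiring a little care is the bookkeeping between bivariant and homological statements of the snc axiom — the relation $\Rc'^\snc$ is phrased as $\zeta_{W,D} - \theta(\pi_D) \in \PCob^*_A(D \to pt)$ whereas Definition \ref{SNCAxiomDef} phrases the snc axiom as $\zeta_{W,D} = \theta(\pi_D) \in \Bb_*(D)$, and one should note these refer to the same element under $\Bb_*(D) = \Bb^{-*}(D \to pt)$ — but this is routine. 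The other mild subtlety is confirming that the kernel of a Grothendieck transformation of bivariant theories is a bivariant ideal (so that killing $\Rc'^\snc$ forces killing $\langle \Rc'^\snc \rangle$), which is immediate from the compatibility of $\eta'$ with pushforwards, pullbacks and products.
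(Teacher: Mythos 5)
Your proposal is correct and is exactly the argument the paper has in mind: the paper's own proof is the one-line remark that the corollary ``follows immediately from Theorem \ref{UnivPropOfBivariantPrecob} and Lemma \ref{ZetaClassIsNaturalLem},'' and your two directions spell out precisely why. (One tiny notational slip: in the ``only if'' paragraph you write that $\eta$ carries $\zeta_{W,D,\PCob^*_A}$ to $\zeta_{W,D,\Bb^*}$, but $\eta$ has source $\Omega^*_A$, so you should either speak of $\zeta_{W,D,\Omega^*_A}$ or of the composite $\PCob^*_A \to \Omega^*_A \xrightarrow{\eta} \Bb^*$; the logic is unaffected.)
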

\begin{proof}
Follows immediately from Theorem \ref{UnivPropOfBivariantPrecob} and Lemma \ref{ZetaClassIsNaturalLem}.
\end{proof}

\subsection{Proof of the comparison theorem}\label{ProofOfComparison1SubSect}

In this section we prove Theorem \ref{FirstComparisonThm}. We will first find an orientation preserving Grothendieck equivalence $\eta'$ between the universal precobordism $\PCob^*_A$ and the derived $A$-precobordism theory $d \Omega^*_{A, \pre}$ from Construction \ref{BivariantDerivedPrecobCons}. After that, it is relatively simple to find the desired Grothendieck equivalence between $d \Omega^*_A$ and the bivariant algebraic cobordism $\Omega^*_A$.

As $d \Omega^*_{A, \pre}$ satisfies the section and the formal group law axioms, it admits a unique orientation preserving Grothendieck transformation from $\PCob^*_A$, so we are left to show that this is an isomorphism.

\begin{thm}\label{UnivPrecobVsDerivedPrecobThm}
The unique Grothendieck transformation
$$\eta': \PCob^*_A \to d\Omega^*_{A, \pre}$$
is an isomorphism.
\end{thm}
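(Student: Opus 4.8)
The plan is to construct an explicit inverse Grothendieck transformation $\psi: d\Omega^*_{A,\pre} \to \PCob^*_A$ and check that $\psi \circ \eta'$ and $\eta' \circ \psi$ are the respective identities. Since $d\Omega^*_{A,\pre}$ is by construction a quotient of $\Lb^* \otimes \Mc^*_{A,+}$ by the bivariant ideals $\langle \Rc^{\mathrm{fib}}\rangle$, $\langle \Rc^{\mathrm{fgl}}\rangle$, $\langle \Rc^{\mathrm{fgl}}_+\rangle$, and since $\PCob^*_A$ is $\Lb$-linear (by Theorem \ref{FGLIsLLinearThm}, as it satisfies the section and formal group law axioms), the natural surjection $\Lb^* \otimes \Mc^*_{A,+} \to \PCob^*_A$ exists; one then checks it kills all three families of relations. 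The homotopy fibre relations $\Rc^{\mathrm{fib}}$ map to zero because the double point cobordism relations specialize to the fibre relations (taking $D_2 = \emptyset$, so that $D_1$ is the whole fibre over $\infty$ and the projective bundle term disappears — or more precisely, one degenerates a suitable double point degeneration); the relations $\Rc^{\mathrm{fgl}}$ and $\Rc^{\mathrm{fgl}}_+$ map to zero because $\PCob^*_A$ satisfies the formal group law axiom. Hence $\psi$ is well-defined, and it is visibly orientation preserving since it sends $[X \xrightarrow{\mathrm{Id}} X]$ to $[X \xrightarrow{\mathrm{Id}} X]$.

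Next I would verify that $\psi$ and $\eta'$ are mutually inverse. The composite $\eta' \circ \psi$ is an orientation preserving Grothendieck endomorphism of the quotient theory $\Lb^* \otimes \Mc^*_{A,+} \to d\Omega^*_{A,\pre}$; since it fixes all classes $[V \to X]$ (these generate everything and are sent to themselves through both maps), it is the identity. Symmetrically, $\psi \circ \eta'$ is an orientation preserving Grothendieck endomorphism of $\PCob^*_A$; by the uniqueness half of Theorem \ref{UnivPropOfBivariantPrecob} (which applies since $\PCob^*_A$ is itself a stably oriented additive bivariant theory satisfying the section and formal group law axioms), any such endomorphism equals the identity. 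This dispatches the isomorphism claim once well-definedness of $\psi$ is in hand.

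The main obstacle will be the well-definedness of $\psi$ — specifically, showing that the double point cobordism ideal $\Rc^{\mathrm{dpc}}_A$ surjects onto (indeed equals the preimage of) the ideal generated by $\Rc^{\mathrm{fib}}$, $\Rc^{\mathrm{fgl}}$ and $\Rc^{\mathrm{fgl}}_+$ inside $\Lb^* \otimes \Mc^*_{A,+}$. One direction is the content of the previous paragraph (dpc relations imply the Lowrey--Schürg relations, using that the bordism group $\Mc^*_{A,+}$ here already has $\Lb$ adjoined so the fibre, fgl and fgl$_+$ relations make sense). The genuinely delicate point is the reverse: that $\Rc^{\mathrm{dpc}}_A$ contains no more than these, i.e., that every double point degeneration relation can be expressed using homotopy fibre relations together with the formal group law relations. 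This is precisely the derived, bivariant analogue of Levine--Pandharipande's theorem that double point cobordism equals Levine--Morel bordism (cf.\ \cite{LP}, and its derived incarnation in \cite{LS} where the authors show $\omega$-style relations generate the same ideal as fibre-plus-fgl relations). The argument should proceed by using a double point degeneration $W \to \Pb^1 \times X$ and comparing $[W_0 \to X]$ with $[D_1 + D_2 \to X]$ via the deformation to the normal cone / blow-up $\Pb_{D_1 \cap D_2}(\Oc_W(D_1) \oplus \Oc)$ term, exactly as one proves the projective bundle formula follows from fgl; the bivariant enhancements are formal once the underlying homology statement is secured. I would isolate this as a lemma and handle it by reduction to the cohomological generators via Lemma \ref{CohomGenRelsForPrecob}, then invoke (or re-derive in the derived setting) the Levine--Pandharipande comparison on the associated homology theories together with Proposition \ref{BivariantHomIdealProp}-type bookkeeping to lift from homology ideals to bivariant ideals.
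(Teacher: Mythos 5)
Your first two paragraphs essentially reproduce the paper's argument: build a retraction $r: \Lb^* \otimes \Mc^*_{A,+} \to \PCob^*_A$ from the canonical $\Lb$-linear structure on $\PCob^*_A$ (Theorem~\ref{FGLIsLLinearThm}), show it descends through the three quotients defining $d\Omega^*_{A,\pre}$, and then use the uniqueness half of Theorem~\ref{UnivPropOfBivariantPrecob} to conclude $\psi \circ \eta' = \mathrm{Id}$, and $\Lb$-linearity plus generation by fundamental classes to conclude $\eta' \circ \psi = \mathrm{Id}$ (the paper phrases the latter as surjectivity of $\eta'$, but that is the same content). So the core strategy is correct and matches the paper.

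Your third paragraph, however, introduces a genuine misunderstanding. You claim that well-definedness of $\psi$ requires showing that $\Rc^{\mathrm{dpc}}_A$ \emph{equals} (``surjects onto, indeed equals the preimage of'') the ideal generated by $\Rc^{\mathrm{fib}}$, $\Rc^{\mathrm{fgl}}$, $\Rc^{\mathrm{fgl}}_+$, and that the ``genuinely delicate point'' is the reverse containment --- that every double point degeneration relation is a consequence of fibre and formal group law relations. Neither is needed. For $\psi$ to descend, you only need the \emph{forward} direction you already gave in paragraph one: the fibre, fgl and fgl$_+$ relations must die in $\PCob^*_A$. The composite $\psi \circ \eta' = \mathrm{Id}$ is then a free consequence of the uniqueness statement in Theorem~\ref{UnivPropOfBivariantPrecob}, which requires no direct comparison of the two ideals --- that is precisely what the universal property buys you. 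The Levine--Pandharipande-type input you invoke (every dpc relation reducible to fibre $+$ fgl) is in effect buried inside the proof of the cohomological universality of $\PCob^*_A$ (\cite{An2} Theorem~3.11), which feeds into Theorem~\ref{UnivPropOfBivariantPrecob}; but at the level of Theorem~\ref{UnivPrecobVsDerivedPrecobThm}, that result is already available and need not be re-derived. Pursuing the ideal comparison directly, as your paragraph three proposes, would replace a formal two-line application of the universal property with a substantially harder and unnecessary argument.
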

\begin{proof}
Since $\eta'$ is $\Lb$-linear by Theorem \ref{FGLIsLLinearThm}, $\eta'$ must be surjective as the $\Lb$-span of its image is clearly $d\Omega^*_{A, \pre}$. We are left to show the injectivity of $\eta'$, which takes rest of the proof. We do this by constructing a retraction for $\eta'$. Observe that the $\Lb$-linear structure on $\PCob^*_A$ gives rise to a unique orientation preserving Grothendieck transformation $r: \Lb^* \otimes \Mc_{A,+}^* \to \PCob^*_A$. Our plan is to go through the construction of $d\Omega^*_{A,\pre}$ as a quotient of $\Lb^* \otimes \Mc_{A,+}^*$, and show that $r$ is well defined modulo all the imposed relations. 

Let $W \to \Pb^1 \times X$ be projective morphism with the composition $W \to \Pb^1 \times Y$ quasi-smooth, and let $W_0$ and $W_\infty$ be the fibres of $W \to \Pb^1$ over $0$ and $\infty$ respectively. As 
$$[W_0 \to X] - [W_\infty \to X] = 0 \in \PCob^*_A(X \to Y),$$
it follows that $r$ factors as $r': d\Omega^*_{A,\mathrm{naive}} \to \PCob^*_A$. 

Consider then line bundles $\Ls_1$ and $\Ls_2$ on a quasi-projective derived $A$-scheme $X$. Since $r'$ sends the universal formal group law $F_\univ$ to $F^{\PCob^*_A}$, we compute that, if $\Ls_1$ and $\Ls_2$ are globally generated, then
\begin{align*}
r' \big( F_\univ(c_1(\Ls_1), c_1(\Ls_2)) - c_1(\Ls_1 \otimes \Ls_2) \big) &= F^{\PCob^*_A}(c_1(\Ls_1), c_1(\Ls_2)) - c_1(\Ls_1 \otimes \Ls_2) \\
&= 0 \in \PCob^*_A(X).
\end{align*}
This shows that $r'$ factors as $r'': d\Omega'^*_{A, \pre} \to \PCob^*_A$. Finally, since
$$c_1(\Ls_1  \otimes \Ls_2) = F^{\PCob^*_A}(c_1(\Ls_1), c_1(\Ls_2)) \in \PCob^*_A(X)$$
holds for any line bundles $\Ls_1$ and $\Ls_2$ on $X$, it follows formally that also the equality
$$c_1(\Ls_1 \otimes \Ls_2^\vee) = F^{\PCob^*_A}\big(c_1(\Ls_2), \iota(c_1(\Ls_2))\big)$$
holds for any $\Ls_i$ on $X$. In particular the latter equation is true for $\Ls_1$ and $\Ls_2$ globally generated, showing that $r''$ factors as $r''': d\Omega^*_{A, \pre} \to \PCob^*_A$. Since $r'''$ is orientation preserving, the composition $r''' \circ \eta'$ must be the identity transformation by Theorem \ref{UnivPropOfBivariantPrecob}. Thus $\eta_1$ is injective, and we are done.
\end{proof}

Proof of Theorem \ref{FirstComparisonThm} is then essentially finished by the following observation, which is an immediate consequence of Lemma \ref{ZetaClassIsNaturalLem}.

\begin{lem}\label{SncToSncLem}
The Grothendieck equivalence $\eta': \PCob^*_A \to d\Omega^*_{A, \pre}$ sends $\Rc'^{\mathrm{snc}}_A \subset \PCob^*_A$ to $\Rc^{\mathrm{snc}}_A \subset d\Omega^*_{A,\pre}$. \qed
\end{lem}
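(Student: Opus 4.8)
The plan is to deduce the statement directly from the naturality of the $\zeta$-class recorded in Lemma~\ref{ZetaClassIsNaturalLem}, after matching up the two defining families of elements. First I would recall the bookkeeping: by Construction~\ref{BivariantAlgCobCons}, $\Rc'^{\snc}_A(D \to pt)$ consists exactly of the elements $\zeta_{W,D} - \theta(\pi_D)$ as $D \hookrightarrow W$ ranges over all $\Cc_A$-snc divisors in smooth $A$-schemes $W$, and is empty over any target $Y \ne pt$; likewise, viewing $\Rc^{\snc}_A$ inside $d\Omega^*_{A,\pre}$ via the identification $d\Omega^{A,\pre}_*(D) = d\Omega^{-*}_{A,\pre}(D \to pt)$, the set $\Rc^{\snc}_A(D \to pt)$ consists of the corresponding elements $\zeta_{W,D} - [D \to D]$. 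The one point that needs a remark is that a $\Cc_A$-snc divisor $D$ is a quasi-smooth closed subscheme of a smooth $A$-scheme, hence quasi-smooth over $A$; therefore $\pi_D$ is specialized and, by definition of the orientation of any quotient of $\Mc_{\Fs_A}$, the class $\theta(\pi_D)$ is the fundamental class $[D \xrightarrow{\mathrm{Id}} D]$, i.e. precisely the element denoted $[D \to D]$ in the homological notation for $d\Omega^*_{A,\pre}$.

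With this in place, the argument is immediate. Since $\eta'$ is orientation preserving, $\eta'(\theta(\pi_D)) = \theta(\pi_D)$, which is $[D \to D]$ by the previous paragraph. Since $\eta'$ is a Grothendieck transformation between stably oriented bivariant theories on $\Fs_A$ satisfying the section and formal group law axioms, Lemma~\ref{ZetaClassIsNaturalLem} gives $\eta'(\zeta_{W,D,\PCob^*_A}) = \zeta_{W,D,d\Omega^*_{A,\pre}}$, i.e. $\eta'(\zeta_{W,D}) = \zeta_{W,D}$, for every $\Cc_A$-snc divisor $D \hookrightarrow W$. Combining the two, $\eta'$ carries each generator $\zeta_{W,D} - \theta(\pi_D)$ of $\Rc'^{\snc}_A$ to the corresponding generator $\zeta_{W,D} - [D \to D]$ of $\Rc^{\snc}_A$. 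As the two bivariant subsets are indexed by the same snc data and $\eta'$ is a bijection by Theorem~\ref{UnivPrecobVsDerivedPrecobThm}, this yields the equality $\eta'(\Rc'^{\snc}_A) = \Rc^{\snc}_A$.

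I do not expect any genuine obstacle: the entire content has already been isolated in Lemma~\ref{ZetaClassIsNaturalLem}, and the only step requiring a sentence of justification is the identification of the orientation class $\theta(\pi_D)$ along the quasi-smooth structure map with the fundamental class $[D \to D]$ appearing in the definition of $\Rc^{\snc}_A$.
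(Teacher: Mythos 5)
Your proof is correct and matches what the paper intends: the statement is asserted as an immediate consequence of Lemma~\ref{ZetaClassIsNaturalLem}, and you supply exactly the two observations that make it immediate, namely $\eta'(\zeta_{W,D}) = \zeta_{W,D}$ from that lemma and $\eta'(\theta(\pi_D)) = \theta(\pi_D) = [D \to D]$ from orientation preservation and the definition of the orientation on quotients of $\Mc_{\Fs_A}$. The appeal to bijectivity of $\eta'$ at the end is unnecessary (matching generators indexed by the same snc data already gives the equality of the two bivariant subsets), but it does no harm.
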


We are then ready to prove the main result of the section. 

\begin{proof}[Proof of Theorem \ref{FirstComparisonThm}]
Indeed, form the square
$$
\begin{tikzcd}
& \PCob^*_A \arrow[twoheadrightarrow]{d} \arrow[]{r}{\cong}[swap]{\eta'} & d \Omega^*_{A, \pre} \arrow[twoheadrightarrow]{d} & \\
\Omega^*_A \arrow[dash, shift left]{r} \arrow[dash]{r} & \PCob^*_A/\langle \Rc'^{\mathrm{snc}}_A \rangle \arrow[]{r}{\cong}[swap]{\eta} & d \Omega^*_{A, \pre} / \langle \Rc^{\mathrm{snc}}_A \rangle \arrow[dash, shift left]{r} \arrow[dash]{r} & d \Omega^*_A
\end{tikzcd}
$$
using Lemma \ref{SncToSncLem}. Then $\eta$ is the desired Grothendieck equivalence. Moreover, it is the only orientation preserving Grothendieck transformation $\Omega^*_A \to d \Omega^*_A$ by Corollary \ref{UnivPropOfBivariantAlgCob}. 
\end{proof}

\section{Proof of Theorem \ref{SecondComparisonThm}}\label{SecondComparisonSect}

The purpose of this section is to compare the two operational bivariant theories $d \op \Omega^*_\LM$ and $\op d \Omega^*_A$, which are built in a formal fashion from the homology theories $\Omega^\LM_*$ and $d\Omega^A_*$ respectively. We start by recalling the operational cobordism theory $\op \Omega^*_\LM$ and proving that it is commutative in Section \ref{OperationalCobReviewSect}. Then, in Section \ref{OpDCobConsSubSect} we construct operational derived cobordism theory $\op d \Omega^*_A$, and prove that it receives a unique orientation preserving Grothendieck transformation from any commutative stably oriented stably oriented bivariant extension of $d \Omega^A_*$. Finally, in Section \ref{ProofOfComparison2SubSect}, we use the virtual pullbacks constructed in Appendix \ref{VirtualPullbackSect} to equip $d \op \Omega^*_\LM$ with orientations along quasi-smooth morphisms, and prove that the resulting Grothendieck transformation
$$\iota: d \op \Omega^*_\LM \to \op d \Omega^*_k,$$
where $k$ is a field of characteristic 0, is an isomorphism, proving Theorem \ref{SecondComparisonThm}.

\subsection{Review of operational cobordism}\label{OperationalCobReviewSect}

The main purpose of this section is to recall the construction of $\op \Omega^*_\LM$ from \cite{KLG} (at least in the case of quasi-projective schemes). We will also study its basic properties, and in particular we will prove that $d \Omega^*_\LM$ is commutative. We will denote by $\Fs^\cl_k$ the functoriality $(\Cc^\cl_k, \Cs^\cl_k, \Is^\cl_k, \Ss^\cl_k)$ where $\Cc$ is the category of quasi-projective $k$-schemes, where $\Cs$ is the class of \emph{projective morphisms}, where $\Is$ is the class of all \emph{Cartesian squares} and where $\Ss$ is the class of \emph{lci morphisms}. Note that lci morphisms are not stable under pullbacks.

\begin{cons}\label{OpCobCons}
We will construct $\op \Omega^*_\LM$ as an oriented bivariant theory with functoriality $\Fs^\cl_k$. Given a morphism $X \to Y$ in $\Cc^\cl_k$, a \emph{degree $d$ operational cobordism class} $c \in \op \Omega^d_\LM(X \to Y)$ consists of a collection of morphisms
$$c_g: \Omega^\LM_*(Y') \to \Omega^\LM_{*-d}(Y' \times_Y X)$$
indexed by morphisms $g: Y' \to Y$, which have to satisfy the following conditions:
\begin{enumerate}
\item[$(C_1)$] if $h: Y'' \to Y'$ is projective, then for any Cartesian diagram
\begin{center}
\begin{tikzcd}
X'' \arrow[]{d}{h'} \arrow[]{r}{} & Y'' \arrow[]{d}{h} \\
X' \arrow[]{d}{g'} \arrow[]{r}{} & Y' \arrow[]{d}{g} \\
X \arrow[]{r}{} & Y,
\end{tikzcd}
\end{center}
we require that
$$h'_* \circ c_{g \circ h} = c_g \circ h_*$$
as morphisms $\Omega^\LM_*(Y'') \to \Omega^\LM_{*-d}(X')$;

\item[$(C_2)$] if $h: Z'' \to Z'$ is an lci morphism of relative dimension $d'$, then for any Cartesian diagram
\begin{center}
\begin{tikzcd}
X'' \arrow[]{d}{h''} \arrow[]{r}{} & Y'' \arrow[]{d}{h'} \arrow[]{r} & Z'' \arrow[]{d}{h} \\
X' \arrow[]{d}{g'} \arrow[]{r}{f'} & Y' \arrow[]{r}{f} \arrow[]{d}{g} &  Z' \\
X \arrow[]{r}{} & Y,
\end{tikzcd}
\end{center}
we require that
$$c_{g \circ h'} \circ h^!_f = h^!_{f \circ f'} \circ c_g$$
as morphisms $\Omega^\LM_*(Y') \to \Omega^\LM_{*-d + d'}(X'')$, where $h^!_f$ and $h^!_{f \circ f'}$ are the \emph{refined Gysin pullback morphisms} from Section 6.6 of \cite{LM}.
\end{enumerate}

The bivariant operations are defined as follows
\begin{enumerate}
\item \emph{bivariant product} is defined in the obvious way by composing homomorphisms;

\item \emph{pushforward}: if $f: X \to Y$ is projective, $Y \to Z$ is arbitrary, and $c \in \op \Omega_\LM^d(X \to Z)$, then we define $f_*(c)$ by the formula
$$f_*(c)_g := f'_* \circ c_g$$
where $f'$ comes from the Cartesian diagram
\begin{center}
\begin{tikzcd}
X' \arrow[]{d}{g'} \arrow[]{r}{f'} & Y' \arrow[]{d}{g'} \arrow[]{r}{} & Z' \arrow[]{d}{g} \\
X \arrow[]{r}{f} & Y \arrow[]{r}{} & Z;
\end{tikzcd}
\end{center}  

\item \emph{pullback}: if $g: Y' \to Y$ is arbitrary and $c \in \op \Omega_\LM^*(X \to Y)$, then we define $g^*(c)$ by the formula
$$f^*(c)_h :=  c_{g \circ h},$$
where $h: Y'' \to Y'$ is an arbitrary morphism.
\end{enumerate}
The verification that with these operations $\op\Omega_\LM^*$ becomes a bivariant theory is done in \cite{KLG}. The orientation is given by the refined Gysin pullbacks using the formula
$$\theta(f)_g := f^!_g:  \Omega_*^\LM(Y') \to \Omega_*^\LM(Y' \times_Y X).$$
This is indeed a valid operational class as $\theta(f)$ satisfies $(C_1)$ by \cite{LM} Theorem 6.6.6 2 (a) and $(C_2)$ by \emph{op. cit.} Theorem 6.6.10. 
\end{cons}

\begin{rem}
The general definition of a bivariant class in \cite{KLG} is more complicated than the one we have used in Construction \fref{OpCobCons}, but in the case of algebraic cobordism, these simplified axioms suffice. This follows from Remark 3.4 of \emph{op. cit.}, and from the simple fact that smooth pullback is a special case of refined lci pullback.
\end{rem}

Let us recall some basic properties of orientations in $\op \Omega^*_\LM$.

\begin{prop}
The orientations $\theta(f)$ are stable under Tor-independent pullbacks, and induce isomorphisms
$$- \bullet \theta(f): \op \Omega^*_\LM(Z \to X) \to \op\Omega^*_\LM(Z \to Y)$$
whenever $f: X \to Y$ is smooth (i.e., the orientations are \emph{strong} along smooth morphisms). 
\end{prop}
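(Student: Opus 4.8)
\emph{The plan.} The two assertions are largely independent, so I would prove them in turn, in both cases unwinding the definitions of Construction \ref{OpCobCons} to reduce everything to standard properties of refined lci pullbacks on $\Omega^\LM_*$ from \cite{LM} Section 6.6.

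\emph{Stability under Tor-independent pullbacks.} Fix an lci morphism $f\colon X\to Y$ and a Tor-independent Cartesian square with corners $X',Y',X,Y$ and bottom morphism $f$, with $g\colon Y'\to Y$ along the right edge; write $f'\colon X'\to Y'$ for the induced morphism. Since regular embeddings are stable under Tor-independent base change and smoothness under arbitrary base change, $f'$ is again lci, so $\theta(f')$ is defined. For any $h\colon Y''\to Y'$, unwinding the definitions gives $g^*(\theta(f))_h=\theta(f)_{g\circ h}=f^!_{g\circ h}$ and $\theta(f')_h=(f')^!_h$, both viewed as maps $\Omega^\LM_*(Y'')\to\Omega^\LM_*(Y''\times_{Y'}X')=\Omega^\LM_*(Y''\times_YX)$. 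These coincide because the refined Gysin homomorphism attached to $f$ and the morphism $g\circ h$ agrees with the one attached to its Tor-independent base change $f'$ and the morphism $h$ — the relevant excess bundle is zero — which is exactly the compatibility recorded in \cite{LM} Theorem 6.6.6 together with the functoriality of refined Gysin pullbacks in the base. Hence $g^*(\theta(f))=\theta(f')$.

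\emph{Strongness along smooth morphisms.} Let $f\colon X\to Y$ be smooth of relative dimension $e$ and $Z\to X$ arbitrary. Since smooth morphisms are specialized, stable under base change, and (by the previous part) the orientation is stable under such pullbacks, this is an instance of the general bivariant principle that the orientation class of a smooth morphism is invertible for the bivariant product (cf. \cite{An1} Proposition 2.9, whose proof only uses stability of the orientation along the pullbacks that occur). For completeness I would also exhibit the inverse directly at the level of operational classes: writing $\Gamma\colon Z\hookrightarrow X\times_YZ$ for the graph of $Z\to X$, a regular embedding of codimension $e$ since $f$ is smooth, and $\rho\colon X\times_YZ\to Z$, $\pi\colon X\times_YZ\to X$ for the projections, one sends $\beta\in\op\Omega^d_\LM(Z\to Y)$ to the class $\beta'\in\op\Omega^{d+e}_\LM(Z\to X)$ given, for $g\colon X'\to X$, by $(\beta')_g:=\Gamma^!\circ\beta_{f\circ g}$, where $\Gamma^!$ is the refined Gysin pullback along $\Gamma$ relative to $X'\times_YZ\to X\times_YZ$, landing in $\Omega^\LM_{*-d-e}(X'\times_XZ)$. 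One then checks that $\beta'$ satisfies axioms $(C_1)$ and $(C_2)$, using that refined Gysin pullbacks commute with proper pushforward and with other refined Gysin pullbacks (\cite{LM} Theorems 6.6.6 and 6.6.10), and that $\beta\mapsto\beta'$ is two-sided inverse to $-\bullet\theta(f)$: one direction follows because $\Gamma$ is a section of the smooth morphism $\rho$, so $\Gamma^!\circ\rho^!=(\rho\circ\Gamma)^!=\mathrm{id}$, combined with the base-change identity from the first part; the other is a symmetric manipulation.

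\emph{Main obstacle.} The conceptual reduction is routine; the work is bookkeeping. The delicate points are verifying that the family of refined Gysin pullbacks defining $\beta'$ is a legitimate operational class and that $\beta\mapsto\beta'$ really inverts $-\bullet\theta(f)$ on the nose. Both come down to a small list of compatibilities for refined lci pullbacks — self-intersection/excess formulas, commutation with proper pushforward, and functoriality under composition — all available in \cite{LM} Section 6.6; the care lies in assembling them correctly while avoiding any appeal to the commutativity of $\op\Omega^*_\LM$, which is only established later (Theorem \ref{OpCobIsCommThm}).
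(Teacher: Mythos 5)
Your proposal is correct and follows essentially the same strategy as the paper, which simply invokes \cite{LM} Lemma 6.6.2 for Tor-independent base-change stability and \cite{An1} Proposition 2.9 for strongness along smooth morphisms. The one small discrepancy is that you reach the first claim via \cite{LM} Theorem 6.6.6 plus base-change functoriality rather than the more directly applicable Lemma 6.6.2, but this is only a citation choice, not a different argument. Your explicit description of the inverse to $-\bullet\theta(f)$ via the graph $\Gamma\colon Z\hookrightarrow X\times_Y Z$ is additional self-contained content that the paper does not spell out (it treats \cite{An1} Proposition 2.9 as a black box), and your remark about avoiding any circular appeal to the commutativity of $\op\Omega^*_\LM$ — which is only proved later in Theorem \ref{OpCobIsCommThm} — is a correct and worthwhile precaution.
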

\begin{proof}
The first claim follows from \cite{LM} Lemma 6.6.2, and the second claim follows from \cite{An1} Proposition 2.9.
\end{proof}

Our next goal is to show that $\op \Omega^*_\LM$ is commutative. In order to do so, we need to recall some properties of algebraic cobordism proven in \cite{KLG2, KLG}. We recall that a projective morphism $X' \to X$ is an \emph{envelope} if for every subvariety $V$ of $X$ there exists a subvariety $V'$ of $X'$ that maps birationally onto $V$. An envelope is \emph{smooth} if the source $X'$ is a smooth $k$-variety. Envelopes are clearly stable under pullbacks. 

\begin{lem}[\cite{KLG} Theorem 5.3]\label{PullbackToEnvIsInjLem}
If $g: Y' \to Y$ is an envelope, then for all $X \to Y$ the bivariant pullback
$$g^*: \op \Omega^*_\LM(X \to Y) \to \op \Omega^*_\LM(Y' \times_Y X \to Y')$$
is an injection.
\end{lem}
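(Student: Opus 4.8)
\emph{Proof proposal.} The plan is to prove injectivity directly from the definition of an operational class, using a single nontrivial input about envelopes. Recall that a class $c \in \op\Omega^*_\LM(X \to Y)$ is the datum of homomorphisms $c_\phi: \Omega^\LM_*(T) \to \Omega^\LM_{*-d}(T \times_Y X)$ for all $\phi: T \to Y$, and that by construction $(g^*c)_\psi = c_{g \circ \psi}$ for every $\psi: T \to Y'$. Thus the hypothesis $g^*(c) = 0$ says exactly that $c_{g \circ \psi} = 0$ whenever $\psi$ has target $Y'$, and we must deduce that $c_\phi = 0$ for all $\phi: T \to Y$.

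The one nonformal ingredient I would invoke is that pushforward along an envelope is surjective on Levine--Morel bordism: if $p: P \to T$ is an envelope of quasi-projective $k$-schemes, then $p_*: \Omega^\LM_*(P) \to \Omega^\LM_*(T)$ is surjective. This is the algebraic-cobordism analogue of Kimura's descent lemma for Chow groups, available in characteristic zero via resolution of singularities (it follows from the theory of envelopes in algebraic cobordism developed in \cite{KLG2}). Together with the already-noted stability of envelopes under base change, this is all that is needed.

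Now fix $\phi: T \to Y$ and form $P := T \times_Y Y'$, with projections $p: P \to T$ and $q: P \to Y'$, so that $\phi \circ p = g \circ q$. Since $g$ is an envelope, so is its base change $p$, hence $p_*$ is surjective; and since $p$ is projective, the compatibility axiom $(C_1)$ applied to $p$ and the evident tower of Cartesian squares over $X \to Y$ gives $\bar p_* \circ c_{\phi \circ p} = c_\phi \circ p_*$, where $\bar p$ is the base change of $p$ along $X \to Y$. But $\phi \circ p = g \circ q$, so $c_{\phi \circ p} = c_{g \circ q} = (g^* c)_q = 0$; therefore $c_\phi \circ p_* = 0$, and surjectivity of $p_*$ forces $c_\phi = 0$. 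As $\phi$ was arbitrary, $c = 0$, which is the desired injectivity.

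The main (indeed the only) obstacle is the surjectivity of envelope pushforwards on $\Omega^\LM_*$; once that is granted, the argument is a purely formal manipulation of the operational axioms. A self-contained treatment would have to reprove that surjectivity, which is where resolution of singularities — hence the standing characteristic-zero hypothesis — enters.
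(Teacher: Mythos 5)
The paper does not give its own argument here; it simply cites Theorem 5.3 of Luis Gonz\'alez--Karu \cite{KLG}, which is precisely this statement. Your proposal is correct and is in fact the natural argument that the cited theorem rests on: envelopes are projective and stable under base change, pushforward along an envelope is surjective on $\Omega^\LM_*$ (this is the surjectivity half of the Kimura-type descent sequence established in \cite{KLG2}, which does use resolution of singularities), and the compatibility axiom $(C_1)$ turns that surjectivity into injectivity of the operational pullback via $c_\phi \circ p_* = \bar p_* \circ c_{\phi\circ p} = \bar p_* \circ c_{g\circ q} = 0$. Each step in your chain checks out: $p$ is projective as a base change of the projective envelope $g$, so $(C_1)$ applies; $\phi\circ p = g\circ q$ by construction of the fibre product; and $(g^*c)_q = c_{g\circ q}$ is the definition of the operational pullback. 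Your identification of surjectivity of envelope pushforward as the single non-formal input is accurate, and your attribution of it to \cite{KLG2} and to characteristic-zero resolution is the correct dependency to flag.
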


Combined with the following observation, smooth envelopes give a powerful tool for studying operational theories.

\begin{lem}[Cf. \cite{KLG} Proposition 3.1]\label{BivariantToSmIsHomLem}
If $Y$ is smooth and of pure dimension $d$, then the morphism
$$\op \Omega^{*}_\LM(X \to Y) \to \Omega^\LM_{d-*}(X)$$
defined by evaluating at $\pi_Y^!(1_{pt}) \in \Omega^\LM_*(Y)$, is an isomorphism.
\end{lem}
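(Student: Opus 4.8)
The statement to prove is Lemma~\ref{BivariantToSmIsHomLem}: when $Y$ is smooth of pure dimension $d$, evaluation at $\pi_Y^!(1_{pt}) \in \Omega^\LM_*(Y)$ gives an isomorphism $\op\Omega^*_\LM(X \to Y) \xrightarrow{\sim} \Omega^\LM_{d-*}(X)$. The plan is to construct an explicit inverse. The key point is that, since $Y$ is smooth, the diagonal $\delta: Y \to Y \times_k Y$ is a regular embedding, so for any morphism $g: Y' \to Y$ the fibre square expressing $Y' = Y' \times_Y Y$ provides a refined Gysin pullback $\delta^!_{g}: \Omega^\LM_*(Y' \times_k X') \to \Omega^\LM_{*-d}(Y' \times_Y X')$ whenever $X' \to X$ is given — more precisely, one uses the regularity of $\delta$ together with the projection $Y' \times_k X' \to Y'$ to extract a class over $Y' \times_Y X' = X \times_Y Y'$ from a class over $X$ pulled back exterior-ly.

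\textbf{Construction of the inverse.} Given $\alpha \in \Omega^\LM_{d-*}(X)$, I would define an operational class $c^\alpha \in \op\Omega^*_\LM(X \to Y)$ by setting, for each $g: Y' \to Y$,
$$
c^\alpha_g(\beta) := \delta^!_g\bigl(\beta \boxtimes \alpha\bigr) \in \Omega^\LM_{*'-*}(Y' \times_Y X),
$$
where $\beta \in \Omega^\LM_{*'}(Y')$, the external product $\beta \boxtimes \alpha$ lives on $Y' \times_k X$, and $\delta^!_g$ is the refined Gysin pullback along the regular embedding $\delta: Y \hookrightarrow Y \times_k Y$ applied to the Cartesian square cutting out $Y' \times_Y X$ inside $Y' \times_k X$ (using $g$ on the first factor and $\pi_X$ on the second). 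The verification that $c^\alpha$ satisfies axioms $(C_1)$ and $(C_2)$ is a direct consequence of the compatibility of refined Gysin pullbacks with proper pushforward (\cite{LM} Theorem 6.6.6) and with other refined Gysin pullbacks (commutativity of refined Gysin pullbacks, \cite{LM} Theorem 6.6.9 or the relevant statement in \emph{loc.\ cit.}), together with the projection formula for external products. One then checks that $c^\alpha$ evaluated at $\pi_Y^!(1_{pt})$ recovers $\alpha$: indeed $\pi_Y^!(1_{pt}) = [Y \to Y]$ is the fundamental class, and $\delta^!_{\Id}([Y] \boxtimes \alpha) = \delta^!([Y \times_k X \to Y \times_k X]\text{-type class carrying }\alpha)$, which by the self-intersection / excess formula for the regular embedding $\delta$ (whose normal bundle is $T_Y$, but here the excess vanishes because we are pulling back along the identity component of the diagonal) returns $\alpha$ on $X = Y \times_Y X$.

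\textbf{The two compositions.} One direction is the computation just sketched: evaluation$\circ(\alpha \mapsto c^\alpha) = \Id$. For the other, given $c \in \op\Omega^*_\LM(X \to Y)$, I must show $c = c^{c_{\Id}(\pi_Y^!(1_{pt}))}$. The idea is that both sides are determined by their values $c_g(\beta)$, and using axiom $(C_1)$ to reduce $\beta$ to fundamental classes of projective $Y'' \to Y'$, then axiom $(C_2)$ with the regular embedding $\delta$ to relate $c_g$ on such a class to $c_{\Id}$ applied to $\pi_Y^!(1_{pt})$ via the formula $\beta = \delta^!_{\cdots}(\beta \boxtimes \pi_Y^!(1_{pt}))$ valid because $\delta$ is regular and $Y$ smooth. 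This is essentially the content of \cite{KLG} Proposition~3.1, which the lemma statement already cites.

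\textbf{Main obstacle.} The routine parts (bivariant axioms for $c^\alpha$, bilinearity, grading) follow mechanically from the refined-Gysin-pullback calculus of \cite{LM} Section~6.6, much of which is in any case repaired in the appendices of this very paper. The genuine difficulty is the surjectivity/injectivity argument for the second composition, i.e.\ showing that an arbitrary operational class is \emph{reconstructed} from its single value $c_{\Id}(\pi_Y^!(1_{pt}))$; this requires knowing that $\Omega^\LM_*(Y')$ for all $g: Y' \to Y$ is generated, after applying refined Gysin pullbacks along $\delta$, by external products with the fundamental class of $Y$ — equivalently, that the operational class is rigid once its ``universal'' value is fixed. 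I expect to invoke \cite{KLG} Proposition~3.1 essentially verbatim for this step rather than reprove it, since the lemma is explicitly flagged as a ``Cf.'' of that result; the only new content is unwinding the identification in the derived/bivariant packaging used here, which is bookkeeping rather than a new idea.
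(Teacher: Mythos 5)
Your explicit-inverse construction via the diagonal $\delta: Y \hookrightarrow Y\times_k Y$ is a sensible route, and the ``easy direction'' computation (that evaluating $c^\alpha$ at $\pi_Y^!(1_{pt})$ returns $\alpha$, since $\Gamma_f: X \to Y\times_k X$ is a section of the smooth projection and the refined pullback $\delta^!$ along the graph square has no excess) is correct in spirit, though the phrase ``identity component of the diagonal'' is not the right reason — the excess vanishes because $\Gamma_f$ is a regular embedding of the same codimension $d$ with $N_{\Gamma_f} \cong f^*T_Y \cong \Gamma_f^*N_\delta$.

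However, the paper's proof is much shorter and avoids the part of your argument that you flag as ``the genuine difficulty.'' You invoke \cite{KLG} Proposition 3.1 ``essentially verbatim'' for the injectivity/reconstruction step, but \cite{KLG} Proposition 3.1 is the statement for $Y = pt$ only, so it does not apply verbatim to smooth $Y$ of positive dimension. What you are missing is the reduction to the point case: the proposition immediately preceding this lemma in the paper records that the orientation $\theta$ is \emph{strong} along smooth morphisms (a consequence of \cite{An1} Proposition 2.9), meaning $-\bullet\theta(\pi_Y): \op\Omega^*_\LM(X\to Y) \to \op\Omega^*_\LM(X\to pt)$ is already an isomorphism. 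Composing this with \cite{KLG} Proposition 3.1 (evaluation at $1_{pt}$) and unwinding the bivariant product $(c\bullet\theta(\pi_Y))_{\Id_{pt}}(1_{pt}) = c_{\Id_Y}(\pi_Y^!(1_{pt}))$ gives precisely the evaluation map in the statement, with no diagonal or excess-intersection bookkeeping needed. Your approach would work if you supplied a direct proof of the reconstruction step for general smooth $Y$ (essentially re-deriving \cite{KLG} Proposition 3.1 in greater generality), but the reduction via strong orientation is both available in the paper and far more economical; as it stands, the crucial step of your plan rests on a citation that, taken literally, proves a different statement.
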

\begin{proof}
Karu and Luis González show this when $Y$ is a point. The general case follows by composing the isomorphism
$$- \bullet \theta(\pi_Y): \op \Omega^*_\LM(X \to Y) \xrightarrow{\cong} \op \Omega^*_\LM(X \to pt)$$
with evaluation at $1_{pt}$.
\end{proof}

We are finally ready to prove the commutativity result.

\begin{thm}\label{OpCobIsCommThm}
The bivariant theory $\op \Omega^*_\LM$ is commutative.
\end{thm}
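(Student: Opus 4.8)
The plan is to reduce the commutativity statement to a computation over smooth schemes, where Lemma~\ref{BivariantToSmIsHomLem} identifies bivariant classes with bordism classes and the needed identities become visible. Concretely, fix an independent (here: arbitrary Cartesian) square with $f\colon X\to Y$, $g\colon Y'\to Y$, and bivariant classes $\alpha\in\op\Omega^*_\LM(X\to Y)$, $\beta\in\op\Omega^*_\LM(Y'\to Y)$; we must show $f^*(\beta)\bullet\alpha=g^*(\alpha)\bullet\beta$ as classes in $\op\Omega^*_\LM(X\times_Y Y'\to Y)$. Since operational classes are determined by their effect on $\Omega^\LM_*(T)$ for all $T\to Y$, and since by $(C_1)$ and compatibility with projective pushforward it suffices to check after pulling back along a smooth envelope of $Y$ (here I would invoke Lemma~\ref{PullbackToEnvIsInjLem}: $g_0^*$ is injective for an envelope $g_0\colon Y_0\to Y$ with $Y_0$ smooth, which exists for quasi-projective $Y$ by resolution of singularities in characteristic $0$), we may assume from the outset that $Y$ is smooth and of pure dimension $d$.

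Once $Y$ is smooth, Lemma~\ref{BivariantToSmIsHomLem} gives an isomorphism $\op\Omega^*_\LM(Z\to Y)\xrightarrow{\cong}\Omega^\LM_{d-*}(Z)$ for every $Z\to Y$, realized by evaluation at $\pi_Y^!(1_{pt})$. The plan is then to transport both sides of the desired identity along these isomorphisms and recognize them as a standard compatibility of refined Gysin pullbacks with proper pushforward in $\Omega^\LM_*$. Writing $\alpha$ as corresponding to a class $a\in\Omega^\LM_*(X)$ and $\beta$ to $b\in\Omega^\LM_*(Y')$, the class $f^*(\beta)\bullet\alpha$ evaluated at $\pi_Y^!(1_{pt})$ should work out to a refined intersection product $a\cdot_Y b\in\Omega^\LM_*(X\times_Y Y')$ built from the refined Gysin pullback along the diagonal of $Y$ (using that $Y$ smooth makes $\Delta_Y$ a regular embedding), and likewise $g^*(\alpha)\bullet\beta$ evaluates to the same refined product with the roles of the two factors swapped. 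The equality of these two is precisely the commutativity of the refined Gysin/intersection product for the smooth scheme $Y$, which follows from the functoriality and base-change properties of refined lci pullbacks recorded in \cite{LM} Section~6.6 (Theorems 6.6.6 and 6.6.10), exactly the properties already used in Construction~\ref{OpCobCons} to verify that $\theta(f)$ is a valid operational class.

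The main obstacle I anticipate is bookkeeping: making the reduction to smooth $Y$ fully rigorous requires checking that the two bivariant classes $f^*(\beta)\bullet\alpha$ and $g^*(\alpha)\bullet\beta$ have the same pullback along the smooth envelope, which in turn uses that bivariant pullback is multiplicative ($A_{13}$) and commutes with the operations, so the envelope pullback of each product is the corresponding product of envelope pullbacks over $Y_0$; combined with the injectivity of Lemma~\ref{PullbackToEnvIsInjLem} this legitimately reduces to $Y=Y_0$ smooth. The second delicate point is the explicit identification in Lemma~\ref{BivariantToSmIsHomLem}'s coordinates of $\alpha\bullet\beta$ with a refined product: one must track carefully which refined Gysin map (along $\Delta_Y$, pulled back along $X\times Y'\to Y\times Y$) computes $\bullet$, and verify the symmetry of that construction. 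Neither step is conceptually hard, but both require patient diagram-chasing with the compatibilities of \cite{LM} Section~6.6; I would organize the write-up so that the smooth case is isolated as the crux and the envelope reduction is dispatched first using only formal bivariant axioms.
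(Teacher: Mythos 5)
Your plan is essentially the paper's own proof: reduce to smooth $Y$ via resolution of singularities and Lemma~\ref{PullbackToEnvIsInjLem}, transfer to bordism via Lemma~\ref{BivariantToSmIsHomLem}, and close with the compatibilities of refined Gysin pullbacks from \cite{LM}~Section~6.6. The step you defer to ``patient diagram-chasing,'' however, is where the paper's argument has its one real idea, and you do not quite articulate it: once $Y$ is smooth, Lemma~\ref{BivariantToSmIsHomLem} is used not merely to identify $\alpha$ and $\beta$ with bordism classes $a$, $b$, but (by linearity and the presentation of $\Omega^\LM_*$ by projective maps from smooth sources) to reduce to the \emph{generating} case $\alpha = h_*(\theta(f\circ h))$ and $\beta = i_*(\theta(g\circ i))$ with $h\colon Z\to X$, $i\colon Y''\to Y'$ projective and $Z$, $Y''$ smooth. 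In that case a short chase through axioms $A_{12}$, $A_{23}$, $A_{123}$ rewrites each side as a pushforward along the common map $h'\circ i'' = i'\circ h''$ of, respectively, $(f\circ h)^*(\theta(g\circ i))\bullet\theta(f\circ h)$ and $(g\circ i)^*(\theta(f\circ h))\bullet\theta(g\circ i)$, so the whole claim collapses to the equality of these two elements, which is precisely \cite{LM} Theorem~6.6.10. I would recommend carrying out this explicit reduction rather than trying to exhibit $f^*(\beta)\bullet\alpha$ as a refined diagonal product $a\cdot_Y b$: that identification is not given by Lemma~\ref{BivariantToSmIsHomLem} and would itself require the same reduction to generators to verify, so introducing $\Delta_Y$ only inserts an extra layer you would then have to unwind.
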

\begin{proof}
Given a Cartesian square
$$
\begin{tikzcd}
X' \arrow[]{d}{g'} \arrow[]{r}{f'} & Y' \arrow[]{d}{g} \\
X \arrow[]{r}{f} & Y
\end{tikzcd}
$$
and elements $\alpha \in \op\Omega^*_\LM(X \to Y)$ and $\beta \in \op\Omega^*_\LM(Y' \to Y)$, we have to show that 
$$f^*(\beta) \bullet \alpha = g^*(\alpha) \bullet \beta.$$
Using resolution of singularities, we can find a smooth envelope $\tilde Y \to Y$, and therefore we can use Lemma \ref{PullbackToEnvIsInjLem} reduce to the case when $Y$ is smooth.

Then using Lemma \ref{BivariantToSmIsHomLem} we can further reduce to the case when $\alpha = h_* (\theta(f \circ h))$ and $\beta = i_*(\theta(g \circ i))$, where everything is as in the Cartesian diagram
$$
\begin{tikzcd}
Z'' \arrow[]{r}{h''} \arrow[]{d}{i''} & X'' \arrow[]{r}{f''} \arrow[]{d}{i'} & Y'' \arrow[]{d}{i} \\
Z' \arrow[]{r}{h'} \arrow[]{d}{g''} & X' \arrow[]{r}{f'} \arrow[]{d}{g'} & Y' \arrow[]{d}{g} \\
Z \arrow[]{r}{h} & X \arrow[]{r}{f} & Y,
\end{tikzcd}
$$
with $Z$ and $Y''$ smooth and $i$ and $h$ projective. We can then compute that
\begin{align*}
f^*(\beta) \bullet \alpha &= f^*\big(i_*(\theta(g \circ i))\big) \bullet h_* (\theta(f \circ h)) \\
&= h'_* \Big( (f \circ h)^*\big(i_*(\theta(g \circ i))\big) \bullet \theta(f \circ h)\Big) & (\text{bivariant axiom $A_{123}$}) \\
&= h'_* \Big( i''_* \big((f \circ h)^*(\theta(g \circ i)) \big) \bullet \theta(f \circ h) \Big) & (\text{bivariant axiom $A_{23}$}) \\
&= (h' \circ i'')_* \big( (f \circ h)^*(\theta(g \circ i)) \bullet \theta(f \circ h) \big) & (\text{bivariant axiom $A_{12}$})
\end{align*}
and similarly that
$$g^*(\alpha) \bullet \beta = (i' \circ h'')_* \big( (g \circ i)^* (\theta(f \circ h)) \bullet \theta(g \circ i) \big).$$
As $h' \circ i'' = i' \circ h''$ the desired equality follows from
$$(f \circ h)^*(\theta(g \circ i)) \bullet \theta(f \circ h) = (g \circ i)^* (\theta(f \circ h)) \bullet \theta(g \circ i)$$
which is equivalent to the commutativity of refined Gysin pullbacks proved in \cite{LM} Theorem 6.6.10.
\end{proof}

We finish by extending $d \Omega^*_\LM$ to quasi-projective derived schemes over $k$. 

\begin{cons}
Let $k$ a field of characteristic 0. Then we define the \emph{extended operational cobordism} $d \op \Omega^*_\LM$ as a bivariant theory (without orientation) with functoriality $\Fs_k$ (see the beginning of Section \ref{FirstComparisonSect}) by the formula
$$d \op \Omega^*_\LM(X \to Y) := \op \Omega^*_\LM(\tau_0(X) \to \tau_0(Y)),$$
where $X$ and $Y$ are quasi-projective derived schemes over $k$. 
\end{cons}

\subsection{Construction of operational derived cobordism}\label{OpDCobConsSubSect}

In this section, we will construct the operational derived cobordism theory $\op d \Omega^*_A$, and prove that it is in some sense the coarsest bivariant extension of the derived bordism theory $d \Omega^A_*$ constructed in Section \ref{DerivedBivariantCobConsSubSect}.

\begin{cons}\label{OpDCobCons}
Let $A$ be a Noetherian ring of finite Krull dimension. We will construct $\op d \Omega^*_A$ as an oriented bivariant theory with functoriality $\Fs_A$ (see the beginning of Section \ref{FirstComparisonSect}). Given a homotopy class of morphisms $X \to Y$ of quasi-projective derived $A$-schemes, a \emph{degree $d$ operational derived cobordism class} $c \in \op d \Omega^d_A$ consists of a collection of morphisms
$$c_g: d\Omega^A_*(Y') \to d\Omega^A_{*-d}(Y' \times^R_Y X)$$
indexed by morphisms $g: Y' \to Y$ of derived schemes, which have to satisfy:
\begin{enumerate}
\item[$(dC_1)$] if $h: Y'' \to Y'$ is projective, then for any derived Cartesian diagram
\begin{center}
\begin{tikzcd}
X'' \arrow[]{d}{h'} \arrow[]{r}{} & Y'' \arrow[]{d}{h} \\
X' \arrow[]{d}{g'} \arrow[]{r}{} & Y' \arrow[]{d}{g} \\
X \arrow[]{r}{} & Y,
\end{tikzcd}
\end{center}
we require that
$$h'_* \circ c_{g \circ h} = c_g \circ h_*$$
as morphisms $d\Omega^A_*(Y'') \to d\Omega^A_{*-d}(X')$;

\item[$(dC_2)$] if $h: Y'' \to Y'$ is quasi-smooth of relative dimension $d'$, then for any derived Cartesian diagram
\begin{center}
\begin{tikzcd}
X'' \arrow[]{d}{h'} \arrow[]{r}{} & Y'' \arrow[]{d}{h} \\
X' \arrow[]{d}{g'} \arrow[]{r}{} & Y' \arrow[]{d}{g} \\
X \arrow[]{r}{} & Y,
\end{tikzcd}
\end{center}
we require that
$$c_{g \circ h} \circ h^! = h'^! \circ c_g$$
as morphisms $d\Omega^A_*(Y'') \to d\Omega^A_{* + d' - d}(X')$;
\end{enumerate}
The bivariant operations are defined analogously to Construction \fref{OpCobCons}, and the verification that $\op d \Omega^A_*$ is a bivariant theory is similar to the verification that $\op \Omega^*_\LM$ is a bivariant theory. The orientation $\theta$ is defined as follows: if $f: X \to Y$ is quasi-smooth, then define $\theta(f)$ by the formula
$$\theta(f)_g := f'^! : d \Omega^A_*(Y') \to d \Omega^A_*(X'),$$
where $f'$ comes from the derived Cartesian diagram
$$
\begin{tikzcd}
X' \arrow[]{r}{f'} \arrow[]{d} & Y' \arrow[]{d}{g} \\
X \arrow[]{r}{f} & Y.
\end{tikzcd}
$$
The fact that $\theta(f)$ is an operational derived cobordism class follows from functoriality of Gysin pullbacks and push-pull formula.
\end{cons}

This operational theory has the following universal property.

\begin{thm}\label{OpDCobUnivProp}
Let $\Bb^*$ be a stably oriented commutative bivariant theory with functoriality $\Fs_A$, and let $\phi$ be an assignment of isomorphisms
$$\phi_X: \Bb_*(X) \xrightarrow{\cong} d \Omega^A_*(X)$$
for each $X \in \Cc_A$ commuting with projective pushforwards and quasi-smooth pullbacks. Then $\phi$ extends in a unique way into an orientation preserving Grothendieck transformation
$$\tilde \phi: \Bb^* \to \op d \Omega^*_A.$$ 
\end{thm}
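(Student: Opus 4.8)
The plan is to write down the only formula that can possibly work and then check that it does. For $\alpha \in \Bb^*(X \xrightarrow{f} Y)$, a morphism $g\colon Y'\to Y$ with $X' := Y'\times^R_Y X$, and a class $\beta\in d\Omega^A_*(Y')$, set
\[
\tilde\phi(\alpha)_g(\beta) := \phi_{X'}\bigl(g^*(\alpha)\bullet\phi_{Y'}^{-1}(\beta)\bigr)\in d\Omega^A_*(X'),
\]
where $g^*(\alpha)\in\Bb^*(X'\to Y')$, where $\phi_{Y'}^{-1}(\beta)$ is viewed in $\Bb_*(Y')=\Bb^{-*}(Y'\to pt)$, and where the bivariant product accordingly lands in $\Bb^*(X'\to pt)=\Bb_*(X')$; the degree shift is the expected one. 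Since $\phi$ is an isomorphism compatible with projective pushforwards and quasi-smooth pullbacks, so is $\phi^{-1}$, and this will be used freely.

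First I would check that $\{\tilde\phi(\alpha)_g\}_g$ is a bona fide operational derived cobordism class, i.e. satisfies $(dC_1)$ and $(dC_2)$; in particular this shows $\tilde\phi(\alpha)\in\op d\Omega^*_A(X\to Y)$. For $(dC_1)$ with $h\colon Y''\to Y'$ projective, one uses $A_3$ to identify the pullbacks along $g\circ h$ and that $\phi$ commutes with projective pushforwards; the identity then reduces to the bivariant projection formula $A_{123}$ for the independent square obtained by pulling back $X'\to Y'$ along $h$. For $(dC_2)$ with $h\colon Y''\to Y'$ quasi-smooth, one rewrites the Gysin pullbacks in both $d\Omega^A_*$ and $\Bb_*$ as $\theta(h)\bullet(-)$, uses that $\phi$ commutes with quasi-smooth pullbacks, and invokes stability of the orientation of $\Bb^*$ to replace the pulled-back orientation $\tilde h^*(\theta_{\Bb}(h))$ by $\theta_{\Bb}(\tilde h)$; what is then left is precisely the commutativity relation $\theta_{\Bb}(\tilde h)\bullet g^*(\alpha) = h^*(g^*(\alpha))\bullet\theta_{\Bb}(h)$ for the relevant Cartesian square, which holds because $\Bb^*$ is commutative (Definition \ref{CommutativeBivariantTheoryDef}). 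This is the step I expect to be the real obstacle: it is the unique place where the commutativity hypothesis enters, and it is genuinely needed.

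Next I would verify that $\tilde\phi$ is an orientation-preserving Grothendieck transformation. Compatibility with bivariant pullback is immediate from $A_3$ and the formula $g^*(c)_h=c_{g\circ h}$; compatibility with pushforward follows from push-pull ($A_{23}$) together with $A_{12}$; compatibility with products follows from multiplicativity of pullback ($A_{13}$) and associativity of $\bullet$, after unwinding the operational product as composition of the $c_g$'s. That $\tilde\phi$ preserves orientations, $\tilde\phi(\theta_{\Bb}(f))=\theta(f)$, is seen by evaluating on an arbitrary $\beta$: stability gives $g^*(\theta_{\Bb}(f))=\theta_{\Bb}(f')$, and $\phi$ intertwines $\theta_{\Bb}(f')\bullet(-)$ with the Gysin pullback $f'^!$. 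Finally $\tilde\phi$ extends $\phi$: specializing to $Y=pt$ and $g=\mathrm{Id}_{pt}$, the formula gives, under the canonical identification of the associated homology of $\op d\Omega^*_A$ with $d\Omega^A_*$ (evaluation at $1_{pt}$), the class $\phi_X\bigl(\alpha\bullet\phi_{pt}^{-1}(1_{pt})\bigr)=\phi_X(\alpha)$, using that $\phi$ carries the fundamental class of the point to itself.

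For uniqueness, let $\psi\colon\Bb^*\to\op d\Omega^*_A$ be any orientation-preserving Grothendieck transformation extending $\phi$. Since $\psi$ commutes with bivariant pullback, $\psi(\alpha)_g=\psi(g^*\alpha)_{\mathrm{Id}_{Y'}}$, so it is enough to pin down $\psi(\alpha)_{\mathrm{Id}_Y}$. For $\beta\in d\Omega^A_*(Y)$, apply $\psi$ to $\alpha\bullet\phi_Y^{-1}(\beta)\in\Bb_*(X)$: compatibility with products gives $\psi(\alpha\bullet\phi_Y^{-1}(\beta))=\psi(\alpha)\bullet\psi(\phi_Y^{-1}(\beta))$; the left-hand side equals $\phi_X(\alpha\bullet\phi_Y^{-1}(\beta))$ because $\psi$ extends $\phi$; and evaluating the right-hand side through the operational product (noting that $\psi(\phi_Y^{-1}\beta)$ corresponds to $\beta$ under the identification above) produces exactly $\psi(\alpha)_{\mathrm{Id}_Y}(\beta)$. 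Hence $\psi(\alpha)_{\mathrm{Id}_Y}(\beta)=\phi_X(\alpha\bullet\phi_Y^{-1}(\beta))$, so $\psi$ agrees with the $\tilde\phi$ constructed above.
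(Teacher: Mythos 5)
Your proposal is correct and follows the same route as the paper: you write down the same formula $\tilde\phi(\alpha)_g = \phi\circ(g^*(\alpha)\bullet -)\circ\phi^{-1}$, verify $(dC_1)$ via the bivariant projection formula $A_{123}$, verify $(dC_2)$ by using stability of the orientation to replace $\theta(h')$ with $f'^*(\theta(h))$ and then invoking commutativity of $\Bb^*$, and check orientation preservation by the same stability argument. You are in fact somewhat more thorough than the paper's own proof, which stops after verifying the operational class axioms and orientation preservation and leaves the Grothendieck-transformation compatibilities and the uniqueness statement implicit; your uniqueness argument (pin down $\psi(\alpha)_{\mathrm{Id}_Y}$ by pairing against $\phi_Y^{-1}(\beta)$ and using that $\psi$ restricts to $\phi$ on homology) is a clean way to supply what the paper omits.
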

\begin{proof}
Indeed, given $\beta \in \Bb^*(X \to Y)$, we define $\tilde \phi (\beta)$ by formula
$$\tilde \phi (\beta)_g := \phi \circ \big(g^*(\beta) \bullet - \big) \circ \phi^{-1}: d \Omega^A_*(Y') \to d \Omega^A_*(X'),$$
where 
$$
\begin{tikzcd}
X' \arrow[]{d} \arrow[]{r} & Y' \arrow[]{d}{g} \\
X \arrow[]{r} & Y.
\end{tikzcd}
$$
is derived Cartesian. We have to show that $\tilde\phi(\beta)$ satisfies $(d C_1)$ and $(d C_2)$.

Given a projective morphism $h: Y'' \to Y'$, an arbitrary morphism $g: Y' \to Y$ and an element $\alpha \in d \Omega^A_*(Y'')$, we compute that 
\begin{align*}
h'_* \big(\tilde\phi(\beta)_{g \circ h}(\alpha)\big) &= h'_* \Big(\phi \big( h^*(g^*(\beta)) \bullet \phi^{-1}  (\alpha)\big)\Big) \\
&= \phi \Big(h'_* \big( h^*(g^*(\beta)) \bullet \phi^{-1}  (\alpha)\big)\Big) \\
&= \phi \Big( g^*(\beta) \bullet h_*\big(\phi^{-1}  (\alpha)\big)\Big) & (\text{bivariant axiom $A_{123}$}) \\
&= \phi \Big( g^*(\beta) \bullet \phi^{-1} (h_*(\alpha))\Big) \\
&= \tilde\phi(\beta)_g (h_*(\alpha)),
\end{align*}
where $h'$ is the pullback of $h$ along $Y' \times^R_Y X \to Y'$. Therefore $\tilde\phi(\beta)$ satisfies $(dC_1)$.

Let us then prove that $\tilde\phi(\beta)$ satisfies $(dC_2)$. Suppose we are given a derived Cartesian diagram
$$
\begin{tikzcd}
X'' \arrow[]{d}{h'} \arrow[]{r}{f''} & Y'' \arrow[]{d}{h} \\
X' \arrow[]{d}{g'} \arrow[]{r}{f'} & Y' \arrow[]{d}{g} \\
X \arrow[]{r}{f} & Y,
\end{tikzcd}
$$
with $h$ quasi-smooth and of pure relative dimension, and $\alpha \in \Bb_*(Y'')$. We can then compute that
\begin{align*}
h'^! \big(\tilde\phi(\beta)_{g}(\alpha)\big) &=  h'^! \Big(\phi\big(g^*(\beta) \bullet \phi^{-1}(\alpha) \big)\Big) \\
&= \phi\Big(h'^! \big(g^*(\beta) \bullet \phi^{-1}(\alpha) \big)\Big) \\
&= \phi\big(\theta(h') \bullet g^*(\beta) \bullet \phi^{-1}(\alpha) \big) \\
&= \phi\Big(f'^*(\theta(h)) \bullet g^*(\beta) \bullet \phi^{-1}(\alpha) \Big) & (\text{$\Bb^*$ is stably oriented}) \\
&= \phi\Big( h^*\big(g^*(\beta)\big) \bullet \theta(h) \bullet \phi^{-1}(\alpha) \Big) & (\text{$\Bb^*$ is commutative}) \\
&= \phi\Big( h^*\big(g^*(\beta)\big) \bullet h^!\big(\phi^{-1}(\alpha) \big) \Big) \\
&= \phi\Big( h^*\big(g^*(\beta)\big) \bullet \phi^{-1}\big(h^!(\alpha) \big) \Big) \\
&= \tilde\phi(\beta)_{g \circ h} \big( h^!(\alpha) \big),
\end{align*} 
so $\tilde \phi (\beta) \in \op d \Omega^*_A(X \to Y)$. 

Finally, to show that $\tilde \phi$ preserves orientations, suppose that $f: X \to Y$ is a quasi-smooth morphism, $g: Y' \to Y$ is an arbitrary morphism, and $f': X' \to Y'$ is the derived pullback of $f$ along $g$. Then, given $\alpha \in d \Omega^A_*(Y')$, we compute that
\begin{align*}
\tilde\phi(\theta(f))_g(\alpha) &= \phi\big( g^*(\theta(f)) \bullet \phi^{-1}(\alpha) \big) \\
&= \phi\big( \theta(f') \bullet \phi^{-1}(\alpha) \big) & (\text{$\Bb^*$ is stably oriented}) \\
&= \phi\big( f'^! (\phi^{-1}(\alpha)) \big) \\
&= f'^!(\alpha) \\
&= \theta^\op(f)_g(\alpha), 
\end{align*} 
where for clarity $\theta^\op$ denotes the orientation of $\op d \Omega^*_A$. Hence $\tilde\phi(\theta(f)) = \theta^\op(f)$.
\end{proof}

In particular $\op d \Omega^*_A$ admits a unique orientation preserving Grothendieck transformation from $\Omega^*_A$.

\begin{thm}\label{BivCobToOpCobThm}
There exists a unique orientation preserving Grothendieck transformation
$$\nu: \Omega^*_A \to \op d \Omega^*_A.$$
\end{thm}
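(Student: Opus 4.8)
The claim follows by feeding the comparison isomorphism of Theorem \ref{FirstComparisonThm} into the universal property of Theorem \ref{OpDCobUnivProp}, with uniqueness then being formal. First note that $\Omega^*_A$ is a stably oriented commutative bivariant theory on $\Fs_A$: it is a quotient of $\Mc_{\Fs_A}$ by a bivariant ideal, and both stable orientedness and commutativity descend to such quotients (Proposition \ref{UnivTheoryProperties}). By Theorem \ref{FirstComparisonThm} there is an orientation preserving Grothendieck isomorphism $\eta\colon\Omega^*_A \isomto d\Omega^*_A$, which on associated homology theories restricts to isomorphisms
$$\phi_X := \eta_h\colon (\Omega^*_A)_*(X) \isomto (d\Omega^*_A)_*(X) = d\Omega^A_*(X),$$
compatible with projective pushforwards (as $\eta$ is a Grothendieck transformation) and with quasi-smooth Gysin pullbacks (as $\eta$ is orientation preserving). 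Applying Theorem \ref{OpDCobUnivProp} with $\Bb^* = \Omega^*_A$ and this $\phi$ produces an orientation preserving Grothendieck transformation $\nu\colon\Omega^*_A \to \op d\Omega^*_A$.

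For uniqueness, recall that $\op d\Omega^*_A$ is a stably oriented bivariant theory on $\Fs_A$ (Construction \ref{OpDCobCons}), so Theorem \ref{UnivPropOfUnivBivThy} provides a unique orientation preserving Grothendieck transformation $\Mc_{\Fs_A} \to \op d\Omega^*_A$. The canonical map $q\colon\Mc_{\Fs_A} \twoheadrightarrow \Omega^*_A$ --- the composite of the surjections $\Mc_{\Fs_A} \to \Mc^*_{A,+} \to \PCob^*_A \to \Omega^*_A$ --- is an orientation preserving Grothendieck transformation and is surjective in every bidegree. Thus any orientation preserving Grothendieck transformation $\nu'\colon\Omega^*_A \to \op d\Omega^*_A$ satisfies $\nu' \circ q = \nu \circ q$, both being equal to the unique transformation out of $\Mc_{\Fs_A}$, and surjectivity of $q$ forces $\nu' = \nu$. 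One could instead deduce uniqueness from Corollary \ref{UnivPropOfBivariantAlgCob}, after checking that $\op d\Omega^*_A$ is additive and satisfies the section, formal group law and snc axioms --- the latter obtained by transporting the corresponding properties of $\Omega^*_A$ along $\nu$ via Proposition \ref{OrientedGTranPreserveFGLProp} and Lemma \ref{ZetaClassIsNaturalLem}.

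Since the proof only assembles results already established, I do not expect a genuine obstacle. The one place that warrants attention is verifying the hypotheses of Theorem \ref{OpDCobUnivProp}: that $\Omega^*_A$ is stably oriented and commutative (immediate from its construction as a quotient of $\Mc_{\Fs_A}$), and that the homology isomorphisms $\eta_h$ intertwine projective pushforwards and quasi-smooth Gysin pullbacks (immediate from $\eta$ being an orientation preserving Grothendieck transformation).
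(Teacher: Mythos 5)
Your proof is correct and for existence takes exactly the paper's first route: use the equivalence $\Omega^*_A \isomto d\Omega^*_A$ from Theorem~\ref{FirstComparisonThm} to get an assignment $\phi$ as in Theorem~\ref{OpDCobUnivProp}, and apply the latter with $\Bb^* = \Omega^*_A$ (after noting $\Omega^*_A$ is stably oriented and commutative because these descend along quotients of $\Mc_{\Fs_A}$). Where you depart from the paper is on uniqueness. Theorem~\ref{OpDCobUnivProp} only gives uniqueness among transformations \emph{extending the chosen} $\phi$, so the paper appeals to its second method --- verifying that $\op d\Omega^*_A$ satisfies the section, formal group law, and snc axioms and then invoking Corollary~\ref{UnivPropOfBivariantAlgCob}. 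Your alternative --- composing with the surjective orientation preserving Grothendieck transformation $q\colon \Mc_{\Fs_A} \twoheadrightarrow \Omega^*_A$ and citing the unique factorization through $\Mc_{\Fs_A}$ (Theorem~\ref{UnivPropOfUnivBivThy}) --- is a valid shortcut: it requires only that $\op d\Omega^*_A$ is stably oriented, which is immediate from Construction~\ref{OpDCobCons} and functoriality of virtual pullbacks, and dodges the axiom-checking entirely. The trade-off is that your route does not record the useful side fact that $\op d\Omega^*_A$ satisfies the bivariant cobordism axioms, which the paper implicitly obtains from its second method; otherwise the two uniqueness arguments prove the same thing. Your passing remark that $q$ factors through $\Mc^*_{A,+}$ and $\PCob^*_A$ is accurate and justifies surjectivity degree by degree.
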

\begin{proof}
This can be proven in two different ways. Since $\Omega^*_A$ is canonically equivalent to $d \Omega^*_A$ by Theorem \ref{FirstComparisonThm}, and since $d \Omega^*_A$ is a commutative stably oriented bivariant theory extending $d \Omega^*_A$ by construction, the claim follows from Theorem \ref{OpDCobUnivProp}. On the other hand, since $\op d \Omega^*_A$ is a stably oriented additive bivariant theory, and since it can be easily checked to satisfy the section, the formal group law and the snc axioms, the claim follows from Corollary \ref{UnivPropOfBivariantAlgCob}. 
\end{proof}

\begin{war}\label{OpCobIsNotCobWar}
The Grothendieck transformation $\nu$ is expected not to be an equivalence. If $k$ is a field characteristic 0, we will show that $d \op\Omega^*_\LM = \op d \Omega^*_k$, and hence the operational theory is $\Ab^1$-invariant. On the other hand, since the Grothendieck rings of vector bundles can be recovered from $\Omega^*_A$, bivariant algebraic cobordism is not $\Ab^1$-invariant, proving the inequality in characteristic 0.
\end{war}

\subsection{Proof of the comparison theorem}\label{ProofOfComparison2SubSect}

In this section we will prove Theorem \ref{SecondComparisonThm}. We start by providing $d \op \Omega^*_\LM$ with a stable orientation, which gives rise to a unique orientation preserving Grothendieck transformation $\iota: d \op \Omega^*_\LM \to \op d \Omega^*_k$ since $d \op \Omega^*_\LM$ is commutative by Theorem \ref{OpCobIsCommThm}. After that, we will show that $\iota$ is an isomorphism. Throughout this section, $k$ will be a field of characteristic 0.

Let us start by constructing the orientation on $d \op \Omega^*_\LM$.

\begin{cons}\label{OperationalOrientationCons}
Let $f: X \to Y$ be a morphism in $\Cc_k$. We define a collection of morphisms $\theta(f)$ indexed by equivalence classes of morphisms $g: Y' \to Y$ with the source $Y'$ classical using the formula
$$\theta(f)_g := f'^\vir : \Omega^\LM_*(\tau_0(Y')) \to \Omega^\LM_*(\tau_0(X')),$$
where $f': X' \to Y'$ is the derived pullback of $f$ along $g$, and $f'^\vir$ is the virtual pullback constructed in Appendix \ref{VirtualPullbackSect}.
\end{cons}

\begin{lem}
The collection $\theta(f)$ is an element of $d \op\Omega^*_\LM(X \to Y)$.
\end{lem}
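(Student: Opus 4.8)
The plan is to verify directly that the collection $\theta(f) = \{f'^\vir\}_g$ defined in Construction \ref{OperationalOrientationCons} satisfies the two axioms $(C_1)$ and $(C_2)$ in the definition of $\op\Omega^*_\LM$ (equivalently, the conditions $(dC_1)$ and $(dC_2)$ phrased for the derived truncation). Recall that a class in $d\op\Omega^*_\LM(X \to Y) = \op\Omega^*_\LM(\tau_0(X) \to \tau_0(Y))$ is, by definition, a system of operations $c_g\colon \Omega^\LM_*(Y') \to \Omega^\LM_{*-d}(Y' \times_Y X)$ on \emph{classical} schemes $Y'$, so we are free to work entirely with classical targets $Y'$, understanding all fibre products below to be the classical truncations $\tau_0(Y' \times^R_Y X)$ of the derived pullbacks.

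First I would check $(C_1)$: given a projective $h\colon Y'' \to Y'$ and the two stacked Cartesian squares, we must show $h'_* \circ (f \circ h)'^\vir = f'^\vir \circ h_*$ after suitable truncation, where $h'$ is the pullback of $h$. This is precisely the push-pull (compatibility with proper pushforward) statement for virtual pullbacks; by the main result on virtual pullbacks recorded in the introduction (Theorem \ref{VirtPullbackPropertiesThm}), $f^\vir$ satisfies the push-pull formula, and the refined version for the pullback $f'$ of $f$ along varying $g$ follows from functoriality of $f^\vir$ together with base change. Second I would check $(C_2)$: for an lci morphism $h\colon Z'' \to Z'$ of relative dimension $d'$ and the relevant three-row Cartesian diagram, we need $c_{g \circ h'} \circ h^!_f = h^!_{f \circ f'} \circ c_g$, i.e. the commutation of the virtual pullback $f'^\vir$ with the refined lci Gysin pullback $h^!$. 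This is the compatibility of virtual pullbacks with Levine--Morel refined lci pullbacks, which again is part of the content of Appendix \ref{VirtualPullbackSect}: the virtual pullback is \emph{constructed} so as to agree with refined lci pullbacks when the quasi-smooth map is classical, and so as to commute with them in general (this is the analogue, one derived level up, of the commutativity of refined Gysin pullbacks \cite{LM} Theorem 6.6.10 used in the proof of Theorem \ref{OpCobIsCommThm}).

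The main obstacle is bookkeeping rather than mathematical depth: one must carefully match the derived Cartesian squares appearing in the definition of an operational class with the classical refined-Gysin and virtual-pullback machinery, keeping track of which fibre products are derived and which are classical truncations, and invoking the correct functoriality/base-change statements for $(-)^\vir$ from Appendix \ref{VirtualPullbackSect}. Concretely, the step I expect to require the most care is $(C_2)$, since it is here that the interaction between the virtual pullback along $f$ and the refined lci pullback along $h$ must be controlled; this is where one genuinely uses the detailed properties of virtual pullbacks (functoriality, push-pull, and agreement with lci pullbacks) established in the appendix, and where a naive argument using only the homology-level operations would not suffice. Once both axioms are verified, $\theta(f)$ is a legitimate bivariant class and the orientation property $\theta(\Id) = 1$, $\theta(f \circ g) = \theta(g) \bullet \theta(f)$ follows from functoriality of virtual pullbacks, completing the proof.
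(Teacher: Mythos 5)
Your proposal is correct and takes the same approach as the paper: both conditions are checked by appealing to Theorem~\ref{VirtPullbackPropertiesThm}, with $(C_1)$ being exactly its push-pull formula (part 3). One small sharpening for $(C_2)$: the paper does not invoke a standalone ``commutativity of virtual pullbacks with refined lci Gysin pullbacks'' statement (as your phrase ``so as to commute with them in general'' suggests), and indeed no such theorem is proved in Appendix~\ref{VirtualPullbackSect}. Instead, the refined lci pullbacks $h^!_{i'}$ and $h^!_{i' \circ f'}$ appearing in $(C_2)$ are first identified with the virtual pullbacks $h'^\vir$ and $h''^\vir$ along the derived base changes of $h$ via Theorem~\ref{VirtPullbackPropertiesThm}(2), and then the required identity $f''^\vir \circ h'^\vir = h''^\vir \circ f'^\vir$ follows formally from functoriality of virtual pullbacks, Theorem~\ref{VirtPullbackPropertiesThm}(1), applied to the two factorizations $h' \circ f'' = f' \circ h''$ of a single quasi-smooth morphism. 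You do already name ``agreement with lci pullbacks'' and ``functoriality'' as the relevant ingredients, so this is only a matter of assembling them in the right order rather than a missing idea.
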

\begin{proof}
Since there is a canonical bijection between maps $Y' \to Y$ and $Y' \to \tau_0(Y)$ when $Y'$ is classical, we see that at least the claim makes sense. To conclude, we have to show that $\theta(f)$ satisfies $(C_1)$ and $(C_2)$.

To prove that $\theta(f)$ satisfies $(C_1)$, we notice that given a projective morphism $h: Y'' \to Y'$ with $Y''$ and $Y'$ classical and a derived Cartesian diagram
\begin{center}
\begin{tikzcd}
X'' \arrow[]{d}{h'} \arrow[]{r}{f''} & Y'' \arrow[]{d}{h} \\
X' \arrow[]{d}{g'} \arrow[]{r}{f'} & Y' \arrow[]{d}{g} \\
X \arrow[]{r}{f} & Y,
\end{tikzcd}
\end{center}
then, by Theorem \ref{VirtPullbackPropertiesThm}, we have that
$$f'^\vir \circ h_* = \tau_0(h')_* \circ f''^\vir$$
proving that $(C_1)$ holds.

To prove that $\theta(f)$ satisfies $(C_2)$ consider a derived Cartesian diagram
$$
\begin{tikzcd}
X'' \arrow[]{d}{h''} \arrow[]{r}{f''} & Y'' \arrow[]{d}{h'} \arrow[]{r}{i''} & Z'' \arrow[]{d}{h} \\
X' \arrow[]{d}{g'} \arrow[]{r}{f'} & Y' \arrow[]{r}{i'} \arrow[]{d}{g} &  Z' \\
X \arrow[]{r}{f} & Y,
\end{tikzcd}
$$
with $Y'$, $Z'$ and $Z''$ classical and $h$ lci. Then by Theorem \ref{VirtPullbackPropertiesThm} we have that $h'^\vir = h^!_{i'}$ and $h''^\vir = h^!_{i' \circ f'}$, so the claim follows from functoriality of virtual pullbacks, which is another part of Theorem \ref{VirtPullbackPropertiesThm}.
\end{proof}

It is then easy to see that $\theta$ is an orientation.

\begin{thm}\label{OpOrientationThm}
The elements 
$$\theta(f) \in d\op\Omega^*_\LM(X \to Y)$$
give a stable orientation for $d \op \Omega^*(X \to Y)$ along quasi-smooth morphisms.
\end{thm}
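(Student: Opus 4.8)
The plan is to verify the two defining properties of an orientation — the normalization $\theta(\Id_X) = 1_X$ and the multiplicativity $\theta(f \circ g) = \theta(g) \bullet \theta(f)$ for quasi-smooth $f, g$ — and then the stability of $\theta$ under bivariant pullbacks; well-definedness of each $\theta(f)$ as an operational class was already secured in the preceding lemma. Each identity will be reduced, component by component, to a property of the virtual pullbacks of Theorem~\ref{VirtPullbackPropertiesThm} (chiefly their functoriality) together with the elementary compatibility of derived base change with pasting of Cartesian squares. Before starting I would record the bookkeeping that makes the operational formulas of Construction~\ref{OpCobCons} line up with the components of Construction~\ref{OperationalOrientationCons}: a class in $d\op\Omega^*_\LM(X \to Y) = \op\Omega^*_\LM(\tau_0 X \to \tau_0 Y)$ is determined by its components indexed over morphisms $g: Y' \to Y$ with $Y'$ classical (using $\Hom(Y',Y) = \Hom(Y',\tau_0 Y)$ for such $Y'$), and for such a $g$ the classical fibre product appearing in the operational theory is $Y' \times_{\tau_0 Y} \tau_0(X) = \tau_0(Y' \times_Y^R X)$, so that the virtual pullback along a derived base change is exactly the kind of map permitted as a component.

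First I would treat normalization: the derived base change of $\Id_X$ along any $g: Y' \to X$ is again an identity and $\Id^\vir = \Id$ by the functoriality clause of Theorem~\ref{VirtPullbackPropertiesThm}, so $\theta(\Id_X)_g = \Id$ for every $g$, i.e.\ $\theta(\Id_X) = 1_X$. For multiplicativity, take composable quasi-smooth $f: X \to Y$ and $h: Y \to Z$ and an arbitrary $g: Z' \to Z$ with $Z'$ classical, and form the two-storey tower of derived Cartesian squares over $g$, with $Y' := Z' \times_Z^R Y$, $X' := Z' \times_Z^R X \simeq Y' \times_Y^R X$, base changes $h': Y' \to Z'$ and $f': X' \to Y'$, and projection $g_Y: Y' \to Y$. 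Unwinding the operational product formula gives $(\theta(f) \bullet \theta(h))_g = \theta(f)_{g_Y} \circ \theta(h)_g = (f')^\vir \circ (h')^\vir$, whereas $\theta(h \circ f)_g = (h' \circ f')^\vir$ because by pasting $h' \circ f'$ is the derived base change of $h \circ f$ along $g$. Functoriality of virtual pullbacks under composition of quasi-smooth morphisms then gives $(h' \circ f')^\vir = (f')^\vir \circ (h')^\vir$; as $g$ was arbitrary, $\theta(f) \bullet \theta(h) = \theta(h \circ f)$.

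For stability I would first note that quasi-smooth morphisms are stable under the derived-Cartesian (hence independent) squares of $\Fs_k$, so that ``stable orientation along quasi-smooth morphisms'' is meaningful. Then, for a derived Cartesian square with right vertical $g: Y' \to Y$ and base change $f': X' \to Y'$ of a quasi-smooth $f: X \to Y$, the operational pullback formula gives $\bigl(g^*(\theta(f))\bigr)_h = \theta(f)_{g \circ h}$ for any $h: Y'' \to Y'$ with $Y''$ classical, while pasting identifies the derived base change of $f$ along $g \circ h$ with the derived base change of $f'$ along $h$. Hence $\theta(f)_{g \circ h} = \theta(f')_h$, so $g^*(\theta(f)) = \theta(f')$, which is precisely stability.

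The whole argument is formal once Theorem~\ref{VirtPullbackPropertiesThm} is available: the real care needed here is the truncation bookkeeping of the first paragraph — checking that the derived base-change squares used to define the components of $\theta(f)$, of $\theta(h \circ f)$, and of the operational product and pullback genuinely coincide after applying $\tau_0$, and that $f^\vir$ is compatible with these identifications (which it is, since on classical truncations $f^\vir$ refines the Levine--Morel refined Gysin pullback, cf.\ Theorem~\ref{VirtPullbackPropertiesThm}). So the genuine difficulty hidden in this statement — the functoriality of virtual pullbacks — is not confronted here but in the appendix; what remains is routine verification.
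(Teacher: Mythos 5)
Your proof is correct and takes essentially the same approach as the paper, which simply states that all claims follow from Theorem~\ref{VirtPullbackPropertiesThm}; you have spelled out the reduction of normalization, multiplicativity, and pullback-stability to the functoriality of virtual pullbacks together with the truncation/derived-base-change bookkeeping. The only small quibble is that $\Id^\vir = \Id$ is not literally a consequence of the functoriality clause (which alone gives only idempotence); it follows instead from clause (2) of Theorem~\ref{VirtPullbackPropertiesThm} or directly from Lemma~\ref{CoincidesWithLMLem}, but this does not affect the argument.
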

\begin{proof}
All claims follow immediately from Theorem \ref{VirtPullbackPropertiesThm}.
\end{proof}

It is clear that the Gysin pullbacks induced by the above orientation on the associated homology theory $d \Omega_*^\LM$ of $d \op \Omega^*_\LM$ are exactly the virtual pullbacks. Recall that Lowrey and Schürg construct (see \cite{LS} Corollary 4.2) morphisms
$$\iota_{h,X}: \Omega^\LM_*(\tau_0(X)) \to d\Omega^k_*(X)$$
which are manifestly compatible with projective pushforwards. The main results of their article are that the $\iota_{h,X}$ are isomorphisms which identify the Gysin pullbacks on $d \Omega^k_*$ with the virtual pullbacks on $d \Omega^\LM_*$ (see \cite{LS} Corollary 5.11 and Theorem 5.12). Applying Theorem \ref{OpDCobUnivProp}, we immediately obtain the following result.

\begin{prop}
There exists a unique orientation preserving Grothendieck transformation 
$$\iota: d \op \Omega^*_\LM \to \op d \Omega^*_k$$
extending the above isomorphism $\iota_h: d \Omega^*_\LM \to  d\Omega^k_*$ of homology theories. \qed
\end{prop}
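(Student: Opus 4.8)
The plan is to deduce this proposition directly from the universal property of $\op d \Omega^*_k$ recorded in Theorem \ref{OpDCobUnivProp}, applied to the bivariant theory $\Bb^* := d \op \Omega^*_\LM$ together with the family of Lowrey--Schürg isomorphisms $\phi_X := \iota_{h,X}$. Thus essentially all the work consists of checking that $d \op \Omega^*_\LM$ and $\iota_h$ satisfy the hypotheses of that theorem; once they do, the conclusion is immediate.

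First I would record that $d \op \Omega^*_\LM$ is a stably oriented commutative bivariant theory on $\Fs_k$. Commutativity is Theorem \ref{OpCobIsCommThm}, together with the observation that $\tau_0$ preserves Cartesian squares, so that commutativity of $\op \Omega^*_\LM$ passes to $d \op \Omega^*_\LM$; the stable orientation is furnished by Construction \ref{OperationalOrientationCons} and Theorem \ref{OpOrientationThm}. I would also identify the associated homology theory: unwinding the definition of $d \op \Omega^*_\LM$ and applying Lemma \ref{BivariantToSmIsHomLem} with $Y = pt$ gives a canonical identification $d \op \Omega_*^\LM(X) \cong \Omega^\LM_*(\tau_0(X))$, under which the Gysin pullbacks coming from the orientation $\theta$ of Construction \ref{OperationalOrientationCons} are precisely the virtual pullbacks $f^\vir$ of Appendix \ref{VirtualPullbackSect}.

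Next I would verify that $\phi_X := \iota_{h,X}\colon \Omega^\LM_*(\tau_0(X)) \to d \Omega^k_*(X)$ meets the requirements of Theorem \ref{OpDCobUnivProp}: each $\phi_X$ is an isomorphism by \cite{LS} Corollary 5.11, the family commutes with projective pushforwards by \cite{LS} Corollary 4.2, and it commutes with quasi-smooth pullbacks because \cite{LS} Theorem 5.12 identifies the Gysin pullbacks on $d \Omega^k_*$ with the virtual pullbacks on $d \Omega^\LM_*$, which by the previous paragraph are exactly the Gysin pullbacks of the homology theory of $d \op \Omega^*_\LM$. With these checks in place, Theorem \ref{OpDCobUnivProp} produces a unique orientation preserving Grothendieck transformation $\iota := \tilde\phi\colon d \op \Omega^*_\LM \to \op d \Omega^*_k$ whose restriction to associated homology is $\phi = \iota_h$, which is precisely the assertion.

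Since all of the substantive ingredients have already been assembled, I do not expect a genuine obstacle; the only point requiring care is the bookkeeping in the two paragraphs above, namely confirming that the Gysin pullbacks attached to the orientation of Construction \ref{OperationalOrientationCons} literally coincide with the virtual pullbacks, so that the Lowrey--Schürg compatibility statement is exactly the hypothesis ``commutes with quasi-smooth pullbacks'' demanded by Theorem \ref{OpDCobUnivProp}.
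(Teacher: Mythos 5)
Your proposal is correct and follows the same route as the paper: the paper, too, simply invokes Theorem \ref{OpDCobUnivProp} with $\Bb^* = d\op\Omega^*_\LM$ and $\phi = \iota_h$, after noting (in the preceding paragraph) that the $\iota_{h,X}$ commute with projective pushforwards and that the Gysin pullbacks on the homology theory of $d\op\Omega^*_\LM$ are the virtual pullbacks, which the Lowrey--Sch\"urg results identify with the Gysin pullbacks on $d\Omega^k_*$. The only difference is one of exposition: the paper leaves the hypothesis-checking implicit, while you spell it out.
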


We are finally ready to prove the main result of this section.

\begin{proof}[Proof of Theorem \ref{SecondComparisonThm}]
We will prove that $\iota$ is an isomorphism by constructing its inverse transformation. We define $\iota': \op d \Omega^*_k \to d \op \Omega^*_\LM$ as follows: if $\alpha \in \op d \Omega^*_k(X \to Y)$, then
$$\iota'(\alpha)_g := \iota_h^{-1} \circ \alpha_g \circ \iota_h: \Omega^*_\LM(\tau_0(Y')) \to \Omega^*_\LM(\tau_0(X \times^R_Y Y')),$$
where $g: Y' \to Y$. As $\iota_h$ commutes with pushforwards and $\alpha$ satisfies $(dC_1)$, $\iota'(\alpha)$ satisfies $(C_1)$. As $\iota_h$ commutes with Gysin pullbacks, $\alpha$ satisfies $(dC_2)$, and refined lci pullbacks are a special case of quasi-smooth pullbacks by Theorem \ref{VirtPullbackPropertiesThm}, $\iota'(\alpha)$ satisfies $(C_2)$, showing that $\iota'$ is well defined. It is also clear that $\iota'$ is the inverse of $\iota$, since by definition
$$\iota(\beta)_g = \iota_h \circ \beta_g \circ \iota_h^{-1}$$
and therefore $\iota: d \op \Omega^*_\LM \to \op d \Omega^*_k$ is an isomorphism.
\end{proof}

\section{A note on operational $K$-theory}\label{OpKThySect}

The purpose of this section is to prove Theorem \ref{ThirdComparisonThm}, which can be thought as a $K$-theoretic version of Theorem \ref{SecondComparisonThm}. Throughout the section $k$ will denote the ground field, and given a derived scheme $X$, $G_0(X)$ will denote its $G$-theory group, i.e., the Grothendieck group of coherent sheaves on $X$. Recall that given a morphism $f: X \to Y$ of derived schemes with finite Tor-dimension (e.g. $f$ is flat or quasi-smooth), then there exists a Gysin pullback morphism $f^!: G_0(Y) \to G_0(X)$ given by the derived pullback of quasi-coherent sheaves. These pullbacks are contravariantly functorial and satisfy push-pull formula for derived Cartesian squares 
$$
\begin{tikzcd}
X' \arrow[]{d} \arrow[]{r} & Y' \arrow[]{d}{g} \\
X \arrow[]{r}{f} & Y
\end{tikzcd}
$$ 
with $g$ proper and $f$ of finite Tor-dimension.
  
We will start by recalling the relevant bivariant theories. Throughout this section, we will denote by $\Fc'^\cl_{k}$ the functoriality $(\Cc'^\cl_k, \Cs'^\cl_k, \Is'^\cl_k, \Ss'^\cl_k)$ where $\Cc'^\cl_k$ is the category of finite type separated $k$-schemes, $\Cs'^\cl_k$ consists of proper morphisms, $\Is'^\cl_k$ consists of all Cartesian squares and $\Ss'^\cl_k$ consists of regular embeddings. Similarly $\Fc'_k = (\Cc'_k, \Cs'_k, \Is'_k, \Ss'_k)$, where $\Cc'_k$ is the $\infty$-category of finite type separated derived $k$-schemes, $\Cs'_k$ are the proper morphisms, $\Is'_k$ consists of all derived Cartesian squares and $\Ss'_k$ consists of all derived regular embeddings. 

Let us first recall the construction of operational $K$-theory from \cite{AP}.

\begin{cons}\label{OpKThyCons}
We will construct operational $K$-theory as a bivariant theory with functoriality $\Fs'^\cl_k$. An \emph{operational $K$-theory class} $c \in \op K^0(X \to Y)$ consists of homomorphisms
$$c_g: G_0(Y') \to G_0(Y' \times_Y X)$$
indexed by morphisms $g: Y' \to Y$ and satisfying:
\begin{enumerate}
\item[$(B_1)$] given a Cartesian diagram
$$
\begin{tikzcd}
X'' \arrow[]{r} \arrow[]{d}{h'} & Y'' \arrow[]{d}{h} \\
X' \arrow[]{r} \arrow[]{d}{g'} & Y' \arrow[]{d}{g} \\
X \arrow[]{r} & Y
\end{tikzcd}
$$
with $h$ proper, then 
$$h'_* \circ c_{g \circ h} = c_g \circ h_*: G_0(Y'') \to G_0(X');$$
\item[$(B_2)$] given a Cartesian diagram as above but $h$ is flat instead, then  
$$h'^! \circ c_g = c_{g \circ h} \circ h^!: G_0(Y') \to G_0(X'');$$

\item[$(B_3)$] given a Cartesian diagram 
$$
\begin{tikzcd}
X'' \arrow[]{r}{f''} \arrow[hook]{d}{i''} & Y'' \arrow[]{r}{h''} \arrow[hook]{d}{i'} & Z'' \arrow[hook]{d}{i} \\
X' \arrow[]{r}{f'} \arrow[]{d}{g''} & Y' \arrow[]{r}{h'} \arrow[]{d}{g'} & Z' \\
X \arrow[]{r}{f} & Y 
\end{tikzcd}
$$
with $i$ a regular embedding, then
$$i^!_{h' \circ f'} \circ c_{g'} = c_{g' \circ i'} \circ i^!_{h'}: G_0(Y') \to G_0(X'')$$
where $i^!_{h' \circ f'}$ and $i^!_{h'}$ are the refined Gysin pullbacks defined in \cite{AP} Section 3.4.
\end{enumerate}
\end{cons}

Secondly, we recall the construction of operational derived $K$-theory from Appendix B of \cite{AGP}.

\begin{cons}
We will construct operational derived $K$-theory as a bivariant theory with functoriality $\Fs'_k$. An \emph{operational derived $K$-theory class} $c \in \op K^{\der}(X \to Y)$ consists of homomorphisms
$$c_g: G_0(Y') \to G_0(Y' \times^R_Y X)$$
indexed by equivalence classes of morphisms $g: Y' \to Y$ and satisfying:
\begin{enumerate}
\item[$(dB_1)$] given a derived Cartesian diagram
$$
\begin{tikzcd}
X'' \arrow[]{r} \arrow[]{d}{h'} & Y'' \arrow[]{d}{h} \\
X' \arrow[]{r} \arrow[]{d}{g'} & Y' \arrow[]{d}{g} \\
X \arrow[]{r} & Y
\end{tikzcd}
$$
with $h$ proper, then 
$$h'_* \circ c_{g \circ h} = c_g \circ h_*: G_0(Y'') \to G_0(X');$$
\item[$(dB_2)$] given a derived Cartesian diagram as above but $h$ is flat instead, then  
$$h'^! \circ c_g = c_{g \circ h} \circ h^!: G_0(Y') \to G_0(X'');$$

\item[$(dB_3)$] given a derived Cartesian diagram as above but $h$ is a derived regular embedding instead, then
$$h'^! \circ c_g = c_{g \circ h} \circ h^!: G_0(Y') \to G_0(X'');$$
\end{enumerate}
\end{cons} 

It turns out, that in order to achieve the main purpose of this section --- showing that the bivariant groups $\op K^\der(X \to Y)$ are canonically isomorphic to $\op K^0(\tau_0(X) \to \tau_0(Y))$ --- we have to prove that classical operational $K$-theory classes are compatible with \emph{virtual pullbacks}, which we define below. From now on, given $X \in \Cc'_k$, we will denote by $i_X$ the canonical inclusion of the truncation $\tau_0(X) \hookrightarrow X$.

\begin{defn}\label{GThyVirtPullbackDef}
Let $f: X \to Y$ be a morphism of finite Tor-dimension of derived schemes. Then we define the \emph{virtual pullback} as
$$f^\vir := i_{X*}^{-1} \circ f^! \circ i_{Y*}: G_0(\tau_0(Y)) \to G_0(\tau_0(X)).$$
It is clear that $f^\vir$ are contravariantly functorial.
\end{defn}  

Let us then recall the Grothendieck transformation constructed by Vezzosi in \cite{AGP} Proposition B.7.

\begin{cons}
We denote by $d \op K^0$ the bivariant theory on $\Fs'_k$ defined by
$$d \op K^0(X \to Y) := \op K^0(\tau_0(X) \to \tau_0(Y)).$$
There exists an injective Grothendieck transformation
$$\alpha: \op K^\der \to d \op K^0$$
constructed as follows: given $c \in \op K^\der(X \to Y)$, define $\alpha(c) \in \op K^0(\tau_0(X) \to \tau_0(Y))$ by
$$\alpha(c)_g := i_{X'*}^{-1} \circ c_{i_Y \circ g}$$
where $g: Y' \to \tau_0(Y)$ is a classical morphism and $X' \to Y'$ is the derived pullback of $X \to Y$ along $i_Y \circ g$. This is well defined by \cite{AGP} Proposition B.7.
\end{cons}

Since $\alpha$ is injective, to achieve our goal, we need only to show that it is surjective. The following result, giving an easily checked criterion for operational classes to be in the image of $\alpha$, is an important part of the proof.

\begin{lem}\label{ImOfAlphaLem}
Let $X \to Y$ be a morphism of derived schemes in $\Cc'_k$. Then $c \in \op K^0(\tau_0(X) \to \tau_0(Y))$ is in the image of 
$$\alpha: \op K^\der(X \to Y) \to \op K^0(\tau_0(X) \to \tau_0(Y))$$
if and only if it is compatible with virtual pullbacks along derived regular embeddings in the following sense: for all derived Cartesian diagrams
$$
\begin{tikzcd}
X'' \arrow[]{r} \arrow[hook]{d}{h'} & Y'' \arrow[hook]{d}{h} \\ 
X' \arrow[]{r} \arrow[]{d}{g'} & Y' \arrow[]{d}{g} \\
X \arrow[]{r} & Y,
\end{tikzcd}
$$ 
with $h$ a derived regular embedding, $c$ should satisfy
$$h'^\vir \circ c_{\tau_0(g)} =  c_{\tau_0(g \circ h)} \circ h^\vir.$$
\end{lem}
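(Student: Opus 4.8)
The plan is to prove both implications by unwinding the definition of $\alpha$. The ``only if'' direction is essentially formal: if $c = \alpha(\tilde c)$ for some $\tilde c \in \op K^\der(X \to Y)$, then $c_{\tau_0(g)} = i_{X'*}^{-1} \circ \tilde c_{i_Y \circ g}$ for any classical $g: Y' \to \tau_0(Y)$, and the compatibility with virtual pullbacks along derived regular embeddings follows by combining axiom $(dB_3)$ for $\tilde c$ with the definition $h^\vir = i^{-1}_{\bullet*} \circ h^! \circ i_{\bullet*}$ of the $G$-theory virtual pullback from Definition \ref{GThyVirtPullbackDef}. Concretely, one chases the definitions around the derived Cartesian diagram, inserts the canonical inclusions $i_{(-)}$ of truncations where necessary, and uses that $\tilde c$ commutes with the Gysin pullback $h^!$ along the derived regular embedding $h$; the truncation maps $i_{(-)*}$ bookkeep the passage between classical and derived schemes and cancel appropriately.

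For the ``if'' direction, which is the substance of the lemma, the strategy is to produce a preimage $\tilde c \in \op K^\der(X \to Y)$ directly. Given a (now \emph{derived}) morphism $g: Y' \to Y$, one must define $\tilde c_g: G_0(Y') \to G_0(Y' \times^R_Y X)$. The natural candidate is to factor through the truncation: set $\tilde c_g := (\text{correction by } i) \circ c_{\tau_0(g')} \circ (\text{correction by } i)$ where $g'$ is an appropriate classical approximation of $g$. More precisely, one should first reduce to the case $Y'$ classical using a derived regular embedding $Y' \hookrightarrow \bar Y'$ into something from which $Y'$ is cut out — or, more cleanly, use the factorization of $g$ through $\tau_0(Y) \hookrightarrow Y$ together with the fact that any derived scheme $Y'$ admits its truncation as a derived regular embedding $i_{Y'}: \tau_0(Y') \hookrightarrow Y'$ only in nice cases; in general one works with arbitrary $g$ and defines $\tilde c_g$ on $G_0(Y')$ by pulling the class $c$ back along $\tau_0(g): \tau_0(Y') \to \tau_0(Y)$ and transporting via the isomorphisms $i_{(-)*}$ on $G$-theory (recall $i_{X*}: G_0(\tau_0(X)) \xrightarrow{\cong} G_0(X)$). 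One then checks that the resulting collection $\tilde c_g$ satisfies $(dB_1)$, $(dB_2)$ and $(dB_3)$: $(dB_1)$ and $(dB_2)$ follow from the corresponding axioms $(B_1)$, $(B_2)$ for $c$ applied to the truncated diagrams together with base-change compatibilities of the $i_{(-)*}$, and $(dB_3)$ is exactly where the hypothesis on virtual pullbacks is used — it is the translation of the assumed identity $h'^\vir \circ c_{\tau_0(g)} = c_{\tau_0(g\circ h)} \circ h^\vir$ back through the isomorphisms $i_{(-)*}$ into the statement $h'^! \circ \tilde c_g = \tilde c_{g \circ h} \circ h^!$ on $G$-theory. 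Finally one verifies $\alpha(\tilde c) = c$, which is immediate from the construction since $\alpha$ simply restricts $\tilde c$ to classical test objects and conjugates by $i_{(-)*}$.

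The main obstacle I anticipate is well-definedness and independence of choices in the ``if'' direction: one must check that the proposed $\tilde c_g$ does not depend on how $g$ is presented, and — more seriously — that $\tilde c$ is compatible with \emph{all} derived base changes $g: Y' \to Y$, not merely those factoring nicely through truncations. The key technical input is that for a derived Cartesian square the formation of truncations is \emph{not} compatible with base change in general, so one cannot simply say ``truncate everything''; instead one must argue that the relevant $G$-theory groups only see the truncation (since $i_{X*}$ is always an isomorphism on $G$-theory) and that the Gysin pullbacks $h^!$ along flat maps and derived regular embeddings are computed on truncations in a way controlled by Definition \ref{GThyVirtPullbackDef}. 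In other words, the proof rests on the fact that $G$-theory is insensitive to the derived structure while the \emph{pullback maps} are not, and the compatibility hypothesis in the lemma is precisely the minimal amount of extra data needed to bridge this gap. Once this is isolated, verifying $(dB_1)$–$(dB_3)$ becomes a routine diagram chase, and I would present it compactly rather than spelling out every square.
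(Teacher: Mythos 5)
Your construction $\tilde c_g := i_{X'*}\circ c_{\tau_0(g)}\circ i_{Y'*}^{-1}$ (in your words, ``pull back along $\tau_0(g)$ and conjugate by the $i_{(-)*}$ isomorphisms'') is exactly the paper's, and the axiom-check you sketch matches: $(dB_1)$ from $(B_1)$ via functoriality of pushforwards, $(dB_2)$ from $(B_2)$ via the fact that the truncation square of a flat morphism is derived Cartesian, and $(dB_3)$ being literally the hypothesis, with $\alpha(\tilde c)=c$ immediate. The obstacle you anticipate at the end is, however, a phantom: truncation is a left adjoint and so preserves derived fibre products, giving $\tau_0(Y'\times^R_Y X)\cong\tau_0(Y')\times_{\tau_0(Y)}\tau_0(X)$ always, and there is no well-definedness issue since $\tau_0(g)$ is functorially determined by $g$; the real subtlety you should have isolated is that the truncation of a derived regular embedding need not be a regular embedding of classical schemes, which is precisely why $(dB_3)$ cannot be deduced from $(B_3)$ and the compatibility hypothesis is needed.
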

\begin{proof}
Given any $c \in \op K^0(\tau_0(X) \to \tau_0(Y))$ and a morphism of derived schemes $g: Y' \to Y$, we define 
$$\tilde{c}_g := i_{X' *} \circ c_{\tau_0(g)} \circ i_{Y'*}^{-1}: G_0(Y') \to G_0(X')$$
where $X' \to Y'$ is the derived pullback of $X \to Y$ along $g$. We claim that $\tilde c$ satisfies $(d B_1)$ and $(d B_2)$, and that $\tilde c$ satisfies $(d B_3)$ if and only if $c$ is compatible with virtual pullbacks along derived regular embeddings. This proves the claim as clearly $\alpha(\tilde c) = c$. 

For the rest of the proof, we consider the following derived Cartesian diagram
$$
\begin{tikzcd}
X'' \arrow[]{r} \arrow[]{d}{h'} & Y'' \arrow[]{d}{h} \\
X' \arrow[]{r} \arrow[]{d}{g'} & Y' \arrow[]{d}{g} \\
X \arrow[]{r} & Y.
\end{tikzcd}
$$

To prove that $\tilde c$ satisfies $(dB_1)$, we consider the case when $h$ is proper and compute that
\begin{align*}
\tilde c_{g} \circ h_* &= i_{X'*} \circ c_{\tau_0(g)} \circ i_{Y'*}^{-1}  \circ h_* \\
&= i_{X'*} \circ c_{\tau_0(g)} \circ \tau_0(h)_* \circ i_{Y''*}^{-1} & (\text{pushforwards are functorial}) \\
&= i_{X'*} \circ \tau_0(h')_* \circ  c_{\tau_0(g \circ h)}  \circ i_{Y''*}^{-1} & (B_1) \\
&= h'_* \circ i_{X''*} \circ c_{\tau_0(g \circ h)}  \circ i_{Y''*}^{-1} & (\text{pushforwards are functorial}) \\
&=  h'_* \circ \tilde c_{g \circ h}
\end{align*}
which is exactly what we wanted.

To prove that $\tilde c$ satisfies $(dB_2)$, we consider the case when $h$ is flat. Notice that for a flat morphism $f: A \to B$ of derived schemes, the square
$$
\begin{tikzcd}
\tau_0(A) \arrow[]{d}{\tau_0(f)} \arrow[]{r}{i_A} & A \arrow[]{d}{f} \\
\tau_0(B) \arrow[]{r}{i_B} & B 
\end{tikzcd}
$$
is derived Cartesian, and so 
$$f^! \circ i_{B*} = i_{A*} \circ \tau_0(f)^!$$
by the push-pull formula. Using this, we compute that
\begin{align*}
\tilde c_{g \circ h} \circ h^! &= i_{X''*} \circ c_{\tau_0(g \circ h)}  \circ i_{Y''*}^{-1} \circ h^! \\
&= i_{X''*} \circ c_{\tau_0(g \circ h)} \circ \tau_0(h)^! \circ i_{Y'*}^{-1} \\
&= i_{X''*} \circ \tau_0(h')^! \circ c_{\tau_0(g)} \circ i_{Y'*}^{-1} & (B_2) \\
&= h'^! \circ i_{X'*} \circ c_{\tau_0(g)} \circ i_{Y'*}^{-1} \\
&= h'^! \circ \tilde c_g,
\end{align*}
so $\tilde c$ satisfies $(dB_2)$.

We are left to show that $\tilde c$ satisfies $(d B_3)$ if and only if $c$ is compatible with virtual pullbacks along derived regular embeddings. Assume that $h$ is a derived regular embedding and compute that
\begin{align*}
\tilde c_{g \circ h} \circ h^! &= i_{X''*} \circ c_{\tau_0(g \circ h)}  \circ i_{Y''*}^{-1} \circ h^! \\
&= i_{X''*} \circ c_{\tau_0(g \circ h)} \circ h^\vir \circ i_{Y'_*}^{-1} & (\text{Definition \ref{GThyVirtPullbackDef}}) \\
&= i_{X''*} \circ h'^\vir \circ c_{\tau_0(g)} \circ i_{Y'_*}^{-1} \\
&= h'^! \circ i_{X'*}^{-1} \circ c_{\tau_0(g)} \circ i_{Y'_*}^{-1} & (\text{Definition \ref{GThyVirtPullbackDef}}) \\ 
&= h'^! \circ \tilde c_{g},
\end{align*} 
and therefore $\tilde c$ satisfies $(dB_3)$. Similarly, one shows that $\tilde c$ satisfying $(dB_3)$ implies that $c$ is compatible with virtual pullbacks, so we are done. 
\end{proof}

Hence, we are done if we can show that all operational classes $c \in \op K^0(X \to Y)$ are compatible with virtual pullbacks in the above sense. To do so, we begin with the following observation. Suppose $j: Z \hookrightarrow X$ is a quasi-smooth closed immersion. We can then use  \cite{Khan} Theorem 4.1.13 to form the derived Cartesian diagram
$$
\begin{tikzcd}
X & \arrow[]{l}[swap]{\mathrm{pr}_2} \Gb_m \times X \arrow[]{d} \arrow[hook]{r}{u} & M(Z/X) \arrow[]{d}{\pi} & \arrow[hook']{l}[swap]{\iota} \Nc_{Z/X} \arrow[]{d} \arrow[]{r}{p} & Z \\
& \Gb_m \arrow[hook]{r} & \Ab^1 & \arrow[hook']{l}[swap]{\iota^0} \{ 0 \}, &
\end{tikzcd}
$$
where $M(Z/X)$ is the derived deformation to the normal bundle, and $p: \Nc_{Z/X} \to Z$ is the vector bundle projection. We then make the following claim.

\begin{lem}\label{FormulaForQSmPullbackLem}
Let everything be as above. Then $\iota^! \circ (u^!)^{-1}$ is well defined and 
$$ f^!  =  (p^!)^{-1} \circ \big( \iota^! \circ (u^!)^{-1} \big) \circ \mathrm{pr}_2^!.$$
\end{lem}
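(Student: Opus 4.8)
The plan is to analyze the deformation diagram and express the Gysin pullback $f^!$ (here $f$ should denote the quasi-smooth closed immersion $j: Z \hookrightarrow X$, whose Gysin pullback we wish to compute) through the derived deformation to the normal bundle. First I would observe that in the diagram, $\mathrm{pr}_2: \Gb_m \times X \to X$ is flat (a projection), $u: \Gb_m \times X \hookrightarrow M(Z/X)$ is an open immersion (hence flat, in fact an open immersion), $\iota: \Nc_{Z/X} \hookrightarrow M(Z/X)$ is a derived regular embedding (the zero fibre of $\pi$ over $\{0\}$), and $p: \Nc_{Z/X} \to Z$ is a vector bundle projection (hence smooth, so $p^!$ is an isomorphism after passing to the appropriate graded pieces, or more precisely $p^!$ is invertible on $G$-theory by homotopy invariance). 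Thus each of $\mathrm{pr}_2^!$, $(u^!)^{-1}$, $\iota^!$, $(p^!)^{-1}$ makes sense: $u^!$ is an isomorphism because restriction to an open whose complement is $\iota(\Nc_{Z/X})$ together with the localization sequence and the fact that the normal bundle deformation is "nice" — more simply, one invokes that $u^!$ is invertible because $M(Z/X)$ deformation retracts appropriately, or one cites the relevant property from \cite{Khan}.

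Next I would set up the key naturality square. Pulling back the whole deformation diagram along $j: Z \hookrightarrow X$ (or rather using that the normal bundle deformation is functorial and that $M(Z/X)$ restricted over $Z \subset X$ recovers $M(Z/Z) = \Ab^1 \times Z$ with $\iota$ becoming the zero section of $\Nc_{Z/X}$ over $Z$, and $u$ becoming $\Gb_m \times Z \hookrightarrow \Ab^1 \times Z$), I would get a commuting diagram of $G$-theory groups relating $f^!: G_0(X) \to G_0(Z)$ to the composite $(p^!)^{-1} \circ \iota^! \circ (u^!)^{-1} \circ \mathrm{pr}_2^!$. Concretely: $\mathrm{pr}_2^!$ pulls a class from $X$ to $\Gb_m \times X$; $(u^!)^{-1}$ transports it to $M(Z/X)$; $\iota^!$ restricts it to $\Nc_{Z/X}$; $(p^!)^{-1}$ identifies $G_0(\Nc_{Z/X})$ with $G_0(Z)$. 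Using the contravariant functoriality of Gysin pullbacks for finite-Tor-dimension morphisms together with the push-pull (base change) formulas recalled at the start of Section \ref{OpKThySect}, the composite computes the pullback to the normal cone and then down to $Z$, which is precisely the standard description of $j^!$ via deformation to the normal bundle. This last identification is the heart of the matter.

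The main obstacle I expect is justifying that $u^!$ is an isomorphism and that the composite $\iota^! \circ (u^!)^{-1}$ really does compute deformation to the normal cone compatibly with $f^! = j^!$ — i.e., verifying the "deformation to the normal cone computes the Gysin map" principle in the derived $G$-theoretic setting. In the classical case this is Verdier's specialization / the content of Fulton's Chapter 5; in the derived setting one should be able to reduce it to the statement that on the special fibre the composite $(p^!)^{-1} \circ \iota^!$ restricted to the image of the deformation is inverse to the smooth pullback $p^!$ composed with the open restriction, using homotopy invariance of $G$-theory ($G_0(\Ab^1 \times W) \cong G_0(W)$) to kill the $\Gb_m$ versus $\Ab^1$ discrepancy. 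I would carry this out by first reducing to the universal case where the base is a point or where we test against the diagram pulled back over $Z$, then invoking homotopy invariance and the localization sequence for the open-closed decomposition $\Gb_m \times X \sqcup \Nc_{Z/X}$ of $M(Z/X)$, and finally assembling the functoriality identities. The remaining steps — checking that the relevant squares are derived Cartesian so that push-pull applies, and that all morphisms involved have finite Tor-dimension — are routine given the setup in \cite{Khan}.
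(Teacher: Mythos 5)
There is a genuine gap, and a factual error at the very first nontrivial step. You assert that $u^!$ is an isomorphism, citing "deformation retracts" or a vague localization argument. This is false in general: the localization sequence for the open-closed decomposition $\Gb_m \times X \sqcup \Nc_{Z/X}$ of $M(Z/X)$ gives
$$G_0(\Nc_{Z/X}) \xrightarrow{\iota_*} G_0(M(Z/X)) \xrightarrow{u^!} G_0(\Gb_m \times X) \to 0,$$
which is right-exactness only; there is no reason for $u^!$ to be injective, and in easy examples it is not. Consequently $(u^!)^{-1}$ does not exist as a map. What the paper actually proves is the weaker — and correct — statement that the \emph{composite} $\iota^! \circ (u^!)^{-1}$ is well defined: any two preimages of a class under $u^!$ differ by something in the image of $\iota_*$, and $\iota^! \circ \iota_* = 0$ (self-intersection with the divisor $\Nc_{Z/X}$), so the ambiguity is killed. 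Your appeal to $u^!$ being invertible skips exactly this observation, which is the entire content of the "well defined" half of the lemma.

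For the identity $f^! = (p^!)^{-1} \circ (\iota^! \circ (u^!)^{-1}) \circ \mathrm{pr}_2^!$, you describe pulling back the deformation diagram along $j: Z \hookrightarrow X$ and "invoking the standard description of $j^!$ via deformation to the normal bundle." But that standard description is precisely what must be proven here, not invoked, and your proposed base change is not quite the right one: $M(Z/X) \times^R_X Z$ is not $M(Z/Z) = \Ab^1 \times Z$, because $Z \times^R_X Z$ carries nontrivial derived structure. The device the paper uses instead is the \emph{strict transform} $\tilde f: \Ab^1 \times Z \hookrightarrow M(Z/X)$ from Definition \ref{DeformationToNormalBundleDef}. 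With $\tilde f$ in hand, one picks any lift $\tilde a \in G_0(M(Z/X))$ of $\mathrm{pr}_2^!(a)$ and computes: $(p^!)^{-1} \iota^!(\tilde a) = s^! \iota^!(\tilde a)$ (zero section inverts $p^!$), which by base change along the derived Cartesian square involving $\tilde f$ and $\iota$ equals $i_0^! \tilde f^!(\tilde a)$, which by $\Ab^1$-homotopy invariance on $\Ab^1 \times Z$ equals $i_1^! \tilde f^!(\tilde a)$, which by base change along the other derived Cartesian square (over the fibre at $1$) equals $f^! \iota'^!(\tilde a) = f^!(a)$. You gesture at homotopy invariance and base change, which are indeed the tools, but without the strict transform $\tilde f$ connecting the special and generic fibres there is no sequence of commuting squares to which to apply them, and the argument does not close.
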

\begin{proof}
Since $\iota^! \circ \iota_*$ is identically zero, and since 
$$G_0(\Nc_{Z/X}) \xrightarrow{\iota_*} G_0(M(Z/X)) \xrightarrow{u^!}  G_0(\Gb_m \times X) \to 0$$
is exact, we see that $\iota^! \circ (u^!)^{-1}$ is well defined. To prove the second claim, let us denote by 
$$\tilde f: \Ab^1 \times Z \hookrightarrow M(Z/X)$$
the strict transform of $\mathrm{Id}_{\Ab^1} \times j$, and consider the derived Cartesian diagram
$$
\begin{tikzcd}
Z \arrow[hook]{r}{s} \arrow[hook]{d}{i_0} & \Nc_{Z/X} \arrow[hook]{d}{\iota} \\
\Ab^1 \times Z \arrow[hook]{r}{\tilde f} & M(Z/X)\\
Z \arrow[hook]{r}{f} \arrow[hook']{u}[swap]{i_1} & X \arrow[hook']{u}[swap]{\iota'}, \\
\end{tikzcd}
$$
where $\iota'$ is the inclusion of the fibre of $\pi$ over $1$, and $s$ is the zero section. Given $a \in G_0(X)$, we can choose $\tilde a \in G_0(M(Z/X))$ so that $u^!(\tilde a) = \mathrm{pr}_2^!(a)$, and then compute that
\begin{align*}
(p^!)^{-1} \circ \big( \iota^! \circ (u^!)^{-1} \big) \circ \mathrm{pr}_2^!(a) &= (p^!)^{-1} \big( \iota^!(\tilde a) \big) \\
&= s^! \big(\iota^!(\tilde a) \big) \\
&= i_0^! \big(\tilde{f}^!(\tilde a) \big)  \\
&= i_1^! \big(\tilde{f}^!(\tilde a) \big)  \\
&= f^! \big(\iota'^!(\tilde a) \big) \\
&= f^!(a),
\end{align*}
which is exactly what we wanted.
\end{proof}

Therefore
$$f^\vir = (\tau_0(p)^!)^{-1} \circ \big( \iota^\vir \circ (\tau_0(u)^!)^{-1} \big) \circ \tau_0(\mathrm{pr})^!_2.$$
In other words, we have found a formula for the virtual pullback in terms of flat pullbacks and a single very simple virtual pullback $\iota^\vir$, which can actually be expressed as a refined Gysin pullback as is shown by the next result. 

\begin{lem}\label{FormulaForIotaLem}
Let everything be as above. Then 
$$\iota^\vir = \iota^{0!}_{\tau_0(\pi)}.$$
\end{lem}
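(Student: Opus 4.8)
The plan is to unwind the definition of $\iota^\vir$ via Definition \ref{GThyVirtPullbackDef} and then identify the resulting $G$-theory pullback with the refined Gysin pullback $\iota^{0!}_{\tau_0(\pi)}$ attached to the regular embedding $\{0\} \hookrightarrow \Ab^1$ and the morphism $\tau_0(\pi): \tau_0(M(Z/X)) \to \Ab^1$. Recall that $\iota: \Nc_{Z/X} \hookrightarrow M(Z/X)$ is precisely the derived pullback of $\iota^0: \{0\} \hookrightarrow \Ab^1$ along $\pi: M(Z/X) \to \Ab^1$, so $\iota$ is itself a derived regular embedding and $\iota^! = \iota^{0!}_\pi$ on $G_0$ by the very construction of refined Gysin pullbacks in the derived setting (they are computed by derived tensoring with the Koszul resolution, which is exactly the derived pullback of quasi-coherent sheaves). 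Thus on derived $G$-theory we already have $\iota^! = \iota^{0!}_\pi$.

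The work, then, is passing to truncations. First I would note that $\tau_0(\iota): \tau_0(\Nc_{Z/X}) \hookrightarrow \tau_0(M(Z/X))$ is the (classical) fibre product of $\{0\} \hookrightarrow \Ab^1$ with $\tau_0(\pi)$, since truncation commutes with the final factor $\{0\}$ being already classical and $\Ab^1$ being classical; so $\iota^{0!}_{\tau_0(\pi)}$ is the refined Gysin pullback for the regular embedding $\{0\}\hookrightarrow \Ab^1$ along the classical morphism $\tau_0(\pi)$. Then I would invoke the compatibility of refined Gysin pullbacks with the comparison map $i_{X*}: G_0(\tau_0(X)) \to G_0(X)$: concretely, for the Cartesian (in the derived sense) square expressing $\iota$ as the pullback of $\iota^0$, one has $\iota^! \circ i_{M(Z/X)*} = i_{\Nc_{Z/X}*} \circ \iota^{0!}_{\tau_0(\pi)}$, which is the $G$-theory push-pull / base-change statement for the finite-Tor-dimension morphism $\iota$ against the proper morphism $i_{M(Z/X)}$ (or more precisely the standard fact that refined Gysin pullbacks on $G_0$ are computed by derived pullback and hence commute with pushforward along closed immersions of truncations). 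Rearranging, $i_{\Nc_{Z/X}*}^{-1} \circ \iota^! \circ i_{M(Z/X)*} = \iota^{0!}_{\tau_0(\pi)}$, and the left-hand side is exactly $\iota^\vir$ by Definition \ref{GThyVirtPullbackDef}.

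The main obstacle I anticipate is bookkeeping around which truncations appear and making sure the square
$$
\begin{tikzcd}
\tau_0(\Nc_{Z/X}) \arrow[]{d}{\tau_0(\iota)} \arrow[hook]{r}{i_{\Nc_{Z/X}}} & \Nc_{Z/X} \arrow[]{d}{\iota} \\
\tau_0(M(Z/X)) \arrow[hook]{r}{i_{M(Z/X)}} & M(Z/X)
\end{tikzcd}
$$
is derived Cartesian, so that the push-pull formula recorded at the start of Section \ref{OpKThySect} applies with $i_{M(Z/X)}$ proper and $\iota$ of finite Tor-dimension. This follows because $\iota$ is flat away from a closed locus—more robustly, because $\tau_0$ commutes with the derived fibre product computing $\iota$ as a base change of $\iota^0$ along $\pi$, together with the fact that $\{0\}$ and $\Ab^1$ are classical, so truncation of the square defining $\iota$ is again Cartesian. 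Once this square is in hand the argument is the short computation above; I would present it as a three-line display invoking the push-pull formula and Definition \ref{GThyVirtPullbackDef}, noting in passing that the same reasoning shows $\iota^{0!}_{\tau_0(\pi)}$ makes sense (it is a genuine refined Gysin pullback for the regular embedding $\{0\} \hookrightarrow \Ab^1$).
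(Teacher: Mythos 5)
Your overall strategy is sound and the identity
$$i_{\Nc_{Z/X}*}^{-1} \circ \iota^! \circ i_{M(Z/X)*} = \iota^{0!}_{\tau_0(\pi)}$$
is exactly what needs to be established, but the justification you give contains a genuine gap. The square
$$
\begin{tikzcd}
\tau_0(\Nc_{Z/X}) \arrow[]{d}{\tau_0(\iota)} \arrow[hook]{r}{i_{\Nc_{Z/X}}} & \Nc_{Z/X} \arrow[]{d}{\iota} \\
\tau_0(M(Z/X)) \arrow[hook]{r}{i_{M(Z/X)}} & M(Z/X)
\end{tikzcd}
$$
is \emph{not} derived Cartesian in general, so the base-change you invoke does not apply to it. Since $\Nc_{Z/X}$ is the derived fibre product of $\pi$ and $\iota^0$, the derived fibre product of $\iota$ with $i_{M(Z/X)}$ is $W := \{0\} \times_{\Ab^1}^R \tau_0(M(Z/X))$, the \emph{derived} vanishing locus of $\tau_0(\pi)$ on $\tau_0(M(Z/X))$. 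Your argument that ``truncation of the square defining $\iota$ is again Cartesian'' is correct (and gives $\tau_0(\Nc_{Z/X}) = \{0\} \times_{\Ab^1} \tau_0(M(Z/X))$), but this is the \emph{classical} vanishing locus; it coincides with $W$ only when $\tau_0(\pi)$ is a non-zero-divisor on $\Oc_{\tau_0(M(Z/X))}$, and under the identification of Theorem \ref{DeformationSpaceAsSpecThm} multiplication by $t$ is the map $u: \Sym^i_{\tau_0(X)}(\pi_0(\Ic)) \to \Sym^{i-1}_{\tau_0(X)}(\pi_0(\Ic))$, which has no reason to be injective. So ``$\tau_0$ commutes with the derived fibre product'' conflates the truncated square (Cartesian classically) with the square you actually need (derived Cartesian), and these differ precisely in the Tor-dependent case.

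What the push-pull formula correctly gives you, applied to the actual derived Cartesian square over $W$, is $\iota^! \circ i_{M(Z/X)*} = a_* \circ \iota'^!$, where $\iota': W \hookrightarrow \tau_0(M(Z/X))$ and $a: W \to \Nc_{Z/X}$ (a closed immersion with $i_{\Nc_{Z/X}} = a \circ i_W$). You then still need $\iota'^! = i_{W*} \circ \iota^{0!}_{\tau_0(\pi)}$, and this is the substance of the lemma: the Anderson--Payne refined Gysin class $\iota^{0!}_{\tau_0(\pi)}[\Fc] = [C] - [K]$, where $K$ and $C$ are the kernel and cokernel of $\tau_0(\pi) \cdot$, must be matched with the derived pullback $\iota'^![\Fc]$, which is the two-term complex $\Fc \xrightarrow{\tau_0(\pi)} \Fc$ viewed on $W$; taking homotopy sheaves identifies the two. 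This explicit kernel/cokernel comparison is the step your proof leaves unaddressed, and it is the entire content of the paper's argument. So: correct target identity, incorrect intermediate justification, and the missing piece is precisely the part of the paper's proof that is not a formal base-change.
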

\begin{proof}
To fix the notation, let  
$$
\begin{tikzcd}
W \arrow[]{d} \arrow[hook]{r}{\iota'} & \tau_0(M(Z/X)) \arrow[]{d}{\tau_0(\pi)} \\
\{ 0 \} \arrow[hook]{r}{\iota^0} & \Ab^1
\end{tikzcd}
$$
be derived Cartesian, and let $\Fc$ be a coherent sheaf on $\tau_0(M(Z/X))$. Then, by definition $\iota^{0!}_{\tau_0(\pi)}[\Fc] = [C] - [K]$, where
$$0 \to K \to \Fc \xrightarrow{\tau_0(\pi) \cdot} \Fc \to C \to 0$$
is an exact sequence. On the other hand, $i'^![\Fc]$ is just the cofibre of $\tau_0(\pi) \cdot$, regarded in a natural way as a quasi-coherent sheaf on the derived vanishing locus $W$ of $\tau_0(\pi)$, and therefore
$$\iota'^! = i_{W*} \circ \iota^{0!}_{\tau_0(\pi)}.$$
Since clearly $i_{W*}^{-1} \circ \iota'^! = \iota^{vir}$, we are done.
\end{proof}

\begin{proof}[Proof of Theorem \ref{ThirdComparisonThm}]
We need to show that $\alpha$ is surjective. By Lemma \ref{ImOfAlphaLem}, it is enough to show the following: if $c \in \op K^0(X \to Y)$ is an operational class ($X$ and $Y$ are classical), then $c$ is compatible with virtual pullbacks along derived regular embeddings. To prove the claim, consider a derived Cartesian diagram
$$
\begin{tikzcd}
X'' \arrow[]{r}{f''} \arrow[hook]{d}{h'} & Y'' \arrow[hook]{d}{h} \\
X' \arrow[]{r}{f'} \arrow[]{d}{g'} & Y' \arrow[]{d}{g} \\
X \arrow[]{r}{f} & Y
\end{tikzcd}
$$
with $h$ a derived regular embedding, and to fix notation, form the Cartesian diagram
$$
\begin{tikzcd}
\tau_0(X') \arrow[]{d}{\tau_0(f')} & \arrow[]{l}[swap]{\mathrm{pr}_2} \Gb_m \times \tau_0(X') \arrow[]{d} \arrow[hook]{r}{u'_0} & \tau_0(M(X''/X')) \arrow[]{d}{} & \arrow[hook']{l}[swap]{\iota'_0} \tau_0(\Nc_{X''/X'}) \arrow[]{d} \arrow[]{r}{p'_0} & \tau_0(X'') \arrow[]{d}{\tau_0(f'')} \\
\tau_0(Y') & \arrow[]{l}[swap]{\mathrm{pr}_2} \Gb_m \times \tau_0(Y') \arrow[]{d} \arrow[hook]{r}{u_0} & \tau_0(M(Y''/Y')) \arrow[]{d}{\tau_0(\pi)} & \arrow[hook']{l}[swap]{\iota_0} \tau_0(\Nc_{Y''/Y'}) \arrow[]{d} \arrow[]{r}{p_0} & \tau_0(Y'') \\
& \Gb_m \arrow[hook]{r} & \Ab^1 & \arrow[hook']{l}[swap]{\iota^0} \{ 0 \}, &
\end{tikzcd}
$$
and denote by $\tau_0(\pi')$ the composition $\tau_0(M(X''/X')) \to \Ab^1$. Then, combining Lemmas \ref{FormulaForQSmPullbackLem} and \ref{FormulaForIotaLem}, we compute that
\begin{align*}
h'^\vir \circ c_{\tau_0(g)} &= (p_0'^!)^{-1} \circ \big( \iota^{0!}_{\tau_0(\pi')} \circ (u'^!_0)^{-1} \big) \circ \mathrm{pr}^!_2 \circ c_{\tau_0(g)} \\
&= c_{\tau_0(g \circ h)} \circ (p_0^!)^{-1} \circ \big( \iota^{0!}_{\tau_0(\pi)} \circ (u_0^!)^{-1} \big) \circ \mathrm{pr}^!_2  & (\text{$(B_2)$ and $(B_3)$}) \\
&= c_{\tau_0(g \circ h)} \circ h^\vir,
\end{align*}
proving that $c$ satisfies the condition of Lemma \ref{ImOfAlphaLem}. Since $c$ was arbitrary, we conclude that $\alpha$ is surjective.
\end{proof}

\appendix

\section{Hurewicz morphism}\label{HurewiczSect}

The purpose of this section is to study the naturality properties of the Hurewicz morphism, which gives an easily understandable approximation to the relative cotangent complex. The main result of this section is Corollary \fref{ConormalVsIdealCor}, which shows, roughly speaking, that the (zeroth homotopy of the) conormal sheaf of a closed embedding can be naturally identified with the (zeroth homotopy of the) restriction of the sheaf of ``derived ideals'' on the closed subscheme. This result will play an important role in Section \fref{DerivedBlowUpSect}, because it gives an easily verifiable criterion for a commutative square of derived schemes to be a relative virtual Cartier divisor (see Definition \fref{DivisorOverZDefn}). 

Throughout this section, we will assume familiarity with Sections 7.3 and 7.4 of \cite{Lur2}, as well as Chapter 25 of \cite{Lur3}. Given a simplicial commutative ring $A$, we will denote by $A^\circ$ its underlying $\Eb_\infty$-ring spectrum. Recall that the stable $\infty$-categories of modules over $A$ and $A^\circ$ are canonically equivalent. We will denote the cotangent complex of a map $A \to B$ of $\Eb_\infty$-ring spectra by $\Lb^\topo_{B/A}$, and reserve the notation $\Lb_{A/B}$ for the cotangent complex of simiplicial commutative rings. 

Let us fix some notation. We will denote by $\Cc_\scr$ and $\Cc_{\Eb_{\infty}}$ the $\infty$-categories of simplicial commutative rings and $\Eb_\infty$-ring spectra respectively. If $\Cc$ is either of those two, then
$$e: \Tb_\Cc \to \Fun(\Delta^1, \Cc)$$
is the \emph{tangent bundle}, where $\Tb_\Cc$ is roughly speaking the $\infty$-category of pairs $(A,M)$ with $A \in \Cc$ and $M$ is an $A$-module, and where $e$ sends $(A, M)$ to the trivial square zero extension $A \oplus M \to A$. Note that the fibre of 
$$p := \Tb_\Cc \xrightarrow{e} \Fun(\Delta^1, \Cc) \to \Fun(\{1\}, \Cc) = \Cc$$
over $A \in \Cc$ is the stable $\infty$-category of $A$-modules. We will denote by 
$$\pi: \Mc^{\Tb}(\Cc) \to \Delta^1 \times \Cc$$
the \emph{tangent correspondence}, where $\Mc^\Tb(\Cc)$ consists of a copy of $\Cc$ over $\{0\} \times \Cc$ and $p: \Tb_\Cc \to \Cc$ over $\{1\} \times \Cc$ in such a way that the functor $\Cc \to \Tb_{\Cc}$ classified by the coCartesian fibration
$$q:= \Mc^{\Tb}(\Cc) \xrightarrow{\pi} \Delta^1 \times \Cc \xrightarrow{\mathrm{pr}_1} \Delta^1$$
coincides with the cotangent complex functor. Note that the arrows in $\Mc^{\Tb}(\Cc)$ lying over $\Delta^1 \times \{A\} \subset \Delta^1 \times \Cc$ are exactly \emph{derivations} of $A$ taking value in some $A$-module. We note that $p$ and $q$ are both Cartesian and coCartesian fibrations.

We start by constructing a functorial version of the topological Hurewicz morphism, and then use it to obtain the functorial Hurewicz of simplicial commutative rings. The functorial topological Hurewicz morphism is an $\infty$-functor $\Fun(\Delta^1, \Cc_{\Eb_\infty}) \to \Fun(\Delta^1, \Tb_{\Cc_{\Eb_\infty}})$ which sends a morphism $\psi: A \to B$ in $\Cc_{\Eb_\infty}$ to a natural morphism $\epsilon^\topo_\psi: B \otimes_A \Cofib(\psi) \to \Lb^{\topo}_{B/A}$ lying over $B$.

\begin{cons}[Functorial topological Hurewicz morphism]\label{FunctorialTopoHurewiczCons}
We start by applying \cite{Lur1} Proposition 4.3.2.15 several times in order to construct an $\infty$-functor 
$$\Fun(\Delta^1, \Cc_{\Eb_\infty}) \to \Fun_{\Delta^1}(\Delta^1 \times \Delta^1/_{\mathrm{pr}_2} \Delta^1, \Mc^\Tb(\Cc_{\Eb_\infty})).$$  
As $\Cc_{\Eb_\infty}$ can be identified as the fibre of $q$ over $0$, we can apply \emph{loc. cit.} to the coCartesian fibration $q$ to obtain an $\infty$-functor that can be described on objects as
\begin{equation*}
\begin{tikzcd}
A \arrow[]{d}{\psi} \\
B 
\end{tikzcd} 
\mapsto
\begin{tikzcd}
A \arrow[]{r}{d_A} \arrow[]{d}{\psi} & \Lb^\topo_A \arrow[]{d} \\
B \arrow[]{r}{d_B} & \Lb^\topo_B,
\end{tikzcd} 
\end{equation*}
where $d_A$ and $d_B$ are the universal derivations of $A$ and $B$ respectively. Applying multiple times \emph{loc. cit.} itself, and once its dual (not in this order), to $p$ (which is the the fibre of $\pi$ over $\{1\} \times \Cc$), and composing with the above $\infty$-functor, we obtain an $\infty$-functor that is morally described by
\begin{equation*}
\begin{tikzcd}
A \arrow[]{d}{\psi} \\
B 
\end{tikzcd} 
\mapsto
\begin{tikzcd}
A \arrow[]{r}{d_A} \arrow[]{d}{\psi} & \Lb^\topo_A \arrow[]{d} \arrow[]{r}{} & 0_A \arrow[]{r}{} \arrow[]{d} & 0_B \arrow[]{ld} \\
B \arrow[]{r}{d_B} & \Lb^\topo_B \arrow[]{r} & \Lb^\topo_{B/A}
\end{tikzcd} 
\end{equation*}
and applying the obvious forgetful functor sends the right hand side to the square
$$
\begin{tikzcd}
A \arrow[]{r} \arrow[]{d}{\psi} & 0_B \arrow[]{d} \\
B \arrow[]{r}{d_{B/A}} & \Lb^\topo_{B/A}
\end{tikzcd}
$$
in $\Mc^\Tb(\Cc_{\Eb_\infty})$. This concludes the first step of the construction of $\epsilon^\topo_{B/A}$.

Let us finish the construction. Using the fact that the morphism $\Tb_{\Cc_{\Eb_\infty}} \to \Cc_{\Eb_\infty}$ classified by $q$ considered as a Cartesian fibration coincides with the composition 
$$\Tb_\Cc \xrightarrow{e} \Fun(\Delta^1, \Cc) \to \Fun(\{0\}, \Cc) = \Cc,$$
we can apply \cite{Lur1} Proposition 4.3.2.15 as above and the forgetful morphism $\Mc^\Tb(\Cc_{\Eb_\infty}) \to \Cc_{\Eb_\infty}$ to send the above square $\infty$-categorically to the square
$$
\begin{tikzcd}
A \arrow[]{r}{\psi} \arrow[]{d}{\psi} & B \arrow[]{d}{\eta_0} \\
B \arrow[]{r}{\eta_{B/A}} & B \oplus \Lb^\topo_{B/A}
\end{tikzcd}
$$
in $\Cc_{\Eb_\infty}$, where $\eta_{B/A}$ and $\eta_0$ are the morphisms $B \to B\oplus \Lb^\topo_{B/A}$ induced by the universal and the trivial $A$-derivations. Composing with the section of $p: \Tb_{\Cc_{\Eb_\infty}} \to \Cc_{\Eb_\infty}$ sending $C$ to $C$ considered as a $C$-module, and applying \cite{Lur1} Proposition 4.3.2.15 as above, we can send the above square $\infty$-functorially to
$$
\begin{tikzcd}
A \arrow[]{r}{\psi} \arrow[]{d}{\psi} & B \arrow[]{d}{\eta_0} \\
B \arrow[]{r}{\eta_{B/A}} & B \oplus \Lb^\topo_{B/A}
\end{tikzcd}
$$
where the objects on the left hand side lie over $A$ (i.e., are considered as $A$-modules), the objects on the right hand side lie over $B$, and the horizontal arrows lie over $\psi$. Taking vertical cofibres, and using one last time \cite{Lur1} Proposition 4.3.2.15, we obtain a morphism
$$B \otimes_A \Cofib(\psi) \to \Cofib(\eta_0).$$
functorial in $\psi$. Composing with the natural identification $\Cofib(\eta_0) \simeq \Lb^\topo_{B/A}$ of $B$-modules, we obtain the desired $\infty$-functor $\psi \mapsto \epsilon^\topo_\psi$.
\end{cons}

Next we prove that this coincides with the Hurewicz morphism constructed by Lurie.

\begin{lem}
Let $\psi: A \to B$ be a morphism in $\Cc_{\Eb_\infty}$. Then the morphism
$$\epsilon^\topo_\psi: B \otimes_A \Cofib(\psi) \to \Lb^\topo_{B/A}$$
constructed above is equivalent to the morphism of \cite{Lur2} Theorem 7.4.3.1.
\end{lem}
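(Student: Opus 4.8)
The plan is to reduce both morphisms to the same explicit description in terms of the universal relative derivation $d_{B/A}\colon B\to\Lb^\topo_{B/A}$, and then to conclude by the uniqueness inherent in the corepresentability of the relative cotangent complex. Recall that $\Lb^\topo_{B/A}$ corepresents the functor $M\mapsto\mathrm{Der}_A(B,M)$ on $\mathrm{Mod}_B$, a derivation with values in $M$ being a section in $(\Cc_{\Eb_\infty})_{A/\!\!/B}$ of the projection $B\oplus M\to B$. Unwinding \cite{Lur2} Theorem 7.4.3.1, its Hurewicz morphism is the map $B\otimes_A\Cofib(\psi)\to\Lb^\topo_{B/A}$ corresponding, under the base-change adjunction $\Map_B(B\otimes_A\Cofib(\psi),N)\simeq\Map_A(\Cofib(\psi),N)$ and Yoneda, to the universal derivation $d_{B/A}$ regarded as an $A$-linear map $B\to\Lb^\topo_{B/A}$ equipped with the canonical null-homotopy of its restriction $d_{B/A}\circ\psi\simeq 0$ along $\psi$.

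First I would unwind Construction \ref{FunctorialTopoHurewiczCons}. By construction, the morphism $B\otimes_A\Cofib(\psi)\to\Cofib(\eta_0)$ produced at the last step is the map on vertical cofibres of the square with horizontal arrows $\psi$ and $\eta_{B/A}$, whose left column lies over $A$ and whose right column lies over $B$; here $\eta_{B/A}\colon B\to B\oplus\Lb^\topo_{B/A}$ classifies the universal $A$-derivation and $\eta_0$ is the split inclusion. Postcomposing with the equivalence $\Cofib(\eta_0)\simeq\Lb^\topo_{B/A}$ coming from the projection $B\oplus\Lb^\topo_{B/A}\to\Lb^\topo_{B/A}$ (which splits the defining cofibre sequence of $\Cofib(\eta_0)$) turns $B\xrightarrow{\eta_{B/A}}B\oplus\Lb^\topo_{B/A}\to\Cofib(\eta_0)\simeq\Lb^\topo_{B/A}$ into $d_{B/A}$ itself, while the commutativity of the square supplies exactly the null-homotopy of $d_{B/A}\circ\psi$. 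Hence $\epsilon^\topo_\psi$ is, tautologically, the image of $d_{B/A}$ under the very same Yoneda correspondence, so it agrees with the morphism of \emph{loc.\ cit.}

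The main work, and the step I expect to be most delicate, is to make this matching precise at the $\infty$-categorical level rather than on objects: one must check that the several applications of \cite{Lur1} Proposition 4.3.2.15 in Construction \ref{FunctorialTopoHurewiczCons} transport the relevant data faithfully — in particular that the arrows produced over $\Delta^1\times\{A\}$ in the tangent correspondence $\Mc^\Tb(\Cc_{\Eb_\infty})$ really are the universal derivations (so the relativization step yields $\Lb^\topo_{B/A}$ together with its universal derivation), and that the final identification $\Cofib(\eta_0)\simeq\Lb^\topo_{B/A}$ it produces is the standard one used by Lurie. Since both constructions are pinned down by the universal property of the cotangent complex functor (encoded by the coCartesian fibration $q$) together with the corepresentability recalled above, it is enough to verify the comparison after passing to the corepresented functors, where it reduces to the unambiguous identity for the universal derivation; this is routine but requires patient bookkeeping of the intermediate equivalences.
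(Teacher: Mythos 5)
Your proof is correct, but it takes a genuinely different route from the paper's. The paper simply recalls that Lurie produces $\epsilon'_\psi$ by forming the pullback $B^{\eta_{B/A}} = B\times_{B\oplus\Lb^\topo_{B/A}} B$ of $\eta_0$ along $\eta_{B/A}$, using the induced factorization $A\to B^{\eta_{B/A}}\xrightarrow{\psi''} B$ of $\psi$, and taking the resulting map $B\otimes_A\Cofib(\psi)\to\Cofib(\psi'')\simeq \Lb^\topo_{B/A}$; the proof then observes that this is visibly the same as the output of Construction \ref{FunctorialTopoHurewiczCons}, since the pullback square and the commutative square with horizontals $\psi$, $\eta_{B/A}$ and verticals $\psi$, $\eta_0$ encode the same data. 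You instead invoke corepresentability of $\Lb^\topo_{B/A}$ together with the universal property of $\Cofib(\psi)$: after adjoining across $B\otimes_A(-)$ and applying Yoneda, both morphisms are classified by the universal derivation $d_{B/A}\colon B\to\Lb^\topo_{B/A}$ equipped with the canonical null-homotopy $d_{B/A}\circ\psi\simeq 0$, hence coincide. Your argument is arguably more conceptual, and it makes explicit what the paper leaves as ``clearly equivalent''; but your stated characterization of Lurie's Hurewicz morphism itself needs to be verified by unwinding his pullback construction, which is essentially what the paper does directly. So neither route is strictly shorter, and both defer the same routine but careful bookkeeping of intermediate equivalences to the reader.
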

\begin{proof}
Let us denote by $\epsilon'_\psi$ the morphism defined in \cite{Lur2}. Recall that it is constructed using the pullback diagram
$$
\begin{tikzcd}
B^{\eta_{B/A}} \arrow[]{d}{\psi''} \arrow[]{r}{\psi''} & B \arrow[]{d}{\eta_0} \\
B \arrow[]{r}{\eta_{B/A}} & B \oplus \Lb^\topo_{B/A}
\end{tikzcd}
$$
of $B$-algebras, inducing a commutative triangle 
\begin{center}
\begin{tikzcd}
A \arrow[]{r}{\psi'} \arrow[]{rd}{\psi} & B^{\eta_{B/A}} \arrow[]{d}{\psi''} \\
  & B
\end{tikzcd}
\end{center}
which in turn induces $\epsilon'_\psi$ as the morphism $B \otimes_A \Cofib(\psi) \to \Cofib(\psi'')$. This is clearly equivalent to our definition, so the claim follows.
\end{proof}

Next, we use the functorial topological Hurewicz morphism to obtain the functorial Hurewicz morphism for simplicial commutative rings.

\begin{cons}[Functorial Hurewicz morphism]\label{FunctorialHurewiczCons}
Using similar arguments as above, one shows that the comparison morphism $\rho_{B/A}: \Lb^\topo_{B^\circ / A^\circ} \to \Lb_{B/A}$ from \cite{Lur3} Section 25.3.5 can be described as an $\infty$-functor $\Fun(\Delta^1, \Cc_\scr) \to \Fun(\Delta^1, \Tb_{\Cc_\scr})$. On the other hand, Construction \ref{FunctorialTopoHurewiczCons} gives an $\infty$-functor sending $\psi: A \to B$ to $\epsilon^\topo_{\psi^\circ}: B^\circ \otimes_{A^\circ} \Cofib(\psi^\circ) \to \Lb^\topo_{B^\circ/A^\circ}$. Taking the composition of these two functorial morphism, and making natural identifications of modules, we obtain the desired functorial Hurewicz morphism
$$\epsilon_\psi: B \otimes_{A} \Cofib(\psi) \to \Lb_{B/A}.$$
\end{cons}

Since everything is canonical, it is easy to globalize the Hurewicz morphism.

\begin{defn}\label{GlobalHurewicz}
If $f: X \to Y$ is a morphism of derived schemes, then we are going to denote by $f^\sharp$ the induced morphism $\Oc_Y \to f_* \Oc_X$. It is then immediate that the Hurewicz morphisms of Construction \fref{FunctorialHurewiczCons} glue together to give the \emph{global Hurewicz morphism}
$$\epsilon_f: f^* \Cofib(f^\sharp) \to \Lb_{X/Y}.$$
\end{defn}

The naturality of the Hurewicz morphism in commutative squares of algebras immediately translates into the following statement.

\begin{thm}\label{NaturalityOfGlobalHurewiczThm}
Suppose 
\begin{center}
\begin{tikzcd}
X' \arrow[]{r}{f'} \arrow[]{d}{g'} & Y' \arrow[]{d}{g} \\
X \arrow[]{r}{f} & Y
\end{tikzcd}
\end{center}
is a commutative square of derived schemes, and consider it as a morphism $G: f' \to f$. Then the commutative squares induced by the functoriality of Construction \fref{FunctorialHurewiczCons} glue together to give a commutative square
\begin{center}
\begin{tikzcd}
f^* \Cofib(f^\sharp) \arrow[]{r}{\epsilon_{f}} \arrow[]{d}{f^* (\psi'_G)} &  \Lb_{X/Y} \arrow[]{d}{\Lb'_G} \\
g'_* f'^* \Cofib(f'^\sharp) \arrow[]{r}{g'_*(\epsilon_{f'})} & g'_*\Lb_{X'/Y'}
\end{tikzcd}
\end{center}
of coherent sheaves, where $\Lb'_G$ is the natural morphism on relative cotangent complexes induced by $G$, and $\psi'_G$ is the morphism induced by taking horizontal cofibres of
\begin{center}
\begin{tikzcd}
\Oc_Y \arrow[]{r}{f^\sharp} \arrow[]{d}{g^\sharp} & f_* \Oc_X \arrow[]{d}{f_*(g'^\sharp)} \\
g_*\Oc_{Y'} \arrow[]{r}{g_*(f'^\sharp)} & g_* f'_* \Oc_{X'}. 
\end{tikzcd}
\end{center}
\end{thm}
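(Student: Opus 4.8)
The plan is to read the statement off the $\infty$-functoriality already built into Constructions \ref{FunctorialTopoHurewiczCons} and \ref{FunctorialHurewiczCons}, and then to globalize by the standard gluing argument. Recall that Construction \ref{FunctorialHurewiczCons} produces $\psi \mapsto \epsilon_\psi$ as (a component of) a single $\infty$-functor
$$H \colon \Fun(\Delta^1, \Cc_\scr) \longrightarrow \Fun(\Delta^1, \Tb_{\Cc_\scr}),$$
assembled by composing the functorial topological Hurewicz morphism of Construction \ref{FunctorialTopoHurewiczCons} with the functorial comparison morphism $\rho$ of \cite{Lur3} Section 25.3.5, together with a chain of applications of \cite{Lur1} Proposition 4.3.2.15. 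First I would observe that, over affines, a commutative square of derived schemes as in the theorem is — under $\Gamma$ and $\Spec$ — exactly a morphism in $\Fun(\Delta^1, \Cc_\scr)$ between the arrows of rings presenting $f$ and $f'$, whose two vertical legs present $g$ and $g'$. Applying $H$ to this morphism yields a commutative square in $\Fun(\Delta^1, \Tb_{\Cc_\scr})$, which unwinds to a commutative square of modules lying over a morphism of rings; this is precisely the affine avatar of the square asserted in the theorem.

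Second, I would identify the four edges of $H(G)$ with the maps named in the statement. The two horizontal edges are $\epsilon_{f}$ and $\epsilon_{f'}$ by the very definition of $H$. For the target edge: by construction the target component of $H$ is the relative cotangent-complex functor postcomposed with $\rho$, so the map it produces on a morphism of arrows is the canonical comparison/base-change map of relative cotangent complexes — that is, $\Lb'_G$ after globalization. For the source edge: in Construction \ref{FunctorialTopoHurewiczCons} the module $B \otimes_A \Cofib(\psi)$ is obtained by forming the cofibre of $\psi$ inside a commutative square of rings and then base-changing, all $\infty$-functorially; since formation of cofibres and base change commute with the forgetful functors used there, a morphism of such squares induces on cofibres exactly the base change along $\psi$ of the cofibre of the square of structure-sheaf maps, which is by definition $\psi'_G$ (using that $g_*$ is exact to identify $\Cofib(g_*(f'^\sharp)) \simeq g_* \Cofib(f'^\sharp)$ and the evident base-change map). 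Hence the source edge is $f^*(\psi'_G)$.

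Third, I would globalize. Every step of Constructions \ref{FunctorialTopoHurewiczCons} and \ref{FunctorialHurewiczCons} is canonical and compatible with localization of rings — the functor $\rho$, base change of modules, formation of cofibres, and the applications of \cite{Lur1} Proposition 4.3.2.15 all commute with restriction along affine opens — so the global Hurewicz morphism $\epsilon_f$ of Definition \ref{GlobalHurewicz} is the gluing of the affine ones, and likewise the naturality squares above glue over any affine open cover of $X$ refining a cover compatible with $G$. Functoriality of $g'_*$ and of $f^*$ then identifies the glued square with the one displayed in the statement.

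The main obstacle will be purely a matter of bookkeeping: confirming that the abstractly produced edge maps of $H(G)$ really are $f^*(\psi'_G)$ and $\Lb'_G$, and not merely natural transformations with the same source and target. Concretely, one has to track through the successive invocations of \cite{Lur1} Proposition 4.3.2.15 how ``cofibre of a square of rings'' and ``relative cotangent complex'' behave on morphisms of squares, and verify that passing to cofibres commutes with the forgetful functors $\Mc^\Tb(\Cc_\scr) \to \Cc_\scr$ and $\Tb_{\Cc_\scr} \to \Cc_\scr$ used along the way. Each such compatibility is formal — it follows from the defining (co)Cartesian-fibration descriptions of the functors involved — so once these are recorded, the theorem drops out.
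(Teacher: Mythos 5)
Your proposal is correct and follows exactly the route the paper takes: the paper gives no explicit proof for Theorem \ref{NaturalityOfGlobalHurewiczThm}, asserting it as an immediate consequence of the $\infty$-functoriality of Constructions \ref{FunctorialTopoHurewiczCons} and \ref{FunctorialHurewiczCons} together with gluing over affines. Your careful identification of the four edges of the induced square with $\epsilon_f$, $g'_*(\epsilon_{f'})$, $f^*(\psi'_G)$, and $\Lb'_G$, and your note that $g_*$ preserves cofibre sequences in the stable $\infty$-category of quasi-coherent sheaves, is precisely the bookkeeping the paper leaves implicit.
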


In the sequel, we are only going to use the following corollary, which is the derived analogue of the classical fact that the conormal sheaf of a closed embedding determined by a sheaf of ideals $\Ic$ can be naturally identified with $\Ic / \Ic^2$.

\begin{cor}\label{ConormalVsIdealCor}
Suppose
\begin{equation*}
\begin{tikzcd}
Z' \arrow[hook]{r}{i'} \arrow[]{d}{g'} & X' \arrow[]{d}{g} \\
Z \arrow[hook]{r}{i} & X,
\end{tikzcd}
\end{equation*}
is a commutative square of derived schemes with $i$ and $i'$ closed embeddings, and consider it as a morphism $G: i' \to i$. Then the induced morphism 
$$\pi_1(\Lb_G): \pi_1(g'^* \Lb_{Z/X}) \to \pi_1(\Lb_{Z'/X'})$$
can be naturally identified with the induced morphism
$$\pi_0(i'^*\psi_G ): \pi_0( i'^* g^*\Ic) \to \pi_0(i'^* \Ic'),$$
where $\Ic := \Fib(i^\sharp)$ and $\Ic' := \Fib(i'^\sharp),$ and where $\psi_G$ and $\Lb_G$ are the adjoints of their primed counterparts in Theorem  \fref{NaturalityOfGlobalHurewiczThm}. 
\end{cor}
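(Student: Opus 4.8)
The plan is to deduce this from Theorem \ref{NaturalityOfGlobalHurewiczThm} by analyzing the two column complexes. First I would recall that for a closed embedding $i: Z \hookrightarrow X$, the cofibre $\Cofib(i^\sharp)$ sits in a fibre sequence $\Ic \to \Oc_X \to i_* \Oc_Z$ with $\Ic = \Fib(i^\sharp)$; since $i_* \Oc_Z$ is connective and $\Oc_X \to i_* \Oc_Z$ is surjective on $\pi_0$, the sheaf $\Ic$ is connective, and the long exact sequence gives $\pi_0(\Ic) = \ker(\pi_0 \Oc_X \to \pi_0 i_* \Oc_Z)$, the classical ideal. Meanwhile $\Cofib(i^\sharp) \simeq \Ic[1]$, so $i^* \Cofib(i^\sharp) \simeq (i^* \Ic)[1]$ and hence $\pi_1(i^* \Cofib(i^\sharp)) \cong \pi_0(i^* \Ic)$. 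The global Hurewicz morphism $\epsilon_i: i^* \Cofib(i^\sharp) \to \Lb_{Z/X}$ has both source and target $1$-connective (the relative cotangent complex of a closed embedding is $1$-connective, and has Tor-amplitude $\le 1$ for quasi-smooth embeddings, but that is not needed here); by Lurie's Hurewicz theorem (\cite{Lur2} Theorem 7.4.3.1, which our Construction \ref{FunctorialHurewiczCons} reproduces) $\epsilon_i$ induces an \emph{isomorphism} on $\pi_1$. So $\pi_1(\epsilon_i): \pi_0(i^*\Ic) \xrightarrow{\sim} \pi_1(\Lb_{Z/X})$, and likewise $\pi_1(\epsilon_{i'}): \pi_0(i'^*\Ic') \xrightarrow{\sim} \pi_1(\Lb_{Z'/X'})$.

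Next I would pull back the commutative square of Theorem \ref{NaturalityOfGlobalHurewiczThm} (with $f = i$, $f' = i'$, written in adjoint form along $g', g'_*$) to $Z'$ and take $\pi_1$. Concretely, the square
\begin{equation*}
\begin{tikzcd}
g'^* i^* \Cofib(i^\sharp) \arrow{r}{g'^*(\epsilon_i)} \arrow{d}{g'^*(\psi_G)} & g'^* \Lb_{Z/X} \arrow{d}{\Lb_G} \\
i'^* \Cofib(i'^\sharp) \arrow{r}{\epsilon_{i'}} & \Lb_{Z'/X'}
\end{tikzcd}
\end{equation*}
(after the standard identification $g'^* i^* \simeq i'^* g^*$, which is exactly what makes $\psi_G$ and $\Lb_G$ the adjoints referred to in the statement) commutes. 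Applying $\pi_1$ and using that the two horizontal maps become isomorphisms by the previous paragraph, one gets a commutative square in which the horizontal arrows are invertible; hence $\pi_1(\Lb_G)$ is identified with $\pi_1(g'^*(\psi_G)) = \pi_0(i'^* g^* \Ic) \to \pi_0(i'^* \Ic')$ under these isomorphisms. It remains only to check that $\pi_1$ of the comparison map $g'^* i^* \Cofib(i^\sharp) \to i'^* g^* \Cofib(i^\sharp)$ agrees with $\pi_0$ of the corresponding comparison for $\Ic$ — this is the same shift-by-one bookkeeping as in the first paragraph, applied fibrewise, and is routine once one tracks the equivalence $\Cofib(i^\sharp) \simeq \Ic[1]$ through pullback.

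The main obstacle is bookkeeping rather than substance: one must make sure the identification $g'^* i^* \simeq i'^* g^*$ used implicitly is exactly the one built into the functoriality of Construction \ref{FunctorialHurewiczCons} and into the definition of $\psi_G$ and $\Lb_G$ as adjoints, so that the square really does commute on the nose after these identifications, and that taking $\pi_1$ commutes with the pullback functors in play (which it does, since $g'^*$ is right $t$-exact and the relevant sheaves are connective). A small point to be careful about is that $\Cofib(i^\sharp)$ is $1$-connective and $\Lb_{Z/X}$ is $1$-connective, so there is no contribution in degrees $\le 0$, and the Hurewicz isomorphism genuinely lands in degree $1$; this is what licenses reducing the whole comparison to $\pi_1$. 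Once those identifications are pinned down, the result follows formally from Theorem \ref{NaturalityOfGlobalHurewiczThm} and the Hurewicz theorem.
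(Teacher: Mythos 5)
Your proof is correct and follows the same route as the paper's: take the adjoint of the naturality square from Theorem \ref{NaturalityOfGlobalHurewiczThm}, invoke Lurie's Hurewicz theorem (\cite{Lur2} Theorem 7.4.3.1) to see that $\epsilon_{i'}$ and $g'^*(\epsilon_i)$ are isomorphisms on $\pi_1$, and use the shift $\Cofib(i^\sharp) \simeq \Ic[1]$ to translate $\pi_1$ of the left column into $\pi_0$ of $\psi_G$ pulled back. The paper's proof is a terse version of exactly this; your additional bookkeeping (connectivity estimates, the identification $g'^* i^* \simeq i'^* g^*$) just fills in steps the paper leaves implicit.
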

\begin{proof}
Taking adjoint of the square of Theorem \fref{NaturalityOfGlobalHurewiczThm}, we obtain a commutative square 
\begin{center}
\begin{tikzcd}
i'^*g^* \Cofib(i^\sharp) \arrow[]{r}{g'^*(\epsilon_{i})} \arrow[]{d}{i'^* (\psi_G)} &  g'^* \Lb_{Z/X} \arrow[]{d}{\Lb_G} \\
i'^* \Cofib(i'^\sharp) \arrow[]{r}{\epsilon_{i'}} & \Lb_{Z'/X'}.
\end{tikzcd}
\end{center}
By \cite{Lur2} Theorem 7.4.3.1, the morphisms $\epsilon_{i'}$ and $g'^*(\epsilon_i)$ induce isomorphisms on $\pi_0$ and $\pi_1$, so the claim follows from the natural identification of $\Cofib(i^\sharp)$ and $\Cofib(i'^\sharp)$ with $\Ic[1]$ and $\Ic'[1]$ respectively. 
\end{proof}

\section{Derived blow ups and deformation to normal bundle}\label{DerivedBlowUpSect}

The purpose of this section is to study the constructions of derived blow ups and deformation to normal cone from \cite{Khan}, and to record how they interact with the analogous classical constructions. The results of this section can be regarded as elaborating on \cite{Khan} Theorem 4.1.5. (vii) and Section 4.1.11: we not only want to understand how the classical blow up sits inside the derived blow up, but also for example the relationship between the exceptional divisors. The results of this section play a fundamental role in the construction and study of virtual pullbacks in Appendix \fref{VirtualPullbackSect}. For simplicity, we will assume throughout the section that all derived schemes are Noetherian. We will denote $\Nc^\vee_{X/Y} := \Lb_{X/Y}[-1]$, and call it the the \emph{conormal complex} (or the \emph{conormal sheaf}); note that if $X \hookrightarrow Y$ is a derived regular embedding, then $\Nc^\vee_{X/Y}$ is a vector bundle.

The structure of this section is as follows: in Section \fref{DerivedBlowUpSubSect},  we recall the definition and basic properties of derived blow up from \cite{Khan}. In Section \fref{IdealLikeSheafSubSect} we study schemes associated to so called \emph{ideal-like sheaves}, which we will prove to model the truncations of derived blow ups in Section \fref{BlowUpTruncationSubSect}. Section \fref{ComparisonWithClassicalSubSect} is devoted to understanding how classical blow up sits inside derived blow up. Finally, in Section \fref{DeformationToNormalBundleSubSect}, we will use the results of the previous sections to study the related construction of \emph{derived deformation to normal bundle}, and to prove several results analogous to those in Verdier's fundamental article \cite{Ver}.

\subsection{Derived blow ups}\label{DerivedBlowUpSubSect}

Recall that a \emph{quasi-smooth closed immersion} (or a \emph{derived regular embedding}) is a closed immersion of derived schemes that affine locally looks like the opposite of $A \to A \modmod (a_1,...,a_r)$, where the right hand side is the derived quotient defined in \cite{Khan} Section 2.3.1. The following two definitions are from \cite{Khan}.

\begin{defn}\label{DivisorOverZDefn}
Let $Z \hookrightarrow X$ be a quasi-smooth closed embedding of derived schemes. Then, for any $X$-scheme $S$, a \emph{virtual Cartier divisor} on $S$ \emph{lying over $Z$} is the datum of a coherently commutating diagram
\begin{center}
\begin{tikzcd}
D \arrow[hook]{r}{i_D} \arrow[]{d}{g} & S \arrow[]{d}{} \\
Z \arrow[hook]{r}{} & X
\end{tikzcd}
\end{center}
such that
\begin{enumerate}
\item $i_D$ is a quasi-smooth closed embedding of virtual codimension 1;
\item the underlying square of classical schemes is Cartesian;
\item the canonical morphism
$$g^* \Nc^\vee_{Z/X} \to \Nc^\vee_{D/S}$$
induces a surjection of sheaves on $\pi_0$.
\end{enumerate}
\end{defn}

\begin{defn}
Let $Z \hookrightarrow X$ be a quasi-smooth closed embedding of derived schemes. Then the \emph{derived blow up} $\bl_Z(X)$ is the $X$-scheme representing virtual Cartier divisors lying over $Z$. In other words, given an $X$-scheme $S$, the space of $X$-morphisms 
$$S \to \bl_Z(X)$$
is naturally identified with the maximal sub $\infty$-groupoid of the $\infty$-category of virtual Cartier divisors of $S$ that lie over $Z$.
\end{defn}

The following is part of Theorem 4.1.5 of \cite{Khan} with one minor modification.

\begin{thm}\label{BlowUpPropertiesThm}
Let $i: Z \hookrightarrow X$ be a quasi-smooth closed embedding of quasi-projective derived $k$-schemes. Then
\begin{enumerate}
\item the derived blow up $\bl_Z(X)$ exists and is unique up to contractible space of choices;

\item the structure morphism $\pi: \bl_Z(X) \to X$ is projective, quasi-smooth, and induces an equivalence 
$$\bl_Z(X) - D_u \to X - Z,$$
where $D_u$ is the universal virtual Cartier divisor on $\bl_Z(X)$ lying over $Z$;

\item the derived blow up $\bl_Z(X) \to X$ is stable under derived base change;

\item there is a natural morphism
$$\Pb_Z(\Nc_{Z/X}) \to \bl_Z(X)$$
exhibiting the left hand side as the universal virtual Cartier divisor on $\bl_Z(X)$ lying over $Z$; moreover, the induced surjection 
$$g^*\Nc^\vee_{Z/X} \to \Nc^\vee_{D_u / \bl_Z(X)}$$
is naturally identified with the natural surjection $g^*\Nc^\vee_{Z/X} \to \Oc(1)$ on $\Pb_Z(\Nc_{Z/X})$;

\item if $Z \stackrel i \hookrightarrow X \stackrel j \hookrightarrow Y$ is a sequence of quasi-smooth closed embeddings, then the outer square in
\begin{center}
\begin{tikzcd}
D_u \arrow[hook]{rr}{i_{D_u}} \arrow[]{d}{g_u} & & \bl_Z(X) \arrow[]{ld}[swap]{\pi} \arrow[]{d}{j \circ \pi} \\
Z \arrow[hook]{r}{i} & X \arrow[hook]{r}{j} & Y
\end{tikzcd}
\end{center}
is a virtual Cartier divisor on $\bl_Z(X)$ lying over $Z \hookrightarrow Y$, and induces a quasi-smooth closed embedding (the \emph{strict transform})
$$\tilde j: \bl_Z(X) \hookrightarrow \bl_Z(Y);$$

\item if $Z$ and $X$ are classical schemes (so that $Z \hookrightarrow X$ is lci), there is a natural equivalence 
$$\bl_Z(X) \simeq \bl_{Z}^\mathrm{cl}(X),$$
where the right hand side is the classical blow up. 

\end{enumerate}
\end{thm}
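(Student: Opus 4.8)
The statement is, up to one minor modification, a portion of \cite{Khan} Theorem 4.1.5, so the plan is to invoke \emph{loc. cit.} for the bulk of the assertions and to supply by hand only the pieces phrased differently here. Parts (1), (2), (3) and (6) are taken directly from \cite{Khan} Theorem 4.1.5 --- the only cosmetic point being that on quasi-projective derived $k$-schemes the word ``projective'' used there agrees with our notion --- and the first assertion of part (4), that the exceptional divisor $D_u$ is naturally identified with $\Pb_Z(\Nc_{Z/X})$ and that this identification realizes it as the universal virtual Cartier divisor lying over $Z$, is likewise \emph{loc. cit.} What remains to be addressed is the last clause of part (4), concerning the conormal surjection, and the strict-transform statement of part (5).

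For the last clause of part (4) I would reduce to the case of a zero section. The canonical morphism $g^* \Nc^\vee_{Z/X} \to \Nc^\vee_{D_u / \bl_Z(X)}$ is compatible with derived base change by part (3), and since the exceptional divisor together with this canonical morphism depends on $Z \hookrightarrow X$ only to first order (that is, only through the normal bundle $\Nc_{Z/X}$), it coincides with the corresponding datum for the zero section $Z \hookrightarrow \Nc_{Z/X}$. For a zero section the derived blow up is the total space of $\Oc(-1)$ over $\Pb_Z(\Nc_{Z/X})$, its exceptional divisor is the zero section $\Pb_Z(\Nc_{Z/X}) \hookrightarrow \bl_Z(\Nc_{Z/X})$, and by inspection the canonical conormal morphism is the tautological quotient $g^* \Nc^\vee_{Z/X} \to \Oc(1)$. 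The only genuine care needed here is to reconcile the projectivization convention of \cite{Khan} with the one used in this article, so that $\Oc(1)$ denotes the correct tautological line bundle; this reconciliation is precisely the ``minor modification'' referred to in the statement.

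For part (5), take the sequence $Z \hookrightarrow X \hookrightarrow Y$ of quasi-smooth closed immersions with notation as in the statement, and form the composite $\bl_Z(X) \xrightarrow{\pi} X \hookrightarrow Y$ together with the universal virtual Cartier divisor $(D_u \hookrightarrow \bl_Z(X),\ g_u : D_u \to Z)$. I would first check that the outer square of the displayed diagram is a virtual Cartier divisor on $\bl_Z(X)$ lying over $Z \hookrightarrow Y$: condition (1) of Definition \ref{DivisorOverZDefn} is inherited from the fact that $D_u$ lies over $Z \hookrightarrow X$; condition (2) holds because $j$ is a closed immersion, so that $\tau_0(Z) \times_{\tau_0(Y)} \tau_0(\bl_Z(X)) = \tau_0(Z) \times_{\tau_0(X)} \tau_0(\bl_Z(X)) = \tau_0(D_u)$; and condition (3) follows by composing the map $g_u^* \Nc^\vee_{Z/Y} \to g_u^* \Nc^\vee_{Z/X}$, which is surjective on $\pi_0$ by the conormal exact sequence of $Z \hookrightarrow X \hookrightarrow Y$, with the surjection $g_u^* \Nc^\vee_{Z/X} \to \Nc^\vee_{D_u / \bl_Z(X)}$. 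The universal property of $\bl_Z(Y)$ then yields $\tilde j : \bl_Z(X) \to \bl_Z(Y)$ over $Y$, and to see that $\tilde j$ is a quasi-smooth closed immersion I would restrict to the two opens covering $\bl_Z(Y)$: away from the exceptional divisor $\tilde j$ restricts to the closed immersion $X - Z \hookrightarrow Y - Z$, while over the exceptional divisor it is, by part (4), the inclusion $\Pb_Z(\Nc_{Z/X}) \hookrightarrow \Pb_Z(\Nc_{Z/Y})$ induced by the subbundle $\Nc_{Z/X} \hookrightarrow \Nc_{Z/Y}$, again a quasi-smooth closed immersion. I expect this last point --- checking that $\tilde j$ is a genuine closed immersion and quasi-smooth, rather than merely a monomorphism of $Y$-schemes --- to be the main, if still routine, obstacle, and it is cleanest to run it affine-locally using the explicit local construction of derived blow ups in \cite{Khan}.
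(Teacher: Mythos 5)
The paper's own proof is essentially a one-line citation: every assertion except the projectivity of $\pi$ is taken directly from Khan--Rydh Theorem 4.1.5, and the projectivity of $\pi$ is supplied by Proposition 2.73 of \cite{AY}. Your proposal follows the same broad strategy of invoking \emph{loc. cit.}, but it misidentifies the ``one minor modification'' referred to in the sentence preceding the theorem. You take it to be a convention issue with $\Oc(1)$, and you dismiss the properness/projectivity question as a ``cosmetic point,'' asserting that ``on quasi-projective derived $k$-schemes the word `projective' used there agrees with our notion.'' This is not correct: Khan--Rydh establish that $\pi: \bl_Z(X) \to X$ is \emph{proper}, and for quasi-projective derived schemes properness does not automatically yield projectivity --- that upgrade is exactly what Proposition 2.73 of \cite{AY} provides, and it is the one substantive point where this theorem differs from the cited source. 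As written, your argument leaves the projectivity claim in part (2) unsupported.

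On the other hand, your additional work on the conormal-surjection clause of part (4) and on part (5) is not wrong --- it is a reasonable sketch of how one would verify those items from scratch --- but the paper regards both as already contained in Khan--Rydh Theorem 4.1.5 (and the surrounding results on strict transforms and exceptional divisors there), so the extra effort is unnecessary. If you want to keep those verifications, the divisor-over-$Z$ check for the composite $Z \hookrightarrow X \hookrightarrow Y$ and the open-cover argument for $\tilde j$ being a quasi-smooth closed immersion are plausible, though the latter really should be carried out in the affine-local model as you note, since ``being a closed immersion over each member of an open cover of the target'' does not by itself give a closed immersion. The concrete fix needed is to replace the claimed reconciliation of ``projective'' with the citation of \cite{AY} Proposition 2.73.
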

\begin{proof}
Everything except the projectivity of $\pi$ follow from \cite{Khan}. The projectivity of $\pi$ is Proposition 2.73 of \cite{AY}.
\end{proof}

\subsection{Ideal-like sheaves and their symmetric algebras}\label{IdealLikeSheafSubSect}

The purpose of this section is to study a particularly nice class of coherent sheaves (with extra structure) on classical schemes, and homogeneous spectra of their symmetric algebras. Let us begin with the main definition.

\begin{defn}\label{IdealLikeSheafDef}
Let $X$ be a classical scheme, and let $\Fc$ be a discrete quasi-coherent sheaf on $X$ together with a morphism $\mu: \Fc \to \Oc_X$. Such a pair $(\Fc, \mu)$ is called an \emph{ideal-like sheaf} if the morphism
$$\Fc \otimes^\mathrm{cl}_{\Oc_X} \Fc \xrightarrow{\mathrm{Id} \otimes \mu - \mu \otimes \mathrm{Id}} \Fc$$
vanishes. Above, $\otimes^\mathrm{cl}$ denotes the underived tensor product.
\end{defn}

The main motivation to study such pairs (as well as the motivation behind the terminology) is given by the following result.

\begin{prop}\label{DerivedIdealsAreIdealLikeProp}
Let $i: Z \hookrightarrow X$ be a closed embedding of derived schemes, and let 
$$\Ic \xrightarrow{\mu} \Oc_X \xrightarrow{i^\sharp} i_* \Oc_Z$$
be a cofibre sequence. Then $(\pi_0(\Ic), \pi_0(\mu))$ is an ideal-like sheaf on the truncation $\tau_0(X)$.
\end{prop}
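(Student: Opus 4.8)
The plan is to reduce the statement to a purely algebraic computation about simplicial commutative rings, and then unwind what the vanishing of the derived ideal map really says on $\pi_0$. Since the question is local on $X$ (both the formation of the cofibre sequence and the condition defining an ideal-like sheaf are Zariski-local), I may assume $X = \Spec B$ for a simplicial commutative ring $B$ and $Z = \Spec(B/\!\!/ J)$ for some simplicial ideal, so that $i^\sharp$ corresponds to a surjection $B \to C$ of simplicial commutative rings (on $\pi_0$) and $\Ic = \Fib(i^\sharp)$ corresponds to the fibre $I$ of $B \to C$ as a $B$-module. The module $\pi_0(\Ic)$ is then $\pi_0(I)$, and $\pi_0(\mu): \pi_0(I) \to \pi_0(B)$ is the map induced by the inclusion-type map $I \to B$; its image is the ordinary ideal $\bar I := \ker(\pi_0(B) \to \pi_0(C)) \subset \pi_0(B)$, since $\pi_0$ is right exact and $\pi_0(I) \to \pi_0(B) \to \pi_0(C) \to 0$ is exact.

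The heart of the matter is the following: I must show that the composite
$$
\pi_0(\Ic) \otimes^{\mathrm{cl}}_{\pi_0(\Oc_X)} \pi_0(\Ic)
\xrightarrow{\ \mathrm{Id}\otimes\pi_0(\mu) - \pi_0(\mu)\otimes \mathrm{Id}\ }
\pi_0(\Ic)
$$
vanishes. First I would observe that by $\pi_0$-symmetric-monoidality of $\pi_0$ (i.e. $\pi_0(M \otimes_B N) \cong \pi_0(M) \otimes^{\mathrm{cl}}_{\pi_0(B)} \pi_0(N)$ for connective modules, coming from the fact that $\pi_0$ of the derived tensor product agrees with the underived tensor product of the $\pi_0$'s), the map in question is $\pi_0$ applied to the $B$-linear map
$$
\Ic \otimes_{B} \Ic \xrightarrow{\ \mathrm{Id}\otimes\mu - \mu\otimes\mathrm{Id}\ } \Ic .
$$
So it suffices to show that the latter map of connective $B$-modules induces zero on $\pi_0$. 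Now $\mu: \Ic \to B$ is, up to equivalence, the canonical map, and the two maps $\Ic \otimes_B \Ic \to \Ic$ given by $\mathrm{Id}\otimes\mu$ and $\mu\otimes\mathrm{Id}$ both factor through the multiplication-type map $\Ic \otimes_B \Ic \to \Ic$; more usefully, on $\pi_0$ each of them lands in $\bar I \cdot \pi_0(\Ic)$ in the obvious sense. The clean way to see the difference vanishes: the two maps $\mathrm{Id}\otimes \mu$ and $\mu \otimes \mathrm{Id}$ become equal after composing with $\mu: \Ic \to B$ (both give the "product of two elements of $\bar I$" map $\pi_0(\Ic)\otimes\pi_0(\Ic) \to \bar I \subset \pi_0(B)$, using commutativity), and $\pi_0(\mu)$ is injective — wait, it is not injective in general.

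So instead the right approach is to work one level up: consider the fibre sequence $\Ic \to B \to i_*\Oc_Z$ and tensor with $\Ic$, giving $\Ic \otimes_B \Ic \to \Ic \to \Ic \otimes_B i_*\Oc_Z$. The key point is that the composite $\Ic\otimes_B\Ic \xrightarrow{\mu\otimes\mathrm{Id}} \Ic \to \Ic\otimes_B i_*\Oc_Z$ equals the composite $\Ic\otimes_B\Ic \to \Ic \otimes_B i_*\Oc_Z$ induced by $\mu\otimes\mathrm{Id}$ followed by $B$-multiplication, but more to the point $\mu\otimes\mathrm{Id}$ and $\mathrm{Id}\otimes\mu$ differ by something landing in $\Ic\otimes_B\Ic \xrightarrow{} \Ic$ whose image on $\pi_0$ is killed. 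Concretely, I would run the argument on $\pi_0$ directly: an element of $\pi_0(\Ic)\otimes^{\mathrm{cl}}_{\pi_0(B)}\pi_0(\Ic)$ is a sum of simple tensors $x\otimes y$; $(\mathrm{Id}\otimes\pi_0(\mu))(x\otimes y) = x \cdot \pi_0(\mu)(y) \in \pi_0(\Ic)$, where the action of $\pi_0(B)$ on $\pi_0(\Ic)$ is the module action; similarly $(\pi_0(\mu)\otimes\mathrm{Id})(x\otimes y) = \pi_0(\mu)(x)\cdot y$. I need $x\cdot\pi_0(\mu)(y) = \pi_0(\mu)(x)\cdot y$ in $\pi_0(\Ic)$. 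The main obstacle — and the one genuinely derived input — is exactly this: it is NOT formal, because $\pi_0(\mu)$ need not be injective, so I cannot just push the identity into $\pi_0(B)$. The resolution is to use that $\Ic$ is a $B$-module arising as a fibre of $B \to C$: then $\Ic$ carries a (nonunital) $B$-algebra structure making $\mu$ a map of nonunital algebras, and the relation $x\cdot\mu(y) = \mu(x)\cdot y$ holds already at the level of the nonunital multiplication on $\Ic$ (this is the "$B$-linearity of the multiplication in both slots" / associativity, since $x\cdot\mu(y)$ and $\mu(x)\cdot y$ both compute $x\cdot y$ via the $\Ic$-internal multiplication composed with $\mu$, using that $\mu$ is a $B$-algebra map: $\mu(xy) = \mu(x)\mu(y)$, and the $B$-module structure on $\Ic$ restricted along $\mu$ recovers the $\Ic$-internal action). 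Passing to $\pi_0$, which is lax monoidal and hence sends (nonunital) algebras to (nonunital) algebras, the identity $x\cdot\pi_0(\mu)(y) = \pi_0(\mu)(x)\cdot y$ holds in $\pi_0(\Ic)$. This gives the vanishing, proving $(\pi_0(\Ic),\pi_0(\mu))$ is ideal-like.

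To summarize the order of steps: (1) reduce to the affine case and identify $\pi_0(\Ic) = \pi_0(I)$, $\pi_0(\mu)$ the canonical map with image $\bar I$; (2) use compatibility of $\pi_0$ with tensor products to rewrite the defining map as $\pi_0$ of a $B$-linear map $\Ic\otimes_B\Ic \to \Ic$; (3) endow $\Ic$ with its natural nonunital commutative $B$-algebra structure as the fibre of the ring map $B \to i_*\Oc_Z$, with $\mu$ a nonunital algebra map over $B$; (4) observe the algebraic identity $x\cdot\mu(y) = \mu(x)\cdot y$ holds internally; (5) apply the lax monoidal functor $\pi_0$ to conclude. The only subtlety — the step I expect to be the real obstacle — is making step (3)–(4) rigorous, i.e. producing the nonunital algebra structure on $\Ic$ in the $\infty$-categorical setting and checking $\mu$ is a map of such; this uses that $i_*\Oc_Z \in \mathrm{CAlg}(\mathrm{Mod}_B)$ and that fibres of maps in $\mathrm{CAlg}^{\mathrm{nu}}$ or in augmented algebras are computed in modules, so the fibre of $B \to i_*\Oc_Z$ acquires a nonunital $E_\infty$-$B$-algebra structure functorially. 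Everything else is a routine unwinding.
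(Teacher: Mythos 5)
Your reduction to the affine case and the observation that the ideal-like map is $\pi_0$ of a $B$-linear map $\Ic\otimes_B\Ic \to \Ic$ (steps (1)--(2)) match the paper's setup. The divergence, and the gap, occurs at the step you flag as ``the real obstacle'' but ultimately wave through as formal.

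The claim that $x\cdot\mu(y)$ and $\mu(x)\cdot y$ both recover the internal multiplication on $\Ic$ does \emph{not} follow from $\Ic$ carrying a nonunital commutative $B$-algebra structure with $\mu$ a nonunital algebra map. Concretely: take $B = \Zb$, $A = \Zb\oplus\Zb$ with multiplication $(a_1,b_1)\cdot(a_2,b_2) = (a_1 a_2, 0)$ and $\mu(a,b)=a$. Then $A$ is a nonunital commutative $B$-algebra and $\mu$ is a nonunital $B$-algebra map (since $\mu((a_1,b_1)(a_2,b_2)) = a_1a_2 = \mu(a_1,b_1)\mu(a_2,b_2)$), yet $(\mathrm{Id}\otimes\mu)\bigl((a_1,b_1)\otimes(a_2,b_2)\bigr) = (a_1a_2, b_1 a_2)$ differs from $(\mu\otimes\mathrm{Id})\bigl((a_1,b_1)\otimes(a_2,b_2)\bigr) = (a_1a_2, a_1 b_2)$, and neither equals the internal product $(a_1 a_2, 0)$. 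The compatibility ``$\mathrm{Id}\otimes\mu$ equals the internal multiplication'' is precisely the ideal-like condition you are trying to prove; presenting it as a consequence of the nonunital-algebra package makes the argument circular. What is true classically --- that the submodule inclusion of an ideal identifies the $B$-action with the internal product --- relies on $\mu$ being a monomorphism and does not transport to the derived setting, which is exactly the nonformal content of the proposition.

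The paper instead argues directly in the simplicial model: after reducing to the cofibre sequence $I \xrightarrow{\mu} A \xrightarrow{\psi} B$ of simplicial $A$-modules, a $0$-simplex of $I$ is a pair $(a,\gamma)$ with $a$ a $0$-simplex of $A$ and $\gamma$ a path $\psi(a)\leadsto 0$ in $B$; the two products to be compared are $(aa', \psi(a')\gamma)$ and $(aa', \psi(a)\gamma')$, so the whole statement reduces to a path homotopy $\psi(a')\gamma \sim \psi(a)\gamma'$. That homotopy is produced explicitly in the preceding lemma by mapping $\Delta^1\times\Delta^1$ to $B$ via the product $\gamma\cdot\gamma'$, gluing a constant $2$-simplex along the two edges that land at $0$, and filling the resulting horn using the Kan property of the underlying simplicial set. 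This concrete horn-filling is the genuinely derived input your proposal is missing; supplying the nonunital-algebra structure via, say, $B\times_C B$ and its augmentation ideal would still leave you to prove the compatibility between $\mu$ and the internal multiplication, which is the same problem in disguise.
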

\begin{proof}
Without loss of generality we may assume that everything is affine, and we have a cofibre sequence
$$I \xrightarrow{\mu} A \xrightarrow{\psi} B$$
of simplicial $A$-modules, with $\psi$ a map of simplicial commutative rings inducing a surjection on $\pi_0$. Recall that a point of $I$ is just a point $a$ of $A$ together with a path $\gamma: \psi(a) \leadsto 0$ in $B$. In order to prove the claim, we have to show that given two such points, say $(a, \gamma)$ and $(a', \gamma')$, there exists a path
$$(aa', \psi(a') \gamma) \sim (aa', \psi(a) \gamma'),$$
so it is enough to find a path homotopy between $\psi(a') \gamma$ and $\psi(a) \gamma'$, which is taken care of by Lemma \ref{DerivedPathLem}.
\end{proof}

\begin{lem}\label{DerivedPathLem}
Let $A$ be a simplicial ring, and consider paths $\gamma$ and $\gamma'$ from $a$ to $0$ and $a'$ to 0 respectively. Then there exists a path homotopy
$$a \gamma' \sim a' \gamma.$$
\end{lem}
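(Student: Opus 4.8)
The statement is a purely simplicial-homotopical fact: in a simplicial commutative ring $A$, given a $1$-simplex (path) $\gamma$ witnessing $a \leadsto 0$ and a $1$-simplex $\gamma'$ witnessing $a' \leadsto 0$, we want a homotopy (i.e.\ a $2$-simplex, or chain of them) between the products $a\gamma'$ and $a'\gamma$. Here by $a\gamma'$ I mean the path obtained by multiplying the $1$-simplex $\gamma'$ by the $0$-simplex (vertex) $a$, using that multiplication $A \times A \to A$ is a map of simplicial sets; note $a\gamma'$ runs from $a a'$ to $0$, and $a'\gamma$ runs from $a' a = a a'$ to $0$, so the endpoints match and the question of a path homotopy makes sense.

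The approach I would take is to build the desired homotopy out of the multiplication map applied to a homotopy living in $A \times A$. The key observation is that $\gamma$ and $\gamma'$ together define a map from (a model of) the square $\Delta^1 \times \Delta^1$ into $A \times A$: namely $(s,t) \mapsto (\gamma(s), \gamma'(t))$. Composing with multiplication $m: A\times A \to A$ gives a map $H: \Delta^1 \times \Delta^1 \to A$ with $H(s,0) = \gamma(s)\cdot a'$, $H(s,1) = \gamma(s)\cdot 0 = 0$, $H(0,t) = a\cdot\gamma'(t)$, and $H(1,t) = 0\cdot\gamma'(t) = 0$. Reading this square along its two boundary paths from the corner $(0,0)$ (value $aa'$) to the corner $(1,1)$ (value $0$): one boundary path is $\gamma$ then the constant path at $0$, which is (up to the standard homotopy collapsing a constant tail) just $a\gamma'$... wait, I need to be careful with which factor — one composite boundary is $t\mapsto a\gamma'(t)$ followed by $s\mapsto \gamma(s)\cdot 0$, and the other is $s \mapsto \gamma(s) a'$ followed by $t\mapsto 0\cdot\gamma'(t)$; the first is $a\gamma'$ up to reparametrization/collapsing the constant segment, the second is $a'\gamma$ likewise. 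Since any map out of the square restricts to a homotopy rel endpoints between its two boundary composites, $H$ furnishes exactly the required path homotopy $a\gamma' \sim a'\gamma$.

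Concretely, to make this rigorous in the $\infty$-categorical / model-categorical language of the paper, I would phrase it as follows. Work in the $\infty$-category of spaces (or the Kan complex underlying $A$). A path from $a$ to $0$ is an edge of this Kan complex; the pair $(\gamma,\gamma')$ gives an edge of the product Kan complex, but more usefully, I invoke that the space of paths from $aa'$ to $0$ is contractible once we fix that such a path exists — no, better: the point is simply that the two edges $a\gamma'$ and $a'\gamma$ of the Kan complex $A$ both arise as $m_*$ of edges in $A\times A$ that are connected by an obvious $2$-cell (the square above), hence are homotopic rel endpoints. I would cite the elementary fact that for a map of simplicial sets $f: K \to L$ and a homotopy $h: \Delta^1 \times \Delta^1 \to K$ between two edges $e_0, e_1$ of $K$ with matching endpoints, $f\circ h$ is such a homotopy between $f(e_0)$ and $f(e_1)$; then take $K = A\times A$, $L = A$, $f = m$, and $h$ the product square. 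This is short and self-contained.

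\textbf{Main obstacle.} The only real subtlety is bookkeeping: matching up the naive ``product of a path by a vertex'' with the boundary of the product square, and handling the constant (degenerate) edges $\gamma(s)\cdot 0$ and $0\cdot\gamma'(t)$ correctly so that the two boundary composites of the square are genuinely identified (rel endpoints) with $a\gamma'$ and $a'\gamma$ respectively — i.e.\ absorbing the constant tails via the standard reparametrization homotopy of paths. There is also a minor choice of whether to present this in the language of Kan complexes / simplicial sets directly or to package it as a statement about the mapping space $\mathrm{Map}(pt, A)$ in an $\infty$-category; I would choose the former as it is the most elementary. No deep input is needed — this is a soft lemma, and the proof is essentially the single sentence ``apply multiplication to the square $(s,t)\mapsto(\gamma(s),\gamma'(t))$,'' with the rest being routine verification of boundaries.
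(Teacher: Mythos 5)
Your proof is correct and follows essentially the same route as the paper: both form the square $\Delta^1\times\Delta^1 \xrightarrow{\gamma\times\gamma'} A\times A \xrightarrow{\cdot} A$ and read off its boundary edges ($a\gamma'$, $a'\gamma$, and two degenerate edges at $0$). The only cosmetic difference is in the final step — the paper explicitly glues a constant $2$-simplex at $0$ along the two degenerate edges to form a horn and fills it using that $A$ is Kan, whereas you absorb the constant tails by the standard reparametrization homotopy; these are equivalent.
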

\begin{proof}
Regarding $\gamma$ and $\gamma'$ as maps of simplicial sets $\Delta^1 \to A$, we obtain
$$\gamma \cdot \gamma' := \Delta^1 \times \Delta^1 \xrightarrow{\gamma \times \gamma'} A \times A \xrightarrow{\cdot} A.$$
Noting that 
$$
(\gamma \cdot \gamma') \vert_{\sigma} = 
\begin{cases}
a \gamma' & \text{if $\sigma = [0]\times \Delta^1$;} \\
a' \gamma & \text{if $\sigma = \Delta^1 \times [0]$;} \\
0 & \text{if $\sigma = [1] \times \Delta^1$ or $\sigma = \Delta^1 \times [1],$}
\end{cases}
$$
the desired homotopy can be found by gluing a constant $2$-simplex at $0$ to $\Delta^1 \times \Delta^1$ along $[1] \times \Delta^1$ and $\Delta^1 \times [1]$ to obtain a horn, and then invoking the fact that the underlying simplicial set of $A$ is a Kan complex.
\end{proof}

Being ideal-like gives rise to virtual exceptional divisor on the homogeneous spectrum of the symmetric algebra. Indeed, let $X$ be a scheme and let $(\Fc, \mu)$ be an ideal-like sheaf on $X$. Then  $\mu$ induces well defined morphisms
$$\Sym^{r+1}_X(\Fc) \to \Sym^r_X (\Fc),$$
which are described affine locally by formula
$$m_0 m_1 \cdots m_r \mapsto \mu (m_0) m_1 \cdots m_r,$$
and these morphisms give rise to a morphism 
$$\mu: \bigoplus_{i=0}^\infty \Sym^{i+1}_X(\Fc) \to \bigoplus_{i=0}^\infty \Sym^{i}_X(\Fc)$$
of graded modules over the symmetric algebra $\Sym_X^*(\Fc)$. Passing to homogeneous spectra, we obtain a natural morphism
$$s_\mu^\vee: \Oc(1) \to \Oc$$
of line bundles on $\Pc roj\big( \Sym^*_X(\Fc) \big)$.

\begin{defn}\label{VirtualExceptionalDivDef}
Let everything be as above. Then the derived vanishing locus of the induced global section 
$$s_\mu \in \Gamma\big(\Pc roj(\Sym_X^*(\Fc)); \Oc(-1)\big)$$
is called the \emph{virtual exceptional divisor} and is denoted by $\Ec_\Fc$.
\end{defn}

We finish by showing that the virtual exceptional divisor associated to an ideal-like sheaf coming from a derived regular embedding $Z \hookrightarrow X$ is a virtual Cartier divisor lying over $Z$, and therefore induces a morphism $\Pc roj\big(\Sym^*_X(\Fc)\big) \to \bl_Z(X)$. This is a key step in identifying $\Pc roj\big(\Sym^*_X(\Fc)\big)$ with the truncation of $\bl_Z(X)$, which is done in Section \ref{BlowUpTruncationSubSect}.

\begin{thm}\label{BlowUpSquareOnProjThm}
Let $i: Z \hookrightarrow X$ be a derived regular embedding, and let $\Ic$ be the homotopy fibre of $i^\sharp$. Then there exists a natural (see Remark \ref{BlowUpSquareOnProjNaturalityRem}) commutative square
\begin{center}
\begin{tikzcd}
\Ec_{\pi_0(\Ic)}  \arrow[hook]{r} \arrow[]{d}{g_{Z/X}} & \Pc roj\big(\Sym_X^*(\pi_0(\Ic))\big) \arrow[]{d}{} \\
Z \arrow[hook]{r}{i} & X
\end{tikzcd}
\end{center}
exhibiting $\Ec_{\pi_0(\Ic)}$ as a virtual Cartier divisor lying over $Z$.
\end{thm}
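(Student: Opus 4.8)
The plan is to build the square in an affine-local model and then verify the three conditions of Definition \ref{DivisorOverZDefn}; the functoriality developed in Appendix \ref{HurewiczSect} will be used both to glue the local construction and to check the conormal condition.

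I would first reduce to $X=\Spec A$ with $i\colon Z\hookrightarrow X$ presented as $A\to A\modmod(a_1,\dots,a_r)$. A Koszul-complex computation identifies $\Ic=\Fib(i^\sharp)$ in low degrees: $\pi_0(\Ic)$ is the cokernel of $\wedge^2A^r\to A^r$, $e_i\wedge e_j\mapsto a_ie_j-a_je_i$, and, writing $\epsilon_i$ for the image of $e_i$, one has $\mu(\epsilon_i)=a_i$ with the only relations among the $\epsilon_i$ being the ones $a_i\epsilon_j=a_j\epsilon_i$ forced by Definition \ref{IdealLikeSheafDef}. Hence $P:=\Pc roj(\Sym^*_X(\pi_0(\Ic)))$ is the closed subscheme of $\Pb^{r-1}_A$ cut out by those relations; on the chart $D_+(\epsilon_j)=\Spec(B_j)$, with coordinates $u_k=\epsilon_k/\epsilon_j$, one has $a_k=a_ju_k$, and the dual section $s_\mu^\vee\colon\Oc(1)\to\Oc$ becomes multiplication by $a_j$ in the trivialization of $\Oc(1)$ by $\epsilon_j$. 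In particular $\Ec_{\pi_0(\Ic)}$, being the derived vanishing locus of $s_\mu$, is $\Spec(B_j\modmod(a_j))$ over this chart.

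The heart of the argument is the construction of $g_{Z/X}\colon\Ec_{\pi_0(\Ic)}\to Z$. On $\Spec(B_j)$ the derived quotient $B_j\modmod(a_j)$ carries a canonical nullhomotopy of $a_j$; using $a_k=a_ju_k$ together with the path calculus of Lemma \ref{DerivedPathLem}, this yields compatible nullhomotopies of all the $a_k$, and hence a morphism $\Spec(B_j\modmod(a_j))\to Z=\Spec(A\modmod(a_1,\dots,a_r))$ over $X$. These chart-wise morphisms are functorial in the affine data, and on overlaps the canonical equivalence of $\modmod(a_j)$ with $\modmod(a_\ell)$ (coming from the fact that killing $f$ and killing a unit times $f$ agree) matches the chosen nullhomotopies; the homotopy-coherent gluing of the charts into a genuine morphism $g_{Z/X}$, and with it the commutative square, is then supplied by the functorial constructions of Appendix \ref{HurewiczSect}. (More invariantly: after pulling $\mu$ back along $p\colon P\to X$ and restricting to $\Ec_{\pi_0(\Ic)}$, it factors through $s_\mu^\vee$, which is canonically null there, and a lift of the structure morphism to $Z$ amounts to a nullhomotopy of this kind.) I expect this gluing and coherence step to be the main technical obstacle.

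It then remains to check Definition \ref{DivisorOverZDefn}. Condition~(1) is automatic, since the derived vanishing locus of a section of a line bundle is a quasi-smooth closed embedding of virtual codimension $1$. For condition~(2), on $\Spec(B_j)$ one has $(a_j)B_j=(a_1,\dots,a_r)B_j$ because $a_k=a_ju_k$, so the classical vanishing locus of $s_\mu$ there is $\Spec(B_j)\times_{\tau_0(X)}\tau_0(Z)$; gluing gives $\tau_0(\Ec_{\pi_0(\Ic)})=P\times_{\tau_0(X)}\tau_0(Z)$, i.e.\ the underlying classical square is Cartesian. For condition~(3) I would apply Corollary \ref{ConormalVsIdealCor} to the square just constructed (with $g=p$ and $g'=g_{Z/X}$): it identifies the canonical map $\pi_0(g_{Z/X}^*\Nc^\vee_{Z/X})\to\pi_0(\Nc^\vee_{\Ec_{\pi_0(\Ic)}/P})$ with $\pi_0((i')^*\psi_G)\colon\pi_0((i')^*p^*\Ic)\to\pi_0((i')^*\Ic')$, where $\Ic'=\Fib((i')^\sharp)$ for $i'\colon\Ec_{\pi_0(\Ic)}\hookrightarrow P$ and $\psi_G$ is the morphism of derived ideals from Theorem \ref{NaturalityOfGlobalHurewiczThm}. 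A chart computation shows that on $\pi_0$ the map $\psi_G$ is the tautological surjection $p^*\pi_0(\Ic)\twoheadrightarrow\Oc_P(1)\cong\pi_0(\Ic')$ (it sends $\epsilon_k\mapsto u_k$, so $\epsilon_j$ already hits a generator), which remains surjective on $\pi_0$ after restriction to $\Ec_{\pi_0(\Ic)}$. This verifies condition~(3), and the naturality assertion follows because every construction above is functorial.
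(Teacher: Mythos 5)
Your proposal follows essentially the same route as the paper: reduce to the affine model $B=A\modmod(a_1,\dots,a_r)$, build $g_{Z/X}$ chart by chart using nullhomotopies of the $a_k$ obtained from $a_k=a_ju_k$ and Lemma~\ref{DerivedPathLem}, glue, and verify the three conditions of Definition~\ref{DivisorOverZDefn}, with condition~(3) reduced via Corollary~\ref{ConormalVsIdealCor} to a visible surjection on $\pi_0$. Your Koszul computation of $\pi_0(\Ic)$ is more explicit than what the paper records, but it encodes the same data (the generators $m_i$ with $\mu(m_i)=a_i$ and the ideal-like relations $a_im_j=a_jm_i$).

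The one place you diverge, and the one you flag as ``the main technical obstacle,'' is the gluing. You attribute the homotopy-coherent assembly of the chart maps to the functorial constructions of Appendix~\ref{HurewiczSect}, but that appendix concerns the Hurewicz morphism and the conormal comparison --- it is what you correctly use later for condition~(3), not a gluing mechanism for morphisms of derived schemes. The paper's actual resolution, which also makes the step routine rather than hard, is the observation that the chart algebras $\Sym^*_A(I_0)_{(m_i)}\modmod(a_i)$ (and their overlaps) are $1$-truncated, so the space of $A$-algebra maps out of $B$ into them is \emph{discrete}. Once the relevant mapping spaces are sets, there is no higher coherence to supply: one only has to check that the chart maps agree on double overlaps, which is precisely what Lemma~\ref{DerivedPathLem} furnishes, and the same discreteness handles independence of the chosen presentation $a_1,\dots,a_r$. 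With that observation in place, your construction closes; without it, your appeal to Appendix~\ref{HurewiczSect} does not actually provide what you need, so you should replace it by the discreteness argument.
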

\begin{proof}
Our goal is to find $g_{Z/X}$. Without loss of generality we may assume $X$ is classical. Let $\Spec(A)$ be an affine open set where $Z = \Spec(B)$ with $B = A\modmod(a_1,...,a_r)$, $I$ be the homotopy fibre of $A \to B$ and $I_0 := \pi_0(I)$. We start by finding $g_{\Spec(B)/\Spec(A)}$, and then glue these together to obtain $g_{Z/X}$. As we will see below, all the relevant mapping spaces are discrete, so we will not have to deal with any higher coherence problems.

Note that the $a_i$ together with the homotopy classes of the canonical paths $a_i \leadsto 0$ in $B$ give rise to generators $m_1,...,m_r$ of $I_0$ mapping to $a_1,...,a_r$ under $\mu: I_0 \to A$. Hence $\Proj\big(\Sym^*_A(I_0)\big)$ has an affine cover by $\Spec\big(\Sym^*_A(I_0)_{(m_i)}\big)$, where for a graded ring $R$ and a homogeneous element $a \in R$, $R_{(a)}$ denotes the degree $0$ part of the localization $R_a$. We notice that on $\Spec\big(\Sym^*_A(I_0)_{(m_i)}\big)$ the exceptional divisor is identified as $\Spec\big(\Sym^*_A(I_0)_{(m_i)} \modmod(a_i) \big)$. Letting $\gamma_i$ be the canonical path $a_i \leadsto 0$ in the underlying space of $\Sym^*_A(I_0)_{(m_1)} \modmod (a_i)$, we obtain a morphism of $A$-algebras 
$$\Big({a_1 \over a_i } \gamma_i, ..., {a_r \over a_i} \gamma_i \Big): B \to \Sym^*_A(I_0)_{(m_i)} \modmod (a_i)$$
using the universal property \cite{Khan} Lemma 2.3.5. 

To prove that these glue to give the desired morphism, we have to show that the two maps of $A$-algebras from $B$ to
$$\Sym^*_A(I_0)_{(m_i)} \left[\dfrac{m_i}{m_j} \right ] \modmod (a_i) \simeq \Sym^*_A(I_0)_{(m_j)} \left[\dfrac{m_j}{m_i} \right ] \modmod (a_j)$$
induced by the tuples of paths 
$$\left ( \dfrac{a_1}{a_i} \gamma_i, ..., \dfrac{a_r}{a_i} \gamma_i \right ) \text{  and  } \left ( \dfrac{a_1}{a_j} \gamma_j, ..., \dfrac{a_r}{a_j} \gamma_j \right )$$
are equivalent (the space of $A$-algebra maps from $B$ to any $1$-truncated $A$-algebra is discrete). However, on the grounds of $I_0$ being ideal-like, we have that $m_i/m_j = a_i/a_j$, so we can compute that
\begin{align*}
\dfrac{a_k}{a_i} \gamma_i &\sim \dfrac{a_k}{a_j} \dfrac{a_j}{a_i} \gamma_i \\
&\sim  \dfrac{a_k}{a_j} \gamma_j, & (\text{Lemma \ref{DerivedPathLem}})
\end{align*}
showing that the tuples are equivalent, and so are the two induced morphisms. Hence we can glue to obtain $g_{\Spec(B)/\Spec(A)}$.

A similar argument shows that the morphism depends only on $B$ and not the chosen representation $B = A \modmod(a_1,...,a_r)$. Hence there exists a canonical map $g_{\Spec(B)/\Spec(A)}$ for any sufficiently small affine open set $\Spec(A)$ of $X$ which is stable under restrictions, proving that we can glue these to obtain the desired morphism $g_{Z/X}$. We have therefore found the desired commutative square
$$
\begin{tikzcd}
\Ec_{\pi_0(\Ic)}  \arrow[hook]{r} \arrow[]{d}{g_{Z/X}} & \Pc roj\big(\Sym_X^*(\pi_0(\Ic))\big) \arrow[]{d}{} \\
Z \arrow[hook]{r}{i} & X.
\end{tikzcd}
$$
so the only thing left for us to do is to check that it truncates to a Cartesian square and that the induced morphism $g_{Z/X}^* \Nc^\vee_{Z/X} \to \Nc^\vee_{\Ec_{\pi_0(\Ic)} / \Pc roj\big(\Sym_X^*(\pi_0(\Ic))\big)}$ is surjective.

Both claims can be checked on the affine local presentation we considered above. Indeed, since $m_j / m_i = a_j / a_i$ in $\Spec^*_A(I_0)_{(m_i)}$, we see that 
$$\Sym^*_A(I_0)_{(m_i)}/(a_i) =  \Sym^*_A(I_0)_{(m_i)}/(a_1,...,a_r),$$
proving the first claim. On the other hand, to prove the second claim, we simply note that the $A$-morphism
$$\left ( \dfrac{a_1}{a_i} \gamma_i,..., \gamma_i,  ..., \dfrac{a_r}{a_i} \gamma_i \right ): B \to \Sym^*_A(I_0)_{(m_i)} \modmod (a_i),$$
induces a morphism $g_{\Spec(B)/\Spec(A)}^* I \to I'$ between the fibres of the horizontal morphisms in the above square, and this is a surjection after truncating since the image of $m_i$ clearly generates $I'$. The surjectivity of the map of conormal bundles then follows from Corollary \ref{ConormalVsIdealCor}, so we are done.
\end{proof}

\begin{rem}\label{BlowUpSquareOnProjNaturalityRem}
Let us record here what we mean by naturality in the statement of the above result. Suppose 
$$
\begin{tikzcd}
X' \arrow[]{d}{f'} \arrow[hook]{r}{i'} & Y' \arrow[]{d}{f} \\
X \arrow[hook]{r}{i} & Y
\end{tikzcd}
$$
is derived Cartesian, with the horizontal morphisms derived regular embeddings, and let $\Ic$ and $\Ic'$ be the derived fibres of $i^\sharp$ and $i'^\sharp$ respectively. Let $(\Ic_0, \mu)$ and $(\Ic'_0, \mu')$ be the induced ideal-like sheaves on $\tau_0(Y)$ and $\tau_0(Y')$. Then the canonical identification $f'^* \Ic \simeq \Ic'$ provides a canonical classical Cartesian square
$$
\begin{tikzcd}
\Pc roj\big(\Sym_{Y'}^*(\Ic'_0)\big) \arrow[]{d} \arrow[]{r}{f''} & \Pc roj\big(\Sym_Y^*(\Ic_0)\big) \arrow[]{d} \\
\tau_0(Y') \arrow[]{r}{f} & \tau_0(Y).
\end{tikzcd}
$$
Moreover, since the canonical global sections of $\Oc(-1)$ are clearly compatible in the sense that $s_{\mu'} = f''^* s_{\mu}$, we obtain a canonical derived Cartesian square
$$
\begin{tikzcd}
\Ec_{\Ic'_0} \arrow[hook]{d} \arrow[]{r}{f'''} & \Ec_{\Ic_0} \arrow[hook]{d} \\
\Pc roj\big(\Sym_{Y'}^*(\Ic'_0)\big)  \arrow[]{r}{f''} & \Pc roj\big(\Sym_Y^*(\Ic_0)\big).
\end{tikzcd}
$$
We claim that the induced square of derived $Y$-schemes
$$
\begin{tikzcd}
\Ec_{\Ic'_0} \arrow[hook]{d} \arrow[]{r}{f'''} & \Ec_{\Ic_0} \arrow[hook]{d} \\
Z'  \arrow[]{r}{f'} & Z
\end{tikzcd}
$$
commutes up to canonical homotopy. Proving this is easy, because all the relevant mapping spaces are discrete, so one can check everything in the affine local presentation used in the proof of Theorem \ref{BlowUpSquareOnProjThm}. 
\end{rem}

Finally, since the truncation of the virtual exceptional divisor has an easy description, we record it here.

\begin{prop}\label{TruncationOfVirtualExceptionalDivProp}
Let $X$ be a scheme and $(\Fc, \mu)$ an ideal-like sheaf on $X$. Then the underlying classical scheme of the virtual exceptional divisor $\Ec_\Fc$ is naturally identified with
$$\Pc roj \big( \Sym^*_{Z_\Fc}(\Fc \vert_{Z_\Fc}) \big),$$
where $Z_\Fc$ is the classical vanishing locus of the image of $\mu: \Fc \to \Oc_X$.
\end{prop}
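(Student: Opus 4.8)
The plan is to reduce the statement to an affine-local computation on the standard charts of $\Pc roj\big(\Sym^*_X(\Fc)\big)$. Recall that $\Ec_\Fc$ is by definition the derived vanishing locus of the section $s_\mu$ of $\Oc(-1)$ on $P := \Pc roj\big(\Sym^*_X(\Fc)\big)$, and that the underlying classical scheme of the derived vanishing locus of a section $t$ of a line bundle $\Lc$ on a \emph{classical} scheme $P$ is the honest closed subscheme $V\big(\im(t^\vee)\big)$ cut out by the image of the dual morphism $t^\vee \colon \Lc^\vee \to \Oc_P$ (affine-locally, where $\Lc$ is trivial and $t$ is a regular function, the derived vanishing locus is the spectrum of the corresponding Koszul complex, whose $\pi_0$ is the classical quotient). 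Applying this with $t = s_\mu$ and $\Lc^\vee = \Oc(1)$, it suffices to identify the closed subscheme $V\big(\im(s_\mu^\vee)\big) \subset P$, where $s_\mu^\vee \colon \Oc(1) \to \Oc_P$ is the morphism from Definition~\ref{VirtualExceptionalDivDef}, with $\Pc roj\big(\Sym^*_{Z_\Fc}(\Fc\vert_{Z_\Fc})\big)$.

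Next I would pass to the affine situation $X = \Spec R$, $\Fc = \widetilde M$, with $\mu \colon M \to R$, and set $J := \im(\mu) \subseteq R$ (the ideal generated by $\mu(M)$), so that $Z_\Fc = \Spec(R/J)$ and $\Fc\vert_{Z_\Fc} = \widetilde{M/JM}$. Since $\Sym^*_R M$ is generated in degree one, $P$ is covered by the charts $D_+(m) = \Spec\big((\Sym^*_R M)_{(m)}\big)$ indexed by $m \in M$, and on such a chart $\Oc(1)$ is trivialized by the degree-one element $m$. Under this trivialization, $s_\mu^\vee$ — which on graded pieces is the map $m_0 m_1 \cdots m_r \mapsto \mu(m_0)\, m_1 \cdots m_r$ — becomes multiplication by the image of $\mu(m) \in R$ in $(\Sym^*_R M)_{(m)}$. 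Hence $V\big(\im(s_\mu^\vee)\big) \cap D_+(m) = \Spec\big((\Sym^*_R M)_{(m)}/(\mu(m))\big)$.

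The only step that is not purely formal is the use of the ideal-like hypothesis to show that $(\mu(m)) = J\,(\Sym^*_R M)_{(m)}$ as ideals of $(\Sym^*_R M)_{(m)}$. For any $m' \in M$ the ideal-like condition, read in degree one $\Sym^1_R M = M$, gives $\mu(m')\, m = \mu(m)\, m'$, so in the localization $\mu(m') = \mu(m)\cdot(m'/m)$ with $m'/m$ a degree-zero element; thus $\mu(m') \in (\mu(m))$, and the reverse inclusion is trivial. Granting this, base change for symmetric algebras ($\Sym^*_R M \otimes_R R/J \cong \Sym^*_{R/J}(M/JM)$) together with the compatibility of the $\Pc roj$-charts with passage to a homogeneous quotient yields
$$(\Sym^*_R M)_{(m)}/(\mu(m)) \;\cong\; \big(\Sym^*_{R/J}(M/JM)\big)_{(\bar m)}.$$
Reassembling the charts, $V\big(\im(s_\mu^\vee)\big)$ is the closed subscheme of $P$ defined by the homogeneous ideal $J\,\Sym^*_X(\Fc)$, i.e.\ the image of the closed immersion $\Pc roj\big(\Sym^*_{Z_\Fc}(\Fc\vert_{Z_\Fc})\big) \hookrightarrow P$ induced by the surjection $\Sym^*_X(\Fc) \twoheadrightarrow \Sym^*_{Z_\Fc}(\Fc\vert_{Z_\Fc})$.

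Finally, all these local identifications are canonical — they are forced by the trivializations of $\Oc(1)$ and involve no auxiliary choices — so they glue to a global identification $\tau_0(\Ec_\Fc) \cong \Pc roj\big(\Sym^*_{Z_\Fc}(\Fc\vert_{Z_\Fc})\big)$, natural in the triple $(X,\Fc,\mu)$. I expect the main obstacle to be bookkeeping rather than conceptual: keeping the graded localizations straight and correctly matching the trivialization of $\Oc(1)$ used to express $s_\mu^\vee$ as multiplication by $\mu(m)$; the entire substance of the argument is the one-line deduction from the ideal-like relation.
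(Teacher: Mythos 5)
Your proof is correct, and it is the natural elaboration of what the paper dismisses with ``The proof is trivial.'' The two key ingredients --- reducing to the observation that on the chart $D_+(m)$ the map $s_\mu^\vee$ becomes multiplication by $\mu(m)$, and using the ideal-like relation $\mu(m')\,m = \mu(m)\,m'$ to conclude $(\mu(m)) = J\,(\Sym^*_R M)_{(m)}$ --- are exactly the content of the statement, and you handle the bookkeeping (trivialization of $\Oc(1)$ by $m$, base change for $\Sym$, compatibility of Proj-charts with homogeneous quotients) correctly. One small streamlining: once you know that $\im(s_\mu^\vee)$ equals the pullback ideal $J \cdot \Oc_P$, you can avoid gluing charts altogether by citing base change for $\Pc roj$, i.e.\ $V(J \cdot \Oc_P) = P \times_X Z_\Fc = \Pc roj\bigl(\Sym^*_X(\Fc) \otimes_{\Oc_X} \Oc_{Z_\Fc}\bigr) = \Pc roj\bigl(\Sym^*_{Z_\Fc}(\Fc\vert_{Z_\Fc})\bigr)$, which also makes naturality in $(X,\Fc,\mu)$ manifest. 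But this is a matter of presentation, not a gap.
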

\begin{proof}
The proof is trivial.
\end{proof}

\subsection{Truncation of derived blow up}\label{BlowUpTruncationSubSect}

Suppose that $i: Z \hookrightarrow X$ is a quasi-smooth closed embedding, and denote by $\Ic$ the homotopy fibre of $i^\sharp: \Oc_X \to i_* \Oc_Z$. The main purpose of this section is to provide a natural identification of $\tau_0\big(\bl_Z(X)\big)$ with $\Pc roj\big(\Sym_{\tau_0(X)}^* (\pi_0(\Ic))\big)$, and to give an alternative characterization of strict transforms under this identification.

We already know (see Theorem \fref{BlowUpSquareOnProjThm} and the remark following it) that the virtual exceptional divisor on $\Pc roj\big(\Sym_{\tau_0(X)}^* (\pi_0(\Ic))\big)$ gives rise to a canonical morphism 
$$\rho'_{Z/X}: \Pc roj \big(\Sym^*_{\tau_0(X)}(\pi_0(\Ic))\big) \to \tau_0\big(\bl_Z(X)\big).$$
Moreover, by the following Lemma, there is a morphism 
$$\rho_{Z/X}: \tau_0\big(\bl_Z(X)\big) \to \Pc roj \big(\Sym^*_{\tau_0(X)}(\pi_0(\Ic))\big)$$
in the other direction.

\begin{lem}\label{SurjOfIdealsLem}
Taking horizontal fibres in the commutative square
\begin{center}
\begin{tikzcd}
\pi^* \Oc_X \arrow[]{r} \arrow[]{d} & \pi^* i_* \Oc_Z \arrow[]{d} \\
\Oc_{\bl_Z(X)} \arrow[]{r} & i_{D_u *}\Oc_{D_u}
\end{tikzcd}
\end{center}
of coherent sheaves on $\bl_Z(X)$ induced by the universal blow up square, we obtain a natural morphism
$$\psi: \pi^* \Ic \to \Oc(-D_u)$$
which induces a surjection on $\pi_0$.
\end{lem}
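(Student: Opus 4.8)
The statement asks for a morphism $\psi\colon \pi^*\Ic \to \Oc(-D_u)$ on $\bl_Z(X)$, obtained by taking horizontal fibres in the pullback along $\pi$ of the (derived) blow-up square, and then to check that $\psi$ induces a surjection on $\pi_0$. The existence of $\psi$ is essentially formal: the universal blow-up square
\begin{center}
\begin{tikzcd}
D_u \arrow[hook]{r}{i_{D_u}} \arrow[]{d}{g_u} & \bl_Z(X) \arrow[]{d}{\pi} \\
Z \arrow[hook]{r}{i} & X
\end{tikzcd}
\end{center}
gives, after applying $\pi^*$ and passing to structure-sheaf morphisms, the commuting square of coherent sheaves displayed in the statement; since $\Ic = \Fib(i^\sharp)$ by definition and $\Oc(-D_u) = \Fib\big(\Oc_{\bl_Z(X)} \to i_{D_u*}\Oc_{D_u}\big)$ because $i_{D_u}$ is a virtual Cartier divisor of virtual codimension $1$, the functoriality of fibres produces the natural morphism $\psi$ on horizontal fibres. (One should remark that $\pi^*i_*\Oc_Z$ need not agree with $i_{D_u*}\Oc_{D_u}$, but that is irrelevant: we only use the commuting square to get the induced map on the fibres of the \emph{horizontal} arrows, and $\Fib$ is functorial on morphisms of arrows.) So the only real content is the surjectivity on $\pi_0$.

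For the surjectivity statement, the plan is to work Zariski-locally on $X$ and use the affine local presentation of $\bl_Z(X)$ established in Section \ref{BlowUpTruncationSubSect} and Theorem \ref{BlowUpSquareOnProjThm}. Take an affine open $\Spec(A) \subset X$ on which $Z = \Spec(B)$ with $B = A \modmod (a_1,\dots,a_r)$; then $I_0 := \pi_0(\Ic)$ is generated by elements $m_1,\dots,m_r$ mapping to $a_1,\dots,a_r$, and $\bl_Z(X)$ is covered (after truncation, and in fact compatibly with the derived structure by Theorem \ref{BlowUpPropertiesThm}(4) and the identification of truncations) by the charts on which $m_i$ is invertible, where $\Oc(-D_u)$ trivializes and the divisor $D_u$ is cut out by the single equation $a_i$. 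On such a chart, $\psi$ is, up to the chosen trivialization of $\Oc(-D_u)$, the map $\pi^*\Ic \to \Oc$ sending the generator $m_j$ to $m_j/m_i$, which, because $I_0$ is ideal-like (Definition \ref{IdealLikeSheafDef}, Proposition \ref{DerivedIdealsAreIdealLikeProp}), equals $a_j/a_i$; in particular the generator $m_i$ maps to $1$. Hence on $\pi_0$ the map $\pi_0(\psi)$ hits a unit, so it is surjective on this chart. Since these charts cover $\bl_Z(X)$ and surjectivity of a map of coherent sheaves is local, we conclude.

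\textbf{Main obstacle.} The formal part --- producing $\psi$ --- is routine once one is careful about which fibres are being taken; the genuine work is organizing the local computation cleanly. The point that requires attention is matching the chart description of $\Oc(-D_u)$ and of the universal divisor $D_u$ on $\bl_Z(X)$ with the explicit $\Proj$-description from Theorem \ref{BlowUpSquareOnProjThm}: one must know that under the identification $\tau_0(\bl_Z(X)) \cong \Pc roj\big(\Sym^*_{\tau_0(X)}(\pi_0(\Ic))\big)$, the line bundle $\Oc(-D_u)$ corresponds to $\Oc(-1)$ and the section cutting out $D_u$ corresponds to $s_\mu$; this is exactly the content of Theorem \ref{BlowUpPropertiesThm}(4) together with the construction of $s_\mu$ preceding Definition \ref{VirtualExceptionalDivDef}. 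Granting this, the computation $\psi(m_j) = m_j/m_i = a_j/a_i$ and in particular $\psi(m_i) = 1$ is immediate from the ideal-like relation, and surjectivity on $\pi_0$ follows. No higher-coherence issues arise because, as already noted in the proof of Theorem \ref{BlowUpSquareOnProjThm}, all the relevant mapping spaces in this affine situation are discrete.
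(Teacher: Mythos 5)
Your plan has a genuine circularity. To identify $\tau_0(\bl_Z(X))$ with $\Pc roj\big(\Sym^*_{\tau_0(X)}(\pi_0(\Ic))\big)$ --- which you need in order to produce the covering by charts where $m_i$ is invertible and in order to recognize $\Oc(-D_u)$ as $\Oc(-1)$ and $D_u$ as cut out by $a_i$ --- you invoke ``the identification of truncations.'' In the paper that identification is Theorem \ref{BlowUpAndSymmAlgebraThm}, and the very purpose of Lemma \ref{SurjOfIdealsLem} is to construct the map $\rho_{Z/X}: \tau_0(\bl_Z(X)) \to \Pc roj(\cdots)$ used in the proof of that theorem: the surjection $\pi_0(\psi)$ onto the line bundle $\pi_0(\Oc(-D_u))$ is exactly what the universal property of $\Pc roj$ needs. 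Theorem \ref{BlowUpSquareOnProjThm} alone only gives the map $\rho'_{Z/X}$ in the \emph{other} direction, from $\Pc roj$ to $\tau_0(\bl_Z(X))$; knowing that $\pi_0(\psi)$ pulls back along $\rho'_{Z/X}$ to a surjection does not by itself give surjectivity of $\pi_0(\psi)$ unless you separately establish that $\rho'_{Z/X}$ is, say, surjective --- and that is again essentially the content of the later theorem.

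The paper avoids this by working intrinsically and pointwise, without any chart description of $\bl_Z(X)$. It builds the diagram comparing $\pi_0(\pi^*\Ic) \to \pi_0(\Oc(-D_u))$ with the ideal sheaf $\Ic'$ of the classical immersion $\tau_0(D_u) \hookrightarrow \tau_0(\bl_Z(X))$; away from $D_u$ the map $\pi_0(\Oc(-D_u)) \to \Ic'$ is an isomorphism and surjectivity is immediate; on $D_u$ it reduces, by Nakayama, to surjectivity after tensoring with $\Oc/\Ic'$, which is precisely condition (3) in the definition of a virtual Cartier divisor for the universal divisor $D_u$, transported to fibres via Corollary \ref{ConormalVsIdealCor}. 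The key input is thus the defining surjectivity on conormal sheaves plus the Hurewicz comparison, not any explicit local model. If you want to salvage an explicit local argument, you would need to cite an \emph{independent} local description of the derived blow up (e.g.\ from \cite{Khan}) rather than Theorem \ref{BlowUpAndSymmAlgebraThm}, and additionally verify that the local trivializations of $\Oc(-D_u)$ and the generators $m_j$ interact with $\psi$ as you claim --- none of which is currently justified.
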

\begin{proof}
We start by noting that there is a natural diagram
\begin{center}
\begin{tikzcd}
\pi_0 (\pi^* \Ic) \arrow[]{d}{\pi_0(\psi)} \arrow[]{rr} \arrow[twoheadrightarrow]{rd}{} & & \Oc \arrow[]{d}{=} \\
\pi_0 \big(\Oc(- D_u)\big) \arrow[twoheadrightarrow]{r}{\phi} & \Ic' \arrow[]{r}{} & \Oc
\end{tikzcd}
\end{center}
of coherent sheaves on $\tau_0 \big(\bl_Z(X)\big)$, where $\Ic'$ is the sheaf of ideals associated to the classical immersion $\tau_0(D_u) \hookrightarrow \tau_0 \big( \bl_Z(X) \big)$, and the diagonal morphism is surjective because the blow up square truncates to a Cartesian square. If a point $\mathfrak p \in X$ is not in $\tau_0(D_u)$, then $\pi_0(\psi)_\mathfrak{p}$ is surjective because $\phi_\mathfrak{p}$ is an isomorphism. On the other hand, if $\mathfrak{p} \in \tau_0(D_u)$, then the induced morphism
$$\pi_0(\psi) \otimes_{\Oc_\mathfrak{p}} \big(\Oc_\mathfrak{p} / \Ic'_\mathfrak{p}\big) : \pi_0 \big(\pi^* \Ic\big)_\mathfrak{p} \otimes_{\Oc_\mathfrak{p}} \big(\Oc_\mathfrak{p} / \Ic'_\mathfrak{p}\big) \to \pi_0 \big(\Oc(- D_u)\big) \otimes_{\Oc_\mathfrak{p}} \big(\Oc_\mathfrak{p} / \Ic'_\mathfrak{p}\big)$$
is surjective by the universal property of the derived blow up combined with  Corollary \fref{ConormalVsIdealCor}, so the surjectivity of $\pi_0(\psi)$ at $\mathfrak{p}$ follows from Nakayama's Lemma. Hence $\pi_0(\psi)$ is surjective at each point of $X$, proving the claim.
\end{proof}

The main result of this subsection is the following theorem. 

\begin{thm}\label{BlowUpAndSymmAlgebraThm}
The morphisms $\rho_{Z/X}$ and $\rho'_{Z/X}$ are inverses of each other.
\end{thm}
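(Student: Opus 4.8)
The plan is to exhibit $\rho_{Z/X}$ and $\rho'_{Z/X}$ as mutually inverse by checking the two composites on universal objects, via the moduli interpretations of the two classical $\tau_0(X)$-schemes. Since the truncation functor is right adjoint to the inclusion of classical schemes into derived schemes, for a \emph{classical} $X$-scheme $T$ the set $\mathrm{Map}_X\bigl(T, \tau_0(\bl_Z(X))\bigr)$ is, by Theorem \ref{BlowUpPropertiesThm}, the set of virtual Cartier divisors on $T$ lying over $Z$, with universal element the derived base change $D_u^0$ of the universal divisor $D_u$ along $\tau_0(\bl_Z(X)) \hookrightarrow \bl_Z(X)$; and $\mathrm{Map}_X\bigl(T, \Pc roj(\Sym^*_{\tau_0(X)}(\pi_0(\Ic)))\bigr)$ is the set of invertible quotients of the (underived) pullback of $\pi_0(\Ic)$, with universal element the tautological quotient $q_{\mathrm{taut}}\colon \pi^*\pi_0(\Ic) \twoheadrightarrow \Oc(1)$. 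In these terms $\rho'_{Z/X}$ is, by Theorem \ref{BlowUpSquareOnProjThm}, the classifying morphism of the virtual exceptional divisor $\Ec_{\pi_0(\Ic)}$ over $Z$, and $\rho_{Z/X}$ is the classifying morphism of the invertible quotient $\psi\colon \pi^*\Ic \to \Oc(-D_u)$ of Lemma \ref{SurjOfIdealsLem}; so by Yoneda it suffices to evaluate $\rho_{Z/X}\circ\rho'_{Z/X}$ on $q_{\mathrm{taut}}$ and $\rho'_{Z/X}\circ\rho_{Z/X}$ on $D_u^0$.

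First I would treat $\rho_{Z/X}\circ\rho'_{Z/X}$. The composite transformation sends $q_{\mathrm{taut}}$ to $\Ec_{\pi_0(\Ic)}$ and then to the invertible quotient $\rho'^*_{Z/X}(\psi)$; since $\rho'_{Z/X}$ carries $D_u$ to $\Ec_{\pi_0(\Ic)}$ and the latter is cut out by a section of $\Oc(-1)$, the pullback $\rho'^*_{Z/X}\Oc(-D_u)$ is the ideal sheaf $\Oc(-\Ec_{\pi_0(\Ic)})\cong\Oc(1)$, so $\rho'^*_{Z/X}(\psi)$ is a surjection $\pi^*\pi_0(\Ic)\twoheadrightarrow\Oc(1)$, and the point is to see that it equals $q_{\mathrm{taut}}$. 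I would do this via the characterization of $\psi$ in the proof of Lemma \ref{SurjOfIdealsLem} (it is the morphism to $\Oc(-D_u)$ whose composite with $\Oc(-D_u)\to\Oc$ is $\pi^*\mu$) together with the naturality recorded in Remark \ref{BlowUpSquareOnProjNaturalityRem}: its pullback along $\rho'_{Z/X}$ is the analogous canonical map $\pi^*\Ic\to\Oc(-\Ec_{\pi_0(\Ic)})$ attached to $\Ec_{\pi_0(\Ic)}\hookrightarrow\Pc roj$, and comparing this with the very definition of $s_\mu^\vee\colon\Oc(1)\to\Oc$ out of $\mu\colon\bigoplus\Sym^{i+1}_X(\pi_0(\Ic))\to\bigoplus\Sym^i_X(\pi_0(\Ic))$ forces the equality on $\pi_0$. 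Dually, for $\rho'_{Z/X}\circ\rho_{Z/X}$, the composite sends $D_u^0$ to $\psi$ and then to $\rho^*_{Z/X}\Ec_{\pi_0(\Ic)} = Z_{\rho^*_{Z/X}s_\mu}$; since $\rho_{Z/X}$ is the classifying map of $\psi$ one gets $\rho^*_{Z/X}\Oc(-1)=\Oc(D_u)$, and the same commutative-triangle characterization identifies $\rho^*_{Z/X}s_\mu$ with the canonical section of $\Oc(D_u)$, so that $Z_{\rho^*_{Z/X}s_\mu}\simeq D_u^0$ on truncations; the compatibility of the two maps to $Z$ follows from the naturality of the morphism $g_{Z/X}$ of Theorem \ref{BlowUpSquareOnProjThm} and from Corollary \ref{ConormalVsIdealCor}, which is exactly what pins down a virtual Cartier divisor over $Z$ from its line-bundle-and-section data.

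The main obstacle is the on-the-nose matching in the previous paragraph — identifying the tautological and canonical surjections, resp. sections, not merely up to the ambient line bundle, while keeping the comparisons natural enough to glue. I expect the cleanest route is to fall back, wherever needed, on the explicit affine charts $\Spec\bigl(\Sym^*_A(I_0)_{(m_i)}\bigr)$ used in the proof of Theorem \ref{BlowUpSquareOnProjThm}: there both $\psi$ (after the identification $B=A\modmod(a_1,\dots,a_r)$, via Lemma \ref{SurjOfIdealsLem}) and $s_\mu$ are written down in closed form, all relevant mapping spaces are discrete, and a direct comparison settles the equalities; functoriality of the charts under restriction then shows the local comparisons agree, which is all that is needed to conclude that both composites are the identity, hence that $\rho_{Z/X}$ and $\rho'_{Z/X}$ are inverse to each other.
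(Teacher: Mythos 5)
Your plan — interpret both schemes as moduli spaces, use Yoneda, and compare the pullbacks of the universal objects along the two composites — is the right outline, and it is the same skeleton the paper uses. But the proposal misses the single step that makes the paper's argument actually close: the reduction by naturality (of both $\rho$ and $\rho'$ under derived pullback, as recorded in Remark~\ref{BlowUpSquareOnProjNaturalityRem} and implicit in Lemma~\ref{SurjOfIdealsLem}) to the universal case $i\colon \{0\}\hookrightarrow\Ab^n$. In that case $X=\Ab^n$, $Z=\{0\}$, $\Ic$ is the honest ideal $I=(x_1,\dots,x_n)$, the derived blow up is classical, and — crucially — $\Sym^*_{\Ab^n}(I)$ coincides with the Rees algebra of $I$ (Micali), so $s_\mu^\vee\colon\Oc(1)\to\Oc$ is a \emph{monomorphism}. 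That injectivity is exactly what lets one argue that the two surjections $I\twoheadrightarrow\Oc(1)$ (namely $\rho'^*\psi$ and $q_{\mathrm{taut}}$) agree, since both fit the same triangle over $\mu$. Dually, the epimorphism $\psi\colon I\twoheadrightarrow\Oc(-D_u)$ pins down the other composite.

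Without the reduction, your argument that the triangle ``forces the equality on $\pi_0$'' for the $\rho\circ\rho'$ direction has a genuine gap: over a general base $A$ the symmetric algebra need not be the Rees algebra, $s_\mu^\vee$ need not be injective, and knowing that two surjections onto $\Oc(1)$ have the same composite with $s_\mu^\vee$ does not make them equal. You flag the ``on-the-nose matching'' as the hard point and propose to settle it in the affine charts $\Spec\bigl(\Sym^*_A(I_0)_{(m_i)}\bigr)$, but those charts are for a general $A$ where the non-injectivity of $s_\mu^\vee$ persists, and it is not clear how the closed-form expressions for $\psi$ and $s_\mu$ would produce the required uniqueness there; the proposal stops short of showing that the chart computation actually identifies $\rho'^*\psi$ with $q_{\mathrm{taut}}$ rather than merely producing two surjections with the same image under $s_\mu^\vee$. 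The fix is simple and is what the paper does: observe first that the whole statement is stable under derived base change, then check it once for $\{0\}\hookrightarrow\Ab^n$, where the uniqueness arguments in both directions are available. (Your $\rho'\circ\rho=\Id$ direction, which only needs $\pi_0(\psi)$ to be an epimorphism, is unproblematic even in general, by Lemma~\ref{SurjOfIdealsLem}; it is only the other direction that requires the universal-case reduction.)
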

\begin{proof}
By naturality, it is enough to check this in the case of $i$ being $\{0\} \hookrightarrow \Ab^n$. Note that now $\Ic$ is just the ideal $I = \langle x_1, ... , x_n \rangle \subset k[x_1, ..., x_n]$, so it is discrete; moreover, $\bl_{\{0\}}(\Ab^n)$ is a classical scheme. Let us simplify the notation by denoting the morphisms of interest by $\rho$ and $\rho'$ respectively.

To prove that the composition $\rho' \circ \rho$ is the identity, we merely have to notice that the vertical morphism of
\begin{center}
\begin{tikzcd}
I \arrow[twoheadrightarrow]{d}{} \arrow[]{rd}{} & \\
\Oc(-D_u) \arrow[]{r}[swap]{s_{D_u}^\vee} & \Oc
\end{tikzcd}
\end{center}
being an epimorphism implies that no other morphism than $s_{D_u}^\vee$ can make the triangle commute. It follows that the virtual exceptional divisor on $\Pc roj \big(\Sym^*_{\Ab^n}(I)\big)$ pulls back along $\rho$ to the universal divisor $D_u$ on $\bl_{\{0\}}(\Ab^n)$. 

Similarly, by \cite{Mic} Chapter 1 Théorème 1 the symmetric and the Rees algebras of $I$ coincide, and it follows that the horizontal morphism of the triangle
\begin{center}
\begin{tikzcd}
I  \arrow[twoheadrightarrow]{d} \arrow[]{rd}{} & \\
\Oc(1) \arrow[]{r}{s_\mu^\vee} & \Oc
\end{tikzcd}
\end{center}
on $\Pc roj \big(\Sym^*_{\Ab^n}(I)\big)$ is a monomorphism. Consequently, the surjection $I \to \Oc(-D_u)$ on $\bl_{\{0\}}(\Ab^n)$ has to pull back along $\rho'$ to the universal surjection $I \to \Oc(-1)$ on $\Pc roj \big(\Sym^*_{\Ab^n}(I)\big)$, and therefore the composition $\rho \circ \rho'$ is the identity.
\end{proof}

Next we are going to provide a concrete characterization of strict transforms under this identification. Suppose $Z \stackrel i \hookrightarrow X \stackrel j \hookrightarrow Y$ is a sequence of quasi-smooth closed embeddings, and consider the induced commutative square 
\begin{center}
\begin{tikzcd}
\Oc_Y \arrow[]{r}{(j \circ i)^\sharp} \arrow[]{d}{j^\sharp} & j_* i_* \Oc_Z \arrow[]{d}{\mathrm{Id}} \\
j_* \Oc_X \arrow[]{r}{j_* (i^\sharp)} & j_* i_* \Oc_Z,
\end{tikzcd}
\end{center}
Taking horizontal fibres, we obtain a natural morphism
$$\psi_{Z/X/Y}: \Ic' \to j_* \Ic,$$
and comparing the long exact homotopy sequences, we see that $\pi_0(\psi_{Z/X/Y})$ is a surjection of sheaves. It turns out that $\psi_{Z/X/Y}$ determines the truncation of the strict transform.

\begin{prop}\label{StrictTransformProp}
Let everything be as above. Then, under the identification of Theorem \fref{BlowUpAndSymmAlgebraThm}, the truncation $\tau_0(\tilde j): \tau_0\big(\bl_Z(X)\big) \hookrightarrow \tau_0\big(\bl_Z(Y)\big)$ of the strict transform coincides with the morphism induced by the surjection
$$\Sym^*_{\tau_0(Y)}\big(\pi_0 (\Ic')\big) \twoheadrightarrow \Sym^*_{j_* \Oc_{\tau_0(X)}}\big(\pi_0 (j_* \Ic)\big)$$
which is induced by the surjection of sheaves
$$\pi_0(\psi_{Z/X/Y}): \pi_0(\Ic') \twoheadrightarrow \pi_0(j_* \Ic)$$
and $j^\sharp$.
\end{prop}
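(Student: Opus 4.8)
The plan is to exploit the modular description of $\bl_Z(Y)$ (Theorem~\ref{BlowUpPropertiesThm}) together with the identifications of Theorem~\ref{BlowUpAndSymmAlgebraThm}, and to reduce the assertion to the statement that two morphisms $\tau_0(\bl_Z(X))\to\bl_Z(Y)$ classify the same virtual Cartier divisor lying over $Z\hookrightarrow Y$. Write $S:=\tau_0(\bl_Z(X))$ and, applying Theorem~\ref{BlowUpAndSymmAlgebraThm} to $i$ and pushing forward along $j$, identify $S\cong\Pc roj\big(\Sym^*_{j_*\Oc_{\tau_0(X)}}(\pi_0(j_*\Ic))\big)$. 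The first morphism is $S\hookrightarrow\bl_Z(X)\xrightarrow{\tilde j}\bl_Z(Y)$; the second is the closed immersion $S\cong\Pc roj\big(\Sym^*_{j_*\Oc_{\tau_0(X)}}(\pi_0(j_*\Ic))\big)\hookrightarrow\Pc roj\big(\Sym^*_{\tau_0(Y)}(\pi_0(\Ic'))\big)$ induced by $\pi_0(\psi_{Z/X/Y})$ and $j^\sharp$, followed by $\rho'_{Z/Y}$ and then by $\tau_0(\bl_Z(Y))\hookrightarrow\bl_Z(Y)$. Since $S$ is classical, each of these factors uniquely through $\tau_0(\bl_Z(Y))$, and unwinding the adjunction the two factorisations are precisely $\tau_0(\tilde j)$ and, after transport along the identifications $\rho$, the morphism in the statement; hence it suffices to prove that the two morphisms $S\to\bl_Z(Y)$ agree.

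By Theorem~\ref{BlowUpPropertiesThm}(5) the first morphism classifies the universal divisor $D_u$ of $\bl_Z(X)$, viewed via the outer square as lying over $Z\hookrightarrow Y$, restricted along $S\hookrightarrow\bl_Z(X)$; by Theorems~\ref{BlowUpSquareOnProjThm} and~\ref{BlowUpAndSymmAlgebraThm} together with Proposition~\ref{TruncationOfVirtualExceptionalDivProp}, this restriction is the virtual exceptional divisor $\Ec_{\pi_0(\Ic)}$ with its natural structure over $Z\hookrightarrow X$, reinterpreted over $Z\hookrightarrow Y$. For the second morphism, $\rho'_{Z/Y}$ classifies $\Ec_{\pi_0(\Ic')}$ over $Z\hookrightarrow Y$ (Theorem~\ref{BlowUpSquareOnProjThm} for the embedding $Z\hookrightarrow Y$), so the composite classifies the pullback of $\Ec_{\pi_0(\Ic')}$ along the closed immersion of the two Projs. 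The identity $j^\sharp\circ\mu_{\Ic'}=(j_*\mu_{\Ic})\circ\pi_0(\psi_{Z/X/Y})$, immediate from the square defining $\psi_{Z/X/Y}$, implies that this closed immersion carries $\Oc(-1)$ to $\Oc(-1)$ and the tautological section $s_{\mu_{\Ic'}}$ to $s_{j_*\mu_{\Ic}}$, so the pulled-back divisor is again $\Ec_{\pi_0(\Ic)}$. Thus both morphisms classify the same underlying divisor; it remains to match the two virtual-Cartier structures over $Z\hookrightarrow Y$.

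This last point I would check affine-locally on $Y$: all mapping spaces occurring are $1$-truncated, so, exactly as in the proof of Theorem~\ref{BlowUpSquareOnProjThm} and Remark~\ref{BlowUpSquareOnProjNaturalityRem}, there are no higher coherence obstructions and everything glues from local data. After shrinking, write $Y=\Spec A$ and, since $Z\hookrightarrow Y$ is a derived regular embedding refining $X\hookrightarrow Y$, choose a compatible presentation $X=\Spec\big(A\modmod(b_1,\dots,b_s)\big)$, $Z=\Spec\big(A\modmod(b_1,\dots,b_s,c_1,\dots,c_t)\big)$. Then $\pi_0(\Ic')$ is generated by classes $m_1,\dots,m_s$ attached to the $b_i$ and $m'_1,\dots,m'_t$ attached to the $c_l$; since $\Jc\to\Ic'\xrightarrow{\psi_{Z/X/Y}}j_*\Ic$ is the fibre sequence of the composable pair $\Oc_Y\to j_*\Oc_X\to(ji)_*\Oc_Z$ (with $\Jc:=\Fib(j^\sharp)$, and $m_i$ lying in the image of $\pi_0(\Jc)$), one gets $\pi_0(\psi_{Z/X/Y})(m_i)=0$ and $\pi_0(\psi_{Z/X/Y})(m'_l)$ equal to the standard generator of $\pi_0(j_*\Ic)$ attached to $\bar c_l$. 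Hence the closed immersion of Projs is disjoint from the charts $D_+(m_i)$ and, on each chart $D_+(m'_l)$, is cut out by the ideal generated by $m_1/m'_l,\dots,m_s/m'_l$ (equivalently by $b_1,\dots,b_s$); reading off Theorem~\ref{BlowUpPropertiesThm}(5) one checks that $\tau_0(\tilde j)$ is cut out by the same equations on these charts, with the matching virtual-Cartier structure, and glueing concludes.

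I expect the main obstacle to be precisely this last step: the verification that the virtual-Cartier-divisor \emph{structure} on $\Ec_{\pi_0(\Ic)}$ produced by restricting the structure of $\Ec_{\pi_0(\Ic')}$ agrees with the one produced by the strict transform of Theorem~\ref{BlowUpPropertiesThm}(5). Everything else is formal manipulation of the universal properties and fibre sequences already established in the section, and the discreteness of the relevant mapping spaces — as in the proof of Theorem~\ref{BlowUpSquareOnProjThm} — is what keeps this verification purely local and free of coherence bookkeeping.
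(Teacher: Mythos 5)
Your approach is genuinely different from the paper's. You reduce to showing that two morphisms $\tau_0(\bl_Z(X))\to\bl_Z(Y)$ classify the same virtual Cartier divisor lying over $Z\hookrightarrow Y$, and then attempt to check this via the modular description of the derived blow up. The paper instead stays entirely in the world of Proj: it observes that, after precomposing with $\rho'_{Z/X}$ and postcomposing with $\rho_{Z/Y}$, the strict transform is determined by a surjection from $\pi_0(j^*\Ic')$ onto an invertible sheaf, and identifies that surjection as the composite $\pi_0(j^*\Ic')\twoheadrightarrow\pi_0(\Ic)\twoheadrightarrow\Oc(-D_u)$ (using Lemma~\ref{SurjOfIdealsLem} and the construction of $\tilde j$). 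The claim then drops out of Theorem~\ref{BlowUpAndSymmAlgebraThm}, since $\rho'_{Z/X}$ carries the universal surjection onto $\Oc(1)$ to the surjection onto $\Oc(-D_u)$. This sidesteps precisely the step you flag as the main obstacle: matching the two virtual Cartier \emph{structures} on the common underlying divisor. Working at the level of surjections onto $\Oc(1)$ makes that match automatic.

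There is a genuine gap in your argument, and a small error beside it. The gap is the one you yourself identify: you never actually verify that the virtual Cartier structure on $\Ec_{\pi_0(\Ic)}$ inherited by restricting $\Ec_{\pi_0(\Ic')}$ agrees with the one coming from Theorem~\ref{BlowUpPropertiesThm}(5); you only sketch a local computation and say what you expect to find. The error is in that sketch: you claim the ideal of the closed immersion of Projs on the chart $D_+(m'_l)$ is generated "equivalently" by $m_1/m'_l,\dots,m_s/m'_l$ or by $b_1,\dots,b_s$. These are not equivalent. Ideal-likeness gives $b_i=(m_i/m'_l)\cdot\mu(m'_l)=(m_i/m'_l)c_l$, and $c_l$ is a local equation for the exceptional divisor of $\bl_Z(Y)$, hence not a unit on $D_+(m'_l)$. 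So $(b_1,\dots,b_s)\subsetneq(m_1/m'_l,\dots,m_s/m'_l)$ in general: the former cuts out the total transform of $X$, the latter the strict transform. If the parenthetical were taken literally, you would conclude that the strict transform equals the total transform, which is false. The ideal of the closed immersion of Projs is indeed $(m_1/m'_l,\dots,m_s/m'_l)$, and this is what you must compare with the strict transform; the $b_i$'s do not enter on equal footing.

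Your overall strategy can certainly be made to work — the modular description of the derived blow up is a legitimate route — but it requires completing the local verification carefully, including the virtual Cartier structure, and costs more than the paper's one-step identification of surjections. If you want a cheaper fix, note that once you know both morphisms $S\to\bl_Z(Y)$ factor through $\tau_0(\bl_Z(Y))$, you can transport the whole problem through $\rho_{Z/Y}$ and compare surjections onto $\Oc(1)$ directly, as the paper does, rather than divisors with structure.
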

\begin{proof}
We need to show that the morphism 
$$\rho_{Z/Y} \circ \tau_0(\tilde j) \circ \rho'_{Z/X}$$
coincides with the morphism corresponding to the composition 
\begin{center}
\begin{tikzcd}
\pi_0 (j^*\Ic') \arrow[twoheadrightarrow]{r}{\pi_0(\psi'_{Z/X/Y})} & \pi_0(\Ic) \arrow[twoheadrightarrow]{r}{} & \Oc(1)
\end{tikzcd}
\end{center}
where the second morphism is the universal surjection on $\Pc roj\big(\Sym^*_{\tau_0(X)}(\pi_0 (\Ic))\big)$, and $\psi'_{Z/X/Y}$ is the adjoint of $\psi_{Z/X/Y}$. But this is trivial, since by construction the strict transform $\tilde j$ is induced by the virtual Cartier divisor over $Z$ given by the outer square of 
$$
\begin{tikzcd}
D_u \arrow[hook]{r}{} \arrow[]{d}{} & \bl_Z(X) \arrow[]{d}{} \\
Z \arrow[hook]{r}{} \arrow[]{d}{\mathrm{Id}} & X \arrow[hook]{d}{} \\
Z \arrow[hook]{r}{} & Y
\end{tikzcd}
$$
so that $\rho_{Z/Y} \circ \tau_0(\tilde j)$ is induced by the surjection 
$$
\begin{tikzcd}
\pi_0 (j^*\Ic') \arrow[twoheadrightarrow]{r}{\pi_0(\psi'_{Z/X/Y})} & \pi_0(\Ic) \arrow[twoheadrightarrow]{r}{} & \Oc(-D_u).
\end{tikzcd}
$$
The claim then follows from Theorem \fref{BlowUpAndSymmAlgebraThm}, as $\pi_0(\Ic) \twoheadrightarrow \Oc(-D_u)$ pulls back along $\rho'_{Z/X}$ to the universal surjection $\pi_0(\Ic) \twoheadrightarrow \Oc(1)$ on $\Pc roj\big(\Sym^*_{\tau_0(X)}(\pi_0 (\Ic))\big)$.
\end{proof}

\subsection{Comparison with classical blow up}\label{ComparisonWithClassicalSubSect}

The purpose of this section is to study how the classical blow up interacts with the derived blow up. Suppose $i: Z \hookrightarrow X$ is a quasi-smooth closed immersion, and consider the classical blow up
$$\bl^\mathrm{cl}_{\tau_0(Z)}\big(\tau_0(X)\big) := \Pc roj\big(\Oc_{\tau_0(X)}[\Ic_0 t]\big),$$
 which is given by the homogeneous spectrum of the Rees algebra on the sheaf of ideals $\Ic_0$ cutting out $\tau_0(Z)$ from $\tau_0(X)$. Let us denote by $\Ic$ be the fibre of $i^\sharp: \Oc_X \to i_* \Oc_Z$, and notice that taking horizontal fibres in the induced commutative square
\begin{center}
\begin{tikzcd}
\Oc_{X} \arrow[]{r} \arrow[]{d} & \Oc_{Z} \arrow[]{d} \\
\Oc_{\tau_0(X)} \arrow[]{r} & \Oc_{\tau_0(Z)}
\end{tikzcd}
\end{center}
we obtain a natural morphism $\psi_{Z/X}: \Ic \to \Ic_0$. Let us make the following simple observation.

\begin{lem}\label{PsiZXSurjLem}
The morphism $\pi_0(\psi_{Z/X}): \pi_0(\Ic) \to \Ic_0$ is a surjection of sheaves.
\end{lem}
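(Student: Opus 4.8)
The plan is to exhibit $\psi_{Z/X}$ as the third term of a fibre sequence whose fibre is highly connective, and then read off surjectivity from the long exact sequence of homotopy sheaves. Recall that $\psi_{Z/X}\colon \Ic \to \Ic_0$ is obtained by taking horizontal fibres in the commutative square whose top row is $i^\sharp\colon \Oc_X \to i_*\Oc_Z$ and whose bottom row is its truncation $\Oc_{\tau_0(X)} \to i_*\Oc_{\tau_0(Z)}$. By the standard fact that the total fibre of a commuting square may be computed by iterating fibres in either order, $\Fib(\psi_{Z/X})$ is equivalent to the fibre of the induced map between the \emph{vertical} fibres, i.e.
$$\Fib(\psi_{Z/X}) \simeq \Fib\bigl( \Fib(\Oc_X \to \Oc_{\tau_0(X)}) \to \Fib(i_*\Oc_Z \to i_*\Oc_{\tau_0(Z)}) \bigr).$$

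Next I would observe that both of these vertical fibres are $1$-connective. Indeed $\Oc_X$ is connective and $\Oc_{\tau_0(X)} = \tau_{\leq 0}\Oc_X$, so $\Fib(\Oc_X \to \Oc_{\tau_0(X)}) \simeq \tau_{\geq 1}\Oc_X$; and since $i$ is a closed immersion, hence affine, the pushforward $i_*$ is $t$-exact, so $\Fib(i_*\Oc_Z \to i_*\Oc_{\tau_0(Z)}) \simeq i_*\tau_{\geq 1}\Oc_Z$ is again $1$-connective. Therefore $\Fib(\psi_{Z/X})$, being the fibre of a morphism between $1$-connective quasi-coherent sheaves, has $\pi_0 = \pi_{-1} = 0$; in particular $\pi_{-1}\bigl(\Fib(\psi_{Z/X})\bigr) = 0$.

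Finally, applying the long exact sequence of homotopy sheaves to the fibre sequence $\Fib(\psi_{Z/X}) \to \Ic \xrightarrow{\psi_{Z/X}} \Ic_0$ yields the exact piece $\pi_0(\Ic) \to \pi_0(\Ic_0) \to \pi_{-1}\bigl(\Fib(\psi_{Z/X})\bigr) = 0$, and since $\Ic_0$ is a discrete sheaf on the classical scheme $\tau_0(X)$ we have $\pi_0(\Ic_0) = \Ic_0$; this is precisely the asserted surjectivity. The argument is local on $X$, so one may first reduce to an affine presentation $X = \Spec A$, $Z = \Spec\bigl(A \modmod (a_1,\dots,a_r)\bigr)$ if preferred, but the sheaf-theoretic version works verbatim. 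I do not anticipate a genuine obstacle: the only points requiring a little care are the identification of $\Fib(\psi_{Z/X})$ through the two-step fibre computation and the $t$-exactness of pushforward along the closed immersion $i$ — after that it is a purely formal connectivity argument. (One also sees that the quasi-smoothness of $i$ is not used; only that $i$ is a closed immersion.)
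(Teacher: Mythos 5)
Your proof is correct. The total-fibre identity you invoke — that the fibre of $\psi_{Z/X}$ can also be computed as the fibre of the induced map between the \emph{vertical} fibres of the square — is standard in any stable $\infty$-category, and your identifications of those vertical fibres as $\tau_{\geq 1}\Oc_X$ and $i_*\tau_{\geq 1}\Oc_Z$ (using $t$-exactness of pushforward along the affine morphism $i$) are both right, so the connectivity bound $\pi_{-1}(\Fib(\psi_{Z/X})) = 0$ follows, and hence the surjectivity on $\pi_0$.

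The paper's own proof reaches the same conclusion by a more hands-on route: it writes out the two long exact sequences of homotopy sheaves — one for $\Ic \to \Oc_X \to i_*\Oc_Z$ and one for the short exact sequence $0 \to \Ic_0 \to \Oc_{\tau_0(X)} \to (i_0)_*\Oc_{\tau_0(Z)} \to 0$ on the truncation — compares them via $\psi_{Z/X}$ and the isomorphisms on $\pi_0$ of structure sheaves, and applies the four lemma. The two arguments are morally the same (both ultimately exploit that the map $\pi_0(\Oc_X) \to \Oc_{\tau_0(X)}$ is an isomorphism and the top two homotopy sheaves of the truncation fibre vanish), but yours packages the diagram chase as a single connectivity statement, which is a bit cleaner and makes it transparent that quasi-smoothness of $i$ plays no role — a point the paper's proof also tacitly shares but does not flag. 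Either argument is acceptable, and yours would drop in without issue.
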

\begin{proof}
Indeed, comparing the induced homotopy long exact sequences
$$
\begin{tikzcd}
\pi_1(\Oc_Z) \arrow[]{r} \arrow[]{d} & \pi_0(\Ic) \arrow[]{r} \arrow[]{d}{\pi_0(\psi_{Z/X})} & \pi_0(\Oc_X) \arrow[]{r} \arrow[]{d}{\cong} & \pi_0(\Oc_Z) \arrow[]{r} \arrow[]{d}{\cong} & 0 \arrow[]{d} \\
0 \arrow[]{r} & \Ic_0 \arrow[]{r} & \Oc_{\tau_0(X)} \arrow[]{r} & \Oc_{\tau_0(Z)} \arrow[]{r} & 0,
\end{tikzcd}
$$
we see that the surjectivity of $\pi_0(\psi_{Z/X})$ follows from the 4-lemma.
\end{proof}

Comparing the long exact homotopy sequences, we conclude that $\psi_{Z/X}$ induces a surjection on $\pi_0$. 

\begin{lem}
The evident square
\begin{equation*}
\begin{tikzcd}
\Ec \arrow[hook]{r} \arrow[]{d} & \bl^\mathrm{cl}_{\tau_0(Z)}\big(\tau_0(X)\big) \arrow[]{d} \\
Z \arrow[hook]{r} & X.
\end{tikzcd}
\end{equation*}
where $\Ec$ is the exceptional divisor of the classical blow up, exhibits $\Ec$ as a virtual Cartier divisor lying over $Z$. 
\end{lem}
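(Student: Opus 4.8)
The plan is to supply the left-hand vertical morphism $g \colon \Ec \to Z$ completing the square and then to verify in turn the three conditions of Definition \ref{DivisorOverZDefn}. Write $\bl^{\mathrm{cl}}$ for $\bl^{\mathrm{cl}}_{\tau_0(Z)}\big(\tau_0(X)\big)$ and $\Ic_0 \subset \Oc_{\tau_0(X)}$ for the ideal sheaf of $\tau_0(Z)$. Since $\Ec$ is a classical scheme, every morphism from $\Ec$ to a derived scheme factors uniquely through the truncation, and the space of such morphisms is discrete; I therefore take $g$ to be the composite of the structure morphism $\Ec \to \tau_0(Z)$ of the exceptional divisor over the blow-up centre with the canonical closed immersion $\tau_0(Z) \hookrightarrow Z$. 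The resulting square commutes, and coherently so because the space of morphisms $\Ec \to X$ is discrete, once one observes — after passing to truncations — the classical fact that $\Ec \hookrightarrow \bl^{\mathrm{cl}} \to \tau_0(X)$ factors through $\tau_0(Z)$ precisely via the structure morphism of the exceptional divisor.

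Conditions (1) and (2) are then formal consequences of the elementary theory of blow ups. The inverse-image ideal $\Ic_0 \cdot \Oc_{\bl^{\mathrm{cl}}}$ is an invertible ideal sheaf, and the closed subscheme it defines is exactly $\Ec$; equivalently $\Ec = \bl^{\mathrm{cl}} \times_{\tau_0(X)} \tau_0(Z)$ as classical schemes, which is condition (2). Since $\Ec$ is cut out by an invertible ideal, the embedding $i_{\Ec} \colon \Ec \hookrightarrow \bl^{\mathrm{cl}}$ is an effective Cartier divisor, i.e.\ affine-locally of the form $\Oc_{\bl^{\mathrm{cl}}} \to \Oc_{\bl^{\mathrm{cl}}} \modmod (f)$ for a nonzerodivisor $f$; hence it is a quasi-smooth closed embedding of virtual codimension $1$ with classical source, which is condition (1).

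For condition (3) I would invoke Corollary \ref{ConormalVsIdealCor} for the square at hand. Let $\Ic := \Fib(i^{\sharp})$, let $\Ic_{\Ec} := \Fib(i_{\Ec}^{\sharp}) \simeq \Oc(-\Ec)$ be the invertible conormal ideal, and let $b \colon \bl^{\mathrm{cl}} \to X$ be the structure morphism. The corollary identifies the canonical morphism $g^{*}\Nc^{\vee}_{Z/X} \to \Nc^{\vee}_{\Ec/\bl^{\mathrm{cl}}}$, on $\pi_0$, with $\pi_0(i_{\Ec}^{*}\psi_G)$, where $\psi_G \colon b^{*}\Ic \to \Ic_{\Ec}$ is the morphism of derived ideal sheaves induced by functoriality of the cofibre sequences cutting out $\Ic$ and $\Ic_{\Ec}$. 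Because $\psi_G$ is compatible with the augmentations to $\Oc_{\bl^{\mathrm{cl}}}$ and $\pi_0(\Ic_{\Ec}) = \Oc(-\Ec)$ is a genuine subsheaf of $\Oc_{\bl^{\mathrm{cl}}}$, the map $\pi_0(\psi_G)$ is surjective exactly when the image of $\pi_0(b^{*}\Ic) \to \Oc_{\bl^{\mathrm{cl}}}$ equals $\Oc(-\Ec)$. That image is $\Ic_0 \cdot \Oc_{\bl^{\mathrm{cl}}}$: the image of $\pi_0(\Ic) \to \Oc_{\tau_0(X)}$ is $\Ic_0$ by the long exact sequence of the defining cofibre sequence of $\Ic$ — the computation already carried out in Lemma \ref{PsiZXSurjLem} — and $\pi_0$ commutes with pullbacks, so the image in question is the extension of $\Ic_0$, which is $\Ic_0 \cdot \Oc_{\bl^{\mathrm{cl}}} = \Oc(-\Ec)$ by the blow-up property used above. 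Restriction along $i_{\Ec}$ preserves surjectivity on $\pi_0$, so $\pi_0(i_{\Ec}^{*}\psi_G)$ is surjective and condition (3) holds.

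The only step carrying genuine content is (3), and the delicate point there is entirely bookkeeping: one must match the abstract conormal surjection produced by the Hurewicz-naturality formalism (Corollary \ref{ConormalVsIdealCor}) with the concrete classical statement that the total transform of the centre under $\bl^{\mathrm{cl}}$ is the exceptional divisor. All the remaining steps are formal, essentially because $\Ec$ and $\bl^{\mathrm{cl}}$ are ordinary schemes — which is exactly what makes this lemma considerably lighter than its derived counterpart, Theorem \ref{BlowUpSquareOnProjThm}.
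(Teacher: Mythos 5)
Your proof is correct and takes essentially the same route as the paper: both reduce to condition~(3) of Definition \ref{DivisorOverZDefn}, invoke Corollary \ref{ConormalVsIdealCor} to trade the conormal surjection for a $\pi_0$-surjectivity statement about (pulled-back) derived ideal sheaves, and then factor that map through $\psi_{Z/X}$ followed by the surjection $\Ic_0 \cdot \Oc_{\bl^{\mathrm{cl}}} = \Oc(-\Ec)$ supplied by the universal property of the classical blow up, citing Lemma \ref{PsiZXSurjLem} for the surjectivity of $\pi_0(\psi_{Z/X})$. You spell out conditions~(1) and~(2) and the pullback/bookkeeping steps more explicitly than the paper, which leaves them to the reader, but the substance is identical.
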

\begin{proof}
The square is of course the outer square in
$$
\begin{tikzcd}
\Ec \arrow[hook]{r} \arrow[]{d} & \bl^\mathrm{cl}_{\tau_0(Z)}\big(\tau_0(X)\big) \arrow[]{d} \\
\tau_0(Z) \arrow[hook]{r} \arrow[]{d} & \tau_0(X) \arrow[]{d} \\
Z \arrow[hook]{r} & X.
\end{tikzcd}
$$
The only thing for us to check is that the third condition of Definition \fref{DivisorOverZDefn} is satisfied. By Corollary \fref{ConormalVsIdealCor} it is enough to check that the induced morphism
$$\Ic \to \Oc(-\Ec)$$
of sheaves on $\bl^\cl_{\tau_0(Z)}(\tau_0(X))$ between the homotopy fibres induces a surjection on $\pi_0$. But this is easy, since the above morphism factors as
$$\Ic \xrightarrow{\psi_{Z/X}} \Ic_0 \to \Oc(-\Ec),$$
the latter morphism is surjective by the universal property of classical blow up, and $\pi_0(\psi_{Z/X})$ is surjective by Lemma \ref{PsiZXSurjLem}.
\end{proof}

We therefore obtain a natural morphism
$$i_{Z/X}: \bl^\mathrm{cl}_{\tau_0(Z)}\big(\tau_0(X)\big) \to \tau_0\big(\bl_Z(X)\big).$$
This morphism has the following alternative description as well:

\begin{thm}\label{InclusionOfClassicalBlowUpAsSymmetricAlgThm}
Let everything be as above. Then, under the identification 
$$\tau_0\big(\bl_Z(X)\big) = \Pc roj\big(\Sym^*_{\pi_0(X)}(\pi_0(\Ic))\big)$$
of Theorem \fref{BlowUpAndSymmAlgebraThm}, the morphism $i_{Z/X}$ coincides with the morphism induced by the evident surjection
$$\Sym^*_{\tau_0(X)}\big(\pi_0(\Ic)\big) \twoheadrightarrow \Oc_{\tau_0(X)}[\Ic_0 t]$$
which is induced by the surjection $\pi_0(\psi_{Z/X})$. In particular, $i_{Z/X}$ is a closed embedding.
\end{thm}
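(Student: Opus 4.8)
The plan is to unwind the universal-property definitions of the three morphisms in play and check that, regarded as $\tau_0(X)$-morphisms into $\Pc roj\big(\Sym^*_{\tau_0(X)}(\pi_0(\Ic))\big)$, they all classify the same invertible quotient of $\pi_0(\Ic)$. Write $B := \bl^{\cl}_{\tau_0(Z)}\big(\tau_0(X)\big) = \Pc roj\big(\Oc_{\tau_0(X)}[\Ic_0 t]\big)$ and $P := \Pc roj\big(\Sym^*_{\tau_0(X)}(\pi_0(\Ic))\big)$, and let $q\colon \Sym^*_{\tau_0(X)}(\pi_0(\Ic))\twoheadrightarrow \Oc_{\tau_0(X)}[\Ic_0 t]$ be the evident surjection (namely $\Sym^*(\pi_0(\psi_{Z/X}))$ followed by the canonical surjection of $\Sym^*_{\tau_0(X)}(\Ic_0)$ onto the Rees algebra), inducing a morphism $j\colon B\to P$. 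By Theorem \ref{BlowUpAndSymmAlgebraThm} the identification of $\tau_0(\bl_Z(X))$ with $P$ is given by $\rho_{Z/X}$ (with inverse $\rho'_{Z/X}$), so it suffices to prove $\rho_{Z/X}\circ i_{Z/X} = j$. Since $B$, $P$ and $\tau_0(\bl_Z(X))$ are classical schemes the relevant mapping spaces are discrete, so there are no higher coherences to check, and since $P$ represents the functor sending a $\tau_0(X)$-scheme $T$ to the set of isomorphism classes of invertible quotients of the pullback of $\pi_0(\Ic)$, it is enough to show that $\rho_{Z/X}\circ i_{Z/X}$ and $j$ pull the universal quotient $\pi_0(\Ic)\twoheadrightarrow \Oc_P(1)$ back to isomorphic quotients on $B$.

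For $j$ this is immediate: the degree-one part of $q$ is $\pi_0(\psi_{Z/X})\colon \pi_0(\Ic)\twoheadrightarrow \Ic_0$ (surjective by Lemma \ref{PsiZXSurjLem}), and as $\Oc_{\tau_0(X)}[\Ic_0 t]$ is generated in degree one, $j$ pulls $\pi_0(\Ic)\twoheadrightarrow \Oc_P(1)$ back to the composite $\pi_0(\Ic)\xrightarrow{\pi_0(\psi_{Z/X})}\Ic_0\twoheadrightarrow \Oc_B(1)$, where the second arrow is the tautological surjection of the classical blow up. For $\rho_{Z/X}\circ i_{Z/X}$, recall that $\rho_{Z/X}$ is the morphism classifying the invertible quotient $\tau_0(\pi)^*\pi_0(\Ic)\twoheadrightarrow \pi_0\big(\Oc(-D_u)\big)$ furnished by Lemma \ref{SurjOfIdealsLem} (using $\pi_0$ of the universal blow-up square), while $i_{Z/X}$ was defined by applying the universal property of $\bl_Z(X)$ to the virtual Cartier divisor $\Ec\hookrightarrow B$ lying over $Z$ constructed in the lemma preceding the theorem. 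Hence $i_{Z/X}^*D_u = \Ec$, and the surjection $\pi^*\Ic\to \Oc(-D_u)$ pulls back along $i_{Z/X}$ to the surjection $\Ic|_B\to \Oc(-\Ec)$; by the construction of the virtual Cartier divisor structure on $\Ec$ this surjection factors as $\Ic\xrightarrow{\psi_{Z/X}}\Ic_0\twoheadrightarrow \Oc(-\Ec)$, the last arrow being the tautological surjection of $B$. Under $\Oc(-\Ec)\cong \Oc_B(1)$ this is again $\pi_0(\Ic)\xrightarrow{\pi_0(\psi_{Z/X})}\Ic_0\twoheadrightarrow \Oc_B(1)$, so $\rho_{Z/X}\circ i_{Z/X}$ and $j$ classify the same quotient and therefore coincide. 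The ``in particular'' then follows because $q$ is a surjection of graded quasi-coherent $\Oc_{\tau_0(X)}$-algebras whose target is generated in degree one, so the induced morphism $j$ of relative $\Pc roj$'s is a closed immersion by the standard description of such morphisms.

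The genuinely delicate part is the bookkeeping in the middle paragraph: tracking which invertible sheaf is which ($\Oc(-D_u)$ on $\tau_0(\bl_Z(X))$ versus $\Oc(-\Ec)$ and $\Oc_B(1)$ on $B$ versus $\Oc_P(1)$ on $P$), and confirming that the surjection exhibiting $\Ec$ as a virtual Cartier divisor over $Z$ really is the particular factorization $\Ic\xrightarrow{\psi_{Z/X}}\Ic_0\to \Oc(-\Ec)$; this is exactly where Corollary \ref{ConormalVsIdealCor} and the construction preceding the theorem enter, and no new idea is needed. Alternatively, one may argue as in the proof of Theorem \ref{BlowUpAndSymmAlgebraThm}: since $i_{Z/X}$, the identification of Theorem \ref{BlowUpAndSymmAlgebraThm}, and $j$ are all natural under derived base change (using Remark \ref{BlowUpSquareOnProjNaturalityRem}), one reduces to the universal case $Z=\{0\}\hookrightarrow \Ab^n$, where $\Ic$ is the discrete ideal $\langle x_1,\dots,x_n\rangle$, $\psi_{Z/X}$ is the identity, the symmetric and Rees algebras coincide by \cite{Mic} so that $q$ is an isomorphism, and (by Theorem \ref{BlowUpPropertiesThm}(6)) the claim collapses to the compatibility of these isomorphisms with the identifications already fixed.
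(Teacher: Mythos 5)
Your proof is correct and takes essentially the same approach as the paper's: both unwind the definitions to compute the invertible quotient of $\pi_0(\Ic)$ classified by $\rho_{Z/X}\circ i_{Z/X}$, identify it as $\pi_0(\Ic)\xrightarrow{\pi_0(\psi_{Z/X})}\Ic_0\twoheadrightarrow\Oc(-\Ec)\cong\Oc_B(1)$, and match it against the quotient classified by the $\Pc roj$ of the surjection $q$, with the closed-immersion conclusion following from $q$ being a surjection of graded algebras onto one generated in degree one. Your write-up is somewhat more explicit about the functor-of-points mechanism, where the paper compresses this to the remark that the degree-one part determines the graded-algebra surjection; your alternative route via reduction to $\{0\}\hookrightarrow\Ab^n$ parallels the reduction the paper itself uses in Theorem \ref{BlowUpAndSymmAlgebraThm} and is a reasonable variant, though as you note it is left as a sketch.
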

\begin{proof}
By unwinding the definitions, we see that the universal surjection
$$\pi_0 (\Ic) \twoheadrightarrow \Oc(1)$$
on $\Pc roj\big(\Sym^*_{\tau_0(X)}(\pi_0 (\Ic))\big)$
pulls back to
$$
\begin{tikzcd}
\pi_0 (\Ic) \arrow[twoheadrightarrow]{r}{\psi_{Z/X}} & \Ic_0 \arrow[twoheadrightarrow]{r}{} & \Oc(- \Ec)
\end{tikzcd}
$$
on $\bl^\mathrm{cl}_{\tau_0(Z)}\big(\tau_0(X)\big)$. It follows that the map 
$$\Sym^*_{\tau_0(X)}\big(\pi_0(\Ic)\big) \twoheadrightarrow \Oc_{\tau_0(X)}[\Ic_0 t]$$
corresponding to $i_{Z/X}$ is as desired, because it behaves as expected in degree 1, and because the degree 1 part completely determines the map. 
\end{proof}

\subsection{Deformation to the normal bundle}\label{DeformationToNormalBundleSubSect}

The notion of a derived blow up allows us to define derived deformation to the normal bundle. The purpose of this section is to use the results obtained in the previous sections in order to study the truncation of the derived deformation space, as well as to study how it interacts with the classical deformation space. 

Let us begin with the main definition:

\begin{defn}\label{DeformationToNormalBundleDef}
Let $i: Z \hookrightarrow X$ be a quasi-smooth closed immersion of derived schemes. We define the \emph{derived deformation to the normal bundle} as the inclusion of derived schemes
\begin{center}
\begin{tikzcd}
\Ab^1 \times Z \arrow[hook]{r}{j_{Z/X}} & M(Z/X)
\end{tikzcd}
\end{center}
over $\Ab^1 \times X$, where the \emph{deformation space} $M(Z/X)$ is the open complement of the strict transform of $\{0\} \times X$ inside $\bl_{\{0\} \times Z}(\Ab^1 \times X)$, and $j_{Z/X}$ is the strict transform of $\Ab^1 \times Z$ (with restricted codomain). In particular, $j_{Z/X}$ is a quasi-smooth closed immersion.
\end{defn}

Let us recall some basic properties from \cite{Khan} Theorem 4.1.13.

\begin{thm}
Derived deformation to the normal bundle satisfies the following basic properties.
\begin{enumerate}
\item The morphism $j_{Z/X}$ is stable under derived base change.

\item Over $\Gb_m = \Ab^1 \backslash \{0\}$, $j_{Z/X}$ is equivalent to the inclusion
$$
\begin{tikzcd}
\Gb_m \times Z \arrow[hook]{r}{\mathrm{Id} \times i} & \Gb_m \times X.
\end{tikzcd}
$$

\item Over $0$, $j_{Z/X}$ is equivalent to the zero section
$$Z \hookrightarrow \Nc_{Z/X},$$
where the right hand side is considered as an $X$-scheme via the composition  $$\Nc_{Z/X} \to Z \hookrightarrow X.$$
\end{enumerate}
\end{thm}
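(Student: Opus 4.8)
The statement to prove lists three basic properties of the derived deformation to the normal bundle $j_{Z/X}\colon \Ab^1 \times Z \hookrightarrow M(Z/X)$, and all three are cited as coming from \cite{Khan} Theorem 4.1.13. The plan is therefore to deduce each one from the properties of derived blow ups recalled in Theorem \ref{BlowUpPropertiesThm}, together with the definition of $M(Z/X)$ as an open subscheme of $\bl_{\{0\}\times Z}(\Ab^1 \times X)$ and of $j_{Z/X}$ as (the restriction of) the strict transform of $\Ab^1 \times Z$.

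\medskip\noindent\textbf{Stability under base change.} For the first claim, I would start from the fact (Theorem \ref{BlowUpPropertiesThm}(3)) that the derived blow up is stable under derived base change, and observe that strict transforms are compatible with derived base change as well — this follows from the universal property in Theorem \ref{BlowUpPropertiesThm}(5), since pulling back the defining virtual Cartier divisor square along a base change $X' \to X$ produces the defining square for the strict transform over $X'$. It then remains to note that the open complement operation (removing the strict transform of $\{0\}\times X$) commutes with base change, since open immersions are stable under pullback; so $M(Z/X)$ and hence $j_{Z/X}$ are stable under derived base change.

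\medskip\noindent\textbf{Behaviour over $\Gb_m$ and over $0$.} For the second claim, restrict everything along $\Gb_m \hookrightarrow \Ab^1$: since $\{0\}\times Z$ is disjoint from $\Gb_m \times X$, the center of the blow up misses this open, so by Theorem \ref{BlowUpPropertiesThm}(2) the blow up restricts to an equivalence over $\Gb_m \times X$, the strict transform of $\{0\}\times X$ is empty there, and the strict transform of $\Ab^1 \times Z$ restricts to $\Gb_m \times Z \hookrightarrow \Gb_m \times X$ via $\mathrm{Id}\times i$. For the third claim, restrict along $\{0\}\hookrightarrow \Ab^1$: using Theorem \ref{BlowUpPropertiesThm}(4), the fibre over $0$ of $\bl_{\{0\}\times Z}(\Ab^1\times X)$ is the projective completion $\Pb_{\{0\}\times Z}(\Nc_{\{0\}\times Z / \Ab^1\times X} \oplus \Oc)$ (with the strict transform of $\{0\}\times X$ as the divisor at infinity), so removing that divisor leaves the vector bundle $\Nc_{\{0\}\times Z/\Ab^1\times X} \simeq \Nc_{Z/X}$; and by the description of strict transforms (Proposition \ref{StrictTransformProp}, or directly Theorem \ref{BlowUpPropertiesThm}(5)), the restriction of $j_{Z/X}$ to the zero fibre is identified with the zero section $Z \hookrightarrow \Nc_{Z/X}$, compatibly with the structure map to $X$.

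\medskip\noindent\textbf{Main obstacle.} Since the hard content (existence and functoriality of derived blow ups, the structure of the exceptional divisor, and the identification of strict transforms) is already packaged in Theorem \ref{BlowUpPropertiesThm} and the results of Section \ref{BlowUpTruncationSubSect}, the proof is essentially a matter of carefully chasing restrictions and strict transforms through these statements. The one genuinely delicate point is keeping track of the identification of the fibre of $j_{Z/X}$ over $0$ with the zero section \emph{as a morphism of $X$-schemes}, i.e. verifying that the composite $\Nc_{Z/X}\to Z \hookrightarrow X$ is the correct structure map; this requires matching the $\Ab^1$-coordinate bookkeeping in the blow-up of $\Ab^1\times X$ with the normal-bundle projection, and is where I expect most of the care to be needed. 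As this is a routine unwinding of \cite{Khan} Theorem 4.1.13, I would simply cite \emph{loc. cit.} for the proof.
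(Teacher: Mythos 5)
The paper does not give a proof of this theorem: the sentence immediately preceding it is ``Let us recall some basic properties from \cite{Khan} Theorem 4.1.13,'' and no proof environment follows. Your decision to cite \emph{loc.\ cit.}\ therefore agrees with the paper's own treatment, and your sketches of claims (1) and (2) are the natural unwindings of the blow-up formalism.

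Your description of the fibre over $0$ in claim (3) is slightly inaccurate, however. The derived fibre of $\bl_{\{0\}\times Z}(\Ab^1\times X)$ over $0\in\Ab^1$ is not a projective bundle: it is the total transform of $\{0\}\times X$, i.e.\ the union of the exceptional divisor $\Ec \simeq \Pb_{\{0\}\times Z}\bigl(\Nc_{\{0\}\times Z/\Ab^1\times X}\bigr) \simeq \Pb_Z\bigl(\Nc_{Z/X}\oplus\Oc\bigr)$ with the strict transform $\bl_Z(X)$ of $\{0\}\times X$, these two meeting along $\Pb_Z\bigl(\Nc_{Z/X}\bigr)$. Note also that $\Nc_{\{0\}\times Z/\Ab^1\times X}\simeq\Nc_{Z/X}\oplus\Oc$ (not $\Nc_{Z/X}$), so writing $\Pb_{\{0\}\times Z}\bigl(\Nc_{\{0\}\times Z/\Ab^1\times X}\oplus\Oc\bigr)$ introduces one $\oplus\Oc$ too many. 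What is true, and what you in fact conclude, is that after passing to $M(Z/X)$ by deleting the strict transform $\bl_Z(X)$, the fibre over $0$ becomes $\Ec\setminus\Pb_Z\bigl(\Nc_{Z/X}\bigr)\simeq\Nc_{Z/X}$, and the restriction of $j_{Z/X}$ to this fibre is the zero section. So the conclusion is right, but the intermediate identification of the blow-up fibre as a projective bundle would not survive a careful write-out; since the paper simply cites Khan, this does not affect the comparison, but it is the one point you flagged as delicate and it is where the sketch, as written, slips.
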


Our first result identifies the truncation of the derived deformation space with something more concrete, and it is inspired by Proposition 2.15 of \cite{Ver}. Let us start by explaining the setup. The scheme $\Ab^1 \times X$ can be identified as the relative spectrum of $\Oc_X[t]$. Given a quasi-smooth closed immersion $i: Z \hookrightarrow X$, the sequence
$$Z \stackrel i \hookrightarrow X \hookrightarrow \Ab^1 \times X$$
of closed immersions (the second morphism being the zero section) induces a diagram 
$$
\begin{tikzcd}
\Oc_X [t] \arrow[]{r}{} \arrow[]{d}{} & \Oc_X [t] \arrow[]{r}{} \arrow[]{d}{\cdot t} & 0 \arrow[]{d}{} \\
\Ic' \arrow[]{r}{u'} \arrow[]{d}{\psi} & \Oc_X [t] \arrow[]{r}{} \arrow[]{d}{} & \Oc_Z \arrow[]{d}{} \\
\Ic \arrow[]{r}{u} & \Oc_X \arrow[]{r}{} & \Oc_Z
\end{tikzcd}
$$
of quasi-coherent sheaves on $X$ with all rows and columns cofibre sequences. Since the morphism $\Oc_X[t] \to \Oc_X$ admits a section, so does $\psi$, and therefore 
$$\Ic' \simeq \Oc_X[t] \cdot t \oplus \Ic.$$
as quasi-coherent sheaves on $X$.

Moreover, the $\Oc_X[t]$-module structure is easy to describe, at least on the truncation: by chasing the above diagram we see that
$$t \cdot (ft,m) = (ft^2 + u(m)t, 0) \in \Oc_{\tau_0(X)}[t] \cdot t \oplus \pi_0(\Ic).$$
It is therefore easy to conclude that the symmetric algebra over $\Oc_{\tau_0(X)}[t]$ of $\pi_0(\Ic')$, whose homogeneous spectrum represents the truncated blow up $\tau_0\big(\bl_{\{0\} \times Z}(\Ab^1 \times X)\big)$ by Theorem \fref{BlowUpAndSymmAlgebraThm}, can be decomposed as the bigraded algebra
\begin{equation}\label{BigradedDeformationAlgebra}
\begin{tikzcd}
\vdots & \vdots & \vdots & \\
\Sym^2_{\tau_0(X)}\big(\pi_0(\Ic)\big) \cdot v^2 & \Sym^1_{\tau_0(X)}\big(\pi_0(\Ic)\big) \cdot v^2 t & \Oc_{\tau_0(X)} \cdot v^2t^2 & \cdots\\
\Sym^1_{\tau_0(X)}\big(\pi_0(\Ic)\big) \cdot v & \Oc_{\tau_0(X)} \cdot vt & \Oc_{\tau_0(X)} \cdot vt^2 & \cdots \\
\Oc_{\tau_0(X)} & \Oc_{\tau_0(X)} \cdot t & \Oc_{\tau_0(X)} \cdot t^2 & \cdots
\end{tikzcd}
\end{equation}
where $v$ is an abstract variable keeping track of the degree of the symmetric power of $\pi_0(\Ic')$.  By unwinding the definitions, one sees that the coefficient module of the monomial $v^n t^m$ is
$$\Sym_{\tau_0(X)}^{n-m}\big(\pi_0(\Ic)\big)$$
with the convention that the negative symmetric powers are just $\Oc_{\tau_0(X)}$, and the multiplication
$$\cdot: \Sym_{\tau_0(X)}^{n-m}\big(\pi_0(\Ic)\big) \cdot v^n t^m \times \Sym_{\tau_0(X)}^{n'-m'}\big(\pi_0(\Ic)\big) \cdot v^{n'} t^{m'} \to \Sym_{\tau_0(X)}^{n+n'-m-m'}\big(\pi_0(\Ic)\big) \cdot v^{n+n'} t^{m+m'}$$
is given by the table
\begin{equation}\label{MultiplicationTable}
(\alpha \cdot v^n t^m) \cdot (\beta \cdot v^{n'} t^{m'}) =  
\begin{cases}
\alpha \beta \cdot v^{n+n'} t^{m+m'} & \text{if $n-m \geq 0$ and $n'-m' \geq 0$}; \\
\beta u^{m' - n'}(\alpha) \cdot v^{n+n'} t^{m+m'} & \text{if $n-m \geq 0$ and $n'-m' < 0$}; \\
\alpha u^{m-n}(\beta) \cdot v^{n+n'} t^{m+m'} & \text{if $n-m < 0$ and $n'-m' \geq 0$}; \\
\alpha \beta \cdot v^{n+n'} t^{m+m'} & \text{otherwise.}
\end{cases}
\end{equation}
where we have abusively denoted by $u$ the morphisms 
$$\Sym_{\tau_0(X)}^{i+1}\big(\pi_0(\Ic)\big) \to \Sym^i_{\tau_0(X)}\big(\pi_0(\Ic)\big)$$
given by 
$$m_0 m_1 \cdots m_i \mapsto u(m_0) m_1 \cdots m_i,$$
which is well defined since $(\pi_0(\Ic), u)$ is an ideal-like sheaf. It is then easy to prove the following theorem. 

\begin{thm}\label{DeformationSpaceAsSpecThm}
Let $i: Z \hookrightarrow X$ be a quasi-smooth closed immersion, and let $\Ic$ be the fibre of $i^\sharp: \Oc_X \to i_* \Oc_Z$. Then the isomorphism $\rho_{\{0\} \times Z / \Ab^1 \times X}$ of Theorem \fref{BlowUpAndSymmAlgebraThm} restricts to a natural isomorphism
$$\rho^\circ_{Z/X}: \tau_0\big(M(Z/X)\big) \stackrel\cong\to \Sc pec \Big( \bigoplus_{i \in \Zb} \Sym^i_{\tau_0(X)} \big(\pi_0(\Ic)\big) \cdot t^{-i}\Big)$$
of schemes over $\tau_0(X)$, where by convention $\Sym^i_{\tau_0(X)} \big(\pi_0(\Ic)\big) := \Oc_{\tau_0(X)}$ for $i$ negative and the algebra structure is obvious (see the discussion preceding the statement).
\end{thm}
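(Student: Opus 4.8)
The strategy is to identify the open subscheme $M(Z/X) \subset \bl_{\{0\} \times Z}(\Ab^1 \times X)$ by describing its truncation in terms of the bigraded algebra \eqref{BigradedDeformationAlgebra}, and then to recognize the resulting open subscheme of $\Proj$ as the relative $\Sc pec$ of the displayed $\Zb$-graded algebra. First I would recall that $M(Z/X)$ is by definition the open complement of the strict transform of $\{0\} \times X \hookrightarrow \Ab^1 \times X$ inside $\bl_{\{0\} \times Z}(\Ab^1 \times X)$. By Theorem \ref{BlowUpAndSymmAlgebraThm}, $\tau_0\big(\bl_{\{0\} \times Z}(\Ab^1 \times X)\big)$ is identified with $\Pc roj$ of the bigraded algebra in \eqref{BigradedDeformationAlgebra}, where the grading that defines $\Proj$ is the $v$-grading (i.e. the symmetric-power degree of $\pi_0(\Ic')$), and $t$ has $v$-degree $0$. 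Using Proposition \ref{StrictTransformProp} applied to the sequence $\{0\} \times Z \hookrightarrow \{0\} \times X \hookrightarrow \Ab^1 \times X$, I would identify the truncation of the strict transform of $\{0\} \times X$ with the closed subscheme cut out by the homogeneous ideal generated by $t$ (this is the degree-$1$ part of $\psi_{Z/X/Y}$ in the notation there, since $\{0\} \times X$ is cut out of $\Ab^1 \times X$ by $t$, and its conormal data pulls back to multiplication by $t$). Hence $\tau_0(M(Z/X))$ is the basic open subset $D_+(t)$ of $\Pc roj$ of \eqref{BigradedDeformationAlgebra}.

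**Computing the distinguished open.** The next step is a direct computation of $D_+(t)$. Inverting the degree-$0$ (in the $v$-grading) element $t$ in the bigraded algebra \eqref{BigradedDeformationAlgebra} and passing to the $v$-degree-$0$ part of the localization, one gets exactly $\bigoplus_{i \in \Zb} \Sym^i_{\tau_0(X)}\big(\pi_0(\Ic)\big) \cdot t^{-i}$: a homogeneous generator $\alpha \cdot v^n t^m$ of $v$-degree $n$ becomes, after dividing by $t^n$, the element $\alpha \cdot v^n t^{m-n}$ of $v$-degree $0$, which under the relabeling $i = n-m$ is precisely $\alpha \cdot t^{-i}$ with $\alpha \in \Sym^i_{\tau_0(X)}(\pi_0(\Ic))$ (and $\Sym^i := \Oc_{\tau_0(X)}$ for $i<0$, matching the convention for negative symmetric powers used throughout). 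One must check that the multiplication induced from the table \eqref{MultiplicationTable} on this degree-$0$ part agrees with the obvious algebra structure on $\bigoplus_i \Sym^i_{\tau_0(X)}(\pi_0(\Ic)) \cdot t^{-i}$ described in the paragraph preceding the theorem statement; this is a routine bookkeeping check split into the four cases of \eqref{MultiplicationTable}, all of which collapse to multiplication of symmetric tensors composed with appropriate powers of $u$, which is exactly the structure of the target algebra. Therefore $\rho_{\{0\} \times Z / \Ab^1 \times X}$ restricts to an isomorphism $\tau_0(M(Z/X)) \cong \Sc pec\big(\bigoplus_i \Sym^i_{\tau_0(X)}(\pi_0(\Ic)) \cdot t^{-i}\big)$ over $\tau_0(X)$.

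**Naturality.** Finally, the naturality of $\rho^\circ_{Z/X}$ follows from the naturality of $\rho_{Z/X}$ (Theorem \ref{BlowUpAndSymmAlgebraThm} together with Remark \ref{BlowUpSquareOnProjNaturalityRem}), since all the constructions involved — forming $\bl_{\{0\} \times Z}(\Ab^1 \times X)$, the strict transform of $\{0\} \times X$, its open complement, and the bigraded decomposition \eqref{BigradedDeformationAlgebra} — are compatible with derived base change in $i: Z \hookrightarrow X$; this is the content of Theorem \ref{BlowUpPropertiesThm}(3) and part (1) of the properties of $j_{Z/X}$ recalled above. The only mild subtlety, and the step I expect to require the most care, is confirming that the truncation of the strict transform of $\{0\} \times X$ really is the vanishing locus of $t$ (as opposed to a smaller or larger closed subscheme): this needs Proposition \ref{StrictTransformProp} and an inspection of the morphism $\psi_{Z/X/Y}$ in degree $1$, using that $\{0\}\times X \hookrightarrow \Ab^1\times X$ is the zero section with ideal generated by $t$, so that $\pi_0(\Ic')_{\text{for }\{0\}\times X} \to \pi_0(\Ic'_{\text{for }\{0\}\times Z})$ recovers multiplication by $t$ after the identification $\Ic' \simeq \Oc_X[t]\cdot t \oplus \Ic$. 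Once this identification is in hand, everything else is formal manipulation of graded algebras. The proof in the text can simply say "the proof is a straightforward but tedious unwinding of the definitions and the multiplication table \eqref{MultiplicationTable}," but the skeleton above is the content.
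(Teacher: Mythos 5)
Your overall plan coincides with the paper's: identify the truncated strict transform of $\{0\}\times X$ via Proposition \ref{StrictTransformProp}, take the open complement, and recognize it as the relative $\Sc pec$ of the degree-zero part of a localization. However, there is a concrete error in the middle step. You assert that the strict transform is the vanishing locus of ``the homogeneous ideal generated by $t$'' and that $\tau_0(M(Z/X))$ is $D_+(t)$, and you even call $t$ a ``degree-$0$ (in the $v$-grading) element.'' But $t$, being a $v$-degree-$0$ element, does not cut out a Proj-closed subscheme of positive degree; $\{t=0\}$ in $\Pc roj$ is the \emph{total} transform of $\{0\}\times\tau_0(X)$, not the strict transform, and $\{t\neq 0\}$ is just $\Gb_m\times\tau_0(X)$ (the locus where the blow-up is an isomorphism), which is not $\tau_0(M(Z/X))$. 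Your localization arithmetic reflects this inconsistency: dividing $\alpha\, v^n t^m$ by $t^n$ yields $\alpha\, v^n t^{m-n}$, which still has $v$-degree $n$, not $0$ as you claim.

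The paper instead observes (via Proposition \ref{StrictTransformProp}) that the kernel of the degree-one surjection $\pi_0(\Ic')\twoheadrightarrow\pi_0(\Ic)$ is $\Oc_{\tau_0(X)}[t]\cdot t$, which as a homogeneous ideal of the bigraded algebra is generated by the $v$-degree-$1$ element $vt$ (the image of $t\in\pi_0(\Ic')$ inside $\Sym^1$). Thus the strict transform is $V(vt)$, its complement is $D_+(vt)$, and one localizes at the degree-one element $vt$; then $\alpha\, v^n t^m/(vt)^n = \alpha\, t^{m-n}$ has $v$-degree zero, giving $\bigoplus_i \Sym^i_{\tau_0(X)}(\pi_0(\Ic))\cdot t^{-i}$ after setting $i=n-m$. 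I suspect you were conflating the ambient variable $t$ (which has $v$-degree $0$) with the generator $t\in\pi_0(\Ic')$ (which sits in $v$-degree $1$ and corresponds to the monomial $vt$). Once this is untangled --- the strict transform is $V(vt)$ and one inverts $vt$, not $t$ --- the rest of your computation, including the naturality argument, goes through as you describe.
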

\begin{proof}
Using the notation of the discussion preceding the statement, we can translate Proposition \fref{StrictTransformProp} as saying that the truncated strict transform of $\{0\} \times X$ inside the truncated blow up $\tau_0(\bl_{\{0\} \times Z}\big(\Ab^1 \times X)\big)$ is the vanishing locus of $vt$. Hence $\tau_0\big(M(Z/X)\big)$ is naturally identified with the relative spectrum of the $v$-degree $0$ part of the bigraded algebra in (\fref{BigradedDeformationAlgebra}) after inverting $vt$ (which is a non zero divisor), and the result is clearly as claimed in the statement.   
\end{proof}

Since being affine can be checked on the truncation, we immediately obtain the following piece of trivia. 

\begin{cor}
The structure morphism $M(Z/X) \to \Ab^1 \times X$ is affine. 
\end{cor}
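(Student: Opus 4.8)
The plan is to reduce the statement to its classical shadow, exactly as the remark preceding it suggests. Recall the standard fact that, for (Noetherian) derived schemes, a morphism $f \colon X \to Y$ is affine if and only if the induced morphism of classical truncations $\tau_0(f) \colon \tau_0(X) \to \tau_0(Y)$ is affine: affineness is a representable property, a quasi-compact quasi-separated derived scheme is affine precisely when its underlying classical scheme is, and truncation commutes with the derived fibre products appearing in the definition of an affine morphism (on classical schemes $\tau_0$ of a derived fibre product is the ordinary fibre product). Applying this to $M(Z/X) \to \Ab^1 \times X$, and using $\tau_0(\Ab^1 \times X) = \Ab^1 \times \tau_0(X)$, it suffices to prove that the induced morphism $\tau_0\big(M(Z/X)\big) \to \Ab^1 \times \tau_0(X)$ is affine.

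Next I would invoke Theorem \ref{DeformationSpaceAsSpecThm}, which provides a natural isomorphism
$$\rho^\circ_{Z/X} \colon \tau_0\big(M(Z/X)\big) \xrightarrow{\ \cong\ } \Sc pec\Big( \bigoplus_{i \in \Zb} \Sym^i_{\tau_0(X)}\big(\pi_0(\Ic)\big) \cdot t^{-i}\Big)$$
of schemes over $\tau_0(X)$. In particular the structure morphism $\tau_0\big(M(Z/X)\big) \to \tau_0(X)$ is a relative spectrum, hence affine.

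Finally I would run the usual cancellation argument for affine morphisms. Consider the factorisation
$$\tau_0\big(M(Z/X)\big) \longrightarrow \Ab^1 \times \tau_0(X) \xrightarrow{\ \mathrm{pr}\ } \tau_0(X)$$
of this affine structure morphism through the projection $\mathrm{pr}$, which is itself affine and in particular separated. Then the first arrow is affine: it factors as its graph over $\tau_0(X)$, which is a closed immersion because $\mathrm{pr}$ is separated, followed by the base change of the affine morphism $\tau_0\big(M(Z/X)\big) \to \tau_0(X)$ along $\mathrm{pr}$; a composite of affine morphisms is affine. Undoing the reduction of the first paragraph, $M(Z/X) \to \Ab^1 \times X$ is affine.

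I do not expect any genuine obstacle here: all the substance has already been established in Theorem \ref{DeformationSpaceAsSpecThm}, and the only points requiring a moment's care are the (routine) bookkeeping that affineness is detected on truncations and the elementary cancellation argument. Indeed the paper itself flags this corollary as "trivia".
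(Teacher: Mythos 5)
Your argument is correct and follows the same route the paper gestures at: reduce to the truncation and then use the relative-spectrum description of Theorem \ref{DeformationSpaceAsSpecThm}. The one place where your write-up diverges from the most direct reading is the final step: you recover affineness over $\Ab^1 \times \tau_0(X)$ by a cancellation argument (affine over $\tau_0(X)$, projection separated, hence affine over the product). That is perfectly valid, but it is slightly roundabout. The quicker observation is that the quasi-coherent algebra $\bigoplus_{i \in \Zb} \Sym^i_{\tau_0(X)}\big(\pi_0(\Ic)\big) \cdot t^{-i}$, with $\Sym^i = \Oc_{\tau_0(X)}$ for $i < 0$, visibly contains $\Oc_{\tau_0(X)}[t]$ as the sub-algebra of non-positive $\Ic$-degree (note $t$ itself lives in degree $i = -1$), so it is already an $\Oc_{\tau_0(X)}[t]$-algebra and the relative spectrum is directly a relative spectrum over $\Ab^1 \times \tau_0(X)$; indeed Theorem \ref{DeformationSpaceAndVectorBundlesThm} uses exactly this compatibility with the map to $\Ab^1$. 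Either way the statement follows; your version trades a moment's inspection of the algebra for a standard permanence property of affine morphisms, which is a reasonable preference.
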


Let us also record an alternative description for the truncation of $j_{Z/X}$:

\begin{cor}\label{InclusionToDeformationSpaceCor}
The composition
\begin{center}
\begin{tikzcd}
\Ab^1 \times \tau_0(Z) \arrow[hook]{r}{\tau_0(j_{Z/X})} & \tau_0\big(M(Z/X)\big) \arrow[]{r}{\rho^\circ_{Z/X}}  & \Sc pec \Big( \bigoplus_{i \in \Zb} \Sym^i_{\tau_0(X)} \big(\pi_0(\Ic)\big) \cdot t^{-i}\Big) 
\end{tikzcd}
\end{center}
is the morphism induced by the obvious surjection
$$\bigoplus_{i \in \Zb} \Sym^i_{\tau_0(X)} \big(\pi_0(\Ic)\big) \cdot t^{-i} \twoheadrightarrow \bigoplus_{i \leq 0} \Oc_{\tau_0(Z)} \cdot t^{-i}.$$
\end{cor}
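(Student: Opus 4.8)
The plan is to deduce the statement from Proposition~\ref{StrictTransformProp} (which computes truncated strict transforms) and Theorem~\ref{DeformationSpaceAsSpecThm}, by careful bookkeeping with the bigrading of~\eqref{BigradedDeformationAlgebra}.

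The first step is to recognize what $j_{Z/X}$ is. By Definition~\ref{DeformationToNormalBundleDef} it is the strict transform of the sequence of quasi-smooth closed embeddings
$$\{0\}\times Z \hookrightarrow \Ab^1\times Z \hookrightarrow \Ab^1\times X,$$
with codomain restricted to the open subscheme $M(Z/X)\subset\bl_{\{0\}\times Z}(\Ab^1\times X)$. Here $\{0\}\times Z$ is an effective Cartier divisor in $\Ab^1\times Z$, so $\bl_{\{0\}\times Z}(\Ab^1\times Z)\simeq\Ab^1\times Z$ and the fibre $\Ic_X:=\Fib(\Oc_{\Ab^1\times Z}\to\Oc_{\{0\}\times Z})$ is the line bundle $t\cdot\Oc_{\Ab^1\times Z}$; in particular $\Sym^*_{\Oc_{\Ab^1\times\tau_0(Z)}}\big(\pi_0(\Ic_X)\big)$ is a polynomial ring on a single degree-one generator. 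Writing $\Ic':=\Fib(\Oc_{\Ab^1\times X}\to\Oc_{\{0\}\times Z})$, I would then apply Proposition~\ref{StrictTransformProp} to this sequence: under the identification of Theorem~\ref{BlowUpAndSymmAlgebraThm} it expresses $\tau_0(\tilde j)$ as the morphism induced by the surjection of sheaves of symmetric algebras determined by $\pi_0(\psi_{\{0\}\times Z/\Ab^1\times Z/\Ab^1\times X})$ together with the structure map.

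The key computation is to evaluate this $\pi_0(\psi)$ on the splitting $\pi_0(\Ic')\simeq\Oc_{\Ab^1\times\tau_0(X)}\cdot t\oplus\pi_0(\Ic)$ recorded before Theorem~\ref{DeformationSpaceAsSpecThm} (where $\Ic=\Fib(\Oc_X\to\Oc_Z)$). I expect that on the first summand $\pi_0(\psi)$ is coefficient restriction along $\Ab^1\times\tau_0(Z)\hookrightarrow\Ab^1\times\tau_0(X)$, while on the second summand $\pi_0(\Ic)$ it vanishes: an element of $\Fib(\Oc_X\to\Oc_Z)$, embedded into $\Ic'$ via the section of $\Oc_X[t]\to\Oc_X$ and then mapped into $\Fib(\Oc_{\Ab^1\times Z}\to\Oc_{\{0\}\times Z})=t\cdot\Oc_{\Ab^1\times Z}$, becomes an internal-degree-zero element of a sheaf supported in positive internal degree. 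All relevant mapping spaces are discrete, as in the proofs of Theorems~\ref{BlowUpSquareOnProjThm} and~\ref{DeformationSpaceAsSpecThm}, so this is a plain affine-local verification with no higher coherence.

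Feeding this into the bigraded description~\eqref{BigradedDeformationAlgebra}, the induced map of symmetric algebras sends a monomial $\alpha\cdot v^n t^m$ with $\alpha\in\Sym^{n-m}_{\tau_0(X)}(\pi_0(\Ic))$ to zero when $n-m\ge 1$ and to the coefficient restriction of $\alpha$ (landing in $\Oc_{\tau_0(Z)}\cdot v^n t^m$, which occurs precisely when $m\ge n$) otherwise; its target is exactly $\Sym^*_{\Oc_{\Ab^1\times\tau_0(Z)}}\big(\pi_0(j_*\Ic_X)\big)=\bigoplus_{m\ge n\ge 0}\Oc_{\tau_0(Z)}\,v^n t^m$. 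Inverting $vt$ and passing to the $v$-degree-$0$ part, exactly as in the proof of Theorem~\ref{DeformationSpaceAsSpecThm}, turns the source into $\bigoplus_{i\in\Zb}\Sym^i_{\tau_0(X)}(\pi_0(\Ic))\cdot t^{-i}$ and the target into $\bigoplus_{i\le 0}\Oc_{\tau_0(Z)}\cdot t^{-i}$, and the induced map becomes the surjection in the statement (coefficient restriction in degrees $i\le 0$, zero in degrees $i\ge 1$). Since $\rho^\circ_{Z/X}$ is the restriction of $\rho_{\{0\}\times Z/\Ab^1\times X}$ to $M(Z/X)$ under this identification and $\tau_0(j_{Z/X})$ is the restriction of $\tau_0(\tilde j)$, composing yields the claim. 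The main obstacle I anticipate is not conceptual but the need to keep the two gradings, the convention $\Sym^i=\Oc$ for $i\le 0$, and the localization step consistent throughout the last two steps.
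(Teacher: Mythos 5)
Your proof is correct and follows essentially the same route as the paper: apply Proposition~\ref{StrictTransformProp} to the sequence $\{0\}\times Z\hookrightarrow\Ab^1\times Z\hookrightarrow\Ab^1\times X$, identify the resulting surjection of bigraded algebras, then invert $vt$ and pass to $v$-degree $0$ as in the proof of Theorem~\ref{DeformationSpaceAsSpecThm}. The one place you are more explicit than the paper is the computation of $\pi_0(\psi_{\{0\}\times Z/\Ab^1\times Z/\Ab^1\times X})$ on the splitting $\pi_0(\Ic')\simeq\Oc_{\Ab^1\times\tau_0(X)}\cdot t\oplus\pi_0(\Ic)$; the paper summarizes this as ``the natural surjection $\Oc_{\tau_0(X)}\to\Oc_{\tau_0(Z)}$ on each bidegree where it makes sense,'' and your verification that it restricts coefficients on the $t$-summand and kills the $\pi_0(\Ic)$-summand is exactly the justification of that phrase.
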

\begin{proof}
In the notation of the discussion preceding Theorem \fref{DeformationSpaceAsSpecThm}, we can identify $\Ab^1 \times \tau_0(Z)$ (considered as the truncation of $\bl_{\{0\} \times Z}(\Ab^1 \times Z)$) as the homogeneous spectrum (with respect to $v$-degree) of the bigraded algebra
\begin{equation}\label{BigradedDeformationAlgebra2}
\begin{tikzcd}
\vdots & \vdots & \vdots & \\
0 & 0 & \Oc_{\tau_0(Z)} \cdot v^2t^2 & \cdots\\
0  & \Oc_{\tau_0(Z)} \cdot vt & \Oc_{\tau_0(Z)} \cdot vt^2 & \cdots \\
\Oc_{\tau_0(Z)} & \Oc_{\tau_0(Z)} \cdot t & \Oc_{\tau_0(Z)} \cdot t^2 & \cdots
\end{tikzcd}
\end{equation}
and by Proposition \fref{StrictTransformProp}, the morphism we are interested in is the one induced by the morphism of bigraded algebras from (\fref{BigradedDeformationAlgebra}) to (\fref{BigradedDeformationAlgebra2}) which is the natural surjection $\Oc_{\tau_0(X)} \to \Oc_{\tau_0(Z)}$ on each bidegree where it makes sense. Inverting $vt$ and passing to the $v$-degree 0 homogeneous parts, we obtain the desired result.
\end{proof}

It is also easy to describe how the classical deformation space sits inside the derived one.

\begin{thm}\label{ClassicalToDerivedDeformationSpaceThm}
Let $i: Z \hookrightarrow X$ be a quasi-smooth closed immersion. Then the closed immersion $i_{\{0\} \times Z/ \Ab^1 \times X}$ of Theorem \fref{InclusionOfClassicalBlowUpAsSymmetricAlgThm} restricts to the closed immersion
$$i^\circ_{Z/X}: \Sc pec\Big( \bigoplus_{i \in \Zb} \Ic_0^i \cdot t^{-i}\Big) = M^\mathrm{cl}\big(\tau_0(Z)/ \tau_0(X)\big) \hookrightarrow \Sc pec \Big( \bigoplus_{i \in \Zb} \Sym^i_{\tau_0(X)} \big(\pi_0(\Ic)\big) \cdot t^{-i}\Big)$$
induced by the natural surjection
$$\bigoplus_{i \in \Zb} \Sym^i_{\tau_0(X)} \big(\pi_0(\Ic)\big) \cdot t^{-i} \twoheadrightarrow \bigoplus_{i \in \Zb} \Ic_0^i \cdot t^{-i},$$
where $\Ic_0$ is the sheaf of ideals cutting $\tau_0(Z)$ from $\tau_0(X)$, $\Ic_0^i := \Oc_{\tau_0(X)}$ for $i$ negative and $\pi_0(\Ic) \twoheadrightarrow \Ic_0$ is the natural map $\psi_{Z/X}$ as in Section \fref{ComparisonWithClassicalSubSect}.
\end{thm}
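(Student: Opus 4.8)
The plan is to deduce everything from the description of $i_{\{0\}\times Z/\Ab^1\times X}$ furnished by Theorem \ref{InclusionOfClassicalBlowUpAsSymmetricAlgThm}, combined with the bigraded presentation of $\tau_0\big(M(Z/X)\big)$ assembled just before Theorem \ref{DeformationSpaceAsSpecThm}. As in the proofs of Theorems \ref{BlowUpSquareOnProjThm}, \ref{BlowUpAndSymmAlgebraThm} and \ref{InclusionOfClassicalBlowUpAsSymmetricAlgThm}, all sheaves and mapping spaces that intervene are discrete, so the whole argument can be carried out affine-locally on $X$ with $Z = \Spec\big(A\modmod(a_1,\dots,a_r)\big)\hookrightarrow\Spec(A)$, and there are no higher coherences to worry about.

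First I would record the global shape of the situation. Write $\Ic'$ for the fibre of $\big(\{0\}\times Z\hookrightarrow\Ab^1\times X\big)^\sharp$ and $\Ic'_0$ for the sheaf of ideals cutting $\{0\}\times\tau_0(Z)$ out of $\Ab^1\times\tau_0(X)$, so that $\pi_0(\Ic')\simeq\Oc_{\tau_0(X)}[t]\cdot t\ \oplus\ \pi_0(\Ic)$ as in the discussion preceding Theorem \ref{DeformationSpaceAsSpecThm}, and $\psi_{\{0\}\times Z/\Ab^1\times X}\colon\Ic'\to\Ic'_0$ as in Section \ref{ComparisonWithClassicalSubSect}. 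Applying Theorem \ref{InclusionOfClassicalBlowUpAsSymmetricAlgThm} to the derived regular embedding $\{0\}\times Z\hookrightarrow\Ab^1\times X$ identifies $i_{\{0\}\times Z/\Ab^1\times X}$, under the isomorphism $\rho_{\{0\}\times Z/\Ab^1\times X}$ of Theorem \ref{BlowUpAndSymmAlgebraThm}, with the morphism of homogeneous spectra (with respect to the auxiliary grading variable $v$) induced by the surjection of bigraded $\Oc_{\tau_0(X)}[t]$-algebras $\Sym^*_{\tau_0(X)[t]}\big(\pi_0(\Ic')\big)\twoheadrightarrow\Oc_{\tau_0(X)[t]}[\Ic'_0 v]$ coming from $\pi_0(\psi_{\{0\}\times Z/\Ab^1\times X})$. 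Using naturality of the square defining $\psi_{-/-}$ under base change along the zero section $X\hookrightarrow\Ab^1\times X$, in degree one this surjection is the identity on the summand $\Oc_{\tau_0(X)}[t]\cdot t$ and is $\pi_0(\psi_{Z/X})$ on the summand $\pi_0(\Ic)$; in particular it carries the distinguished degree-one element $vt$ to $vt$.

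Next I would cut out the deformation spaces. By the proof of Theorem \ref{DeformationSpaceAsSpecThm} (via Proposition \ref{StrictTransformProp}), the truncated strict transform of $\{0\}\times X$ inside $\tau_0\big(\bl_{\{0\}\times Z}(\Ab^1\times X)\big)$ is the vanishing locus of $vt$, so $\tau_0\big(M(Z/X)\big)$ is the distinguished open $D_+(vt)$ and $\rho^\circ_{Z/X}$ identifies its $v$-degree-zero coordinate ring with $\bigoplus_{i\in\Zb}\Sym^i_{\tau_0(X)}\big(\pi_0(\Ic)\big)\cdot t^{-i}$. Classically, the strict transform of $\{0\}\times\tau_0(X)$ inside $\bl^\cl_{\{0\}\times\tau_0(Z)}\big(\Ab^1\times\tau_0(X)\big) = \Pc roj_v\big(\Oc_{\tau_0(X)[t]}[\Ic'_0 v]\big)$ is cut out by the degree-one element $vt$ of the Rees algebra, and deleting it and passing to $v$-degree zero recovers the extended Rees algebra $\bigoplus_{i\in\Zb}\Ic_0^i\cdot t^{-i}$, i.e.\ $M^\cl\big(\tau_0(Z)/\tau_0(X)\big)$ (this is the usual deformation to the normal cone, cf.\ \cite{Ver}; it also falls out of the same bigraded computation that underlies Theorem \ref{DeformationSpaceAsSpecThm}). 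Since the surjection of the previous paragraph sends $vt$ to $vt$, the preimage under $i_{\{0\}\times Z/\Ab^1\times X}$ of the first strict transform is the second; hence $i_{\{0\}\times Z/\Ab^1\times X}$ restricts to a morphism $M^\cl\big(\tau_0(Z)/\tau_0(X)\big)\to\tau_0\big(M(Z/X)\big)$, and this restriction is a closed immersion, being the restriction of a closed immersion to the preimage of an open subscheme.

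Finally I would identify this restriction explicitly: it is the morphism of affine $\tau_0(X)$-schemes induced by the map on $v$-degree-zero parts of the localizations at $vt$ of $\Sym^*_{\tau_0(X)[t]}\big(\pi_0(\Ic')\big)\twoheadrightarrow\Oc_{\tau_0(X)[t]}[\Ic'_0 v]$. Reading this off the bigraded table \eqref{BigradedDeformationAlgebra} and the multiplication rule \eqref{MultiplicationTable}, the summand indexed by $i\in\Zb$ is $\Sym^i_{\tau_0(X)}\big(\pi_0(\Ic)\big)\cdot t^{-i}$ on the source (with $\Sym^i:=\Oc_{\tau_0(X)}$ for $i<0$) and $\Ic_0^i\cdot t^{-i}$ on the target, and the induced map is the identity for $i<0$ and, for $i\ge 0$, the composite $\Sym^i_{\tau_0(X)}\big(\pi_0(\Ic)\big)\to\Sym^i_{\tau_0(X)}(\Ic_0)\to\Ic_0^i$ of $\Sym^i\big(\pi_0(\psi_{Z/X})\big)$ with multiplication — that is, precisely the natural surjection $\bigoplus_{i\in\Zb}\Sym^i_{\tau_0(X)}\big(\pi_0(\Ic)\big)t^{-i}\twoheadrightarrow\bigoplus_{i\in\Zb}\Ic_0^i\,t^{-i}$ induced by $\pi_0(\psi_{Z/X})$, which is the assertion of the theorem. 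The only genuinely delicate point is this last degree-by-degree bookkeeping against \eqref{BigradedDeformationAlgebra} and \eqref{MultiplicationTable} (and the matching check that $\pi_0(\psi_{\{0\}\times Z/\Ab^1\times X})$ restricts to $\pi_0(\psi_{Z/X})$ on the $\pi_0(\Ic)$-summand); both are routine, and the conceptual content is already carried by Theorems \ref{InclusionOfClassicalBlowUpAsSymmetricAlgThm} and \ref{DeformationSpaceAsSpecThm}.
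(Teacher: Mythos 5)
Your proposal is correct and follows the same route the paper takes: apply Theorem \ref{InclusionOfClassicalBlowUpAsSymmetricAlgThm} to $\{0\}\times Z\hookrightarrow\Ab^1\times X$ to present $i_{\{0\}\times Z/\Ab^1\times X}$ via a surjection of bigraded algebras, then invert $vt$ and pass to $v$-degree zero as in the proofs of Theorem \ref{DeformationSpaceAsSpecThm} and Corollary \ref{InclusionToDeformationSpaceCor}. The paper compresses this into a one-sentence remark referring back to those proofs; your write-up simply makes the identical bookkeeping explicit.
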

\begin{proof}
The proof is essentially the same as the previous two proofs, but uses Theorem \fref{InclusionOfClassicalBlowUpAsSymmetricAlgThm} instead of Proposition \fref{StrictTransformProp}.
\end{proof}

Given a closed inclusion $Z \hookrightarrow X$ of classical schemes, let us denote by $C_{Z/X}$ the \emph{normal cone} of $Z$ in $X$. Recall that this is the relative spectrum over $Z$ of the symmetric algebra $\Sym^*_Z(\pi_0(\Nc^\vee_{Z/X}))$ 
on the classical conormal sheaf of the inclusion. Note that if $Z \hookrightarrow X$ is a closed embedding of derived schemes, then the canonical surjection
$$\pi_0(\Nc^\vee_{Z/X}) \twoheadrightarrow \pi_0(\Nc^\vee_{\tau_0(Z)/\tau_0(X)})$$
induces a canonical closed immersion
$$\iota_{Z/X}: C_{\tau_0(Z)/\tau_0(X)} \hookrightarrow \tau_0(\Nc^\vee_{Z/X})$$
of schemes over $\tau_0(Z)$. We then have the following result.

\begin{cor}\label{ClassicalToDerivedDeformSpaceSpecialFibreCor}
Let $i: Z \hookrightarrow X$ be a quasi-smooth closed immersion. Under the identification of Theorem \fref{ClassicalToDerivedDeformationSpaceThm}, the (classical) fibre of the natural inclusion
$$i^\circ_{Z/X}: M^\mathrm{cl}\big(\tau_0(Z)/ \tau_0(X)\big) \hookrightarrow \tau_0\big(M(Z/X)\big)$$
over $0$ is identified with $\iota_{Z/X}$.  
\end{cor}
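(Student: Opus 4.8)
The plan is to compute the classical fibre over $0 \in \Ab^1$ on both sides of the inclusion $i^\circ_{Z/X}$ using the explicit presentation furnished by Theorem \fref{ClassicalToDerivedDeformationSpaceThm}, and to check that the outcome is precisely $\iota_{Z/X}$. Recall that, under the identifications of Theorems \fref{DeformationSpaceAsSpecThm} and \fref{ClassicalToDerivedDeformationSpaceThm}, the morphism $i^\circ_{Z/X}$ is the relative spectrum over $\tau_0(X)$ of the surjection of $\Zb$-graded $\Oc_{\tau_0(X)}$-algebras
$$\bigoplus_{i \in \Zb} \Sym^i_{\tau_0(X)}\big(\pi_0(\Ic)\big) \cdot t^{-i} \twoheadrightarrow \bigoplus_{i \in \Zb} \Ic_0^i \cdot t^{-i}$$
induced in degree $1$ by $\psi_{Z/X} \colon \pi_0(\Ic) \twoheadrightarrow \Ic_0$, and that the structure morphism to $\Ab^1$ is given by the element $t$, which sits in the summand of index $-1$. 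Since the corollary concerns the classical fibre, I would simply reduce both graded algebras modulo the ideal generated by $t$.

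First I would carry out the two reductions. By the description of the algebra structure given in the discussion preceding Theorem \fref{DeformationSpaceAsSpecThm}, multiplication by $t$ on the index-$i$ component of $\bigoplus_i \Sym^i_{\tau_0(X)}(\pi_0(\Ic)) \cdot t^{-i}$ is the structure map $u \colon \Sym^{i+1}_{\tau_0(X)}(\pi_0(\Ic)) \to \Sym^i_{\tau_0(X)}(\pi_0(\Ic))$ (and the identity on bookkeeping symbols in non-positive index); hence all indices $i \leq 0$ are killed, and since $\psi_{Z/X}$ is surjective the image of $u$ on $\Sym^{i+1}_{\tau_0(X)}(\pi_0(\Ic))$ equals $\Ic_0 \cdot \Sym^i_{\tau_0(X)}(\pi_0(\Ic))$, so the index-$i$ quotient is $\Sym^i_{\tau_0(X)}(\pi_0(\Ic)) \otimes_{\Oc_{\tau_0(X)}} \Oc_{\tau_0(Z)} = \Sym^i_{\tau_0(Z)}\big(\pi_0(\Ic)\vert_{\tau_0(Z)}\big)$. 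By Corollary \fref{ConormalVsIdealCor}, applied both to the identity square and to the square with rows $\tau_0(Z) \hookrightarrow \tau_0(X)$ and $Z \hookrightarrow X$, the restriction $\pi_0(\Ic)\vert_{\tau_0(Z)}$ is canonically $\pi_0(\Nc^\vee_{Z/X})$, so the reduction of the source is $\Sym^*_{\tau_0(Z)}(\pi_0(\Nc^\vee_{Z/X}))$, whose relative spectrum over $\tau_0(Z)$ is $\tau_0(\Nc_{Z/X})$ — in accordance with the third property recalled from \cite{Khan} Theorem 4.1.13. The identical computation applied to the Rees algebra $\bigoplus_i \Ic_0^i \cdot t^{-i}$, where multiplication by $t$ in index $i$ is the inclusion $\Ic_0^{i+1} \hookrightarrow \Ic_0^i$, yields the associated graded $\bigoplus_{i \geq 0} \Ic_0^i/\Ic_0^{i+1}$, whose relative spectrum over $\tau_0(Z)$ is the normal cone $C_{\tau_0(Z)/\tau_0(X)}$.

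It then remains to identify the induced morphism. Since reduction modulo $t$ is functorial, the fibre of $i^\circ_{Z/X}$ over $0$ is the relative spectrum of the surjection $\Sym^*_{\tau_0(Z)}(\pi_0(\Nc^\vee_{Z/X})) \twoheadrightarrow \bigoplus_{i \geq 0}\Ic_0^i/\Ic_0^{i+1}$ which in degree $1$ is $\psi_{Z/X} \otimes_{\Oc_{\tau_0(X)}} \Oc_{\tau_0(Z)} \colon \pi_0(\Ic)\vert_{\tau_0(Z)} \to \Ic_0/\Ic_0^2$; under the identification of Corollary \fref{ConormalVsIdealCor} this is exactly the canonical surjection $\pi_0(\Nc^\vee_{Z/X}) \twoheadrightarrow \pi_0(\Nc^\vee_{\tau_0(Z)/\tau_0(X)})$ defining $\iota_{Z/X}$, completing the proof. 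The one step that demands care is the bookkeeping in the second paragraph: correctly identifying multiplication by $t$ with the map $u$, and verifying that its image on $\Sym^{i+1}$ is precisely $\Ic_0 \cdot \Sym^i$, so that the two reductions genuinely come out as a symmetric algebra and as the Rees associated graded; once the presentations of Theorems \fref{DeformationSpaceAsSpecThm} and \fref{ClassicalToDerivedDeformationSpaceThm} are in hand, everything else is formal.
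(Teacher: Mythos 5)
Your argument is correct and follows essentially the same route as the paper's terse proof: reduce the explicit graded presentations of Theorems \ref{DeformationSpaceAsSpecThm} and \ref{ClassicalToDerivedDeformationSpaceThm} modulo $t$, then identify the degree-one piece of the resulting surjection of symmetric/associated-graded algebras via Corollary \ref{ConormalVsIdealCor}. The only cosmetic difference is that where the paper cites Proposition \ref{TruncationOfVirtualExceptionalDivProp} to identify the special fibre, you carry out the equivalent computation directly (tracking the image of multiplication by $t$ through the structure map $u$), which makes the argument more self-contained but amounts to the same thing.
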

\begin{proof}
Combine Theorem \fref{ClassicalToDerivedDeformationSpaceThm} with Proposition \fref{TruncationOfVirtualExceptionalDivProp} and Corollary \fref{ConormalVsIdealCor}.
\end{proof}

Finally, we are going to prove an analogue of Corollaire 2.18 in \cite{Ver} for derived deformation spaces. Let us first lay out the setup. Suppose $Z \hookrightarrow X$ is a quasi-smooth closed immersion of derived schemes, and let $E$ be a vector bundle on $X$ with zero section $s: X \hookrightarrow E$. We want to show that $\tau_0\big(M(Z/E)\big)$ can be identified as the pullback of $E$ over $\tau_0\big(M(Z/X)\big)$. 

Let us proceed somewhat similarly as in the discussion preceding Theorem \fref{DeformationSpaceAsSpecThm}. The sequence $Z \hookrightarrow X \hookrightarrow E$ induces a diagram 
$$
\begin{tikzcd}
\langle E^\vee \rangle \arrow[]{r}{} \arrow[]{d}{} & \langle E^\vee \rangle \arrow[]{r}{} \arrow[]{d}{} & 0 \arrow[]{d}{} \\
\Ic' \arrow[]{r}{u'} \arrow[]{d}{\psi} & \LSym^*_X(E^\vee) \arrow[]{r}{} \arrow[]{d}{} & \Oc_Z \arrow[]{d}{} \\
\Ic \arrow[]{r}{u} & \Oc_X \arrow[]{r}{} & \Oc_Z
\end{tikzcd}
$$
of quasi-coherent sheaves on $X$ with all rows and columns cofibre sequences, and this induces again a natural equivalence 
$$\Ic' \simeq \langle E^\vee \rangle \oplus \Ic.$$
The $\Sym^*_{\tau_0(X)}(E^\vee)$-module structure on the zeroth homotopy sheaf $\langle E^\vee \rangle \oplus \pi_0(\Ic)$ is induced by the usual structure on $\langle E^\vee \rangle$, and the multiplication by elements of $E^\vee$ sends elements from $\pi_0(\Ic)$ to $\langle E^\vee \rangle$ via the composition
$$E^\vee \otimes_{\Oc_{\tau_0(X)}} \pi_0(\Ic) \xrightarrow{\mathrm{Id} \times u} E^\vee \otimes_{\Oc_{\tau_0(X)}} \Oc_{\tau_0(X)} = E^\vee \hookrightarrow \langle E^\vee \rangle.
$$
It follows that the symmetric algebra on $\pi_0(\Ic')$ over $\Sym^*_{\tau_0(X)}(E^\vee)$, whose relative spectrum represents the truncated deformation space $\tau_0\big(M(Z/E)\big)$ by Theorem \fref{DeformationSpaceAsSpecThm}, can be decomposed as ``Laurent polynomials'' on variables $s$ and $t$, where the sheaf of coefficients for $t^{-n} s^m$ is
$$\Sym^{n-m}_{\tau_0(X)}\big(\pi_0(\Ic)\big) \otimes_{\Oc_{\tau_0(X)}} \Sym^m_{\tau_0(X)}(E^\vee)$$
(compare this to the bigraded algebra (\fref{BigradedDeformationAlgebra}); the multiplication is given by the obvious analogue of (\fref{MultiplicationTable})), where $-n$ keeps track of the symmetric power of $\pi_0(\Ic')$, and where $n$ (but not $m$!) is allowed to take negative values as well.

On the other hand, if we change the grading slightly by replacing $s$ by $s' := st$, then the sheaf of coefficients for $t^{-n}{s'}^m$ is
$$\Sym^n_{\tau_0(X)}\big(\pi_0(\Ic)\big) \otimes_{\Oc_{\tau_0(X)}} \Sym^m_{\tau_0(X)}(E^\vee),$$
and investigating how the multiplication is defined on this bigraded algebra, we can observe it to be isomorphic to
$$\Big( \bigoplus_{i \in \Zb} \Sym^i_{\tau_0(X)} \big(\pi_0(\Ic)\big) \cdot t^{-i} \Big) \otimes_{\Oc_{\tau_0(X)}} \Big( \bigoplus_{j \geq 0} \Sym^j_{\tau_0(X)} (E^\vee) \cdot s'^{j} \Big).$$
This takes us a long way in proving the following result.

\begin{thm}\label{DeformationSpaceAndVectorBundlesThm}
Let $Z \hookrightarrow X$ be a quasi-smooth closed immersion, and let $E$ be a vector bundle on $X$ with zero section $s: X \hookrightarrow E$. Then we can form in a commutative diagram
$$
\begin{tikzcd}
& \Ab^1 \times \tau_0(Z) \arrow[hook]{ld} \arrow[hook]{rd} & \\
\tau_0\big(M(Z/X)\big) \times_{\tau_0(X)} \tau_0(E) \arrow[]{rr}{w} \arrow[]{rd} & & \tau_0\big(M(Z/E)\big) \arrow[]{ld} \\
& \Ab^1 &
\end{tikzcd}
$$
with $w$ an isomorphism of schemes, the upper left diagonal morphism is the composition
$$
\begin{tikzcd}
\Ab^1 \times \tau_0(Z) \arrow[hook]{r}{\tau_0(j_{Z/X})} & \tau_0\big(M(Z/X)\big) \arrow[hook]{r}{s} & \tau_0\big(M(Z/X)\big) \times_{\tau_0(X)} \tau_0(E),
\end{tikzcd}
$$
where $s$ is the zero section, and the upper right diagonal morphism is $\tau_0(j_{Z/E})$.
\end{thm}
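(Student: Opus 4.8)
The plan is to build the isomorphism $w$ directly out of the bigraded algebra computation carried out in the paragraph preceding the statement, and then to check the triangle commutes on the two diagonal inclusions. Concretely, Theorem \ref{DeformationSpaceAsSpecThm} identifies $\tau_0\big(M(Z/E)\big)$ with the relative spectrum over $\tau_0(X)$ of the symmetric algebra of $\pi_0(\Ic')$ over $\Sym^*_{\tau_0(X)}(E^\vee)$, and the change of variable $s' := st$ exhibits that algebra as a tensor product
$$
\Big( \bigoplus_{i \in \Zb} \Sym^i_{\tau_0(X)} \big(\pi_0(\Ic)\big) \cdot t^{-i} \Big) \otimes_{\Oc_{\tau_0(X)}} \Big( \bigoplus_{j \geq 0} \Sym^j_{\tau_0(X)} (E^\vee) \cdot s'^{j} \Big).
$$
The first tensor factor is, again by Theorem \ref{DeformationSpaceAsSpecThm}, the coordinate algebra of $\tau_0\big(M(Z/X)\big)$ over $\tau_0(X)$, and the second is the coordinate algebra of $\tau_0(E)$ over $\tau_0(X)$. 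Taking relative spectra of this tensor decomposition over $\tau_0(X)$ gives exactly $\tau_0\big(M(Z/X)\big) \times_{\tau_0(X)} \tau_0(E)$, and I would \emph{define} $w$ to be the resulting isomorphism. The compatibility with the maps to $\Ab^1$ is immediate since the $\Ab^1$-grading is the $t$-grading on both sides and the identification is $t$-homogeneous (the variable $s'$ carries $t$-degree zero after the regrading, matching the fact that $\tau_0(E) \to \Ab^1 \times \tau_0(X)$ is pulled back from $\tau_0(X)$), so the lower triangle commutes.

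Next I would verify that the upper left diagonal is the zero section composed with $\tau_0(j_{Z/X})$. Here I would use Corollary \ref{InclusionToDeformationSpaceCor}, which identifies $\tau_0(j_{Z/X})$ with the morphism induced by the surjection
$$
\bigoplus_{i \in \Zb} \Sym^i_{\tau_0(X)} \big(\pi_0(\Ic)\big) \cdot t^{-i} \twoheadrightarrow \bigoplus_{i \leq 0} \Oc_{\tau_0(Z)} \cdot t^{-i}.
$$
Composing with the zero section $\tau_0(E) \to \tau_0(X)$ amounts to tensoring with the augmentation $\bigoplus_{j \geq 0} \Sym^j_{\tau_0(X)}(E^\vee) \cdot s'^j \twoheadrightarrow \Oc_{\tau_0(X)}$ (kill $s'$), so the total map on coordinate rings kills both $s'$ and the positive-$t$-degree part and the ideal $\pi_0(\Ic)$. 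On the other hand, applying Corollary \ref{InclusionToDeformationSpaceCor} directly to $Z \hookrightarrow E$ and unwinding the regrading, $\tau_0(j_{Z/E})$ is induced by the surjection of the big tensor algebra onto $\bigoplus_{i \leq 0} \Oc_{\tau_0(Z)} \cdot t^{-i}$ which likewise kills $s'$, the positive-$t$-degree part, and $\pi_0(\Ic)$ (noting that $\Ic' \simeq \langle E^\vee\rangle \oplus \Ic$ and the $E^\vee$-summand is sent to zero since $\tau_0(Z)$ is contained in the zero section of $E$). Matching these two presentations under $w$ shows both diagonal maps agree, so the diagram commutes.

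The main obstacle I anticipate is purely bookkeeping: making the regrading $s \mapsto s' = st$ and the multiplication table (the analogue of \eqref{MultiplicationTable}, with the twist map $E^\vee \otimes \pi_0(\Ic) \to \langle E^\vee\rangle$ replacing $u$) rigorous enough that the claimed algebra isomorphism is genuinely an isomorphism of sheaves of graded $\Oc_{\tau_0(X)}$-algebras, not merely of the underlying graded modules. Since all the relevant constructions are Zariski-local on $X$ and affine over it (by the Corollary following Theorem \ref{DeformationSpaceAsSpecThm}, or directly), this can be checked on an affine open $\Spec(A) \subset X$ where $Z$ is cut out by a regular sequence and $E$ is trivial; there the statement reduces to an explicit isomorphism of Laurent-type polynomial $A$-algebras, and the multiplication twist is the only subtlety. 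Once the local picture is in place, gluing is automatic because every morphism in sight is canonical and compatible with restriction. I would therefore present the proof as: "reduce to the affine local presentation of the discussion preceding the statement; there $w$ is the evident isomorphism of bigraded algebras; the diagonal compatibilities follow from Corollary \ref{InclusionToDeformationSpaceCor} applied to $Z \hookrightarrow X$ and $Z \hookrightarrow E$; glue."
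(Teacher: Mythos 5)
Your proposal matches the paper's proof in structure and content: $w$ is defined from the tensor decomposition obtained by the $s' = st$ regrading of the bigraded algebra for $\tau_0\big(M(Z/E)\big)$ (which the paper also sets up in the paragraph preceding the theorem), the lower triangle follows because the identification respects the $t$-grading, and the upper triangle reduces to comparing the algebra surjections from Corollary \ref{InclusionToDeformationSpaceCor} applied to $Z\hookrightarrow X$ and $Z\hookrightarrow E$, using that both kill the $E^\vee$-factor, the positive symmetric-power part, and the ideal $\pi_0(\Ic)$. The only addition beyond the paper's argument is the final paragraph on reducing the multiplication-table verification to an affine-local computation, which is sound but supplements rather than changes the approach; the paper elides this by simply asserting the algebra isomorphism.
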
 
\begin{proof}
We already have the isomorphism $w$ by the above discussion. The lower diagram commutes since $w$ clearly preserves $t$. To prove that the upper triangle commutes, it suffices to observe that the morphism
$$\bigoplus_{n \in \Zb, m \geq 0} \Sym^{n-m}_{\tau_0(X)}\big(\pi_0(\Ic)\big) \otimes_{\Oc_{\tau_0(X)}} \Sym^m_{\tau_0(X)}(E^\vee) \cdot t^{-n} s^{m} \to \bigoplus_{n \leq 0} \Oc_{\tau_0(Z)} \cdot t^{-n}$$
associated to $\tau_0(j_{Z/E})$ sends terms with nontrivial power of $s$ to 0 (easy consequence of Corollary \fref{InclusionToDeformationSpaceCor}), and coincides with the morphism associated to the composition
$$
\begin{tikzcd}
\Ab^1 \times \tau_0(Z) \arrow[hook]{r}{\tau_0(j_{Z/X})} & \tau_0\big(M(Z/X)\big) \arrow[hook]{r}{s} & \tau_0\big(M(Z/X)\big) \times_{\tau_0(X)} \tau_0(E),
\end{tikzcd}
$$
for the terms with $m=0$.
\end{proof}

\section{Virtual pullbacks in algebraic bordism}\label{VirtualPullbackSect}

The purpose of this section is to construct virtual pullbacks for the algebraic bordism groups $\Omega^\LM_*$ of Levine--Morel, essentially following the arguments of Section 4 of Lowrey--Schürg's article \cite{LS}, while filling several gaps in their treatment. We note that Adeel Khan performs a similar construction in his recent preprint \cite{Khan2} in the context of motivic homotopy theory of derived stacks. The arguments of this section should go through for any homology theory in algebraic geometry having nice enough functorial properties without assuming anything about being representable in the motivic homotopy category. We will work over a ground field $k$ of characteristic 0 only because that is the generality in which the bordism groups of Levine--Morel are defined. For simplicity, all derived schemes in this section will be assumed to be quasi-projective over $k$.  
 
Let us now state our goal more precisely. Suppose $f: X \to Y$ is a quasi-smooth morphism of relative virtual dimension $d$. We want to construct \emph{virtual pullback morphisms}
$$f^\vir : \Omega^\LM_*\big(\tau_0(Y)\big) \to \Omega^\LM_{* + d}\big(\tau_0(X)\big),$$
which will coincide with the lci pullback morphisms constructed in Section 6.5 of \cite{LM} whenever $X$ and $Y$ are classical schemes (recall that a morphism of classical schemes is quasi-smooth if and only if it is lci). The main result of this section, besides the construction of $f^\vir$, is the following theorem, whose proof is given in Section \fref{VirtPullbackPropertiesSect}.

\begin{thm}\label{VirtPullbackPropertiesThm}
The virtual pullbacks satisfy the following properties.

\begin{enumerate}
\item The morphisms $f^\vir$ are contravariantly functorial in compositions of quasi-smooth morphisms. In other words, if $f: X \to Y$ and $g: Y \to Z$ are quasi-smooth, then
$$(g \circ f)^\vir = f^\vir \circ g^\vir.$$

\item Let 
\begin{center}
\begin{tikzcd}
X' \arrow[]{r}{f'} \arrow[]{d}{g'} & Y' \arrow[]{d}{g} \\
X \arrow[]{r}{f} & Y
\end{tikzcd}
\end{center}
be a homotopy Cartesian square of derived schemes, and suppose $f: X \to Y$ is an lci morphism between classical schemes. Then $f'^\vir$ coincides with the refined lci pullback morphism $f^!_g$ of \cite{LM} Section 6.6.

\item Let 
\begin{center}
\begin{tikzcd}
X' \arrow[]{r}{f'} \arrow[]{d}{g'} & Y' \arrow[]{d}{g} \\
X \arrow[]{r}{f} & Y
\end{tikzcd}
\end{center}
be a homotopy Cartesian square of derived schemes with $f$ quasi-smooth and $g$ projective. Then 
$$f^\vir \circ \tau_0(g)_* = \tau_0(g')_* \circ f'^\vir.$$

\end{enumerate}
\end{thm}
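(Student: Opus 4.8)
\textbf{Proof strategy for Theorem \ref{VirtPullbackPropertiesThm}.} The three assertions should be established in the logical order (2), (3), (1), since the functoriality statement (1) will be reduced, via the deformation to the normal bundle, to the case of derived regular embeddings, and the analysis of that case leans on both the compatibility (2) with refined lci pullbacks and the push-pull formula (3). Throughout, the definition of $f^\vir$ that I would work from is the one built in the earlier part of this appendix (the construction preceding the theorem): one first treats quasi-smooth closed immersions by deforming to the normal bundle, using the structural results of Appendix \ref{DerivedBlowUpSect} — in particular Theorems \ref{DeformationSpaceAsSpecThm}, \ref{ClassicalToDerivedDeformationSpaceThm}, \ref{DeformationSpaceAndVectorBundlesThm}, and Corollaries \ref{InclusionToDeformationSpaceCor}, \ref{ClassicalToDerivedDeformSpaceSpecialFibreCor} — to reduce to the pullback along the zero section of a vector bundle, where one uses the homotopy invariance and the projective bundle formula in $\Omega^\LM_*$; then one handles smooth morphisms by the formula of Levine--Morel; and finally a general quasi-smooth $f$ is factored (locally, after choosing a quasi-projective embedding) as a derived regular embedding followed by a smooth morphism, the independence of the resulting map being part of what has to be checked.

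\textbf{Part (2).} When $f\colon X\to Y$ is an lci morphism of \emph{classical} schemes, I would verify that the construction of $f'^\vir$ literally specializes to the refined lci pullback $f^!_g$ of \cite{LM} Section 6.6. First reduce to the case of a regular embedding and the case of a smooth morphism separately (the general lci case being their composite, and functoriality of $f^!_\bullet$ on the Levine--Morel side is \cite{LM} Theorem 6.6.6). For a smooth morphism this is immediate from the defining formula. For a regular embedding $i\colon Z\hookrightarrow X$ with $X$, $Z$ classical, the point is that the derived deformation space $M(Z/X)$ has classical truncation equal to the classical deformation space $M^{\mathrm{cl}}(Z/X)$ (this is exactly Theorem \ref{ClassicalToDerivedDeformationSpaceThm} together with Corollary \ref{ClassicalToDerivedDeformSpaceSpecialFibreCor}, since for a classical regular embedding $\pi_0(\Ic) \to \Ic_0$ is an isomorphism and $C_{Z/X}\simeq \Nc_{Z/X}$), so the deformation-to-the-normal-bundle recipe coincides step-by-step with Levine--Morel's, and hence so do the resulting operators. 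The refined version is obtained by base change, which is handled because both the derived blow-up and the derived deformation space are stable under derived base change (Theorems \ref{BlowUpPropertiesThm}(3), \ref{DeformationToNormalBundleDef} and Khan's Theorem 4.1.13, cited in the text), and the refined lci pullbacks of Levine--Morel are defined precisely by the same base-changed deformation diagram.

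\textbf{Part (3), the push-pull formula.} Given the Cartesian square with $f$ quasi-smooth and $g$ projective, I would again reduce to $f$ a derived regular embedding and $f$ smooth. For $f$ smooth the formula is the standard compatibility of smooth pullback with proper pushforward in $\Omega^\LM_*$, which holds already classically. For $f$ a derived regular embedding $i\colon Z\hookrightarrow Y$, unwind the construction of $i^\vir$: it is a composite of flat pullbacks (to $\Gb_m\times -$, then to the truncated deformation space), a single specialization/Gysin-type operator along the zero fibre $\{0\}\hookrightarrow \Ab^1$, and an inverse of a vector-bundle (flat) pullback, all performed on truncations of the relevant deformation spaces. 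Each of these operations commutes with proper pushforward: flat pullback does so by the classical push-pull formula in $\Omega^\LM_*$; the specialization along $\{0\}\hookrightarrow\Ab^1$ does so by \cite{LM} (the relevant compatibility of the divisor/Gysin operator with proper pushforward, used in loc. cit. to construct $i^!_\bullt$); and to chase everything through coherently one uses that the formation of $M(Z/X)$, its truncation, the inclusion $j_{Z/X}$, and the vector-bundle identification of Theorem \ref{DeformationSpaceAndVectorBundlesThm} are all natural in derived base change, so that the projective morphism $g$ induces a compatible projective morphism on each deformation space appearing in the construction. Assembling these naturality squares yields $f^\vir\circ\tau_0(g)_* = \tau_0(g')_*\circ f'^\vir$.

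\textbf{Part (1), functoriality.} Once (2) is known, the smooth $\circ$ smooth and the (regular embedding) $\circ$ smooth cases reduce to classical functoriality of lci pullbacks (\cite{LM} Theorem 6.6.6) plus compatibility of the two constructions. The genuinely new case is a composite of two derived regular embeddings $Z\hookrightarrow X\hookrightarrow Y$ — and, more subtly, the independence of the factorization used to define $f^\vir$ for a general quasi-smooth $f$, i.e.\ that $(\text{smooth})\circ(\text{regular embedding})$ gives a well-defined operator. For the composite of regular embeddings I would use the double deformation space: deforming first $X$ to $\Nc_{X/Y}$ and then the image of $Z$, one produces a derived scheme over $\Ab^2$ (or an iterated $M(-/-)$) whose truncation interpolates between $i_{Z/Y}^\vir$ and $i_{Z/X}^\vir\circ i_{X/Y}^\vir$; the required identity of operators follows from analyzing the various special fibres exactly as Levine--Morel do in the classical setting, now using Theorems \ref{DeformationSpaceAsSpecThm} and \ref{DeformationSpaceAndVectorBundlesThm} and Corollary \ref{InclusionToDeformationSpaceCor} to identify all the truncations concretely, together with Proposition \ref{StrictTransformProp} for the strict transforms.

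\textbf{Main obstacle.} The hard part is \emph{not} any single homological identity but the bookkeeping in Part (1): showing that the operator attached to a derived regular embedding is independent of the chosen presentation, and that the double-deformation argument genuinely produces, at the level of truncations, a cobordism between the two iterated pullbacks. This is precisely where the gaps in \cite{LS} lie, and it is the reason Appendix \ref{DerivedBlowUpSect} had to establish such detailed control — via ideal-like sheaves and their symmetric algebras — over $\tau_0(\bl_Z(X))$ and $\tau_0(M(Z/X))$. Concretely, one must check that the ``negative symmetric power'' conventions of Theorems \ref{DeformationSpaceAsSpecThm} and \ref{DeformationSpaceAndVectorBundlesThm} glue correctly across the double deformation, and that the relevant maps to $\Ab^1\times\Ab^1$ are flat with the expected special fibres; once these identifications are in hand, the desired equalities in $\Omega^\LM_*$ follow from homotopy invariance, the projective bundle formula, and the divisor axioms, none of which present difficulty individually.
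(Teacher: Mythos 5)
Your overall outline --- deform to the normal cone, reduce to derived regular embeddings, and lean on the truncation control of Appendix \ref{DerivedBlowUpSect} --- matches the paper, and presenting (2), (3), (1) rather than (1), (2), (3) is an inessential reordering since each part ultimately rests on facts about immersions proved separately. But at the crux you take a genuinely different route. The hard case, as you correctly identify, is functoriality for a composite of derived regular embeddings $Z\hookrightarrow X\hookrightarrow Y$ (Lemma \ref{PullbackAlongImmersionIsFunctorialLem} in the paper). You propose a double deformation over $\Ab^2$, iterating $M(-/-)$ and comparing special fibres in two directions. The paper instead uses a \emph{single} deformation: it first isolates the special case where the outer embedding is the zero section $s\colon X\hookrightarrow E$ of a vector bundle, which it handles directly via the product decomposition $\tau_0\bigl(M(Z/E)\bigr)\cong\tau_0\bigl(M(Z/X)\bigr)\times_{\tau_0(X)}\tau_0(E)$ of Theorem \ref{DeformationSpaceAndVectorBundlesThm} together with Lemma \ref{PullbackAlongImmersionCommutesLem}; then it reduces the general composite to this vector-bundle case by deforming only $X$ inside $Y$, taking the quasi-smooth immersion $j_{X/Y}\circ(\mathrm{Id}\times i)\colon\Ab^1\times Z\hookrightarrow M(X/Y)$, and comparing its fibres over $0$ and $1$ via Lemma \ref{PullbackAlongImmersionDCartesianLem} and $\Ab^1$-invariance. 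Your $\Ab^2$ approach is standard (Verdier/Manolache style) and would also work, but it requires you to control the truncation and both families of special fibres of the iterated deformation space --- precisely the bookkeeping you flag as ``the main obstacle'' --- whereas the paper's shortcut eliminates one of the two deformations outright. Two smaller points: for (2) you over-engineer; once one observes that the conormal bundle of a derived regular embedding is stable under derived base change, the diagram defining $f'^\vir$ and the one defining Levine--Morel's refined pullback $f^!_g$ are literally the same, so there is no need to re-split into embedding and smooth cases. And the well-definedness of the $(\text{smooth})\circ(\text{regular embedding})$ factorization belongs to the construction of $f^\vir$ (Lemma \ref{VirtualPullbackWellDefinedLem}), not to the proof of this theorem, so you should not need to revisit it here.
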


In our construction of virtual pullbacks, we are going to need some basic properties of the classical algebraic bordism groups $\Omega^\LM_*$ proved in \cite{LM}. For the reader's convenience, we are going to list them below. In order not to overburden the exposition, we will usually not explicitly refer to the following theorem when using the listed properties, so the reader is suggested to familiarize themselves with them.

\begin{thm}
The classical algebraic bordism groups $\Omega^\LM_*$ satisfy the following basic properties.

\begin{enumerate}
\item The lci pullback morphisms $f^!$ are contravariantly functorial in compositions.

\item If we have a Tor-independent Cartesian square of classical $k$-schemes
\begin{center}
\begin{tikzcd}
X' \arrow[hook]{r}{f'} \arrow[]{d}{g'} & Y' \arrow[]{d}{g} \\
X \arrow[hook]{r}{f} & Y
\end{tikzcd}
\end{center}
with $f$ lci and $g$ proper, then $f^! \circ g_* = g'_* \circ f'^!$.

\item Let $p: E \to X$ be a vector bundle over $X$ of rank $r$. Then the pullback morphism
$$p^!: \Omega^\LM_*(X) \to \Omega^\LM_{*+r}(E)$$
is an isomorphism and its inverse $(p^!)^{-1}$ coincides with the lci pullback $s^!$ for a section $s$ of $E$.
\end{enumerate}
\end{thm}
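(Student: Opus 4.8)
The plan is to deduce all three statements directly from the foundations laid down by Levine and Morel, using that $\Omega^\LM_*$ is by construction an oriented Borel--Moore homology theory in the sense of \cite{LM}, so that the full apparatus of lci pullbacks of \emph{loc. cit.} Chapter 6 is at our disposal. Concretely, (1) and (2) are special cases of the functoriality and base-change properties of lci (and refined lci) pullbacks, while (3) is the extended homotopy invariance of $\Omega^\LM_*$ together with a one-line consequence of (1).

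For statement (1), I would recall that for a quasi-projective lci morphism $f\colon X\to Y$ the pullback $f^!$ is constructed in \cite{LM} Section 6.5 by factoring $f$ through a regular embedding followed by a smooth morphism, applying the deformation-to-the-normal-cone construction to the regular embedding and the tautological smooth pullback to the smooth part, and checking independence of the factorization; contravariant functoriality in compositions of lci morphisms is part of this same package, reducing to functoriality of smooth pullbacks, functoriality of refined pullbacks along regular embeddings, and the compatibility of the deformation construction with composition of regular embeddings. For statement (2), I would invoke the behaviour of refined Gysin pullbacks under proper pushforward: the refined pullback $f^!_g$ of \cite{LM} Section 6.6 specializes to $f^!$ when the square is Tor-independent, and the identity $f^!\circ g_* = g'_*\circ f'^!$ is then \cite{LM} Theorem 6.6.6~(2); alternatively one factors $f$ again, treating the smooth case (base change of smooth pullback against proper pushforward) and the regular-embedding case (behaviour of the deformation to the normal cone under proper morphisms) separately.

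For statement (3), the isomorphism $p^!\colon \Omega^\LM_*(X)\xrightarrow{\sim}\Omega^\LM_{*+r}(E)$ is the extended homotopy invariance of $\Omega^\LM_*$ proved in \cite{LM}: by a Mayer--Vietoris/Noetherian induction argument one reduces to a trivial bundle $E\simeq\Ab^r\times X$, where it is the $r$-fold iterate of the homotopy invariance $\Omega^\LM_*(X)\cong\Omega^\LM_{*+1}(\Ab^1\times X)$. Granting this, the assertion about a section $s$ of $E$ is immediate from statement (1): a section $s\colon X\hookrightarrow E$ is a regular embedding of codimension $r$, so $s^!$ is defined, and since $p\circ s=\mathrm{Id}_X$ functoriality gives $s^!\circ p^! = (\mathrm{Id}_X)^! = \mathrm{Id}_{\Omega^\LM_*(X)}$; composing with the inverse of the isomorphism $p^!$ yields $s^! = (p^!)^{-1}$. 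In particular $s^!$ is independent of the chosen section.

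The only real obstacle here is bookkeeping rather than mathematics: one must confirm that the quasi-projectivity and characteristic-zero hypotheses assumed in this appendix match those under which the cited results of \cite{LM} are stated, which is automatic since Levine and Morel work throughout with quasi-projective schemes over a field of characteristic $0$; and one must pin down the exact statement numbers in \cite{LM} for the functoriality of lci pullbacks and for the extended homotopy-invariance assertion. Since the paper already cites \cite{LM} Theorems 6.6.6 and 6.6.10 elsewhere, these references can be stated precisely with no further work.
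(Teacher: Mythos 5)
Your proof is correct and takes essentially the same route as the paper: the paper simply observes that (1) and (2) are part of $\Omega^\LM_*$ being an oriented Borel--Moore homology theory (Definition 5.1.3 of \cite{LM}, the main theorem of the book) and cites Corollary 6.5.5(3) of \cite{LM} for (3). Your derivation of $s^! = (p^!)^{-1}$ from extended homotopy invariance together with functoriality is exactly the content of that corollary, so the only difference is that you unpack the citation into a one-line argument.
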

\begin{proof}
The first two claims follow from the fact that $\Omega^\LM_*$ is an \emph{oriented Borel--Moore homology theory} (see Definition 5.1.3. of \cite{LM}), which is the main result of the book \cite{LM}. The third claim  is the third part of Corollary 6.5.5 of \emph{op. cit.} 
\end{proof}

We will continue using notation of the previous section. Given a closed embedding $i: Z \hookrightarrow X$ of derived schemes, the conormal complex $\Nc^\vee_{Z/X}$ is by definition the shifted cotangent complex $\Lb_{Z/X}[-1]$. If $i$ is quasi-smooth, then the conormal complex is a vector bundle.  We will denote by $C_{\tau_0(Z)/\tau_0(X)}$ the classical normal cone, and by $\iota_{Z/X}$ the canonical inclusion $C_{\tau_0(Z)/\tau_0(X)} \hookrightarrow \tau_0(\Nc_{Z/X})$ of schemes over $\tau_0(Z)$. Derived deformation to normal cone is denoted by $M(Z/X)$ and the classical one by $M^\mathrm{cl}\big(\tau_0(Z)/\tau_0(X)\big)$.

\subsection{Construction of virtual pullbacks}

The purpose of this section is to construct virtual pullbacks in algebraic bordism along quasi-smooth morphisms. In the end, the construction reduces to a very special instance: pulling back along a virtual Cartier divisor, which is already defined in \cite{LM} under the slightly different name of ``intersection with a pseudo-divisor''. The rest will then follow using the usual strategy of constructing pullbacks: we can construct pullbacks along an arbitrary derived regular embedding $Z \hookrightarrow X$ using the derived deformation to the normal bundle spaces $M(Z/X)$ studied in Section \fref{DeformationToNormalBundleSubSect}, and finally we can construct pullbacks along arbitrary quasi-smooth morphisms by factoring them as a compositions of a derived regular embeddings and a smooth morphisms.

Given a quasi-smooth closed immersion $i: Z \hookrightarrow X$, we can form the sequence
$$
\begin{tikzcd}
X & \arrow[]{l}[swap]{\mathrm{pr}_2} \Gb_m \times X \arrow[hook]{r}{u} & M(Z/X) & \arrow[hook']{l}[swap]{s_{Z/X}} \Nc_{Z/X} \arrow[]{r}{p} & Z 
\end{tikzcd}
$$
of derived schemes, where $u$ is the canonical open embedding to the derived deformation space, $s_{Z/X}$ is the virtual Cartier divisor defined as the derived fibre over $0$ of the canonical morphism $M(Z/X) \to \Ab^1$ (inclusion of the \emph{special} fibre), and $p$ is the vector bundle projection. Analogously to Section 6.5.2 of \cite{LM}, we make the following definition.

\begin{defn}\label{DerivedSpecializationMorphismDef}
Let everything be as above. We can then define the \emph{derived specialization morphism} $\sigma_{Z/X}: \Omega^\LM_*\big(\tau_0(X)\big) \to \Omega^\LM_*\big(\tau_0(\Nc_{Z/X})\big)$ as the composition
$$\Omega^\LM_*\big(\tau_0(X)\big) \xrightarrow{\mathrm{pr}_2^!} \Omega^\LM_{*+1}\big(\Gb_m \times \tau_0(X)\big) \xrightarrow{(u^!)^{-1}} \Omega^\LM_{*+1}\big(\tau_0(M(Z/X))\big) \xrightarrow{s_{Z/X}^p} \Omega^\LM_*\big(\tau_0(\Nc_{Z/X})\big),$$
where $s^p_{Z/X}$ is the pullback along pseudo-divisor defined in Section 6.5.1 of \cite{LM}. 
\end{defn}
\begin{rem}\label{WellDefinedSpecializationRem}
Note that since $u^!$ is not a bijection (only a surjection), $(u^!)^{-1}$ is not well defined. However, since $s^p_{Z/X} \circ \tau_0(s_{Z/X})_* = 0$, the well definedness of the composition $s^p_{Z/X} \circ (u^!)^{-1}$ follows from the localization exact sequence.
\end{rem}

The following result gives perhaps a more concrete formula for the derived specialization morphism. We begin by noting that given a derived regular embedding $i: Z \hookrightarrow X$, the classical deformation space $M^\mathrm{cl}\big(\tau_0(Z)/\tau_0(X)\big)$ allows us to define a specialization morphism
$$\sigma_{\tau_0(Z)/\tau_0(X)}: \Omega^\LM_*\big(\tau_0(X)\big) \to \Omega^\LM_*\big(C_{\tau_0(Z) / \tau_0(X)}\big)$$
landing in the bordism group of the normal cone. Recalling that the normal cone sits canonically inside $\tau_0\big(\Nc_{Z/X}\big)$, we obtain the following.

\begin{prop}
Let $Z \hookrightarrow X$ be a quasi-smooth closed embedding. Then 
$$\sigma_{Z/X} = \iota_{Z/X*} \circ \sigma_{\tau_0(Z)/\tau_0(X)},$$
where $\iota_{Z/X}$ is the canonical closed embedding $C_{\tau_0(Z) / \tau_0(X)} \hookrightarrow \tau_0(\Nc_{Z/X})$.
\end{prop}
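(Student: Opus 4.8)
The plan is to reduce the derived specialization morphism to the classical one by carefully comparing the two deformation spaces and the localization sequences that define both morphisms. The key input is Theorem \ref{ClassicalToDerivedDeformationSpaceThm}, which realizes $M^\mathrm{cl}\big(\tau_0(Z)/\tau_0(X)\big)$ as a closed subscheme of $\tau_0\big(M(Z/X)\big)$ via $i^\circ_{Z/X}$, together with Corollary \ref{ClassicalToDerivedDeformSpaceSpecialFibreCor}, which identifies the fibre of $i^\circ_{Z/X}$ over $0$ with $\iota_{Z/X}: C_{\tau_0(Z)/\tau_0(X)} \hookrightarrow \tau_0(\Nc_{Z/X})$. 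So first I would set up the diagram of classical schemes obtained by truncating everything in sight: over $\Gb_m$ both deformation spaces agree (being just $\Gb_m \times \tau_0(X)$), over $0$ the classical one is $C_{\tau_0(Z)/\tau_0(X)}$ and the derived one is $\tau_0(\Nc_{Z/X})$, and the inclusion $i^\circ_{Z/X}$ is compatible with the projections to $\Ab^1$.

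Next I would chase the definitions. The classical specialization morphism $\sigma_{\tau_0(Z)/\tau_0(X)}$ is built (Section 6.5.2 of \cite{LM}) exactly as $\sigma_{Z/X}$ but with $M^\mathrm{cl}$ in place of $M(Z/X)$: pull back by $\mathrm{pr}_2^!$ to $\Gb_m \times \tau_0(X)$, invert the (surjective, but well-defined-after-composition) open restriction to the classical deformation space, and then intersect with the pseudo-divisor cutting out the special fibre. The point is that the open immersion $u: \Gb_m \times \tau_0(X) \hookrightarrow \tau_0(M(Z/X))$ factors through the open immersion into $M^\mathrm{cl}$ followed by $i^\circ_{Z/X}$ composed appropriately — more precisely, $u$ and the classical open immersion have the same source and are compatible via $i^\circ_{Z/X}$. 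Using the push-pull formula for the proper (indeed closed) morphism $i^\circ_{Z/X}$ against the pseudo-divisor $\{0\}$ (which is Tor-independent of the vector-bundle-cone situation, or one argues via the localization sequences directly), the pushforward $\iota_{Z/X*}$ commutes past the pseudo-divisor intersection, and one obtains $\sigma_{Z/X} = \iota_{Z/X*} \circ \sigma_{\tau_0(Z)/\tau_0(X)}$ by matching term by term. Concretely: $s^p_{Z/X} \circ (u^!)^{-1} \circ \mathrm{pr}_2^! = \iota_{Z/X*} \circ s^p_{\tau_0(Z)/\tau_0(X)} \circ ((u^\mathrm{cl})^!)^{-1} \circ \mathrm{pr}_2^!$, using that pseudo-divisor intersection commutes with proper pushforward (\cite{LM} Proposition 6.5.1 or the analogous compatibility) and that the two open-restriction-inverses are related by $i^\circ_{Z/X}$ via the localization exact sequences.

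The main obstacle I anticipate is the bookkeeping around the ill-definedness of $(u^!)^{-1}$: neither $u^!$ nor $(u^\mathrm{cl})^!$ is bijective, so one cannot simply manipulate these "inverses" formally. The clean way is to argue entirely at the level of the localization exact sequences. For a class $\alpha \in \Omega^\LM_*(\tau_0(X))$, choose any lift $\tilde\alpha \in \Omega^\LM_{*+1}(\tau_0(M(Z/X)))$ of $\mathrm{pr}_2^!\alpha$; then $\sigma_{Z/X}(\alpha) = s^p_{Z/X}(\tilde\alpha)$, and this is well-defined because $s^p_{Z/X}$ kills the image of $\tau_0(\Nc_{Z/X})$ under closed pushforward. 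One then needs to produce a lift inside the classical deformation space: the closed immersion $i^\circ_{Z/X}$ together with the fact that $\tau_0(M(Z/X)) \setminus M^\mathrm{cl}$ is disjoint from the generic fibre (so that the restriction of $\mathrm{pr}_2^!\alpha$ extends over $M^\mathrm{cl}$) lets one choose $\tilde\alpha = i^\circ_{Z/X *}(\tilde\beta)$ for some lift $\tilde\beta$ over $M^\mathrm{cl}$; finally compatibility of pseudo-divisor intersection with proper pushforward and Corollary \ref{ClassicalToDerivedDeformSpaceSpecialFibreCor} give $s^p_{Z/X}(i^\circ_{Z/X *}\tilde\beta) = \iota_{Z/X *}(s^p_{\tau_0(Z)/\tau_0(X)}\tilde\beta) = \iota_{Z/X *}(\sigma_{\tau_0(Z)/\tau_0(X)}\alpha)$. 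The remaining subtlety is checking that $\mathrm{pr}_2^!\alpha$ genuinely lifts over $M^\mathrm{cl}$ rather than only over the full derived deformation space — this follows because the complement of $M^\mathrm{cl}$ in $\tau_0(M(Z/X))$ lies over $\{0\}$ (it meets only the special fibre, as the generic fibres coincide), so the relevant restriction map in the localization sequence for $M^\mathrm{cl} \subset \tau_0(M(Z/X))$ is already surjective onto the part supported away from $0$.
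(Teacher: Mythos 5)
Your proposal follows essentially the same route as the paper's proof: you set up the Cartesian diagram coming from Corollary \ref{ClassicalToDerivedDeformSpaceSpecialFibreCor}, apply the push-pull formula for pseudo-divisor intersection against the proper closed immersion $i^\circ_{Z/X}$ (this is LM Lemma 6.2.1(1)), and resolve the ill-definedness of $(u^!)^{-1}$ by choosing explicit lifts along the localization sequence. The paper packages your lift-choosing step as the assertion that $i^\circ_{Z/X*}\circ(u'^!)^{-1}$ is a one-sided inverse of $u^!$, which amounts to the identity $u^!\circ i^\circ_{Z/X*}=u'^!$ from push-pull on the right-hand Cartesian square; the surjectivity you actually use in your final paragraph is that of $u'^!\colon\Omega^\LM_*(M^{\mathrm{cl}})\to\Omega^\LM_*(\Gb_m\times\tau_0(X))$ in the localization sequence for the open immersion $u'$ (not for the closed immersion $M^{\mathrm{cl}}\subset\tau_0(M(Z/X))$ as you write), but the underlying idea is correct.
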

\begin{proof}
By Corollary \fref{ClassicalToDerivedDeformSpaceSpecialFibreCor} and basic properties of (derived) deformation spaces, we have the Cartesian diagram
$$
\begin{tikzcd}
C_{\tau_0(Z) / \tau_0(X)} \arrow[hook]{d}{\iota_{Z/X}} \arrow[hook]{rr}{s_{\tau_0(Z) / \tau_0(X)}} && M^\mathrm{cl}\big(\tau_0(Z) / \tau_0(X)\big) \arrow[hook]{d}{i^\circ_{Z/X}} & \arrow[hook']{l}[swap]{u'} \Gb_m \times \tau_0(X) \arrow[]{d}{ \mathrm{Id}} \\
\tau_0(\Nc_{Z/X}) \arrow[hook]{rr}{\tau_0(s_{Z/X})} && \tau_0\big(M(Z/X)\big) & \arrow[hook']{l}[swap]{u} \Gb_m \times \tau_0(X).
\end{tikzcd}
$$
Since $C_{\tau_0(Z)/\tau_0(X)}$ is the pullback of $\tau_0(\Nc_{Z/X})$ along $i^\circ_{Z/X}$ as pseudo-divisors, it follows from \cite{LM} Lemma 6.2.1 (1) that
\begin{equation}\label{PseudoPushPullEq}
s_{Z/X}^p \circ i^\circ_{Z/X*} = \iota_{Z/X*} \circ s^p_{\tau_0(Z)/\tau_0(X)}.
\end{equation}
On the other hand, as $i^\circ_{Z/X*} \circ (u'^!)^{-1}$ is a one sided inverse ``morphism'' of $u^!$, we can compute that
\begin{align*}
\sigma_{Z/X} &= s^p_{Z/X} \circ (u^!)^{-1} \circ \mathrm{pr}_2^! \\
&= s^p_{Z/X} \circ i^\circ_{Z/X*} \circ (u'^!)^{-1} \circ \mathrm{pr}_2^! \\
&= \iota_{Z/X*} \circ s^p_{\tau_0(Z)/\tau_0(X)} \circ (u'^!)^{-1} \circ \mathrm{pr}_2^! & (\fref{PseudoPushPullEq}) \\
&= \iota_{Z/X*} \circ \sigma_{\tau_0(Z)/\tau_0(X)}
\end{align*}
proving the claim.
\end{proof}

We can now define virtual pullbacks along quasi-smooth immersions.

\begin{defn}\label{VirtualPullbackAlongImmersionDef}
Suppose $i: Z \hookrightarrow X$ is a derived regular embedding of virtual codimension $d$. We define the \emph{virtual pullback} $i^\vir: \Omega^\LM_*\big(\tau_0(X)\big) \to \Omega^\LM_*\big(\tau_0(Z)\big)$ as the composition 
$$\Omega^\LM_*\big(\tau_0(X)\big) \xrightarrow{\sigma_{Z/X}} \Omega^\LM_*\big(\tau_0(\Nc_{Z/X})\big) \xrightarrow{(p^!)^{-1}} \Omega^\LM_{*-d}\big(\tau_0(Z)\big),$$
where $p$ is the natural projection of the vector bundle $\tau_0(\Nc_{Z/X}) \to \tau_0(Z)$.
\end{defn}

Many desirable properties are verified by the following three lemmas.

\begin{lem}[cf. \cite{LS} Lemma 4.13]\label{PullbackAlongImmersionCommutesLem}
Suppose
\begin{center}
\begin{tikzcd}
Z' \arrow[hook]{r}{i'} \arrow[]{d}{f'} & X' \arrow[]{d}{f} \\
Z \arrow[hook]{r}{i} & X
\end{tikzcd}
\end{center}
is homotopy Cartesian with $i$ a derived regular embedding. Then
\begin{enumerate}
\item if $f$ is proper, 
$$i^\vir \circ \tau_0(f)_* = \tau_0(f')_* \circ i'^\vir;$$
\item if $f$ is smooth,
$$\tau_0(f')^! \circ i^\vir = i'^\vir \circ \tau_0(f)^!.$$
\end{enumerate}
\end{lem}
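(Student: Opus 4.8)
The strategy is to reduce everything to the functoriality and base-change properties of the three ingredients out of which $i^\vir$ is built: the refined lci pullback of Levine--Morel along the zero section of the normal bundle (equivalently $(p^!)^{-1}$), the pullback along the pseudo-divisor $s_{Z/X}$ (part of the definition of $\sigma_{Z/X}$), and the bijection $(u^!)^{-1}$ coming from $\Gb_m$-invariance and the localization sequence. The key point is that derived deformation to the normal bundle is stable under derived base change (Khan, recalled as Theorem \ref{BlowUpPropertiesThm}(3) and the corresponding statement for $M(Z/X)$), so a homotopy Cartesian square as in the statement induces a homotopy Cartesian square
$$
\begin{tikzcd}
M(Z'/X') \arrow[]{r} \arrow[]{d} & M(Z/X) \arrow[]{d} \\
X' \arrow[]{r}{f} & X
\end{tikzcd}
$$
compatibly with the inclusions $u$, $u'$ of the open $\Gb_m$-part and $s_{Z/X}$, $s_{Z'/X'}$ of the special fibre, and with the vector bundle projections $p$, $p'$ of the normal bundles. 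Passing to truncations, all these squares remain Cartesian (after possibly observing Tor-independence, which holds because one leg is flat in case (2) and because the pullback-of-a-pseudo-divisor formula of \cite{LM} Lemma 6.2.1 does not require it in case (1)).

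\textbf{Case (1), $f$ proper.} First I would prove that $\sigma$ commutes with proper pushforward, i.e. $\sigma_{Z/X} \circ \tau_0(f)_* = \tau_0(\tilde f)_* \circ \sigma_{Z'/X'}$ where $\tilde f: \tau_0(\Nc_{Z'/X'}) \to \tau_0(\Nc_{Z/X})$ is the induced map. This follows term by term from the definition of $\sigma$: the morphism $\mathrm{pr}_2^!$ commutes with proper pushforward along $\Gb_m \times \tau_0(f)$ by the push-pull formula for the smooth (hence lci, Tor-independent) morphism $\mathrm{pr}_2$; the morphism $(u^!)^{-1}$ commutes with the pushforward along $\tau_0(M(Z'/X')) \to \tau_0(M(Z/X))$ because $u^!$ does and $(u^!)^{-1}$ is well-defined via the localization sequence (Remark \ref{WellDefinedSpecializationRem}), so one transports the identity through the surjections; and $s_{Z/X}^p$ commutes with proper pushforward by \cite{LM} Lemma 6.2.1 (the push-pull formula for intersection with a pseudo-divisor). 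Then $(p^!)^{-1}$ commutes with proper pushforward along the vector bundle map because $p^!$ does (it is an lci pullback along a flat, hence Tor-independent, morphism). Composing the three identities yields $i^\vir \circ \tau_0(f)_* = \tau_0(f')_* \circ i'^\vir$.

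\textbf{Case (2), $f$ smooth.} Here one argues identically but with smooth Gysin pullback $\tau_0(f)^!$ in place of $\tau_0(f)_*$, using the other half of the compatibilities: $\mathrm{pr}_2^!$ commutes with $\tau_0(f)^!$ by functoriality of lci pullback in compositions; $(u^!)^{-1}$ commutes with the smooth pullback $\tau_0(M(Z'/X')) \to \tau_0(M(Z/X))$ again by functoriality of lci pullback plus the localization-sequence argument; $s^p_{Z/X}$ commutes with smooth pullback by \cite{LM} Lemma 6.2.2 (the statement that intersection with a pseudo-divisor commutes with smooth — indeed lci — pullback along a Tor-independent square); and $(p^!)^{-1}$ commutes with smooth pullback by functoriality of lci pullbacks. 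This gives $\tau_0(f')^! \circ i^\vir = i'^\vir \circ \tau_0(f)^!$.

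\textbf{Main obstacle.} The genuinely delicate step is the handling of $(u^!)^{-1}$, which is not an honest inverse but only a right inverse extracted from the localization exact sequence. One must check that the compatibility of $u^!$ (resp.\ the induced pushforward / smooth pullback on deformation spaces) with pushforward / pullback on $\Gb_m \times \tau_0(X)$ passes correctly through the quotient, i.e.\ that the auxiliary choices in inverting $u^!$ can be made compatibly — this uses precisely that $s^p_{Z/X} \circ \tau_0(s_{Z/X})_* = 0$ so that the composites $s^p_{Z/X} \circ (u^!)^{-1}$ land in a well-defined group and the diagram chase closes. Everything else is a mechanical invocation of the base-change and functoriality properties of lci pullbacks and pseudo-divisor intersections recorded in \cite{LM} Chapter 6, together with the base-change stability of derived deformation spaces from Appendix \ref{DerivedBlowUpSect}.
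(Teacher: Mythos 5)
Your proposal takes essentially the same approach as the paper: form the five-column homotopy-Cartesian ladder (from $X'$ through $\Gb_m \times X'$, $M(Z'/X')$, $\Nc_{Z'/X'}$ to $Z'$, mapping down to the unprimed row) via base-change stability of derived deformation spaces, then verify the commuting squares one at a time using \cite{LM} Lemma 6.2.1 for the pseudo-divisor intersection and the standard compatibilities of flat pullback and proper pushforward for the remaining three. Your more careful treatment of the $(u^!)^{-1}$ step is a worthwhile elaboration of what the paper dispatches in one sentence rather than a different route, and the only slip is a citation: the smooth-pullback compatibility of pseudo-divisor intersection is also part of \cite{LM} Lemma 6.2.1 (the paper cites it for both cases), not Lemma 6.2.2.
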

\begin{proof}
Since derived deformation spaces are stable under homotopy pullbacks, the induced diagram
$$
\begin{tikzcd}
X' \arrow[]{d}{f} & \arrow[]{l}[swap]{\mathrm{pr}_2} \Gb_m \times X'  \arrow[]{d}{\mathrm{Id} \times f} \arrow[hook]{r}{u'} & M(Z'/X') \arrow[]{d}{f''} & \arrow[hook']{l}[swap]{s_{Z'/X'}} \Nc_{Z'/X'} \arrow[]{d}{f'''} \arrow[]{r}{p'} & Z' \arrow[]{d}{f'} \\
X & \arrow[]{l}[swap]{\mathrm{pr}_2} \Gb_m \times X \arrow[hook]{r}{u} & M(Z/X) & \arrow[hook']{l}[swap]{s_{Z/X}} \Nc_{Z/X} \arrow[]{r}{p} & Z 
\end{tikzcd}
$$
is homotopy Cartesian. Hence, the pseudo-divisor induced by $s_{Z/X}$ pulls back along $\tau_0(f'')$ to the pseudo divisor induced by $s_{Z'/X'}$, and therefore Lemma 6.2.1 of \cite{LM} tells us that in 
\begin{enumerate}
\item we have the identity 
$$s_{Z/X}^p \circ \tau_0(f''')_* = \tau_0(f'')_* \circ s_{Z'/X'}^p;$$
\item we have the identity
$$\tau_0(f''')^! \circ s_{Z/X}^p = s_{Z'/X'}^p \circ \tau_0(f'')^!.$$
\end{enumerate} 
The commutativity of the three other squares needed to prove the result follow from the basic properties of smooth pullbacks and proper pushforwards in $\Omega^\LM_*$, so we are done.
\end{proof}

\begin{lem}\label{AgreesWithPseudoPullLem}
Suppose $i: D \hookrightarrow X$ is a virtual Cartier divisor. Then
$$i^\vir = i^p: \Omega^\LM_*\big(\tau_0(X)\big) \to \Omega^\LM_{*-1}\big(\tau_0(D)\big).$$ 
\end{lem}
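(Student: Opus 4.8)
The statement compares two constructions of the "pullback along a virtual Cartier divisor": the virtual pullback $i^\vir$ of Definition \ref{VirtualPullbackAlongImmersionDef}, built via the derived deformation to the normal bundle, and the pseudo-divisor intersection $i^p$ of Levine--Morel (Section 6.5.1 of \cite{LM}). The plan is to unwind $i^\vir$ in the case where $i\colon D\hookrightarrow X$ is a virtual Cartier divisor and show that the deformation space contributes nothing: the normal bundle $\Nc_{D/X}$ is a line bundle $\Oc_X(D)|_D$, and the derived specialization morphism $\sigma_{D/X}$ followed by $(p^!)^{-1}$ collapses to $i^p$ because the deformation $M(D/X)\to \Ab^1\times X$ is, up to the relevant truncation, essentially trivial — the strict transform in the blow-up of $\{0\}\times D$ in $\Ab^1\times X$ does not change the divisor class.

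First I would recall that $i^\vir = (p^!)^{-1}\circ \sigma_{D/X}$, where $\sigma_{D/X} = s^p_{D/X}\circ (u^!)^{-1}\circ \mathrm{pr}_2^!$. The key geometric input is the structure of the deformation space when $i$ is already a divisor: using Theorem \ref{DeformationSpaceAsSpecThm} (or directly the blow-up computation), when $\Nc^\vee_{D/X}$ is a line bundle the symmetric algebra $\bigoplus_{i\in\Zb}\Sym^i(\pi_0(\Ic))\cdot t^{-i}$ degenerates, and $\tau_0(M(D/X))$ is naturally $\Ab^1\times \tau_0(X)$ with $\tau_0(\Nc_{D/X})$ sitting inside as $\{0\}\times \tau_0(X)$, with the $\Ab^1$-coordinate being precisely the function cutting out the divisor — more precisely, $M(D/X)\simeq \Ab^1\times X$ compatibly with the inclusions, so that $u$ becomes the open immersion $\Gb_m\times X\hookrightarrow \Ab^1\times X$ and $s_{D/X}$ the zero section. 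I would then verify that under this identification $\mathrm{pr}_2^!$ and $(u^!)^{-1}$ compose to the flat pullback $\Ab^1\times\tau_0(X)$, and that $s^p_{D/X}$ applied to this recovers $i^p$ of the original divisor on $\tau_0(X)$, using the compatibility of pseudo-divisor intersections with flat pullback (\cite{LM} Lemma 6.2.1 again). Finally, $(p^!)^{-1}$ for the line-bundle projection $p\colon \tau_0(\Nc_{D/X})\to\tau_0(D)$ is the inverse of an isomorphism, and tracking through shows the composite is exactly $i^p$.

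An alternative, possibly cleaner route avoids the explicit deformation-space computation: both $i^\vir$ and $i^p$ can be characterized by compatibility with the localization sequence and with the self-intersection formula, and one shows they agree by reducing (via the projection formula and the naturality lemmas \ref{PullbackAlongImmersionCommutesLem}, \ref{AgreesWithPseudoPullLem} that precede and follow) to the universal case of the zero section of a line bundle over a smooth base, where $i^p$ is by definition $c_1$-cap and $i^\vir$ is computed directly. I would present the direct computation as the main argument since the infrastructure (Theorem \ref{DeformationSpaceAsSpecThm}, Corollary \ref{ClassicalToDerivedDeformSpaceSpecialFibreCor}) is already developed in Appendix \ref{DerivedBlowUpSect} and matches the style of \cite{LS} Lemma 4.14.

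\textbf{Main obstacle.} The hard part will be making precise the claim that "the deformation space of a divisor is trivial" — i.e.\ checking that the identification $\tau_0(M(D/X))\simeq \Ab^1\times\tau_0(X)$ provided by Theorem \ref{DeformationSpaceAsSpecThm} really does intertwine the virtual Cartier divisor $s_{D/X}$ with the zero section $\{0\}\times\tau_0(X)$ \emph{as pseudo-divisors}, not merely as closed subschemes, so that the pullback $s^p_{D/X}$ transports correctly. This requires keeping careful track of the line-bundle data $\Oc(1)$ on the relevant $\Proj$ and matching it with $\Oc_X(D)$; the ideal-like sheaf formalism of Section \ref{IdealLikeSheafSubSect} and Proposition \ref{TruncationOfVirtualExceptionalDivProp} are exactly what is needed, but assembling them into the divisor-level statement is the delicate point.
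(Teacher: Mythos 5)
Your main route rests on a false premise. For a general virtual Cartier divisor $i: D \hookrightarrow X$, the truncated \emph{derived} deformation space $\tau_0\bigl(M(D/X)\bigr)$ is \emph{not} $\Ab^1 \times \tau_0(X)$. You appear to be conflating the conormal sheaf $\Nc^\vee_{D/X}$ (which is indeed a line bundle on $D$) with the ideal-like sheaf $\pi_0(\Ic)$ on $\tau_0(X)$ that feeds into Theorem \ref{DeformationSpaceAsSpecThm}; these are very different objects. A minimal counterexample: take $X = pt$ and $D$ the derived vanishing locus of the zero section of $\Oc_{pt}$. Then $\pi_0(\Ic) \cong k$ but the map $u: \pi_0(\Ic) \to \Oc_{\tau_0(X)} = k$ is zero, and the Rees-like algebra of Theorem \ref{DeformationSpaceAsSpecThm} is $k[s,t]/(st)$ with $s = y t^{-1}$, so $\tau_0\bigl(M(D/X)\bigr)$ is a nodal curve, not $\Ab^1$. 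What \emph{is} $\Ab^1 \times \tau_0(X)$ in that situation is the \emph{classical} deformation space $M^{\cl}\bigl(\tau_0(D)/\tau_0(X)\bigr)$; these two spaces are related but distinct, and keeping them apart is exactly what the unnamed proposition after Definition \ref{DerivedSpecializationMorphismDef} (factoring $\sigma_{D/X}$ as $\iota_{D/X *} \circ \sigma_{\tau_0(D)/\tau_0(X)}$) and Corollary \ref{ClassicalToDerivedDeformSpaceSpecialFibreCor} are for.

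Your alternative route is much closer to what the paper actually does, and is the right idea: use the push-pull compatibility of $i^\vir$ (Lemma \ref{PullbackAlongImmersionCommutesLem}(1)) and of $i^p$ (Lemma 6.2.1 of \cite{LM}) to reduce to the two base cases in Levine--Morel's Definition 6.5.1 --- snc divisors on smooth $X$ (handled by Lemma 6.5.6 of \cite{LM}) and the derived vanishing locus of the zero section of a line bundle on smooth $X$. In the second case $\tau_0(D)=\tau_0(X)$, so the classical specialization is the identity and $\iota_{D/X}$ is the zero section $s: X \hookrightarrow \Ls$, whence $i^\vir = s^! \circ s_*$ is cap with $c_1(\Ls)$, which is also the definition of $i^p$. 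Two small further issues with your write-up: the ``alternative route'' cites Lemma \ref{AgreesWithPseudoPullLem} --- i.e., the statement being proved --- which would make the argument circular as written; and nothing is said about why $i^p$, which a priori depends on a choice of pseudo-divisor structure, is even well-defined from the derived data (this is implicit in the reduction, since \cite{LM} only defines $i^p$ in terms of the pseudo-divisor, and the reduction normalizes that choice).
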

\begin{proof}
Since both types of pullbacks satisfy a similar push-pull formula in homotopy Cartesian squares ($i^\vir$ by Lemma \fref{PullbackAlongImmersionCommutesLem} (1), $i^p$ by \cite{LM} Lemma 6.2.1), it is evidently enough to deal with the following special case (see \emph{op. cit.} Definition 6.5.1): $X$ is smooth and connected and $D$ is either
\begin{enumerate}
\item an snc divisor on $X$; 
\item the derived vanishing locus of the zero section of a line bundle $\Ls$ on $X$.
\end{enumerate}
Case 1. is a special case of \cite{LM} Lemma 6.5.6, so it is enough for us to deal with case 2. But now the classical deformation space is equivalent to $\Ab^1 \times X$ and therefore $\sigma_{X/\tau_0(D)}$ is the identity. Moreover, $\iota_{D/X}$ is the zero section $s: X \hookrightarrow \Ls$, so that $i^\vir = s^! \circ s_*$ is the multiplication by the first Chern class of $\Ls$. But since also $i^p$ is the multiplication by the first Chern class of $\Ls$ in this case, we are done.
\end{proof}

\begin{lem}\label{PullbackAlongImmersionDCartesianLem}
Suppose the square
\begin{center}
\begin{tikzcd}
Z' \arrow[hook]{r}{i'} \arrow[hook]{d}{j'} & X' \arrow[hook]{d}{j} \\
Z \arrow[hook]{r}{i} & X
\end{tikzcd}
\end{center}
is homotopy Cartesian, with all morphisms quasi-smooth closed immersions. Then
$$i'^\vir \circ j^\vir = j'^\vir \circ i^\vir.$$
\end{lem}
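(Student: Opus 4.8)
The plan is to reduce the commutativity of virtual pullbacks along two quasi-smooth closed immersions to the double deformation construction, following the strategy that Lowrey and Schürg use for $d\Omega^k_*$ and that Levine--Morel use in \cite{LM} Section 6.5. First I would observe that, by Definition \ref{VirtualPullbackAlongImmersionDef}, each virtual pullback factors through a derived specialization morphism followed by the inverse of a vector bundle projection. So unwinding both sides of the equation $i'^\vir \circ j^\vir = j'^\vir \circ i^\vir$, it suffices to compare the two composites of specialization morphisms attached to the deformation spaces $M(Z/X)$, $M(Z'/X')$, $M(X'/X)$ and $M(Z'/Z)$, together with the appropriate vector bundle projections. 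The commutativity of the projection part is formal (it only uses that $\tau_0(\Nc_{-/-})$ is a vector bundle and the naturality of $p^!$), so the heart of the matter is the commutativity of specialization morphisms.

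The key step is to construct the \emph{double deformation space}, i.e. to apply the deformation-to-the-normal-bundle construction twice. Concretely, starting from the homotopy Cartesian square of quasi-smooth closed immersions, one forms $M(Z'/X')$ and then deforms the closed immersion $M(Z'/Z) \hookrightarrow M(Z'/X')$ (obtained via functoriality of $M(-/-)$ under the square), or equivalently one deforms in the $X' \to X$ direction inside $M(Z/X)$. Using Theorem \ref{VirtPullbackPropertiesThm}(1) (functoriality, which may be assumed) together with the compatibility results of Appendix \ref{DerivedBlowUpSect}---in particular Theorem \ref{DeformationSpaceAndVectorBundlesThm} and the stability of derived deformation spaces under homotopy base change---one checks that both orders of deformation produce equivalent derived schemes over $\Ab^1 \times \Ab^1$, with the special fibres compatibly identified with the relevant normal bundles (and their iterated normal bundles). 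Then both composites $i'^\vir \circ j^\vir$ and $j'^\vir \circ i^\vir$ are computed by the same sequence of pseudo-divisor pullbacks and flat pullbacks on the double deformation space, using Lemma \ref{PullbackAlongImmersionCommutesLem} to move the various pullbacks past proper pushforwards and smooth pullbacks, and Lemma \ref{AgreesWithPseudoPullLem} to identify the individual virtual divisor pullbacks with the pseudo-divisor pullbacks $s^p$ of \cite{LM} Section 6.5.1, where \cite{LM} Lemma 6.2.1 gives the commutativity of pseudo-divisor pullbacks with each other.

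I expect the main obstacle to be the bookkeeping around the truncation functor: the double deformation space is built from derived blow ups, but the actual pullbacks live on the classical truncations, and one must verify that the truncation of the iterated derived deformation space still has the expected bigraded presentation (compare the discussion preceding Theorem \ref{DeformationSpaceAsSpecThm} and Theorem \ref{DeformationSpaceAndVectorBundlesThm}). The delicate point is that $u^!$ is only a surjection, so at each stage one must invoke the localization exact sequence (as in Remark \ref{WellDefinedSpecializationRem}) to know that the composite with $s^p$ is well defined, and then argue that these well-definedness choices are compatible across the two orders of deformation. Once the geometry of the double deformation space is set up and its truncation is identified with the appropriate $\Sc pec$ of a multigraded algebra, the remaining verification is a routine diagram chase using the three lemmas already established, so I would present the double deformation space construction in detail and then be terse about the chase itself.
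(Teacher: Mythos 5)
Your plan has a circularity and an unconstructed piece of geometry. The circularity: you invoke Theorem~\ref{VirtPullbackPropertiesThm}(1) (functoriality of $f^\vir$), but that theorem rests on Lemma~\ref{PullbackAlongImmersionIsFunctorialLem}, whose proof explicitly uses the present Lemma~\ref{PullbackAlongImmersionDCartesianLem}; you cannot ``assume'' functoriality here. The geometry: your first description of the double deformation space asks to deform a closed immersion $M(Z'/Z) \hookrightarrow M(Z'/X')$, but no such map exists --- there is no morphism $Z \to X'$ coming from the Cartesian square, hence no map $\Ab^1 \times Z \to \Ab^1 \times X'$ to blow up, and correspondingly no map on generic fibres $\Gb_m \times Z \to \Gb_m \times X'$. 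Your second description (deforming the closed immersion $M(Z'/X') \hookrightarrow M(Z/X)$ obtained by base change along $j$) does produce a genuine derived scheme over $\Ab^1 \times \Ab^1$, but none of the truncation analysis in Appendix~\ref{DerivedBlowUpSect} (Theorems~\ref{DeformationSpaceAsSpecThm},~\ref{DeformationSpaceAndVectorBundlesThm}) is stated for an iterated $M(M(-/-)/M(-/-))$, so you would need to redo that bookkeeping from scratch; you flag this yourself as ``the main obstacle'' but then wave it away. Also a small citation slip: the commutativity of two pseudo-divisor pullbacks with each other is \cite{LM}~Proposition~6.3.3, not Lemma~6.2.1 (which is push-pull and smooth base change for a single pseudo-divisor).

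The paper avoids the double deformation space entirely and thereby avoids both problems. It base-changes the \emph{single} deformation diagram for $Z \hookrightarrow X$ along $j: X' \hookrightarrow X$; stability of $M(-/-)$ under derived base change makes the resulting ladder homotopy Cartesian, with $M(Z'/X') \hookrightarrow M(Z/X)$ a quasi-smooth immersion pulled back from $j$. All horizontal maps except $s^p_{Z/X}$ are smooth (or smooth with inverse), so Lemma~\ref{PullbackAlongImmersionCommutesLem}(2) handles them, reducing the claim to: $j^\vir$ commutes with pullback along a single virtual Cartier divisor. To handle that, it deforms \emph{again}, this time along $j$ (the ladder with $M(D'/D)$ over $M(X'/X)$), so that $j^\vir$ unwinds into smooth pullbacks, their inverses, and one pseudo-divisor pullback $s^p_{X'/X}$; \cite{LM}~Lemma~6.2.1 handles the smooth/proper part, and \cite{LM}~Proposition~6.3.3 handles the two pseudo-divisors. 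This two-stage reduction buys exactly what you need --- commutativity of a $j$-type virtual pullback with each individual operation in the specialization sequence --- without ever building (or truncating) a bivariate deformation space, and without any appeal to results not yet available at this point in the text.
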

\begin{proof}
By assumption, the diagram
$$
\begin{tikzcd}
X' \arrow[hook]{d}{} & \arrow[]{l}[swap]{\mathrm{pr}_2} \Gb_m \times X'  \arrow[hook]{d} \arrow[hook]{r}{u'} & M(Z'/X') \arrow[hook]{d} & \arrow[hook']{l}[swap]{s_{Z'/X'}} \Nc_{Z'/X'} \arrow[hook]{d} \arrow[]{r}{p'} & Z' \arrow[hook]{d} \\
X & \arrow[]{l}[swap]{\mathrm{pr}_2} \Gb_m \times X \arrow[hook]{r}{u} & M(Z/X) & \arrow[hook']{l}[swap]{s_{Z/X}} \Nc_{Z/X} \arrow[]{r}{p} & Z 
\end{tikzcd}
$$
is homotopy Cartesian, and since the virtual pullbacks commute with smooth pullbacks by Lemma \fref{PullbackAlongImmersionCommutesLem}, we are reduced to showing that virtual pullbacks commute with pullbacks along pseudo-divisors (induced from virtual Cartier divisors).

But given a homotopy Cartesian diagram
$$
\begin{tikzcd}
D' \arrow[hook]{r}{i_{D'}} \arrow[hook]{d}{j''} & X' \arrow[hook]{d}{j} \\
D \arrow[hook]{r}{i_D} & X
\end{tikzcd}
$$
with $D$ and $D'$ virtual Cartier divisors, we can form the following homotopy Cartesian diagram
$$
\begin{tikzcd}
D \arrow[hook]{d}{} & \arrow[]{l}[swap]{\mathrm{pr}_2} \Gb_m \times D  \arrow[hook]{d} \arrow[hook]{r}{u'} & M(D'/D) \arrow[hook]{d} & \arrow[hook']{l}[swap]{s_{D'/D}} \Nc_{D'/D} \arrow[hook]{d} \arrow[]{r}{} & D' \arrow[hook]{d} \\
X & \arrow[]{l}[swap]{\mathrm{pr}_2} \Gb_m \times X \arrow[hook]{r}{u} & M(X'/X) & \arrow[hook']{l}[swap]{s_{X'/X}} \Nc_{X'/X} \arrow[]{r} & X', 
\end{tikzcd}
$$
and \cite{LM} Lemma 6.2.1 reduces the problem to showing that pullbacks along pseudo divisors commute with each other. But this follows from \cite{LM} Proposition 6.3.3, so we are done.
\end{proof}

Moreover, the following result gives a criterion for when the virtual pullback can be computed as the pullback along the truncation.

\begin{lem}\label{CoincidesWithLMLem}
Suppose that
\begin{center}
\begin{tikzcd}
\tau_0(Z) \arrow[hook]{r}{\tau_0(i)} \arrow[hook]{d}{} & \tau_0(X) \arrow[hook]{d}{} \\
Z \arrow[hook]{r}{i}& X
\end{tikzcd}
\end{center}
is homotopy Cartesian. Then 
$$i^\vir = \tau_0(i)^!.$$
\end{lem}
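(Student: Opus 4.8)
\textbf{Proof plan for Lemma \ref{CoincidesWithLMLem}.}

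The plan is to trace through the construction of $i^\vir$ in Definition \ref{VirtualPullbackAlongImmersionDef} and show that under the hypothesis that $\tau_0(Z) = \tau_0(i)^{-1}(\tau_0(X))$ is computed by the \emph{derived} fibre product, every derived ingredient collapses to its classical counterpart, so that $i^\vir$ becomes the lci (pseudo-divisor) pullback of Levine--Morel, which in turn equals $\tau_0(i)^!$. The key observation is that when the square in the statement is homotopy Cartesian, the conormal bundle behaves well: $\tau_0(\Nc_{Z/X})$ is identified with the classical normal cone $C_{\tau_0(Z)/\tau_0(X)}$, because $\iota_{Z/X}$ becomes an isomorphism (the surjection $\pi_0(\Nc^\vee_{Z/X}) \twoheadrightarrow \pi_0(\Nc^\vee_{\tau_0(Z)/\tau_0(X)})$ is an isomorphism under this hypothesis, since $\tau_0(Z)$ is \emph{cut out} in $\tau_0(X)$ by the same data that cuts out $Z$ in $X$ after truncation). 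More importantly, the derived deformation space $M(Z/X)$ has truncation equal to the classical deformation space $M^{\mathrm{cl}}(\tau_0(Z)/\tau_0(X))$: this follows from Theorem \ref{ClassicalToDerivedDeformationSpaceThm} together with the fact that $\pi_0(\Ic) \twoheadrightarrow \Ic_0$ is an \emph{isomorphism} in the relevant situation, which I expect to be exactly equivalent to the square being homotopy Cartesian (compare Lemma \ref{SurjOfIdealsLem} and Corollary \ref{ConormalVsIdealCor}).

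Concretely, the steps I would carry out are as follows. First, reduce to the case of a virtual Cartier divisor $i: D \hookrightarrow X$ by the standard deformation argument: both $i^\vir$ and $\tau_0(i)^!$ satisfy push-pull in homotopy Cartesian squares (for $i^\vir$ by Lemma \ref{PullbackAlongImmersionCommutesLem}(1), for $\tau_0(i)^!$ by \cite{LM} Lemma 6.2.1, and for lci pullbacks in general), and the formation of the deformation space commutes with the hypothesis being satisfied on the nose, so it suffices to check the claim after deforming to the normal bundle. Second, in the virtual-Cartier-divisor case, invoke Lemma \ref{AgreesWithPseudoPullLem}, which already identifies $i^\vir = i^p$ with the Levine--Morel pullback along the associated pseudo-divisor. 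Third, observe that when the square $\tau_0(D) = \tau_0(i)^{-1}(\tau_0(X))$ is homotopy Cartesian, the virtual Cartier divisor $D \hookrightarrow X$ has $\tau_0(D)$ an honest Cartier divisor on $\tau_0(X)$ (the surjectivity in condition (3) of Definition \ref{DivisorOverZDefn} becomes an isomorphism, so there is no excess), and hence $i^p$ is by definition the refined pullback $\tau_0(i)^!$ of Levine--Morel (\cite{LM} Section 6.5 and Section 6.6). Assembling these gives the claim.

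The main obstacle will be the second step: making precise the claim that ``the hypothesis of the lemma is preserved under forming the derived deformation space,'' i.e.\ that if the square $\tau_0(Z) \to Z$, $\tau_0(X)\to X$ is homotopy Cartesian then so is the corresponding square for $\Gb_m \times X$, $M(Z/X)$, $\Nc_{Z/X}$, and their truncations — and, crucially, that under this hypothesis $\tau_0(M(Z/X))$ coincides with the classical deformation space and $\tau_0(\Nc_{Z/X})$ with the classical normal cone, so that $\sigma_{Z/X}$ reduces to the classical specialization morphism $\sigma_{\tau_0(Z)/\tau_0(X)}$. This requires combining the explicit description of $\tau_0(M(Z/X))$ from Theorem \ref{DeformationSpaceAsSpecThm}, the comparison Theorem \ref{ClassicalToDerivedDeformationSpaceThm}, and Corollary \ref{ClassicalToDerivedDeformSpaceSpecialFibreCor}, together with a careful check that the surjections $\pi_0(\psi_{Z/X})$ and $\pi_0(\psi_{Z/X/Y})$ appearing there are in fact isomorphisms in the present situation. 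Once that identification is in place, everything else is bookkeeping against the already-established properties of $\Omega^\LM_*$.
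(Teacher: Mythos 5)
Your opening paragraph identifies exactly the observation the paper uses: under the hypothesis, $\pi_0(\psi_{Z/X}):\pi_0(\Ic)\to\Ic_0$ is an isomorphism, so by Theorem \ref{ClassicalToDerivedDeformationSpaceThm} (and Corollary \ref{ClassicalToDerivedDeformSpaceSpecialFibreCor}) the truncation $\tau_0(M(Z/X))$ is naturally identified with the classical deformation space $M^{\mathrm{cl}}(\tau_0(Z)/\tau_0(X))$, and $\tau_0(\Nc_{Z/X})$ with the classical normal cone. The paper's proof stops there: since $i^\vir$ (Definition \ref{VirtualPullbackAlongImmersionDef}) and the Levine--Morel lci pullback $\tau_0(i)^!$ are literally the same three-step composite $(p^!)^{-1}\circ s^p \circ (u^!)^{-1}\circ\mathrm{pr}_2^!$, one built from the derived deformation space and the other from the classical one, identifying the spaces already identifies the maps — no further argument needed.

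Your ``concrete steps'' 1--3, by contrast, introduce an unnecessary detour. Step 1 proposes to ``reduce to the virtual Cartier divisor case by the standard deformation argument,'' but that deformation is already the \emph{definition} of both $i^\vir$ and $\tau_0(i)^!$; appealing to push-pull to perform the reduction is circular, and the assertion ``the formation of the deformation space commutes with the hypothesis being satisfied on the nose'' is not something to be used as an auxiliary lemma — it is precisely the content of the identification $\tau_0(M(Z/X))\simeq M^{\mathrm{cl}}(\tau_0(Z)/\tau_0(X))$ which, once in hand, already finishes the proof. Steps 2 and 3 (via Lemma \ref{AgreesWithPseudoPullLem} and then matching $i^p$ against $\tau_0(i)^!$) can be made to work, but they re-derive what a direct comparison of definitions gives immediately; and the phrase ``the surjectivity in condition (3) of Definition \ref{DivisorOverZDefn} becomes an isomorphism, so there is no excess'' is a red herring here — condition (3) is automatically an equivalence for the tautological divisor $D\hookrightarrow X$ over itself, and what matters is instead that $\tau_0(D)\hookrightarrow\tau_0(X)$ is a genuine Cartier divisor on classical schemes, so the pseudo-divisor pullback agrees with the Levine--Morel Cartier-divisor pullback. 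So: your ``key observation'' paragraph is the proof; trim the rest.
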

\begin{proof}
In this situation the truncation $\tau_0\big(M(Z/X)\big)$ is naturally isomorphic to the classical deformation space $M^\mathrm{cl}\big(\tau_0(Z)/\tau_0(X)\big)$, and the claim follows by comparing the two definitions.
\end{proof}

Finally, we can extend the definition of virtual pullbacks to arbitrary quasi-smooth morphisms using the following lemma.

\begin{lem}[\cite{LS} Lemma 4.15]\label{VirtualPullbackWellDefinedLem}
Suppose $f: X \to Y$ is quasi-smooth, and suppose we have factorizations
$$X \stackrel{i_1}{\hookrightarrow} P_1 \xrightarrow{p_1}  Y$$
and
$$X \stackrel{i_2}{\hookrightarrow} P_2 \xrightarrow{p_2}  Y$$
of $f$ into a composition of a smooth morphism $p$ and a closed immersion $i$ (necessarily a derived regular embedding). Then
$$i_1^\vir \circ \tau_0(p_1)^! = i_2^\vir \circ \tau_0(p_2)^!.$$

\end{lem}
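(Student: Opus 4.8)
The standard strategy is to reduce to the case of two factorizations that dominate a common third factorization, and then to analyse the difference between the two regular embeddings using the graph construction. First I would reduce to the case where one factorization factors through the other. Given the two factorizations $X \xrightarrow{i_1} P_1 \xrightarrow{p_1} Y$ and $X \xrightarrow{i_2} P_2 \xrightarrow{p_2} Y$, form the fibre product $P_1 \times_Y^R P_2$; the diagonal-type morphism $X \to P_1 \times_Y^R P_2$ together with the two projections gives a factorization dominating both. So it suffices to treat the case of a commutative diagram
\begin{center}
\begin{tikzcd}
X \arrow[hook]{r}{i} \arrow[hook]{rd}{i'} & P \arrow[]{d}{q} \arrow[]{r}{p} & Y \\
 & P' \arrow[]{ru}{p'} &
\end{tikzcd}
\end{center}
with $p, p'$ smooth, $q$ smooth (being a morphism between derived schemes smooth over $Y$ with a section, up to replacing $P$ by a suitable further smooth pullback), and $i, i'$ derived regular embeddings, and to show $i^\vir \circ \tau_0(p)^! = i'^\vir \circ \tau_0(q)^! \circ \tau_0(p)^! $, i.e. after cancelling $\tau_0(p)^!$ by functoriality of smooth pullbacks, that $i^\vir = i'^\vir \circ \tau_0(q)^!$ (using that $q$ is smooth and $i = $ section of $q$ composed with... — more precisely $i' = q \circ i$). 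Actually the cleanest formulation: with $i' = q\circ i$ and $q$ smooth, I must show $(q \circ i)^\vir$ factors correctly, i.e. $i^\vir \circ \tau_0(p)^! = (q\circ i)^\vir \circ \tau_0(p')^!$; since $p = p' \circ q$ this becomes $i^\vir = (q\circ i)^\vir \circ \tau_0(q)^!$ after cancelling $\tau_0(p')^!$.

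\textbf{Key steps.} (1) Perform the fibre-product reduction above so that only the nested case remains. (2) In the nested case, $q: P \to P'$ is a smooth morphism admitting $i$ as a closed-immersion section through $X$; here I would use the self-intersection-type argument. Choose a factorization of $q$ itself, or better, use that $i: X \hookrightarrow P$ and the composite $q \circ i = i': X \hookrightarrow P'$ are both derived regular embeddings fitting into the homotopy Cartesian square
\begin{center}
\begin{tikzcd}
X \arrow[hook]{r}{i} \arrow[equal]{d} & P \arrow[]{d}{q} \\
X \arrow[hook]{r}{i'} & P'
\end{tikzcd}
\end{center}
Wait — this square is Cartesian only if $q$ is a regular embedding, which it is not (it is smooth). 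So instead I would factor $i'$ through $i$: we have $i' = q \circ i$ with $q$ smooth, so by the definition of virtual pullback along a quasi-smooth morphism as "regular embedding followed by smooth" applied to $i'$ via the factorization $X \xrightarrow{i} P \xrightarrow{q} P'$ — but that is exactly what we are trying to prove is well-defined. The genuine content, following \cite{LS} Lemma 4.15, is a deformation-to-the-normal-bundle computation: compare $\Nc_{X/P}$, $\Nc_{X/P'}$ and $\Nc_{P/P'}$ (a vector bundle since $q$ is smooth) via the exact triangle $i^*\Nc_{P/P'} \to \Nc_{X/P'} \to \Nc_{X/P}$, which splits because $\Nc_{P/P'}$ is a vector bundle; then identify the iterated deformation space $M(X/M(P/P'))$-type object using Theorem \ref{DeformationSpaceAndVectorBundlesThm} (deformation spaces and vector bundles), and conclude that the two derived specialization morphisms agree after composing with the appropriate vector bundle pullbacks. (3) Assemble: combine Lemma \ref{PullbackAlongImmersionDCartesianLem} (virtual pullbacks along regular embeddings commute in Cartesian squares), Lemma \ref{PullbackAlongImmersionCommutesLem} (compatibility with smooth pullback), and the vector-bundle identification to finish.

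\textbf{Main obstacle.} The hard part is step (2): making precise the comparison of the two derived specialization morphisms $\sigma_{X/P}$ and $\sigma_{X/P'}$ when one target deformation space is built "inside" the other. This is precisely where Theorem \ref{DeformationSpaceAndVectorBundlesThm} is needed — it gives the isomorphism $\tau_0(M(X/P)) \times_{\tau_0(P)} \tau_0(E) \cong \tau_0(M(X/E))$ for $E$ a vector bundle over $P$, which is the derived incarnation of Verdier's Corollaire 2.18 and is exactly the input that controls how the normal bundle deformation iterates through a smooth morphism. The remaining work is a diagram chase with pseudo-divisor pullbacks using \cite{LM} Lemma 6.2.1 and Proposition 6.3.3, together with the fact (from the third listed property of $\Omega^\LM_*$) that vector bundle pullbacks are isomorphisms whose inverses are section pullbacks; I expect this to be routine once the geometric identification of the deformation spaces is in place, so I would set it up carefully but not belabour it.
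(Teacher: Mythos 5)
Your fibre-product reduction is the paper's first step and is sound in spirit, but the reduced claim you write down is stated backwards and does not type-check: you should be aiming for $i'^\vir = i^\vir\circ\tau_0(q)^!$ (pullback from the smaller $P'$ equals the smooth pullback $\tau_0(q)^!$ up to the bigger $P$ followed by $i^\vir$), whereas your ``$i^\vir = i'^\vir\circ\tau_0(q)^!$'' has $i$ and $i'$ swapped --- note that $\tau_0(q)^!$ lands in $\Omega^\LM_*(\tau_0(P))$, which is the source of $i^\vir$ and not of $i'^\vir$, so the composition on your right-hand side is not even defined. Also, ``cancelling $\tau_0(p')^!$'' is not a valid inference ($\tau_0(p')^!$ is far from injective); the correct logic is merely that the stronger claim implies the one you want, and one then proves the stronger claim.

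The genuine gap is in your step (2). You correctly observe that the naive commuting square is not Cartesian and that factoring $i'$ as $X\to P\to P'$ and appealing to well-definedness would be circular, but the deformation-space route you pivot to is not workable as stated: $q$ is smooth, not a closed immersion, so there is no $\Nc_{P/P'}$ and no $M(P/P')$, and your exact triangle has the terms in the wrong order (the correct short exact sequence is $0\to i^*T_{P/P'}\to\Nc_{X/P}\to\Nc_{X/P'}\to 0$). The idea you are missing is the section trick. Pull $q$ back along $i'$ to obtain a homotopy Cartesian square with smooth projection $\bar p\colon X':=P\times^R_{P'}X\to X$ and derived regular embedding $\tilde{\imath}\colon X'\hookrightarrow P$; because $i'=q\circ i$, the pair $(i,\Id_X)$ defines a section $s\colon X\hookrightarrow X'$ of $\bar p$ with $\tilde{\imath}\circ s\simeq i$. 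A section of a smooth morphism has homotopy Cartesian truncation square, so $s^\vir=\tau_0(s)^!$ by Lemma \ref{CoincidesWithLMLem}; base change gives $\tau_0(\bar p)^!\circ i'^\vir=\tilde{\imath}^\vir\circ\tau_0(q)^!$ by Lemma \ref{PullbackAlongImmersionCommutesLem}; and functoriality along compositions of regular embeddings gives $i^\vir=s^\vir\circ\tilde{\imath}^\vir$ by Lemma \ref{PullbackAlongImmersionIsFunctorialLem} (which is about immersions only, so invoking it here is not circular). Chaining these with $\tau_0(s)^!\circ\tau_0(\bar p)^!=\Id$ gives $i'^\vir=i^\vir\circ\tau_0(q)^!$ in four lines. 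You are right that Theorem \ref{DeformationSpaceAndVectorBundlesThm} is the substantive geometric input, but the paper packages it inside the proof of Lemma \ref{PullbackAlongImmersionIsFunctorialLem} and then deduces the present lemma from that one via the section trick; trying to inline the deformation argument here is what makes your plan stall.
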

\begin{proof}
We can now form commutative diagrams
\begin{center}
\begin{tikzcd}
  & P_1 \times_Y P_2 \arrow[]{d}{\mathrm{pr}_j} \\
X \arrow[hook]{ru}{i_{12}} \arrow[hook]{r}{i_j} \arrow[]{rd}{f} & P_j \arrow[]{d}{p_{j}} \\
  & Y
\end{tikzcd}
\end{center}
for $j=1,2$, and since smooth pullbacks are functorial in $\Omega^\LM_*$, we are reduced to showing the following: given a commutative triangle
\begin{center}
\begin{tikzcd}
 & Y \arrow[]{d}{p} \\
X \arrow[hook]{ru}{j} \arrow[hook]{r}{i} & Z 
\end{tikzcd}
\end{center}
with $i$ and $j$ derived regular embeddings and $p$ is smooth, then 
$$i^\vir = j^\vir \circ \tau_0(p)^!.$$
We will show this.

We start by forming the homotopy Cartesian square
\begin{center}
\begin{tikzcd}
X' \arrow[hook]{r}{i'} \arrow[]{d}{p'} & Y \arrow[]{d}{p} \\
X \arrow[hook]{r}{i} & Z
\end{tikzcd}
\end{center}
and noting that $p'$ admits a section $s: X \hookrightarrow X'$ such that $i' \circ s$ is naturally equivalent to $j$. Moreover, as the square
\begin{center}
\begin{tikzcd}
\tau_0(X) \arrow[hook]{r}{\tau_0(s)} \arrow[]{d} &  \tau_0(X') \arrow[]{d} \\
X \arrow[hook]{r}{s} & X'
\end{tikzcd}
\end{center}
is homotopy Cartesian (since $s$ is a section of a smooth morphism), we see that $s^\vir = \tau_0(s)^!$. We can then compute that
\begin{align*}
i^\vir &= \tau_0(s)^! \circ \tau_0(p')^! \circ i^\vir & \\
&=  \tau_0(s)^! \circ i'^\vir \circ \tau_0(p)^! & (\text{Lemma \fref{PullbackAlongImmersionCommutesLem}}) \\
&= s^\vir \circ i'^\vir \circ \tau_0(p)^! & (\text{Lemma \fref{CoincidesWithLMLem}}) \\
&= j^\vir \circ \tau_0(p)^! & (\text{Lemma \fref{PullbackAlongImmersionIsFunctorialLem}})
\end{align*}
proving the claim.
\end{proof}

We are finally ready to make the main definition of the section.

\begin{defn}
Let $f: X \to Y$ be a quasi-smooth morphism of relative dimension $d$, and let 
$$X \stackrel{i}{\hookrightarrow} P \stackrel{p}{\to} Y$$
be a factorization of $f$ where $i$ is a closed embedding and $p$ is a smooth morphism. By Lemma \fref{VirtualPullbackWellDefinedLem}, the morphism
$$f^\vir := i^\vir \circ \tau_0(p)^!: \Omega^\LM_*\big(\tau_0(Y)\big) \to \Omega^\LM_{* + d}\big(\tau_0(X)\big)$$
depends only on $f$, and we define this to be the \emph{virtual pullback along $f$}.
\end{defn}

\subsection{Proof of Theorem \fref{VirtPullbackPropertiesThm}}\label{VirtPullbackPropertiesSect}

The purpose of this section is to prove that the virtual pullbacks satisfy various desirable properties. However, before giving the proof, we need the following result, which was already used in the proof of Lemma \fref{VirtualPullbackWellDefinedLem}.

\begin{lem}[\cite{LS} Proposition 4.14]\label{PullbackAlongImmersionIsFunctorialLem}
Suppose we have quasi-smooth immersions
$$Z \stackrel i \hookrightarrow X \stackrel j \hookrightarrow Y.$$
Then $i^\vir \circ j^\vir = (j \circ i)^\vir$.
\end{lem}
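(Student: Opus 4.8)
The statement asserts the functoriality of virtual pullbacks along composable quasi-smooth closed immersions $Z \stackrel i \hookrightarrow X \stackrel j \hookrightarrow Y$. The plan is to reduce the general case to the case of virtual Cartier divisors by a standard deformation-to-the-normal-bundle double-deformation argument, mirroring the structure of the proof of Lemma \ref{PullbackAlongImmersionDCartesianLem}. First I would recall that, by Definition \ref{VirtualPullbackAlongImmersionDef}, each of the three virtual pullbacks $i^\vir$, $j^\vir$, $(j \circ i)^\vir$ is a composite of a derived specialization morphism (built from the derived deformation space $M(-/-)$) and an inverse of a vector-bundle pullback isomorphism. The task is therefore to compare the three deformation spaces $M(Z/X)$, $M(X/Y)$ and $M(Z/Y)$, together with the cones they produce.

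The key geometric input is a \emph{double deformation space}: one deforms $Z \hookrightarrow X \hookrightarrow Y$ simultaneously in two $\Ab^1$-directions, obtaining a derived scheme over $\Ab^1 \times \Ab^1$ whose various fibres recover $M(Z/X)$, $M(X/Y)$, the normal bundle $\Nc_{X/Y}$ restricted appropriately, and the iterated normal bundle $\Nc_{Z/X}$ sitting inside (the pullback of) $\Nc_{X/Y}$ over $Z$. Concretely I would use the compatibility of derived deformation to the normal bundle with derived base change (Theorem \ref{BlowUpPropertiesThm}, and its consequence Theorem \ref{DeformationSpaceAndVectorBundlesThm} for deformation spaces of vector bundles, e.g. $M(Z/\Nc_{X/Y})$ identifies with the pullback of $M(Z/X)$ along $\Nc_{X/Y} \to X$), to produce homotopy Cartesian squares relating all the relevant specialization morphisms. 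The truncation-level identifications from Section \ref{DeformationToNormalBundleSubSect}, especially Theorem \ref{DeformationSpaceAsSpecThm} and Corollary \ref{InclusionToDeformationSpaceCor}, let one verify that the classical schemes underlying these fibres are as expected. Then, exactly as in Lemma \ref{PullbackAlongImmersionDCartesianLem} and Lemma \ref{PullbackAlongImmersionCommutesLem}, I would invoke \cite{LM} Lemma 6.2.1 (push-pull for pullbacks along pseudo-divisors in homotopy Cartesian squares) and the functoriality of smooth pullbacks and of inverse-of-projection maps in $\Omega^\LM_*$ to propagate the comparison of fundamental classes through the diagram. Finally the core case --- commutativity/functoriality of iterated pullbacks along pseudo-divisors --- is \cite{LM} Proposition 6.3.3, already used in Lemma \ref{PullbackAlongImmersionDCartesianLem}.

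A slicker alternative, which I would actually prefer to carry out, avoids reconstructing the double deformation space by hand: factor the comparison into two applications of already-established lemmas. Using the homotopy Cartesian square obtained by deforming $j$ while leaving $i$ untouched, Lemma \ref{PullbackAlongImmersionCommutesLem} reduces the functoriality $i^\vir \circ j^\vir = (j\circ i)^\vir$ to the analogous statement where $j$ is replaced by the virtual Cartier divisor $s_{X/Y}: \Nc_{X/Y} \hookrightarrow M(X/Y)$ (special fibre inclusion) and $i$ by its base change; one then deforms $i$, using Lemma \ref{PullbackAlongImmersionCommutesLem} again, to reduce further to the case where \emph{both} immersions are special-fibre inclusions of deformation spaces, i.e.\ both are virtual Cartier divisors. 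At that point Lemma \ref{PullbackAlongImmersionDCartesianLem} (commutativity of virtual pullbacks along quasi-smooth closed immersions sitting in a homotopy Cartesian square), together with the definition of the iterated deformation giving $(j\circ i)^\vir$ as the composite, finishes the argument. The role of Lemma \ref{AgreesWithPseudoPullLem} is to know that on virtual Cartier divisors $i^\vir = i^p$, so that the reduced statement is literally about pullbacks along pseudo-divisors, where \cite{LM} Proposition 6.3.3 applies.

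\textbf{Main obstacle.} The delicate point is the bookkeeping of the iterated deformation space that computes $(j\circ i)^\vir$: one must check that the derived scheme deforming $Z \hookrightarrow X \hookrightarrow Y$ in two parameters really does, in each stage of the reduction, produce the normal bundle $\Nc_{Z/X}$ as a sub-vector-bundle of (a pullback of) $\Nc_{X/Y}$ over $Z$ with the compatibilities needed to factor $(p_{Z/Y})^!$ through $(p_{Z/X})^!$ and $(p_{X/Y})^!$, and that the various occurrences of $(u^!)^{-1}$ are well defined at each stage (as in Remark \ref{WellDefinedSpecializationRem}, via localization exact sequences). This is where the truncation analysis of Section \ref{DeformationToNormalBundleSubSect} --- in particular Theorem \ref{DeformationSpaceAndVectorBundlesThm} --- does the real work; once those identifications are in hand, the homological bookkeeping is routine and entirely parallel to the proofs of Lemmas \ref{PullbackAlongImmersionCommutesLem} and \ref{PullbackAlongImmersionDCartesianLem}.
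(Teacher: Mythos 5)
Your preferred ``slicker alternative'' breaks down at the claimed reduction to the case where both immersions are virtual Cartier divisors. Deforming $j: X \hookrightarrow Y$ to the normal bundle replaces the composite $j \circ i$, up to homotopy invariance, by $s \circ i$ where $s: X \hookrightarrow \Nc_{X/Y}$ is the \emph{zero section of a vector bundle} of arbitrary rank --- this has the same codimension as $j$ and is not a virtual Cartier divisor unless $j$ already had codimension one. No further deformation brings $i$ down to a divisor either: what a deformation produces is always a zero section of the normal bundle, whose codimension is that of the original embedding. Consequently the invocation of \cite{LM} Proposition 6.3.3 is misapplied here; that result is about commuting two pseudo-divisor pullbacks in a Cartesian square (it is the engine behind Lemma \ref{PullbackAlongImmersionDCartesianLem}), not about functoriality of a composition, and in any case the reduction it is supposed to justify is unavailable.

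The paper's proof is organized around a different intermediate step: it first proves the lemma in the special case where $j$ is the zero section $s: X \hookrightarrow E$ of a vector bundle of any rank, using Theorem \ref{DeformationSpaceAndVectorBundlesThm} (identifying $\tau_0(M(Z/E))$ with $\tau_0(M(Z/X)) \times_{\tau_0(X)} \tau_0(E)$) together with Lemma \ref{CoincidesWithLMLem}; it then reduces the general case to this vector-bundle case by a single deformation of $j$, via the immersion $i': \Ab^1 \times Z \hookrightarrow M(X/Y)$, using Lemma \ref{PullbackAlongImmersionDCartesianLem} at the fibres over $0$ and $1$ and homotopy invariance. You correctly flag Theorem \ref{DeformationSpaceAndVectorBundlesThm} as where the real work happens, but your plan never isolates the vector-bundle case as a standalone statement, which is precisely the missing intermediate step. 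Your first sketch, via a double deformation space over $\Ab^1 \times \Ab^1$, is a legitimate alternative strategy (cf.\ \cite{Man}), but it is genuinely different from the paper's route and you do not carry out the construction it requires.
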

\begin{proof}
We prove the result in two parts.

\begin{enumerate}
\item Let us first deal with the case where $j: X \hookrightarrow Y$ is the zero section of a vector bundle $s: X \hookrightarrow E$. By Theorem \fref{DeformationSpaceAndVectorBundlesThm}, we can form a commutative diagram
$$
\begin{tikzcd}
& \Ab^1 \times \tau_0(Z) \arrow[hook]{ld} \arrow[hook]{rd} & \\
\tau_0\big(M(Z/X)\big) \times_{\tau_0(X)} \tau_0(E) \arrow[]{rr}{w} \arrow[]{rd} & & \tau_0\big(M(Z/E)\big) \arrow[]{ld} \\
& \Ab^1 &
\end{tikzcd}
$$
with $w$ an isomorphism of schemes. Hence the morphisms $\sigma_{X / E}$ and $\sigma_{E \vert_X / E}$ coincide, and therefore we can use Lemma \fref{PullbackAlongImmersionCommutesLem} to conclude that the left small square in
$$
\begin{tikzcd}
\Omega^\LM_*\big(\tau_0(E)\big) \arrow[]{r}{\sigma_{Z/E}} \arrow[]{d}{\tau_0(s)^!} & \Omega^\LM_*\big(\tau_0(\Nc_{Z/E})\big)\arrow[]{d}{s'^!} \arrow[]{r}{\cong} & \Omega^\LM_*(\tau_0(Z)) \arrow[]{d}{\Id} \\
\Omega^\LM_*\big(\tau_0(X)\big) \arrow[]{r}{\sigma_{Z/X}} & \Omega^\LM_*\big(\tau_0(\Nc_{Z/X})\big) \arrow[]{r}{\cong} & \Omega^\LM_*(\tau_0(Z))
\end{tikzcd}
$$
commutes, where $s'$ is the ``zero-section'' $\tau_0(\Nc_{Z/X}) \hookrightarrow \tau_0(\Nc_{Z/E})$ induced by the isomorphism $w$. As $s^\vir = \tau_0(s)^!$ by Lemma \fref{CoincidesWithLMLem}, we can combine the above with basic properties of $\Omega^\LM_*$ to conclude that
$$i^\vir \circ s^\vir = (s \circ i)^\vir,$$
which is exactly what we wanted.

\item Now for the general case. Consider the quasi-smooth immersion $i'$ defined as the composition
$$
\begin{tikzcd}
\Ab^1 \times Z \arrow[hook]{r}{\mathrm{Id} \times i} & \Ab^1 \times X \arrow[hook]{r}{j_{X/Y}} & M(X/Y),   
\end{tikzcd}
$$
and form the homotopy Cartesian diagram 
$$
\begin{tikzcd}
Z \arrow[hook]{d}{i'_0} \arrow[hook]{r}{s \circ i} & \Nc_{X/Y} \arrow[hook]{d}{i_0} \\
\Ab^1 \times Z \arrow[hook]{r}{i'} & M(X/Y) \\
Z \arrow[hook]{r}{j \circ i} \arrow[hook']{u}[swap]{i'_1} & Y \arrow[hook']{u}[swap]{i_1}
\end{tikzcd}
$$
by restricting to fibres over $0$ and over $1$ ($s$ is the zero section $X \hookrightarrow \Nc_{X/Y}$).

Let then $\alpha$ be an element in $\Omega^\LM_*\big(\tau_0(Y)\big)$, and choose $\wtil\alpha$ in $\Omega^\LM_*\big(\tau_0(M(X/Y))\big)$ that restricts to the pullback of $\alpha$ on $\Omega^\LM_*\big(\Gb_m \times \tau_0(Y)\big)$. We can then compute that
\begin{align*}
(j \circ i)^\vir (\alpha) &= (j \circ i)^\vir \circ i_1^\vir (\tilde\alpha)\\ 
&=i'^\vir_1 \circ i'^\vir(\wtil \alpha) & (\text{Lemma \fref{PullbackAlongImmersionDCartesianLem}}) \\
&= i'^\vir_0 \circ i'^\vir(\wtil \alpha) & (\text{homotopy invariance}) \\
&= (s \circ i)^\vir \circ \sigma_{X/Y}(\alpha) & (\text{Lemma \fref{PullbackAlongImmersionDCartesianLem}}) \\
&= i^\vir \circ s^\vir \circ \sigma_{X/Y}(\alpha) & (\text{part 1.}) \\
&= i^\vir \circ j^\vir (\alpha)
\end{align*}
proving the claim. \qedhere
\end{enumerate}
\end{proof}

We are now ready to prove the main result.

\begin{proof}[Proof of Theorem \fref{VirtPullbackPropertiesThm}]
Throughout the proof, we will usually denote the truncations $\tau_0(Y)$ and $\tau_0(f)$ of a derived schemes $Y$ and morphisms $f$ between them by $Y_0$ and $f_0$ respectively.

\begin{enumerate}
\item Since both $f$ and $g$ are quasi-projective, we can factor them as
$$X \stackrel i \hookrightarrow U \times Y \stackrel{p}{\to} Y$$
and
$$Y \stackrel j \hookrightarrow V \times Z \stackrel{q}{\to} Z$$
respectively, where $U$ and $V$ are open subschemes of $\Pb^n$ and $\Pb^m$ respectively, and $p$ and $q$ are the obvious projection morphisms. Form the diagram
\begin{center}
\begin{tikzcd}
X \arrow[hook]{r}{i} & U \times Y \arrow[hook]{r}{\mathrm{Id}_{U} \times j} \arrow[]{d}{p} & U \times V \times Z \arrow[]{d}{p'} \\
& Y \arrow[hook]{r}{j} & V \times Z \arrow[]{d}{q} \\
& & Z,
\end{tikzcd}
\end{center}
where $p'$ is the obvious projection, and note that the middle square is homotopy Cartesian. We can then simply compute that
\begin{align*}
f^\vir \circ g^\vir &= i^\vir \circ p_0^! \circ j^\vir \circ q_0^! & \\
&= i ^\vir \circ (\mathrm{Id}_{U} \times j)^\vir  \circ p_0'^! \circ q_0^! & (\text{Lemma \fref{PullbackAlongImmersionCommutesLem}}) \\
&=  \big((\mathrm{Id}_{U} \times j) \circ i\big)^\vir  \circ (q \circ p')_0^! & (\text{Lemma \fref{PullbackAlongImmersionIsFunctorialLem}}) \\
&= (g \circ f)^\vir. &
\end{align*}
 
\item Recall that the normal bundle is stable under derived pullbacks. The proof is now immediate by comparing the definition of the virtual pullback to that of the refined pullback in \cite{LM} Section 6.6. 
 
\item Let us factor $f$ as
$$X \stackrel i \hookrightarrow P \stackrel{p}{\to} Y$$
where $p$ is smooth. We can now form the homotopy Cartesian diagram
\begin{center}
\begin{tikzcd}
X' \arrow[hook]{r}{i'} \arrow[]{d}{g'} & P' \arrow[]{d}{g''} \arrow[]{r}{p'} & Y' \arrow[]{d}{g} \\
X \arrow[hook]{r}{i} & P \arrow[]{r}{p} & Y 
\end{tikzcd}
\end{center}
and compute that
\begin{align*}
f^\vir \circ g_{0*} &= i^\vir \circ p_0^! \circ g_{0*} & \\
&= i^\vir \circ g''_{0*} \circ p'^!_0  \\
&= g'_{0*} \circ i'^\vir \circ p'^!_0 & (\text{Lemma \fref{PullbackAlongImmersionCommutesLem}}) \\
&= g'_{0*} \circ f'^\vir, &  
\end{align*}
proving the claim. \qedhere
\end{enumerate}
\end{proof}

\Addresses
\end{document}